\documentclass[12pt, a4paper]{amsart}
\usepackage{amscd, amssymb}

\usepackage{tensor}
\usepackage{wrapfig}
\usepackage{tikz}

\allowdisplaybreaks

\usepackage{xcolor}

\def\Z{\mathbb{Z}}

\def\AA{\mathcal{A}}
\def\BB{\mathcal{B}}
\def\CC{\mathcal{C}}
\def\DD{\mathcal{D}}

\def\LL{\mathcal{L}}
\def\MM{\mathcal{M}}
\def\NN{\mathcal{N}}

\def\RR{\mathcal{R}}

\def\TT{\mathcal{T}}

\newtheorem{thm}{Theorem}[section]
\newtheorem{cor}[thm]{Corollary}

\newtheorem{lemma}[thm]{Lemma}
\newtheorem{prop}[thm]{Proposition}

\theoremstyle{definition}
\newtheorem{definition}[thm]{Definition}

\theoremstyle{remark}
\newtheorem{remark}[thm]{Remark}

\newtheorem{example}[thm]{Example}

\numberwithin{equation}{section}

\tikzstyle{vertex}=[circle]
\tikzstyle{goto}=[->,shorten >=1pt,>=stealth,semithick]

\usetikzlibrary{arrows}
\usepackage{color,soul}
\usepackage{comment}
\usepackage[linguistics]{forest}
\usetikzlibrary {decorations.pathreplacing}
\usetikzlibrary{arrows.meta}
\usepackage{mathtools}

\usepackage{hyperref}
\hypersetup{
    colorlinks=true,
    linkcolor=blue,
    filecolor=blue,      
    urlcolor=blue,
    citecolor=blue,
    }

\usepackage{float}

\begin{document}
 \title[Geometric properties of the golden ratio Thompson's group]{Geometric properties of the golden ratio Thompson's group}
\author{Denys Svetelik}

\begin{abstract}
    We show that all three golden ratio Thompson's groups $F_\tau$, $T_\tau$ and $V_\tau$  embed in the asynchronous rational group. We prove properties of the Cayley graph of the monoid $M = \langle L, R : LR^2 = RL^2 \rangle$, whose topological full group is $V_\tau$. In particular, we compute a distance function for the Cayley graph of the monoid $M$. Additionally, we prove that this Cayley graph is hyperbolic in the sense of Gromov. Our analysis reveals that the horofunction boundary of this graph is homeomorphic to a space resembling a Cantor-like set, with additional isolated points situated between each pair of breakpoints.
\end{abstract}
\maketitle

\tableofcontents

\section{Introduction}
\subsection{Overview} In 1965, Thompson introduced three groups $F$, $T$, $V$, which were used in~\cite{McT} for the construction of finitely presented groups with unsolvable word problems. In~\cite{T2} Thompson showed that the groups $T$ and $V$ are finitely presented, infinite simple groups and used the group $V$ to prove that a group with finitely many generators has a solvable word problem if and only if it can be embedded into a finitely generated simple subgroup of a finitely presented group. The group $F$ was used in a number of works related to homotopy idempotents \cite{Dy2, FrH}. In \cite{BroG} it was proven that $F$ is the first known example of a torsion-free infinite-dimensional $FP_\infty$ group. Later in \cite{Mi} it was proven that $F$ is simply connected at infinity, thus showing that the group does not have any homotopy at infinity.

The Thompson's groups $F, T$, and $V$ can be seen as groups of piecewise linear homeomorphisms of the unit interval, the unit circle, and the Cantor set, respectively, that map dyadic rational numbers to dyadic rational numbers, and are differentiable everywhere except for a finite number of dyadic numbers, and the derivative on the interval of differentiability is always a power of 2. For a detailed introduction on Thompson's groups, we refer the reader to \cite{Cannon F} and \cite{Burillo Thompson}.

In 2000 S. Cleary in his paper \cite{Clearly} introduced the irrational slope Thompson's groups. These irrational slope Thompson's groups are a variation of the classic Thompson's group, with the break points now in $\mathbb{Z}[\tau]$ and the slopes being powers of $\tau$, where $\tau = \frac{\sqrt{5}-1}{2}$ is the positive square root of the equation~$x^2+x=1$ and is called the \textit{small golden ratio}. In~\cite{Ttau} and \cite{Burillo T} J. Burillo, B. Nucinkis, and L. Reeves proved that the commutator subgroup of $F_\tau$ is simple but $T_\tau$ and $V_\tau$ have index-2 normal subgroups; they also gave a finite presentation of these groups and represented them in terms of binary trees.

The approach we adopt for \textit{rationality} is based on the work of Grigorchuk, Nekrashevych, and Sushchanskii \cite{Grigorchuk Automata}, who employed finite state machines to describe sets, relations, and functions. A central objective of this framework is to define homeomorphisms in the Cantor set $\{0, 1\}^\omega$ using asynchronous machines, where a single symbol is read as input and a finite string, composed of elements of $\{0, 1\}$, is produced as output during each step of the computation. These homeomorphisms correspond to \textit{rational functions}.

A homeomorphism of the Cantor set $\{0, 1\}^\omega$ is called \textit{rational} if it can be realized by an asynchronous transducer (or asynchronous Mealy machine) that acts on infinite binary strings. In \cite{Grigorchuk Automata}, Grigorchuk, Nekrashevych, and Sushchanskii observed that the collection of all rational homeomorphisms of $\{0, 1\}^\omega$ forms a group $\RR$ under the operation of composition, which they called the \textit{rational group}. Additionally, they point out that the group of rational homeomorphisms of $A^\omega$, for any finite alphabet $A$ with at least two symbols, is isomorphic to $\RR$.

During the last few decades, relatively little focus has been on the class of groups generated by asynchronous transducers, particularly the full asynchronous rational group~$\RR$. It is established that the group~$\RR$ is simple but not finitely generated \cite{Belk async}. Furthermore, finitely generated subgroups of~$\RR$ have a solvable word problem; however, there is no solution to the periodicity problem for elements of~$\RR$~\cite{Grigorchuk Automata,Belk some}. Additionally,~$\RR$ contains several well-known groups: the Thompson groups $F$, $T$, and $V$, the Brin-Thompson groups~$nV$ and groups like the Röver group~$ V_\Gamma$ \cite{Grigorchuk Automata,Belk some,Rover}. Furthermore, any group generated by synchronous automata can be embedded into $\RR$.

In 2023 Belk, Bleak, Matucci and Zaremsky proved that all hyperbolic groups satisfy the Boone–Higman conjecture in \cite{Belk Boone-Higman}. The Boone–Higman conjecture was proposed in 1973 and states that a finitely generated group with a solvable word problem can be embedded in a finitely presented simple group. The authors proved that hyperbolic groups meet this criterion by demonstrating that each hyperbolic group embeds within a finitely presented simple group. In their proof, they introduced a new class of groups, which they called \textit{rational similarity groups}. Rational similarity groups generalize self-similar groups by allowing for rational, "canonical similarity" transformations within subshifts of finite type. Specifically, the authors showed that each hyperbolic group is embedded in a full, contracting rational similarity group, which is embedded in a finitely presented simple group. Additionally, rational similarity groups emerged as fundamental objects in the study of asynchronous group actions, suggesting their potential broader applicability in embedding problems within geometric and combinatorial group theory.

In \cite{Belk Rational embeddings}, Belk, Bleak and Matucci proved that every Gromov hyperbolic group $G$ embeds in the rational group $\RR$, where by Gromov hyperbolic group we mean a group with a Cayley graph $\Gamma$ that is hyperbolic in Gromov sense. This embedding uses a framework in which elements of $G$ act on a boundary space (horofunction boundary~$\partial_h G$) of binary sequences by asynchronous transducers. In the same paper, the authors proved that for any hyperbolic group $G$, the action of $G$ on its horofunction boundary $\partial_h G$ is rational. In the construction, the authors introduce a specific tree structure within the hyperbolic graph $\Gamma$, which they call the \textit{tree of atoms}. They observed that when a group~$G$ acts properly and cocompactly on $\Gamma$ then the tree of atoms has a self-similar structure and is in addition is naturally homeomorphic to the horofunction boundary $\partial_h \Gamma$ (also known as the metric boundary). In \cite{Webster boundary} it was shown that the horofunction boundary~$\partial_h \Gamma$ is compact, totally disconnected and has the Gromov boundary $\partial \Gamma$ as a quotient.  

The horofunction boundary of a non-elementary group was proven to be homeomorphic to the Cantor set \cite{Belk Rational embeddings}. It was still an open question if the same statement holds for non-elementary monoids. It turns out that the monoid we study in this paper is a conterexample to the statement, which proves that the horofunction boundary of non-elemenrary monoids is more complicated then expected. 

\subsection{Summary of main results} In section \ref{sec: Irrational slope} we prove the following theorem. Let $\{0,1\}^\omega$ be the set of all binary sequences. Let~$X_i, Y_i, C_{i+1}$ and~$\Pi_i$, for~$i=\{0,1\}$ be rational homeomorphisms of the Cantor set~$\{0,1\}^\omega$ defined by the automata shown in Figure~\ref{fig:v_tau automaton} (which differ only in their initial states).

\begin{thm}
The group $G$ of homeomorphisms of $\{0,1\}^\omega$ generated by $X_0,$ $X_1,$ $Y_0,$ $Y_1,$ $C_1,$ $C_2,$ $\Pi_0$ and $\Pi_1$ is isomorphic to the golden ratio Thompson group $V_\tau$. Moreover $V_\tau$ is a rational similarity group.
\end{thm}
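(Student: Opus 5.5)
The plan is to realize $V_\tau$ concretely as a group of homeomorphisms of a Cantor set coded by the monoid $M=\langle L,R : LR^2=RL^2\rangle$, and then transport this action to $\{0,1\}^\omega$ by a rational homeomorphism.

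First, I will use the tree-pair description of $V_\tau$ due to Burillo, Nucinkis and Reeves. Each caret splits an interval of length $\tau^k$ into pieces of lengths $\tau^{k+1}$ and $\tau^{k+2}$ ($L$ then $R$). The relation $\tau+\tau^2=1$ yields the basic move $LR^2=RL^2$, which gives the identification of the two subdivisions of an interval into three pieces. Infinite descending paths modulo this relation form a space $\Omega$: the boundary of the tree of $M$, or the set of normal forms. I will fix a normal form, for instance forbidding the subword $RLL$ by rewriting it to $LRR$, so that $\Omega$ is a sofic/finite-type shift. Then $\Omega$ is homeomorphic to a closed subset of $\{0,1\}^\omega$ via $L\mapsto 0$, $R\mapsto 1$. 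I will check that this subset is in fact all of $\{0,1\}^\omega$ after a suitable rational recoding; alternatively, I will show that the set of infinite normal forms is the full shift up to a rational homeomorphism. In either case, $V_\tau$ acts faithfully on $\Omega$ by prefix replacement: a tree pair $(S,T,\sigma)$ sends $s_i w\mapsto t_{\sigma(i)} w$, where $w$ is read modulo the relation.

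Second, I will identify the generators. The Burillo–Nucinkis–Reeves finite presentation of $V_\tau$ uses generators $x_0,x_1,y_0,y_1,c_1,c_2,\pi_0,\pi_1$, where the $y_i$ are the elements coming from the basic move. I will show that each acts on $\Omega\cong\{0,1\}^\omega$ by the transducer of Figure~\ref{fig:v_tau automaton} with the appropriate initial state. The prefix replacement itself is a finite rewriting, but after replacing a prefix, the tail $w$ may need to be renormalized, since a new forbidden pattern $RLL$ can straddle the junction. Renormalization propagates only through a bounded-state carry, and this is exactly why the machine is asynchronous with finitely many states. The map $x\mapsto X$ etc. thus defines a homomorphism $V_\tau\to G$ that is onto, because both groups are generated by the listed elements. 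It is injective because the action of $V_\tau$ on $\Omega$ is faithful: a nontrivial element moves some subinterval, hence some point, as the dyadic-like cylinders are dense. This gives $G\cong V_\tau$.

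Third, for the rational similarity group statement, I will verify the definition of Belk–Bleak–Matucci–Zaremsky. Take the subshift of finite type given by normal forms, with the canonical similarities between cones being the maps $s w\mapsto t w$ between cones of the same type; the types are determined by the last letter or the finite carry state. The group $G$ must contain all canonical similarities between cones, and must be closed under restriction to cones, with nuclei given by the finitely many transducer states. Fullness follows because $V_\tau$ contains every prefix-exchange map between partitions by cones of matching types.

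The main obstacle I expect is the renormalization step: showing that the carry induced by $LR^2=RL^2$ stays bounded, so that the transducers are finite-state, and that the states listed in the figure are closed under taking restrictions. My first approach is to compute explicitly the restrictions of $Y_0$ and $Y_1$ along $L$ and $R$, and check that they cycle among the named states.
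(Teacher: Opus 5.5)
\textbf{The gap: the coding of $\{0,1\}^\omega$ is never fixed.} You treat as routine the step where, after ``a suitable rational recoding'', each generator acts on $\{0,1\}^\omega$ by the transducer of Figure~\ref{fig:v_tau automaton}. The statement is about those specific machines, so the identification of $\{0,1\}^\omega$ with the space $V_\tau$ acts on is the whole content of the first part, and your proposal never fixes it.

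The paper's proof does fix it. The map $q=q_2q_1$ reads each digit $0$ as $LL$ and each digit $1$ as $R$, so $q(0\omega)=\tau^2q(\omega)$ and $q(1\omega)=\tau^2+\tau q(\omega)$. Thus every binary cone is split by an $x$-type caret, and $q$ is an order-preserving surjection that is two-to-one exactly over $\mathbb{Z}[\tau]\cap(0,1)$. The identity $q\circ X_0=x_0\circ q$ then reduces to a check on the three cones $00$, $01$, $1$.

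The coding your normal form suggests is a different one. The $RLL$-free words form a proper subshift of $\{L,R\}^\omega$. A word beginning with $R$ is forced to continue in $\{R,LR\}^\omega$, so the two top cones have values $[0,\tau]$ and $[\tau,1]$, not $[0,\tau^2]$ and $[\tau^2,1]$. Concretely, $x_0$ is a single translation on $[\tau^4,\tau^2]$, which is the cone $q(c(01))$ in the paper. In your coding that interval is the union of the two cones $L^3R$ and $L^2R$, so the machine you would compute for $x_0$ is not $X_0$. You could at best obtain some group of transducers isomorphic to $V_\tau$. To say anything about the group generated by the machines of the Figure, you must exhibit the conjugacy to the Figure's coding (essentially rediscover $q$) and then check the generators against it, as the paper does.

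\textbf{The space $\Omega$ also needs repair.} ``Infinite paths modulo $LR^2=RL^2$'' and ``infinite normal forms'' are not the same space. The words $L^2R^\infty$, $L(RL)^\infty$ and $RL^\infty$ all converge to $\tau^2$, yet they are pairwise inequivalent under finitely many applications of the relation. This is exactly what produces the isolated points of $\DD_\tau$ in Section~\ref{horofunction boundary section}, whereas $V_\tau$ acts on the blowup $\CC_\tau$, which has only two points over each element of $I_\tau$.

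Word-level prefix replacement is also not defined everywhere. The word $L(RL)^\infty$ contains neither $LRR$ nor $RLL$, so it is equivalent to no other word. In particular it begins with neither leaf $LL$ nor $R$ of an $x$-type caret, although it converges to the leaves' common endpoint $\tau^2$. The action therefore has to be defined through intervals and continuity, which $q$ provides directly.

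\textbf{The second part.} Checking the definition of a rational similarity group directly is a legitimate alternative to the paper's route (nucleus of injections plus Theorem~\ref{thm: 2.46}). In the paper's coding it is short. A canonical similarity $\alpha\omega\mapsto\beta\omega$ between proper cones of the full shift descends to an affine map of slope a power of $\tau$ between intervals with endpoints in $\mathbb{Z}[\tau]$. Completing the complements to binary cone partitions of equal size gives a prefix exchange that descends to $V_\tau$, and hence lies in $G$. As you set it up, however, you verify the definition on the normal-form subshift rather than on $\{0,1\}^\omega$, where $G$ lives, so this part again depends on the missing conjugacy.
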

\noindent The first part of the theorem is proven directly in Theorem \ref{v_tau automaton}. First, we identify a quotient map from the space of binary sequences $\{0,1\}^\omega$ to the unit interval $[0,1]$. We then establish that an isomorphism exists between the two groups. The second part is proven in Theorem \ref{nuc}. The proof relies heavily on the ideas presented by Belk, Bleak, Matucci, and Zaremsky in \cite{Belk Boone-Higman}. They stated the conditions for a set of maps to be a nucleus of injections (Definition~\ref{nuc def}) and the conditions for a group to be a rational similarity group with a given nucleus (Theorem~\ref{thm: 2.46}). We adapt these ideas to find the nucleus~$\NN$ for $G$, this gives a background to prove that $V_\tau$ is a rational similarity group.

From this, we derive a corollary: $F_\tau$ and $T_\tau$ are isomorphic to subgroups of $G$ formed by the generators~$X_i$, $Y_i$ and $X_i$, $Y_i$, $C_{i+1}$, respectively. This provides insight into the interaction of irrational slope Thompson groups within the context of automata and geometric representations.

Section \ref{sec: Monoid} is heavily based on studying the properties of the Cayley graph of the monoid~$M=\langle L,R: LR^2= RL^2\rangle$, whose topological full group is $V_\tau$. Let $Cay(M)$ be the respective Cayley graph; see Figure \ref{fig: MM}. Observe that the monoid $M$ acts as the vertex set in $Cay(M)$, which we impose with the path metric. Let~$x$, $m$ be vertices in $Cay(M)$, we say that~$x\in Cone(m)$, if there is a word for $x$ that starts with $m$. The main result of the section is the following theorem.
\begin{thm}
Let $x,y $ be any given pair of vertices in the Cayley graph $Cay(M)$ of the monoid $M = \langle L,R:LR^2=RL^2 \rangle$. Then the distance between $x$ and $y$ is given  by:
\[
d(x,y) =\begin{cases}
        d(x',y'), & \begin{tabular}[t]{@{}l@{}} 
        $\textit{when } x=mx', y=my' \textit{ for some nontrivial } m\in M$\\
\end{tabular}\\
        |x|+|y| -2, & \begin{tabular}[t]{@{}l@{}} 
        $\text{when } x \in Cone(LR), y \in Cone(RL) \textit{ and } x,y \notin Cone(LR^2);$\\
\end{tabular}\\
        |x|+|y|,  & \begin{tabular}[t]{@{}l@{}} 
        else.\\
\end{tabular} 
    \end{cases}
\]
\end{thm}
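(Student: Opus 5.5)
The plan is to first get a concrete handle on the monoid $M$ and then read off the metric from a "cut-set" structure of $Cay(M)$. Both defining words $LR^2$ and $RL^2$ have length $3$, so word length $|x|$ is well defined on $M$. I will orient the relation as a rewriting rule $RL^2\to LR^2$. It preserves length and strictly decreases the word lexicographically (with $L<R$), so it terminates. It has no self-overlaps, since no proper suffix of $RLL$ is a proper prefix of $RLL$; hence the rewriting system is complete and every element has a unique normal form.

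From the normal forms I will derive two facts, where edges of $Cay(M)$ join $x$ to $xL$ and to $xR$.
\begin{enumerate}
\item $M$ is left cancellative, so left multiplication $x\mapsto mx$ is a graph isomorphism from $Cay(M)$ onto the induced subgraph on $Cone(m)$.
\item $Cone(L)\cap Cone(R)=Cone(LR^2)=Cone(RL^2)$.
\end{enumerate}
Two general estimates will be used throughout. Every edge changes length by exactly $1$, which gives $d(x,y)\ge \bigl||x|-|y|\bigr|$. Moreover, any path from $x$ to $y$ through a vertex $z$ with $z$ a prefix of both has length at least $|x|+|y|-2|z|$.

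The key combinatorial step is a gateway lemma for the cones $Cone(L)$ and $Cone(R)$. Suppose $y=xa$ with $a\in\{L,R\}$, $y\in Cone(L)$ and $x\notin Cone(L)$. Using fact (2) and unique normal forms, I expect to show that either $x=1$, or $x=RL$ and $y=RL^2=LR^2$. Symmetrically, the only edges entering $Cone(R)$ from outside are at $1$ and at $LR$. So the $6$-cycle
\[
1,\ L,\ LR,\ LR^2=RL^2,\ RL,\ R
\]
separates the graph, and the two cones are glued only along $Cone(LR^2)$.

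With this lemma I would do the first case by induction on $|m|$, peeling off one letter at a time. Take $x,y\in Cone(m)$ and a geodesic between them that leaves $Cone(m)$. It must exit and re-enter through gateway vertices. I then compare lengths using the estimate $|x|+|y|-2|z|$ for a vertex $z$ on the cut, and show that the excursion can be replaced by a path inside $Cone(m)$ that is no longer. Fact (1) then gives $d(mx',my')=d(x',y')$. I expect this to be the main obstacle. Cones overlap, as in $Cone(L)\cap Cone(R)$, and for longer $m$ the boundary of $Cone(m)$ may have several gateways, so this needs careful bookkeeping. I would first try to prove the statement for one-letter $m$, and then iterate using that $Cone(m_1m_2)=m_1\,Cone(m_2)$.

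For the remaining cases, suppose $x,y$ have no nontrivial common left factor. Then, up to symmetry, either one of them is $1$, or $x\in Cone(L)\setminus Cone(LR^2)$ and $y\in Cone(R)\setminus Cone(LR^2)$. Any path from $x$ to $y$ either passes through $1$, or enters $Cone(LR^2)$.
\begin{itemize}
\item A path through $1$ has length at least $|x|+|y|$.
\item By the gateway lemma, $x$ can reach $Cone(LR^2)$ only through $LR$, which forces $x\in Cone(LR)$. Likewise $y$ needs $RL$, which forces $y\in Cone(RL)$. Such a path has length at least $(|x|-2)+2+(|y|-2)=|x|+|y|-2$.
\end{itemize}
For the matching upper bounds, the path $x\to LR\to LR^2=RL^2\to RL\to y$ has length $|x|+|y|-2$. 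The path through $1$ has length $|x|+|y|$. Comparing the two bounds in each configuration yields exactly the second and third cases of the formula.
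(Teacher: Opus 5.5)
\textbf{Gap: your gateway lemma is false.}

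Under your rewriting system, prefixing $L$ to a normal form gives a normal form, so $Cone(L)$ is exactly the set of normal forms beginning with $L$. But $(RL)^k$ is a normal form beginning with $R$, and by induction using $RL^2=LR^2$,
\[
(RL)^kL=(RL)^{k-1}\,LR^2=LR^{2k-2}\,R^2=LR^{2k},
\]
for example $RLRL\cdot L=LR^4$. In fact the edges entering $Cone(L)$ from outside are exactly $(RL)^k\to LR^{2k}$ for all $k\ge 0$. Symmetrically, $(LR)^kR=RL^{2k}=LR^2L^{2k-2}$. So the $6$-cycle does not separate the graph: the path $LRLRL\to LRLR\to RL^4$ enters $Cone(LR^2)$ without visiting $LR$ or $1$.

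For your first case this is repairable. With the correct gateways, the length function alone shows that an excursion outside $Cone(L)$ from $LR^{2k}$ to $LR^{2j}$ has length at least $2+2|k-j|$. The $R$-path inside the cone has length $2|k-j|$. Hence one-letter cones are strongly convex, and your induction on $|m|$ via left multiplication goes through.

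The second and third cases are where the argument genuinely breaks. Take $x\in Cone(L)\setminus Cone(LR^2)$ and $y\in Cone(R)\setminus Cone(LR^2)$. A path from $x$ to $y$ avoiding $1$ may enter $Cone(LR^2)$ at $RL^{2k}$ from $(LR)^k$, and leave it at $LR^{2j}$ towards $(RL)^j$. For such a path the length function only gives the lower bound $|x|+|y|+2-4\min(k,j)$, which is too weak once $\min(k,j)\ge 2$.

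What is missing is the distance inside $Cone(LR^2)\cong Cay(M)$ between its two boundary rays:
\[
d(LR^2L^a,LR^2R^b)=d(L^a,R^b)=a+b.
\]
With it, the bound becomes $(|x|-2k)+1+(2k+2j-4)+1+(|y|-2j)=|x|+|y|-2$; the deeper gateways only produce ties. This is exactly the paper's Proposition~\ref{boundary path}, proved by induction on $n+m$ using the self-similarity of $Cone(LR^2)$, and it is the key step absent from your plan.

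Similarly, when $x\notin Cone(LR)$ you need $d(x,(LR)^k)\ge |x|-2k+2$ rather than the trivial $|x|-2k$. That bound comes from the distance formula itself, applied inside $Cone(L)$ to shorter words. So the argument must be organized as an induction on $|x|+|y|$, not as a single cut through the $6$-cycle.
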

\noindent The proof is presented in Theorem \ref{M distance}. The first step we make is to observe that every cone in $Cay(M)$ is strongly geodesically convex, where by strongly geodesically convex we mean that if $x,y \in Cone(m)$ then any geodesic between $x$ and $y$ is fully contained in $Cone(m)$. The remaining proof involves manipulating possible paths that a geodesic between $x$ and~$y$ can take. The key idea here is to observe that if $x$ starts with $L$ and $y$ starts with $R$ then a geodesic either visits the root $1$ or passes through the $Cone(LR^2)$, which is self-similar to the whole graph itself. 

This theorem provides a useful distance function in a Cayley graph, which has a complicated structure. It is not only a powerful tool in Sections 5 and 6 of this paper, but also lays a solid foundation for further study of the monoid $M$.

Section \ref{horofunction boundary section} explores the horofunction boundary of the graph $Cay(M)$, introducing a Cantor-like set $\DD_\tau$. This set includes additional isolated points between each pair of breakpoints, as shown below.
\[
\begin{tikzpicture}[scale=0.85]
\draw (1.5,0) node[anchor=north]{}
-- (3,0) node[anchor=south]{}
    (4.5,0) node[anchor=north]{}
-- (6,0) node[anchor=north]{}
    (7.5,0) node[anchor=north]{}
-- (9,0) node[anchor=north]{}
    (10.5,0) node[anchor=north]{}
-- (12,0) node[anchor=north]{}
    (13.5,0) node[anchor=north]{}
-- (15,0) node[anchor=north]{};

\node[circle,fill=black,inner sep=0pt,minimum size=0pt,label=above:{$\cdots$}] (a) at (0.5,-0.27) {};
\node[circle,fill=black,inner sep=0pt,minimum size=0pt,label=above:{$\cdots$}] (a) at (16.5,-0.27) {};

\node[circle,fill=black,inner sep=0pt,minimum size=4pt,label=above:{$x_1^+$}] (a) at (1.5,0) {};
\node[circle,fill=black,inner sep=0pt,minimum size=4pt,label=above:{$x_2^-$}] (a) at (3,0) {};
\node[circle,fill=black,inner sep=0pt,minimum size=4pt,label=above:{$x_2$}] (a) at (3.75,0) {};
\node[circle,fill=black,inner sep=0pt,minimum size=4pt,label=above:{$x_2^+$}] (a) at (4.5,0) {};
\node[circle,fill=black,inner sep=0pt,minimum size=4pt,label=above:{$x_3^-$}] (a) at (6,0) {};
\node[circle,fill=black,inner sep=0pt,minimum size=4pt,label=above:{$x_3$}] (a) at (6.75,0) {};
\node[circle,fill=black,inner sep=0pt,minimum size=4pt,label=above:{$x_3^+$}] (a) at (7.5,0) {};
\node[circle,fill=black,inner sep=0pt,minimum size=4pt,label=above:{$x_4^-$}] (a) at (9,0) {};
\node[circle,fill=black,inner sep=0pt,minimum size=4pt,label=above:{$x_4$}] (a) at (9.75,0) {};
\node[circle,fill=black,inner sep=0pt,minimum size=4pt,label=above:{$x_4^+$}] (a) at (10.5,0) {};
\node[circle,fill=black,inner sep=0pt,minimum size=4pt,label=above:{$x_5^-$}] (a) at (12,0) {};
\node[circle,fill=black,inner sep=0pt,minimum size=4pt,label=above:{$x_5$}] (a) at (12.75,0) {};
\node[circle,fill=black,inner sep=0pt,minimum size=4pt,label=above:{$x_5^+$}] (a) at (13.5,0) {};
\node[circle,fill=black,inner sep=0pt,minimum size=4pt,label=above:{$x_6^-$}] (a) at (15,0) {};
\end{tikzpicture}
\]
where every break point $x_i$ belongs to the ring $\mathbb{Z}[\tau] \cap (0,1)$, and each interval above represents a similar Cantor set with extra isolated points.

\begin{thm}
    The horofunction boundary $\partial_h Cay(M)$ of the graph $Cay(M)$ of the monoid $M = \langle L,R :LR^2=RL^2\rangle$ is naturally homeomorphic to~$\DD_\tau$.
\end{thm}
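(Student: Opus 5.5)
The plan is to compute horofunctions explicitly from the distance formula (the second theorem above, Theorem~\ref{M distance}), and then match the possible limits with the points of $\DD_\tau$. Recall that $\partial_h Cay(M)$ is the set of limits, in the topology of pointwise convergence of functions normalized at the root $1$, of the functions $h_y(x)=d(x,y)-d(1,y)=d(x,y)-|y|$ as $y\to\infty$. The approach has four steps.

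\textbf{Step 1: classify sequences along which $h_y$ can converge.} First we handle a fixed $x$ and a $y$ with $|y|$ large. By the distance formula, $h_y(x)$ depends only on two pieces of data:
\begin{itemize}
\item the longest common prefix $m$ of $x$ and $y$ in $M$, since $d(x,y)=d(x',y')$ after cancelling $m$;
\item which of the cones $Cone(L)$, $Cone(R)$, $Cone(LR)$, $Cone(RL)$, $Cone(LR^2)$ contain the remainders $x'$ and $y'$.
\end{itemize}
Hence $h_{y_n}$ converges pointwise exactly when, for each fixed $x$, this finite data stabilizes. Using the normal forms of $M$ (the monoid is left cancellative, and the relation $LR^2=RL^2$ means the cones $Cone(LR^2)=Cone(RL^2)$ overlap), we obtain two kinds of horofunctions.

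\textbf{Step 2: describe the two kinds of horofunctions.}
\begin{itemize}
\item \emph{Busemann-type horofunctions.} These come from infinite paths $y_n\to\xi$ in which $y_n$ eventually lies in nested cones $Cone(m_k)$ with $|m_k|\to\infty$. The limit is $h_\xi(x)=|x'|-|m|$ plus the $-2$ correction, where the correction applies exactly when $x'$ and the tail of $\xi$ sit in $Cone(LR)$ and $Cone(RL)$ respectively, outside $Cone(LR^2)$.
\item \emph{Extra horofunctions.} These come from sequences that do not enter deeper and deeper cones. The main candidate is $y_n=mLR^n$ (or symmetrically $mRL^n$) with $n\to\infty$. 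The $-2$ correction distinguishes such a sequence from the boundary rays it approaches, and this produces a new, isolated function.
\end{itemize}

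\textbf{Step 3: build the map to $\DD_\tau$ and check that it is a bijection.} Infinite words are sent to $[0,1]$ via the quotient map $\{0,1\}^\omega\to[0,1]$ from Section~\ref{sec: Irrational slope}, reading $L$ as the subdivision of an interval in ratio $\tau^2:\tau$ (or the reverse). A pair of distinct rays with the same real image corresponds to a breakpoint $x_i\in\mathbb{Z}[\tau]\cap(0,1)$. We send the two distinct Busemann horofunctions of such a pair to $x_i^-$ and $x_i^+$, and the extra isolated horofunction to the isolated point $x_i$. We then check two things:
\begin{itemize}
\item injectivity: distinct data give distinct functions, which can be tested on suitable $x$ chosen in the relevant cones;
\item surjectivity: every point of $\DD_\tau$ is attained.
\end{itemize}
Self-similarity of $Cone(LR^2)$ reduces the verification to the neighborhood of the root.

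\textbf{Step 4: continuity, and the main obstacle.} Since both spaces are compact Hausdorff ($\partial_h$ is compact by \cite{Webster boundary}), a continuous bijection from $\DD_\tau$ to $\partial_h Cay(M)$ is automatically a homeomorphism. So it suffices to show that convergence in $\DD_\tau$ implies pointwise convergence of the horofunctions, which follows because the values $h(x)$ on any finite set of $x$ depend only on a finite prefix.

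I expect the main obstacle to be Step 1: proving that the classification is exhaustive. The difficulty is showing that sequences which neither go deep into cones nor are of the form $mLR^n$ produce no further limits, and that the isolated points really are isolated. This requires checking, via the $-2$ case of the distance formula, that near-diagonal sequences straddling $Cone(LR)$ and $Cone(RL)$ yield exactly one new function per breakpoint.
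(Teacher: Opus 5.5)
Your route, computing horofunctions directly from the distance formula instead of building the tree of atoms and its type graph as the paper does, is reasonable. The gap is in Steps 2 and 3: they misidentify where the isolated points come from, so the bijection you describe is wrong.

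\textbf{Where the isolated points come from.} The class of ``sequences that do not enter deeper and deeper cones'' is empty up to subsequences. By K\"onig's lemma, any $y_n\to\infty$ has a subsequence lying in $Cone(\xi_1\cdots\xi_k)$ for a single infinite word $\xi$. In particular, $y_n=mLR^n$ lies in the nested cones $Cone(mLR^k)$, and its limit is an ordinary one-sided point. Since $[0,1]_{LR^n}$ has right endpoint $\tau$, the sequence $LR^n$ gives $\tau^-$, and $L^2R^n$ (your case $m=L$) gives $(\tau^2)^-$.

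These points are not isolated. For $|x|\le N$ one has $h_y(x)=|x|-2|m|-2\epsilon$, which depends only on which cones of length at most $N+3$ contain $y$. By Lemma~\ref{lem:I_M_homeomorphic_M}, these memberships coincide for $L^2R^kL^j$ and $L^2R^n$ once $k$ is large compared with $N$. Hence the horofunctions of the rays $L^2R^kL^\infty$, which are all distinct, converge to that of $L^2R^\infty$.

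What actually happens is that each breakpoint carries three inequivalent geodesic rays, not a pair. At $\tau^2$ they are $L^2R^\infty$, $(LR)^\infty$ and $RL^\infty$, with $(h(R),h(L^2))$ equal to $(1,-2)$, $(1,0)$ and $(-1,0)$ respectively. At every other breakpoint they are the left translates $mL^2R^\infty$, $m(LR)^\infty$, $mRL^\infty$. The isolated one is the zigzag ray, whose intervals contain $\tau^2$ in their interior. It is exactly the paper's level-2 atom $c_2=\{LR,LRL,LRLR,\dots\}$.

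\textbf{Consequences for Steps 3 and 4.} Your Step 3 assignment is not well defined. The function coming from $mLR^n$ is already the left-hand member of your ``pair'' at the point to which $[0,1]_{mLR^n}$ shrinks. Meanwhile the zigzag horofunction is left with no image.

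The zigzag rays also break the justification in Step 4. Since $(LR)^kL^2R^n=L^2R^{n+2k}$ in $M$, the words $(LR)^kL^2R^\infty$ agree with $(LR)^\infty$ on arbitrarily long prefixes. Yet they all give the horofunction of $L^2R^\infty$, which takes value $-2$ rather than $0$ at $L^2$. So $h$ is \emph{not} determined by a finite prefix of the ray, precisely at the isolated points. What you need instead is that each value $h(x)$ is locally constant on $\DD_\tau$.

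\textbf{Wrong coding.} The map from Section~\ref{sec: Irrational slope} does not apply here. In $M$ the generators select the overlapping subintervals $[0,1]_L=[0,\tau]$ and $[0,1]_R=[\tau^2,1]$, not a subdivision in ratio $\tau^2:\tau$. The right map is $\xi\mapsto\bigcap_n[0,1]_{\xi_1\cdots\xi_n}$.

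\textbf{How to repair it.} Classify limits by where the shrinking intervals $[0,1]_{y_n}$ sit relative to the limit point $p$.
\begin{enumerate}
\item If $p\notin I_\tau$, there is exactly one horofunction.
\item If $p\in I_\tau$, there are three: the intervals eventually lie to the left of $p$, contain $p$ in their interior, or lie to the right of $p$.
\end{enumerate}
By Lemma~\ref{lem:I_M_homeomorphic_M}, each case fixes all bounded-length cone memberships. The middle class is a single ray up to finitely many vertices, hence isolated. This is the information the paper encodes in the atom types $P_L$ and $P_R$ of its type graph.
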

\noindent The proof is presented in Theorem \ref{horofunction boundary of $M$}. Our proof is heavily based on the ideas presented by Belk, Bleak, and Matucci in \cite{Belk Rational embeddings}. The main idea of the proof is to construct a tree of atoms. The boundary of this tree is shown to be homeomorphic to the horofunction boundary (Theorem \ref{equivalence boundary = horofunction boundary}). We start by considering vector fields, where each vector corresponds to a distance function between the two vertices, on the set of edges contained in balls of radius~$n \geq 0$ around the root of $Cay(M)$. Since $Cay(M)$ is a locally finite graph, for every $n$, there is a finite number of possible vector fields. For each distinct vector field, we associate a set of vertices from the whole graph, for which the vector field coincides with the distance function. Whenever the set is infinite, we call it an \textit{atom}. With each increment of $n$, the previous atoms decompose into new ones. Eventually, we observe a pattern in the atoms and construct a tree of atoms. This tree has a self-similar structure and can be represented as a directed multi-graph. We show that the path space of this graph is $\DD_\tau$. Then, following~\cite{Belk Rational embeddings}, we conclude that the horofunction boundary of the graph $Cay(M)$ is homeomorphic to $\DD_\tau$.

In Section \ref{sec: hyperbolicity} we embed the graph $Cay(M)$ in $\MM'$, which is the same graph but with additional edges between every pair of adjacent vertices. We prove that these two graphs are quasi-isometric. Relying on the work of Kong, Lau, and Wang \cite{Kong}, we prove certain proprieties (Theorem \ref{hyperbolic theorem}) of $\MM'$ that are equivalent to it being a hyperbolic graph, thus resulting in the following theorem, which is proved in Theorem \ref{M hyperbolic}.
\begin{thm}
    The Cayley graph $Cay(M)$ of the monoid $M = \langle L,R :LR^2=RL^2\rangle$ is $\delta$-hyperbolic.
\end{thm}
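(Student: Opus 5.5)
We will prove hyperbolicity using the distance formula of Theorem~\ref{M distance} together with a thin-triangles (equivalently, Gromov product) criterion, rather than working with geodesics directly. Take $x,y,z\in Cay(M)$ and write everything relative to the root $1$. By Theorem~\ref{M distance}, after stripping a maximal common prefix $m$, the distance $d(x,y)$ is either $|x'|+|y'|$ or $|x'|+|y'|-2$. The Gromov product $(x\cdot y)_1=\tfrac12(|x|+|y|-d(x,y))$ is therefore, up to an additive error of at most $1$, the length of a longest common prefix of $x$ and $y$. A common prefix here means some $m\in M$ with $x,y\in Cone(m)$, and we take $|m|$ maximal.

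First, we reduce to a tree-like statement. Define $\rho(x,y)=\max\{|m| : x,y\in Cone(m)\}$, and check from the distance formula that $|(x\cdot y)_1-\rho(x,y)|\le 1$. The care needed is that the recursion in the first case may strip several prefixes in succession. We will argue by induction on $|x|+|y|$ that the final correction is only ever $0$ or $2$. Strong geodesic convexity of cones ensures that the recursion $d(mx',my')=d(x',y')$ is consistent.

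Second, we will show the ultrametric-type inequality
\[
\rho(x,z)\ge \min\{\rho(x,y),\rho(y,z)\}-C
\]
for a uniform constant $C$. Since $|m|$ is well defined ($M$ is homogeneous, so all words for an element have the same length), this amounts to understanding how cones of a fixed length intersect. Two cones $Cone(m_1)$ and $Cone(m_2)$ with $|m_1|=|m_2|=k$ and $m_1\ne m_2$ can intersect only through the relation $LR^2=RL^2$. We will show that their intersection is then contained in $Cone(m)$ for a single $m$ of length at most $k+3$; for example, $Cone(LR)\cap Cone(RL)=Cone(LR^2)$. The claim is that whenever $y$ lies in both $Cone(m_1)$ and $Cone(m_2)$, there is a common extension of controlled extra length. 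From this, $x\in Cone(m_1)\ni y$ and $y\in Cone(m_2)\ni z$ with $|m_1|,|m_2|\ge k$ give truncations of length $k-c$ that are comparable, so $x$ and $z$ share a prefix of length $k-c$. This yields the $\delta$-hyperbolicity inequality $(x\cdot z)_1\ge\min\{(x\cdot y)_1,(y\cdot z)_1\}-\delta$ with basepoint $1$. Since the formula is invariant under left translation by prefixes, and every vertex is reached from $1$ by a geodesic, we then pass to an arbitrary basepoint via the standard fact that the four-point condition at one basepoint implies it at all basepoints with $2\delta$.

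The main obstacle is the second step: controlling cone intersections in $M$ and proving that the relation $LR^2=RL^2$ cannot create long chains of ``almost common'' prefixes. This is where the structure of the Cayley graph, with $Cone(LR^2)$ self-similar to the whole graph, must be used. If this direct approach proves messy, the fallback is the route via \cite{Kong}. We would add edges between adjacent vertices of each level to obtain $\MM'$, show $Cay(M)\to\MM'$ is a quasi-isometry (each horizontal edge has uniformly bounded length in $Cay(M)$ by the distance formula), and verify the criterion of Theorem~\ref{hyperbolic theorem}, namely that horizontal geodesics have uniformly bounded length. Hyperbolicity is a quasi-isometry invariant, so this suffices.
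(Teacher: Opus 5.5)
Your main route is different from the paper's, and it is sound in outline. The paper does not use Theorem~\ref{M distance} for hyperbolicity. It adds the edges $mL$--$mR$ and $mLR$--$mRL$ to form $\MM'$ and shows the inclusion $Cay(M)\to\MM'$ is a quasi-isometry. It then proves $\MM'$ is expansive and $(3,2)$-departing by a case analysis of the $U/D$ substitution pattern on each level. Finally it applies Theorem~\ref{hyperbolic theorem} from \cite{Kong} together with quasi-isometry invariance. Your fallback is that route, except that you would also have to check expansiveness, which the theorem assumes.

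You instead argue intrinsically. The distance formula gives $\rho(x,y)\le (x\cdot y)_1\le \rho(x,y)+1$, so hyperbolicity at the basepoint $1$ reduces to an almost-ultrametric inequality for $\rho$. That is an algebraic statement about how cones of $M$ meet, and the one-basepoint four-point condition then suffices by the standard $2\delta$ argument. This gives an explicit small $\delta$ and avoids both the external criterion and the pictorial case checks. The price is that everything rests on Theorem~\ref{M distance}, left cancellativity, and $Cone(L)\cap Cone(R)=Cone(LR^2)$.

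The step you need to fix is your key lemma. A bound on the length of a common \emph{extension} of $m_1,m_2$ does not by itself give ``comparable truncations''. What the argument needs is a long common \emph{prefix}: if $|m_1|=|m_2|=k$ and $Cone(m_1)\cap Cone(m_2)\neq\emptyset$, then $m_1,m_2$ share a prefix of length at least $k-2$. Here is how to prove it.

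\begin{itemize}
\item Write $m_1=ga$, $m_2=gb$ with $g$ a longest common prefix. By left cancellation, $a,b$ have a common right multiple $c$ but no nontrivial common prefix. So, say, $a\in Cone(L)\setminus Cone(R)$ and $b\in Cone(R)\setminus Cone(L)$, and $c\in Cone(LR^2)$.
\item Left cancellation also gives $X:=Cone(L^2)\cap Cone(R)=LR\cdot X$, so $X=\emptyset$ by length. Likewise $Cone(L)\cap Cone(R^2)=R\cdot X=\emptyset$.
\item Write $a=La'$. Then $a'$ has a common multiple with $R^2$, so $a'$ cannot start with $L$. If $a'=Ra''$, then $a''$ cannot start with $R$, since otherwise $a\in Cone(LR^2)\subseteq Cone(R)$.
\item Hence, if $|a|\ge 3$, $a$ begins with $LRL$ and, symmetrically, $b$ begins with $RLR$. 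These have no common multiple: a common multiple $LR^2w=RL^2w$ would force $w\in Cone(L^2)\cap Cone(R^2)\subseteq X$.
\end{itemize}

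So $\{a,b\}$ is $\{L,R\}$ or $\{LR,RL\}$, and $|g|\ge k-2$. This gives $\rho(x,z)\ge\min\{\rho(x,y),\rho(y,z)\}-2$, hence $(x\cdot z)_1\ge\min\{(x\cdot y)_1,(y\cdot z)_1\}-3$. With that lemma in place, your argument is complete.
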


These results not only bring important insights, but also raise a number of questions to be answered:
\begin{enumerate}
    \item Are all irrational slope Thompson's groups rational similarity groups? If not, then what are the criteria for a Thompson's group to be one?
    \item Are all horofunction boundaries of the Cayley graphs of the family of monoids $M_n = \langle L,R : LR^n = RL^n \rangle$, for $n>1$, Cantor-like sets with additional isolated points in between every pair of breakpoints?
    \item Are all of these Cayley graphs hyperbolic? 
    \item What are their respective Gromov boundaries?
\end{enumerate}

\noindent \textbf{Acknowledgments.} I would like to thank my supervisors Jim Belk and Mike Whittaker for suggesting these problems and for valuable
discussions and comments.

\section{Background on the Irrational slope Thompson groups}
We assume that the reader is familiar with the original Thompson groups $T, F$ and $V$ \cite{Burillo Thompson} and their irrational slope variants $T_\tau, F_\tau$ and $V_\tau$~\cite{Burillo T}, \cite{Clearly}. 

The group $F_\tau$ shares many structural properties with the original Thompson group $F$, but with a key difference: the slopes and break points of the group elements are related to the golden ratio $\tau=\frac{\sqrt{5}-1}{2}$, also known as the \textit{small golden ratio}, which is a root of the polynomial $x^2 + x - 1$. This group has been extensively studied by Burillo, Nucinkis, and Reeves, who provided a finite presentation for $F_\tau$ and explored its combinatorial structure in terms of binary trees in \cite{Ttau}.

\begin{definition}
$F_\tau$ is the group of homeomorphisms from the unit interval to itself such that:
\begin{itemize}
    \item they are piecewise linear and orientation-preserving;
    \item is not differentiable at a finite number of points that belong to $\mathbb{Z}[\tau]\cap[0,1]$;
    \item where differentiable, the slope is a power of $\tau$.
\end{itemize}
Formally:
\[
F_\tau = G([0,1]; \mathbb{Z}[\tau], \langle\tau\rangle)
\]
\end{definition}
Like Thompson's group $F$, elements of $F_\tau$ can be represented by pairs of binary trees. Each binary tree encodes a subdivision of the interval into subintervals, and the leaves of the trees correspond to these subintervals. However, unlike in $F$, we must distinguish between two types of subdivision in $F_\tau$ one of length~$\tau$ and one of length $\tau^2$. This is represented in the tree diagram by carets with edges of different lengths, where the longer edge corresponds to the shorter interval and vise versa, to encode the length as a power of $\tau$. See \cite{Ttau} for the construction.

Similarly, the group $T_\tau$ is the group of piecewise-linear, orientation-preserving homeomorphisms of the circle, and the group $V_\tau$ is the group of piecewise-linear, orientation-preserving bijections of the interval, such that if they are not differantiable only at a finite number of points in $\mathbb{Z}[\tau]$, and having derivatives of the differential intervals as powers of~$\tau$. We consider two elements of $V_\tau$ to be the same if they agree everywhere but on a finite set of points. For a more detailed introduction to $F_\tau$ $V_\tau$, we refer the reader to \cite{Burillo T}.

\begin{definition}\label{def:caret_types}
    \cite[Definition 1.1]{Ttau} For the group $F_\tau$, we define two types of carets:
    \begin{itemize}
        \item an \textit{x-type caret} has a long left edge and a short right edge. It subdivides an interval~$[x,y]$ into~$[x, x+\tau^2(y-x)] \cup [x+\tau^2(y-x),y]$.
        \item a \textit{y-type caret} has a short left edge and a long right edge. It subdivides an interval~$[x,y]$ into~$[x, x+\tau(y-x)] \cup [x+\tau(y-x),y]$.
    \end{itemize}
\end{definition}

\begin{thm}~\cite[Theorem 4.4]{Ttau}
The group $F_\tau$ is generated by four key elements:~$x_0, x_1$ and~$y_0, y_1$  (see Figures \ref{fig:x_n gen1}, \ref{fig:y_n gen1} for the generators). 
\end{thm}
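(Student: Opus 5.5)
The plan is to show that the subgroup $H=\langle x_0,x_1,y_0,y_1\rangle$ is all of $F_\tau$ in two stages. First, every element of $F_\tau$ is given by a pair of $\tau$-trees, and these can be normalised. Second, every such pair is a product of the four generators. Here $x_0,x_1$ are the elements whose tree pairs move an x-type caret to a y-type caret at the root, respectively on the right spine, and $y_0,y_1$ are the analogous elements that rotate carets, in the manner of $x_0,x_1$ in Thompson's $F$.

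\textbf{Step 1: tree pairs.} Take $f\in F_\tau$ with breakpoints $b_1<\dots<b_k$ in $\mathbb{Z}[\tau]\cap(0,1)$. We need a subdivision of $[0,1]$ by x- and y-carets that has all the $b_i$ among its endpoints and on whose pieces $f$ is linear. For this we use that every point of $\mathbb{Z}[\tau]\cap[0,1]$ is an endpoint of some finite caret subdivision, i.e. it has a finite expansion $\sum \pm\tau^{n_i}$ realised by nested $\tau/\tau^2$ splits. We refine both the domain subdivision and its image under $f$; because slopes are powers of $\tau$, each image piece has length a power of $\tau$ times a domain piece, and it can be further subdivided. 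This yields a pair of trees $(S,T)$ with the same number of leaves and $f$ mapping leaves in order.

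\textbf{Step 2: normal form of trees.} The key relation is $\tau^2+\tau\cdot\tau = \tau^2(1)+\dots$, more precisely the identity $\tau^2+\tau^3=\tau$. It means that an x-caret followed by a y-caret on its right child gives the same subdivision (three pieces of lengths $\tau^2,\tau^3,\tau^2$) as a y-caret followed by an x-caret on its left child. Using these moves, any tree can be transformed into one whose carets are all of a single type, say y-type, along a right-vine form, except for a bounded defect at the end. We then show that each move on a subtree located at right-spine depth $n$ is realised by conjugating $x_1$ or $y_1$ by powers of $x_0$. These elements are exactly $x_n,y_n$ in the infinite generating family, with $x_n=x_0^{-(n-1)}x_1x_0^{n-1}$ and likewise for $y_n$.

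\textbf{Step 3: products.} Write $f=(S,T)=(S,V)(V,T)$ with $V$ the normal vine. Then $(S,V)$ is a product of $x_n^{\pm1}$ and $y_n^{\pm1}$, as in the classical proof for $F$, by induction on the number of carets off the right spine.

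The main obstacle is Step 1, namely guaranteeing that arbitrary breakpoints in $\mathbb{Z}[\tau]$ are reachable by finite caret subdivisions. I would prove this by induction on the degree in a representation $a+b\tau$, using the facts that $\tau^{-1}=1+\tau$ and that $\mathbb{Z}[\tau]\cap[0,1]$ is the union over trees of all leaf endpoints (a greedy $\tau$-expansion terminates for elements of $\mathbb{Z}[\tau]$). Termination of this greedy expansion is the delicate point.
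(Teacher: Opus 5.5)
The paper does not prove this statement; it quotes it from \cite[Theorem 4.4]{Ttau}, so your outline has to stand on its own. The architecture is the standard one: tree pairs, reduction to a fixed vine, and $x_n,y_n$ as conjugates of $x_1,y_1$ by powers of $x_0$. However, it breaks exactly where $F_\tau$ differs from $F$.

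\textbf{Step 2: the basic relation is false.} An x-caret with a y-caret on its right child cuts $[0,1]$ into pieces of lengths $\tau^2,\tau^2,\tau^3$. A y-caret with an x-caret on its left child gives $\tau^3,\tau^2,\tau^2$. The correct move (Section~\ref{sec: Irrational slope}) identifies an x-caret carrying an x-caret on its right with a y-caret carrying a y-caret on its left; both give $\tau^2,\tau^3,\tau^2$. A basic move changes neither the element nor the leaf partition. So it cannot turn an arbitrary tree into a vine, and it is not ``realised'' by any generator.

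\textbf{Step 3: this is the real gap.} The proof for $F$ pushes carets onto the spine by rotations, and every configuration can be rotated there. Here, by the formulas in the proof of Proposition~\ref{prop q commutes for x_0}, $x_0$ sends $[0,\tau^4],[\tau^4,\tau^2],[\tau^2,1]$ onto $[0,\tau^2],[\tau^2,\tau],[\tau,1]$. So its domain is forced to be an x-caret whose left child is an x-caret, and the $x_n$ only rotate that configuration.
\begin{itemize}
\item An x-caret whose left child is a y-caret has leaves $\tau^3,\tau^4,\tau$, and it is the only tree with that partition. It is neither of the rotatable form nor subject to a basic move. Extra expansions, which add carets, are needed, so the $F$ induction on carets off the spine does not apply verbatim.
\item More seriously, right vines with the same number of leaves but different caret types along the spine have different leaf partitions, so they are different elements. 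Connecting them requires elements that change caret types.
\end{itemize}
Showing that such type changes lie in $\langle x_0,x_1,y_0,y_1\rangle$ is the actual content of the theorem. It needs the definition of $y_0,y_1$ and their relations with the $x_n$, and your sketch never uses what $y_n$ is. It even pairs a y-caret target vine with conjugation by the x-caret rotation $x_0$.

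\textbf{Step 1: there is also a gap, but not the one you flag.} Reachability of points of $\mathbb{Z}[\tau]\cap(0,1)$ is true and standard: greedy golden-ratio expansions of such points are finite, and a finite expansion is realised by descending through x- and y-carets. What is not automatic is getting a tree \emph{pair}. The intervals that occur as leaves of trees are exactly the $[0,1]_w$ of Section~\ref{sec: Monoid}, and unlike dyadic intervals two of them can overlap without being nested. This causes two failures:
\begin{itemize}
\item The image of a leaf need not be a leaf. The element of $F_\tau$ with slopes $\tau,\tau^{-1},\tau$ on $[0,\tau^2],[\tau^2,2\tau^2],[2\tau^2,1]$ sends $[0,1]_{RL}=[\tau^2,2\tau^2]$ onto $[\tau^3,\tau^3+\tau]$. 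That interval has length $\tau$ but is neither $[0,\tau]$ nor $[\tau^2,1]$, the only leaf-intervals of that length.
\item A partition into leaf-intervals need not come from a tree. The intervals $[0,1]_{L^3},[0,1]_{LRL},[0,1]_{LR^3L^2},[0,1]_{RLR},[0,1]_{R^3}$, with breakpoints $\tau^3<2\tau^3<\tau^2+\tau^4<2\tau^2$, partition $[0,1]$. They avoid both $\tau^2$ and $\tau$, so there is no root caret.
\end{itemize}
So ``refine both subdivisions; this yields a pair of trees'' hides a genuine lemma. One must choose the refinements so that, on every affine piece of $f$, they restrict to subtrees on both the domain and the range side. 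In the literature this is supplied by Cleary's regular-subdivision results \cite{Clearly}.
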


\begin{figure}
\centering
\includegraphics[width=0.8\linewidth]{"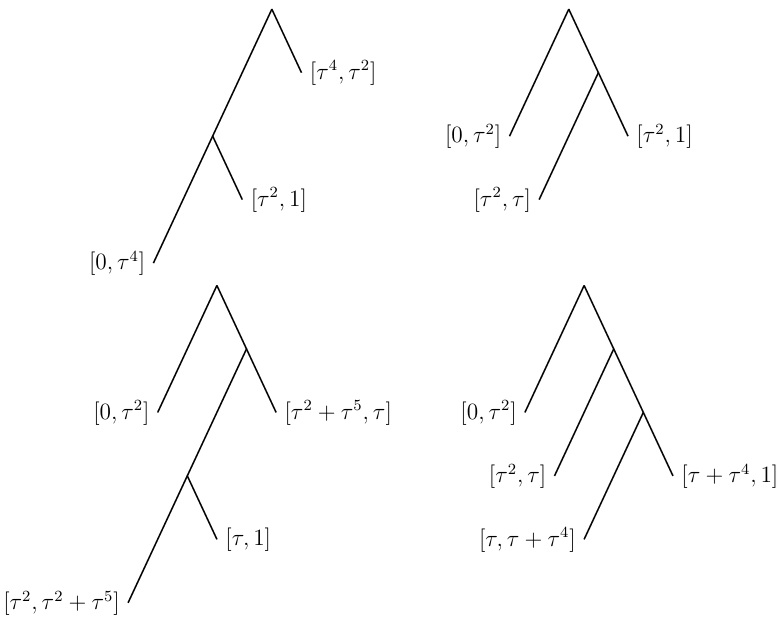"}
    \caption{The generators $x_0$ and $x_1$ as tree diagrams.}
    \label{fig:x_n gen1}
\end{figure}

\begin{figure}
\centering
\includegraphics[width=0.8\linewidth]{"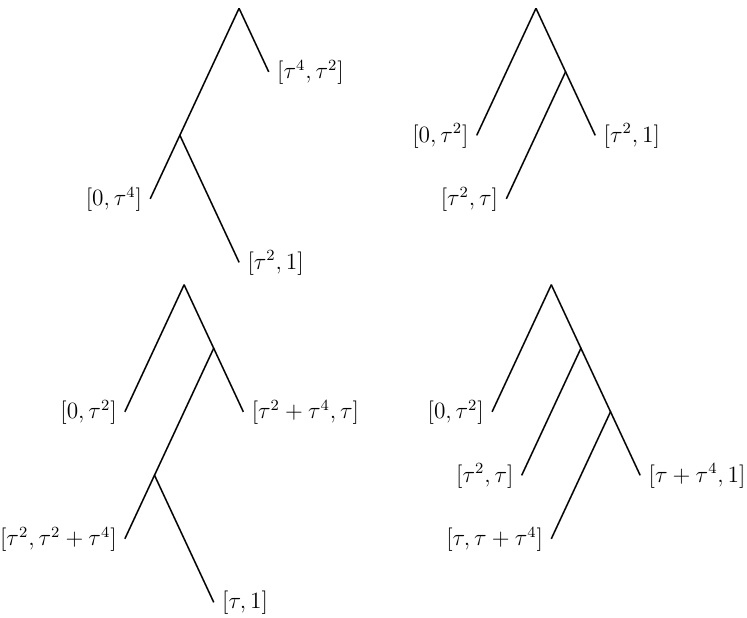"}
    \caption{The generators $y_0$ and $y_1$ as tree diagrams.}
    \label{fig:y_n gen1}
\end{figure}

\begin{thm}\label{thm t gen}
    \cite[Theorem 2.2]{Burillo T} The Thompson group $T_\tau$ is generated by the generators of the Thompson group $F_\tau$ and, in addition, $c_1$ and $c_2$. See Figure \ref{fig:c_1 c2 gen} for these generators.
\end{thm}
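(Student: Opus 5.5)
The plan is to follow the standard argument for Thompson's group $T$, adapted to the two caret types of Definition~\ref{def:caret_types}. Elements of $T_\tau$ are represented by triples $(D,R,k)$, where $D$ and $R$ are $\tau$-trees (trees built from $x$-type and $y$-type carets) with the same number $n$ of leaves, and $k$ is a cyclic shift. The $i$-th leaf interval of $D$ is sent affinely onto the $(i+k)$-th leaf interval of $R$, with indices taken mod $n$. The first step is to check that every $g\in T_\tau$ admits such a representation. The breakpoints of $g$ and of $g^{-1}$ lie in $\mathbb{Z}[\tau]\cap[0,1]$, and, as in \cite{Ttau}, any such finite set of points can be refined into a subdivision of $[0,1]$ obtained by iterated $\tau$/$\tau^2$ cuts. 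On each piece the map is affine with slope a power of $\tau$. Using the identity $\tau = \tau^2+\tau^3$ (coming from $\tau^2+\tau=1$), one can subdivide further until each piece of the domain subdivision maps exactly onto a piece of a range subdivision obtained by carets. I would take this normal-form existence directly from \cite{Burillo T}.

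Next, write $g = \sigma\circ f$, where $f\in F_\tau$ and $\sigma$ is a rotation. Let $c_n$ denote the element rotating the leaves of a fixed tree, say a right vine with $n$ leaves, by one position. The element $(D,R,k)$ equals $(D,V,0)\cdot(V,V,k)\cdot(V,R,0)$, where $V$ is a vine with $n$ leaves. The outer two factors lie in $F_\tau$, which by the preceding theorem is generated by $x_0,x_1,y_0,y_1$. It therefore suffices to generate the rotations $(V_n,V_n,1)$ for every $n$ and every vine shape.

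The inductive step is the main obstacle. I expect $c_1$ and $c_2$ to be the rotations on the smallest trees, with one caret of each type. The plan is to show that the rotation on a vine with $n+1$ leaves can be written as a product of the rotation on $n$ leaves with elements of $F_\tau$. This is the $\tau$-analogue of the classical relation $C_{n+1} = X_n^{-1}\,\cdots\, C_n$, or more exactly of $C_n = X_{n-1}C_{n+1}$ up to $F$-conjugation. The extra difficulty is that the rightmost caret may be of either type. The two base rotations let us handle both parities, and the relations $x_n$, $y_n$ are used to convert a caret of one type into a composite of the other, via the rewriting $[x,y]\to$ $\tau$-split versus $\tau^2$-split. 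Concretely, I would verify on tree pairs that conjugating $c_1$ by suitable products of $x_i$, $y_i$ yields the rotation on larger vines, checking leaf lengths so the maps are affine of slope $\tau^m$.

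Finally, combining these steps, every $(V_n,V_n,k)$ is a power of such a rotation. Hence every $g\in T_\tau$ lies in $\langle x_0,x_1,y_0,y_1,c_1,c_2\rangle$.
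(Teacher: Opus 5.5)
\textbf{Gap in the inductive step.} The paper gives no argument of its own here; it only cites Burillo--Nucinkis--Reeves. Your reduction to rotations is sound: the tree-pair normal form, the factorisation $(D,R,k)=(D,V,0)(V,V,k)(V,R,0)$, and the observation that it suffices to produce the rotations $r_n=(V_n,V_n,1)$. The problem is the step you flag as the main obstacle. It is not carried out, and the concrete mechanism you propose for it cannot work.

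A one-leaf rotation on a tree with $n$ leaves has order exactly $n$, so the $r_n$ have pairwise distinct orders. All conjugates of $c_1$ have the same order as $c_1$. Hence conjugating $c_1$ (or $c_2$) by elements of $F_\tau$ yields at most one of the $r_n$, never the whole family. You also hedge between a product relation and ``$F$-conjugation'' and plan a case analysis on the type of the rightmost caret. Both suggest the mechanism behind the classical relation between $C_n$ and $C_{n+1}$ has not been identified. As written, the proposal does not show that the $r_n$ lie in $\langle x_0,x_1,y_0,y_1,c_1,c_2\rangle$.

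\textbf{The missing idea.} The elements of $T_\tau$ that fix the point $0$ of the circle are exactly the elements of $F_\tau$.

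Inside your own scheme, this settles the induction at once. Choose $V_n$ and $V_{n+1}$ with the same first leaf $[0,s]$, for instance with the same root caret as the base rotation. Then $r_n(0)=r_{n+1}(0)=s$, so $r_n^{-1}\circ r_{n+1}$ fixes $0$ and lies in $F_\tau$. No parity analysis is needed.

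It is cleaner still to bypass normal forms and vines entirely:
\begin{enumerate}
\item $F_\tau$ acts transitively on $\mathbb{Z}[\tau]\cap(0,1)$. Every such point is a leaf endpoint of some $\tau$-tree (Cleary's regular-subdivision result, the same fact behind the tree-pair description of $F_\tau$). Adding carets on either side makes two such trees have equal numbers of leaves to the left and to the right of the chosen points. The resulting tree pair is an element of $F_\tau$ sending one point to the other.
\item Let $c$ be a nontrivial rotation such as $c_1$. Then $c(0)\in\mathbb{Z}[\tau]\cap(0,1)$.
\item Let $g\in T_\tau$. If $g(0)=0$, then $g\in F_\tau$. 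Otherwise pick $f\in F_\tau$ with $f(c(0))=g(0)$. Then $(f\circ c)^{-1}\circ g$ fixes $0$, so it lies in $F_\tau=\langle x_0,x_1,y_0,y_1\rangle$.
\end{enumerate}
This proves the statement, and shows that a single rotation together with $F_\tau$ already suffices.
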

\begin{figure}
\centering
\includegraphics[width=0.5\linewidth]{"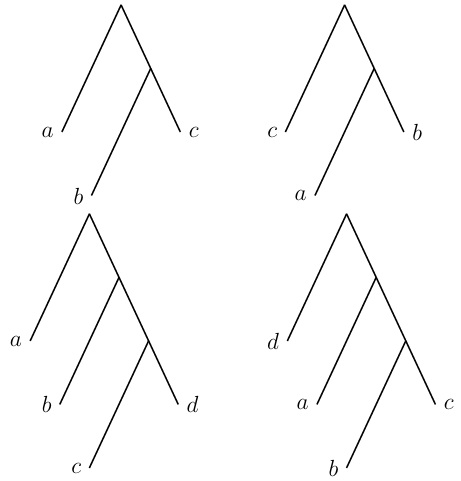"}
    \caption{The generators $c_1$ and $c_2$ as tree diagrams. The labels on the leaves determine which domain intervals map to which range intervals.}
    \label{fig:c_1 c2 gen}
\end{figure}

\begin{thm}\label{thm v gen}
    \cite[Section 4]{Burillo T} The Thompson group $V_\tau$ is generated by the generators of the Thompson group $T_\tau$ and, in addition, $\pi_0$ and $\pi_1$. See Figure \ref{fig:pi_0 pi_1 gen} for these generators.
\end{thm}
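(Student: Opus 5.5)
The statement to justify is that $V_\tau$ is generated by $x_0,x_1,y_0,y_1,c_1,c_2,\pi_0,\pi_1$. The plan is to reduce to Theorem~\ref{thm t gen}, which already gives generators for $T_\tau$, and then show that the extra permutation freedom in $V_\tau$ is captured by $\pi_0,\pi_1$.

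\textbf{Step 1: normal form.} Every element of $V_\tau$ is represented by a triple $(S,\sigma,T)$. Here $S$ and $T$ are $\tau$-trees, built from the $x$-type and $y$-type carets of Definition~\ref{def:caret_types}, with the same number $n$ of leaves and with matching multisets of leaf-interval lengths. The map $\sigma\in S_n$ matches the leaves of $S$ to leaves of $T$ of equal length. This follows by taking a subdivision at all breakpoints, which lie in $\mathbb{Z}[\tau]$, and refining each piece to a $\tau$-adic subdivision. I will use the results of \cite{Burillo T} that every interval with endpoints in $\mathbb{Z}[\tau]$ and length a power of $\tau$ admits such refinements, and that any two trees have a common refinement.

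\textbf{Step 2: factor the element.} Write $(S,\sigma,T)=(S,\sigma',S')\cdot(S',\mathrm{id},T)$. Here $S'$ is chosen to have the same leaf lengths in the same order as $T$ (possible after expanding carets, using the relation $\tau^2+\tau=1$ to rearrange). The second factor lies in $F_\tau$. The first factor is a permutation of the leaves of one tree, up to expansions. So it suffices to show that every \emph{leaf permutation} $(S,\rho,S)$ preserving lengths lies in $\langle T_\tau,\pi_0,\pi_1\rangle$.

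\textbf{Step 3: reduce to transpositions.} Any length-preserving $\rho$ is a product of transpositions of leaves of equal length. Using $F_\tau$ and $T_\tau$, which act transitively on standard subdivisions with a given cyclic sequence of lengths, I can conjugate such a transposition to one of two kinds. Either it swaps two adjacent leaves of length $\tau$ at a fixed place, or it swaps two adjacent leaves of length $\tau^2$. I would then identify these two moves with $\pi_0$ and $\pi_1$, or with conjugates of them by $F_\tau$ elements (after inserting carets), by comparing with Figure~\ref{fig:pi_0 pi_1 gen}.

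If the swapped leaves are not adjacent, I first apply an element of $F_\tau$ that makes them adjacent. If their lengths differ from $\tau$ and $\tau^2$ by a power of $\tau$, I rescale them via expansions.

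\textbf{Main obstacle.} The hardest step is the conjugation in Step 3. I must check that for any pair of equal-length leaves there is an element of $T_\tau$ carrying the configuration to the standard one of $\pi_0$ or $\pi_1$ while respecting lengths. Unlike the dyadic case, $\tau$-trees with the same leaf count can have different length vectors, so transitivity must be argued by length sequences, not leaf counts. I would first establish the lemma that two $\tau$-subdivisions with the same cyclic sequence of leaf lengths are related by an element of $T_\tau$, and deduce the general case from it.
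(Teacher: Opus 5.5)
The paper gives no proof of its own here; it quotes the result from Burillo--Nucinkis--Reeves. Your overall plan is the standard one: split off an $F_\tau$ factor, reduce the rest to leaf permutations, then to transpositions, then to conjugates of $\pi_0,\pi_1$. But it is built on a wrong description of the elements.

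In a tree-pair diagram for $V_\tau$, and already for $F_\tau$, each domain leaf is mapped affinely onto its matched range leaf with slope a power of $\tau$. Matched leaves need not have the same length. The generator $x_0$ is a counterexample to your Step 1. By the computation in the proof of Proposition~\ref{prop q commutes for x_0}, it maps $[0,\tau^4]$, $[\tau^4,\tau^2]$, $[\tau^2,1]$ onto $[0,\tau^2]$, $[\tau^2,\tau]$, $[\tau,1]$, with slopes $\tau^{-2},1,\tau$. No refinement of a tree pair changes a slope. A triple in which matched leaves have equal length has slope $1$ everywhere, so your normal form only describes piecewise translations. Step 2 is then vacuous: if $S'$ has the same leaf lengths in the same order as $T$, the two trees cut $[0,1]$ at the same points, so $(S',\mathrm{id},T)$ is the identity.

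The same restriction breaks Step 3. No element of $F_\tau$ (or $T_\tau$) can make two non-adjacent leaves adjacent. These maps preserve the (cyclic) order, so the nondegenerate interval between the two leaves goes to a nondegenerate interval between their images. Expansions do not rescale a leaf either; they only change the representative. The obstacle you single out is an artifact of the equal-length requirement. For any two $\tau$-trees $S,T$ with the same number of leaves, $(S,\mathrm{id},T)\in F_\tau$, so transitivity is governed by leaf count exactly as for $F$.

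The repair is to drop the length condition:
\begin{itemize}
\item After expanding so both trees have $n$ leaves, $(S,\sigma,T)=(T,\sigma,T)\circ(S,\mathrm{id},T)$, with $(S,\mathrm{id},T)\in F_\tau$.
\item Conjugating by $(T,\mathrm{id},T_n)\in F_\tau$ turns $(T,\sigma,T)$ into a permutation $(T_n,\sigma,T_n)$ of the leaves of one fixed tree $T_n$.
\item Since $S_n$ is generated by the $n$-cycle and the transposition $(n{-}1\ n)$, you only need two facts. First, the leaf-cycling map $(T_n,(1\,2\,\cdots\,n),T_n)$ is a piecewise-linear circle homeomorphism of the allowed kind, hence lies in $T_\tau$. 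Second, the swap of the last two leaves is an $F_\tau$-conjugate of $\pi_0$ or $\pi_1$.
\end{itemize}
That last identification, which your proposal leaves as ``compare with the figure'', is where the real verification lies. Leaves of different lengths being swapped cause no difficulty once slopes are allowed.
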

\begin{figure}
\centering
\includegraphics[width=0.5\linewidth]{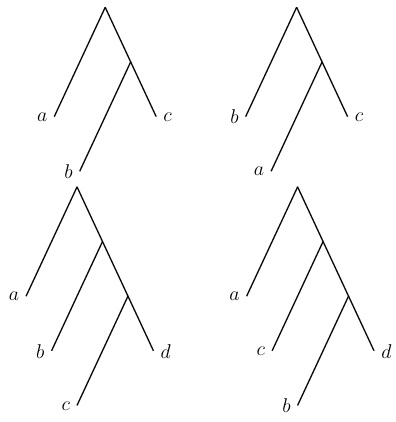}
    \caption{The generators $\pi_0$ and $\pi_1$ as tree diagrams. The labels on the leaves determine which domain intervals map to which range intervals.}
    \label{fig:pi_0 pi_1 gen}
\end{figure}

The general rule for reading a tree diagram is that the $n^{th}$ interval in the domain tree, counting from the left, is assigned to the $n^{th}$ interval in the range tree, unless specified otherwise. The length of the intervals in a tree diagram corresponds to~$\tau^n$, where~$n$ is the distance from the root to the end of the caret of the respective interval. Short carets have length 1, while long carets have length 2. Note that group $V_\tau$ has an additional generator that permutes the penultimate and ultimate intervals in the range tree.

\section[Irrational slope Thompson groups]{Irrational slope Thompson groups as an asynchronous automaton}\label{sec: Irrational slope}
In this section, we use automaton theory to explore the dynamical properties of Thompson groups~$F_\tau, T_\tau$ and~$V_\tau$. In particular, we prove that the Thompson groups~$F_\tau$,~$T_\tau$ and~$V_\tau$ are isomorphic to a certain group of rational homeomorphisms of the Cantor set. Let $X_0, X_1, Y_0, Y_1, C_1, C_2, \Pi_0$ and $\Pi_1$ be rational homeomorphisms of the Cantor set~$\{0,1\}^\omega$ defined by the automata shown in Figure \ref{fig:v_tau automaton} (which differ only in their initial states). We prove the following theorem.

\begin{thm} \label{v_tau automaton}
    The group $G$ of rational homeomorphisms of $\{0,1\}^\omega$ generated by $X_0,$ $X_1,$ $Y_0,$ $Y_1,$ $C_1,$ $C_2,$ $\Pi_0$ and $\Pi_1$ is isomorphic to $V_\tau$. Formally, $G = \langle X_0, X_1, Y_0, Y_1, C_1, C_2,$ $\Pi_0, \Pi_1 \rangle \simeq V_\tau$. See Figure \ref{fig:v_tau automaton} for the automata that generate the group $G$. 
\end{thm}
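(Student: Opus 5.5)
We will prove the theorem by exhibiting an explicit conjugacy between the action of $G$ on $\{0,1\}^\omega$ and the action of $V_\tau$ on $[0,1]$ modulo finitely many points. The natural coding is the golden-ratio (Fibonacci) subdivision. To each finite binary word we assign a subinterval of $[0,1]$ recursively, starting from $I_\emptyset=[0,1]$. If $I_w=[a,b]$, we set
\[
I_{w0}=[a,a+\tau(b-a)],\qquad I_{w1}=[a+\tau(b-a),b].
\]
This is the $y$-type caret of Definition~\ref{def:caret_types}. The $x$-type caret then appears as a composite: a $y$-caret followed by a $y$-caret on the left child reproduces lengths $\tau^2,\tau^3$ and $\tau^2$, via the identity $\tau=\tau^2+\tau^3$.

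Define $\Phi\colon\{0,1\}^\omega\to[0,1]$ by $\Phi(\omega)=\bigcap_n I_{\omega_1\cdots\omega_n}$. This is well defined since $|I_w|\le\tau^{|w|}\to0$, and it is a continuous surjection. It is injective except on the countable set of sequences coding endpoints, where exactly two sequences share an image, both of the form $u01^\omega$ and $u10^\omega$. All endpoints lie in $\mathbb{Z}[\tau]$, and conversely every point of $\mathbb{Z}[\tau]\cap(0,1)$ occurs as an endpoint of some $I_w$. We will check the converse through the standard fact that every $\tau$-subdivision point is reachable by a tree of carets, as in \cite{Ttau}.

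The key steps, in order, are as follows.

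\textbf{Step 1 (leaves of trees are cones).} Any tree diagram for $F_\tau$, $T_\tau$ or $V_\tau$ built from $x$- and $y$-carets is rewritten as a binary tree of $y$-carets. This is done by re-expressing each $x$-caret over $[a,b]$: its left leaf $[a,a+\tau^2(b-a)]$ becomes the cone $I_{w00}$ after one further split, and its right leaf becomes the union of cones $I_{w01}\cup I_{w1}$. Consequently each leaf interval of a tree diagram is a finite disjoint union of intervals $I_w$.

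\textbf{Step 2 (linear maps are prefix replacements).} The affine map from $I_u$ onto $I_v$, which has slope $\tau^{k}$ with $k=|u|-|v|$, corresponds under $\Phi$ to the prefix replacement $u\omega\mapsto v\omega$. This holds because both cones are subdivided by the same self-similar rule. So for every $g\in V_\tau$ there is a homeomorphism $\hat g$ of $\{0,1\}^\omega$ satisfying $\Phi\circ\hat g=g\circ\Phi$ off a countable set. Moreover $\hat g$ is a finite union of prefix replacements, hence rational.

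\textbf{Step 3 (the generators match).} For each of the generators $x_0,x_1,y_0,y_1,c_1,c_2,\pi_0,\pi_1$, we compute its prefix-replacement table using Steps 1–2. We then check directly that the automaton in Figure~\ref{fig:v_tau automaton}, started at the appropriate initial state, realises exactly that table. This is a finite verification. The asynchronous states arise because a single $x$-caret requires reading two symbols before the output can be decided.

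\textbf{Step 4 (homomorphism and injectivity).} The assignment $g\mapsto\hat g$ respects composition, since $\Phi$ intertwines the actions on a dense set and both sides are continuous. Its image is generated by the $\hat g$ of the generators, which by Theorem~\ref{thm v gen} and Step 3 is exactly $G$. For injectivity, suppose $\hat g=\mathrm{id}$. Then $g$ fixes every point of the dense set $\Phi(\{0,1\}^\omega\setminus\text{countable})$. Since $g$ is piecewise linear, it is the identity away from finitely many points, which is trivial in $V_\tau$. Hence $G\cong V_\tau$.

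The main obstacle is Step 1: the interaction between the two caret types and the rewriting identity $\tau=\tau^2+\tau^3$. The difficulty is ensuring that every $V_\tau$-breakpoint and every slope $\tau^k$ is genuinely realised by a prefix replacement. In particular, $x$-carets produce intervals that are unions of cones, not single cones. I would first handle this by showing that the set of finite unions of cones is closed under the affine maps arising from the carets, and then verify Step 3 on the automaton as a bookkeeping check.
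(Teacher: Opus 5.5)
Your architecture is essentially the paper's: a countable-to-one coding of $[0,1]$ by $\{0,1\}^\omega$, lifting through it, matching the generators, and getting injectivity by density. However, Step~2 makes a false claim, and that claim derails Step~3.

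\textbf{Step~2 fails.} In your coding $|I_w|=\tau^{\#_0(w)+2\#_1(w)}$, not $\tau^{|w|}$, so your slope formula $\tau^{|u|-|v|}$ is wrong, and the error hides a parity obstruction. The only cones containing $1$ are $I_{1^n}=[1-\tau^{2n},1]$. Since $x_0$ fixes $1$, any prefix replacement agreeing with $\hat x_0$ near $1^\omega$ must have the form $1^n\omega\mapsto 1^m\omega$, and it covers an affine map of slope $\tau^{2(m-n)}$. But $x_0$ maps $[\tau^2,1]$ onto $[\tau,1]$ (the relation $x_0f_R=f_{R^2}$ in the paper), so its slope at $1$ is $\tau$. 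Hence $\hat x_0$ is not a finite union of prefix replacements: its local actions at the prefixes $1^n$ are never the identity. The assertion that every $\hat g$ is a finite union of prefix replacements is therefore false. Consequently, asynchrony cannot be explained merely by "reading two symbols".

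\textbf{Step~3 fails as stated.} The paper's proof of Proposition~\ref{prop q commutes for x_0} shows that $X_0$ acts by $00\omega\mapsto0\omega$, $01\omega\mapsto10\omega$, $1\omega\mapsto11\omega$.
\begin{itemize}
\item Under your $\Phi$, $X_0$ covers $[0,\tau^2]\to[0,\tau]$, $[\tau^2,\tau]\to[\tau,\tau+\tau^3]$, $[\tau,1]\to[\tau+\tau^3,1]$.
\item This is not $x_0$, which sends $[0,\tau^4]\to[0,\tau^2]$, $[\tau^4,\tau^2]\to[\tau^2,\tau]$, $[\tau^2,1]\to[\tau,1]$.
\end{itemize}
The automata are built for the mirror-image $x$-caret coding $q=q_2q_1$, which satisfies $q(0\omega)=\tau^2q(\omega)$ and $q(1\omega)=\tau^2+\tau q(\omega)$.

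Even in that coding, the cones at $0$ are $[0,\tau^{2n}]$, so the parity obstruction simply moves to $0$. For example, the element $[0,\tau]\to[0,\tau^2]$, $[\tau,1]\to[\tau^2,1]$ of $F_\tau$ fixes $0$ with slope $\tau$. This is why the automata need the recurrent states $\beta,\gamma$. If every element of $G$ were a finite prefix-replacement map, the nucleus in Theorem~\ref{nuc} would be $\{\mathrm{id}\}$.

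\textbf{How to repair it.} Step~3 must be done in the coding $q$, and it cannot be a comparison with finite tables. It has to be a recursive intertwining check, as the paper does for $x_0$ via $f_L$ and $f_R$:
\begin{itemize}
\item To each state $s$ attach the piecewise linear map $\sigma_s$ it should cover.
\item For every transition from $s$ to $s'$ reading $a$ and writing $v$, verify $\sigma_s\circ A_a=A_v\circ\sigma_{s'}$, where $A_0(t)=\tau^2t$ and $A_1(t)=\tau^2+\tau t$. This includes the loops through $\beta$ and $\gamma$.
\item Nondegeneracy then forces $q\circ s=\sigma_s\circ q$.
\end{itemize}
Once Step~3 is repaired this way, your Step~4 goes through. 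It only needs $\hat g$ to exist as a homeomorphism, which follows from the blow-up description, not that $\hat g$ is a prefix-replacement map.
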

\begin{figure}
    \centering
    \includegraphics[width=0.6\linewidth]{"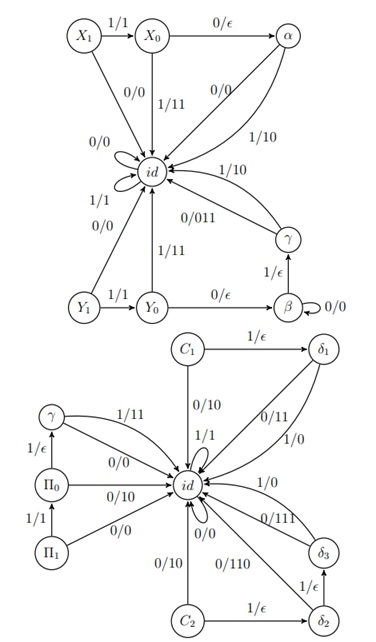"}
\caption{Automata that generate the Thompson group $V_\tau$.}
    \label{fig:v_tau automaton}
\end{figure}
\noindent Note that $G$ is a subgroup of $\mathrm{Homeo}(\{0,1\}^\omega)$. The remainder of the section is dedicated to proving this theorem.

\subsection{Preliminaries on automata theory and rational groups} We assume that the reader is familiar with  the fundamental concepts of alphabets and the languages they generate. We rely on the definitions stated in \cite[Section 2.1]{Grigorchuk Automata}.

Let $X$ be a finite set, we construct the set~$X^*$, known as the \textit{free monoid} generated by $X$. Elements of $X^*$ are finite sequences, which are called \textit{words}. The set  includes the \textit{empty word}, which we denote by $\epsilon$. The \textit{length} of a word ~$v$ is the minimum number of symbols needed to represent it and is denoted by $|v|$. The length of the empty word $\epsilon$ is 0. The set of all such infinite sequences generated by $X$ is denoted by $X^\omega$. For any $v \in X^*$ and~$u \in X^* \cup X^\omega$, we can define the concatenation (product) $vu \in X^\omega$ in a natural way. A word $v \in X^*$ is a prefix of another word~$u \in X^* \cup X^\omega$ if~$u = vy$ for some $y \in X^* \cup X^\omega$. For any given set of words $X \subseteq X^* \cup X^\omega$, there exists a unique longest common prefix of all words in~$X$, this prefix is infinite if and only if~$X$ consists of a single infinite word.

We endow the set $X$ with the discrete topology, and $X^\omega$ with the product topology. This space is homeomorphic to the Cantor set, which implies that its topological type does not depend on the choice of $X$. For any finite word~$v \in X^*$, the set $c(v) = \{vu : u \in X^\omega\}$ is open and closed in this topology. The family $\{c(v) : v \in X^*\}$ forms a basis for the topology in $X^\omega$. The sets $c(v_1)$ and $c(v_2)$ have a nonempty intersection if and only if one of $v_1$ or $v_2$ is a prefix of the other. In such a case, the set corresponding to the longer word is a subset of the other. 

\begin{definition} 
   \cite[Definition 1.1]{Belk Rational embeddings} An \textit{automaton} has the following components:
    \begin{enumerate}
        \item Two finite sets $X_{in}$ and $X_{out}$, which are called the input and output alphabet.
        \item A finite set of states $Q$, where each element represents a distinct state.
        \item An initial state $q_0$ that belongs to the set $Q$.
        \item A transition function $t$ that maps a state $q \in Q$ and an input symbol $a \in X_{in}$ to a new state $q' \in Q$, represented as $t: Q \times X_{in} \rightarrow Q$.
        \item An output function $o$ that maps a state $q \in Q$ and an input symbol $a \in X_{in}$ to a string of output symbols $s \in X_{out}$, represented as $o: Q \times X_{in} \rightarrow X^*_{out}$.
    \end{enumerate}
The automaton is classified as \textit{synchronous} if the output mapping $o$ generates a single symbol from the output alphabet $X_{out}$ for each state $q \in Q$ and input symbol $a \in X_{in}$. If this condition is not met, the automaton is called \textit{asynchronous}.

\end{definition}

An automaton can be graphically represented as a finite directed graph. Each state is depicted as a node in the graph. The transitions and outputs are illustrated by directed edges. Specifically, for each state $q \in Q$ and input symbol $a \in X_{in}$, there is a directed edge from the node representing $q$ to the node representing $t(q, a)$, labeled with $a / o(q, a)$.

Given an automaton $T = (X_{in}, X_{out}, Q, q_0, t, o)$, an \textit{input word} for $T$ is an infinite sequence $a_1a_2a_3... \in X_{in}^\omega$. The corresponding \textit{output word} is the concatenation of the outputs produced by the output mapping $o$ for each state transition, expressed as:
\[o(q_0, a_1) o(q_1, a_2) o(q_2, a_3)...,\]
where $\{q_n\}$ is the sequence of states starting from the initial state $q_0$, defined recursively as $q_n = t(q_{n-1}, a_n)$.

Note that the output word may be finite if the output function $o(q, a)$ is the empty word for all but finitely many input symbols $a$ in a given state $q$. However, our focus is on automata whose output words are always infinite. Such automata are called \textit{nondegenerate}, otherwise we call them \textit{degenerate}. A nondegenerate automaton defines a mapping from infinite input words to infinite output words over the respective alphabets.

Intuitively, an automaton can be seen as a graphical representation of a recurrent function, which reads the input word one symbol at a time, returns a symbol from $X$, and then moves to the next stage, which is uniquely determined by the current reading.   

For a detailed treatment on automata theory, we refer the reader to \cite[Section 1.1]{Belk Rational embeddings} and \cite[Section 2.1]{Grigorchuk Automata}.

\begin{definition}
\cite[Definition 1.2]{Belk Rational embeddings}A function $f: X_{\text{in}}^\omega \rightarrow X_{\text{out}}^\omega$ is called \textit{rational} if there exists a non-degenerate automaton $\mathcal{X}$ on the alphabets $X_{\text{in}}$ and $X_{\text{out}}$ such that $f(\psi) = \mathcal{X}(\psi)$ for all $\psi \in X_{\text{in}}^\omega$.
\end{definition}

Rational functions possess several key properties \cite[Proposition 1.3]{Belk Rational embeddings}:
\begin{enumerate}
    \item The rational functions are continuous with respect to the product topologies in $X_{\text{in}}^\omega$ and $X_{\text{out}}^\omega$.
    \item The composition of two rational functions is again a rational function.
    \item If $f: X_{\text{in}}^\omega \rightarrow X_{\text{out}}^\omega$ is a rational bijection, then the inverse function $f^{-1}: X_{\text{out}}^\omega \rightarrow X_{\text{in}}^\omega$ is also rational.
\end{enumerate}

\begin{definition}
\cite[Definition 1.4]{Belk Rational embeddings} Let $X$ be a finite alphabet, consisting of at least two symbols, and we define the \textit{rational group} $\mathcal{R}_X$. This group consists of all rational homeomorphisms of $X^\omega$. All rational groups~$\mathcal{R}_X$ are isomorphic to each other, we refer to a single rational group $\mathcal{R}$ without specifying the alphabet.
\end{definition}

When $G$ is a group, any injective homomorphism $G \rightarrow \RR_X$ is a rational representation of this group. The rational group $\mathcal{R}$ can be studied through an infinite rooted tree $X^*$ of finite strings on $X$. The boundary $\partial X^*$ of this tree is homeomorphic to $X^\omega$, and the action of $\mathcal{R}$ on~$X^\omega$ can be understood by analyzing the restrictions of rational functions on the subtrees of $X^*$. See section \ref{tree back} for the definitions of trees and boundaries.

It has been shown in \cite{Belk Rational embeddings} that every finitely generated group, whose Cayley graph is delta-hyperbolic, is embedded in the rational group $\RR$. 

\begin{definition}\cite[Definition 2.3]{Grigorchuk Automata}
    Let $f : X_{in}^\omega \rightarrow X_{out}^\omega$ be a continuous non-constant function, and let~$w \in X_{in}^*$ be a finite word. The \textit{local action} of $f$ in the word $w$ is denoted by~$f|_w$, is the mapping~$f|_w : X_{in}^\omega \rightarrow X_{out}^\omega$ defined as:
\[\forall u \in X_{in}^\omega\text{, } f(wu) = v f|_w(u),\]
where $v$ is the longest common prefix of the sets of words $\{f(wu) : u \in X_{in}^\omega\}$. If this set has only a single word, then the largest common prefix is infinite, in which case $f|_w$ is undefined.

In other words, the local action $f|_w$ describes how $f$ acts on infinite words that begin with the finite prefix $w$. If the output of $f$ on words with the prefix $w$ is independent of the infinite suffix after $w$, then the local action at $w$ is not defined. We do not allow $f$ to be a constant function, because a constant function may map two distinct input words to the same output word, which is not allowed.
\end{definition}

\begin{thm}
\cite[Theorem 2.5]{Grigorchuk Automata} A continuous mapping $f: X_{in}^\omega \to X_{out}^\omega$ is rational if and only if it has a finite number of local actions.
\end{thm}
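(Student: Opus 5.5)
The plan is to prove the two implications separately. Throughout I assume $f$ is non-constant on every cylinder $c(w)$, so that every local action $f|_w$ is defined. This holds for the injective maps the paper cares about, and it is the setting implicit in the definition of local action above. Without it the statement needs an extra clause about eventually periodic constant outputs.

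For ($\Rightarrow$), let $f$ be computed by a nondegenerate automaton $\mathcal{X}=(X_{in},X_{out},Q,q_0,t,o)$. For each $q\in Q$ let $f_q$ be the function computed by $\mathcal{X}$ started at $q$. Given $w=a_1\cdots a_n$, let $q_w$ be the state reached after reading $w$ and let $o(q_0,w)$ be the concatenated output along the way. Then $f(wu)=o(q_0,w)\,f_{q_w}(u)$ for all $u$. Let $p_q$ be the longest common prefix of the image of $f_q$; it is finite by the standing assumption. The longest common prefix of $f(c(w))$ is therefore $o(q_0,w)p_{q_w}$, and
\[
f|_w(u)=\text{the word obtained from } f_{q_w}(u) \text{ by deleting the prefix } p_{q_w}.
\]
So $f|_w$ depends only on $q_w$, and $f$ has at most $|Q|$ local actions.

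For ($\Leftarrow$), build an automaton whose state set $Q$ is the finite set of local actions $\{f|_w : w\in X_{in}^*\}$, with initial state $f|_\epsilon$. The first step is the cocycle identity $(f|_w)|_a=f|_{wa}$. It follows by comparing longest common prefixes: if $v$ is the longest common prefix of $f(c(w))$ and $v'$ that of $f|_w(c(a))$, then $vv'$ is the longest common prefix of $f(c(wa))$. This makes the transition $t(g,a)=g|_a$ well defined on $Q$. For the output, set $o(g,a)$ to be the longest common prefix of $g(c(a))$, which is finite because $g$ is non-constant on $c(a)$. By construction $g(au)=o(g,a)\,g|_a(u)$.

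Induction on $n$ then gives
\[
f(a_1\cdots a_n u)=o(q_0,a_1)\cdots o(q_{n-1},a_n)\,f|_{a_1\cdots a_n}(u),
\]
so the automaton's output after $n$ steps is exactly the longest common prefix $v_n$ of $f(c(a_1\cdots a_n))$.

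The main obstacle is showing that the automaton is nondegenerate and actually computes $f$, that is, $|v_n|\to\infty$ and $v_n\to f(\psi)$ for $\psi=a_1a_2\cdots$. I will use continuity of $f$ at $\psi$. For each $N$ there is $n$ such that every point of $f(c(a_1\cdots a_n))$ agrees with $f(\psi)$ in the first $N$ symbols. Hence $v_n$ contains the first $N$ symbols of $f(\psi)$, so the outputs form an increasing chain of prefixes of $f(\psi)$ whose lengths are unbounded. Therefore the output word is infinite and equals $f(\psi)$, and $f$ is rational.
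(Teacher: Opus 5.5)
The paper gives no proof of this statement; it is quoted from Grigorchuk--Nekrashevych--Sushchanskii, so your argument has to stand on its own. It is the standard one: in one direction the local action at $w$ is determined by the state reached after reading $w$, and in the other direction the local actions themselves serve as states. Your ($\Rightarrow$) direction, the cocycle identity $(f|_w)|_a=f|_{wa}$, and the continuity argument for nondegeneracy are all fine. Your standing assumption is genuinely needed, not just convenient. With the paper's convention that $f|_w$ is undefined when $f$ is constant on $c(w)$, consider the map with $f(0u)=z$ for a fixed $z$ that is not eventually periodic and $f(1u)=1u$. It is continuous and has only finitely many defined local actions. Yet it is not rational, because any finite automaton reading $000\cdots$ produces an eventually periodic output.

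There is one concrete slip in ($\Leftarrow$). You take $f|_\epsilon$ as the initial state, but $f(u)=v_0\,f|_\epsilon(u)$, where $v_0$ is the longest common prefix of $f(X_{in}^\omega)$. So your automaton computes $f|_\epsilon$, not $f$, and the displayed identity $f(a_1\cdots a_nu)=o(q_0,a_1)\cdots o(q_{n-1},a_n)\,f|_{a_1\cdots a_n}(u)$ is missing the prefix $v_0$. For example, for $f(u)=0u$ every local action $f|_w$ is the identity map, and your construction returns the one-state identity automaton. The repair is immediate: use $f$ itself as the initial state, with the same rules $t(f,a)=f|_a$ and $o(f,a)$ equal to the longest common prefix of $f(c(a))$. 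The state set becomes $\{f\}\cup\{f|_w : w\in X_{in}^*\}$, which is still finite. Equivalently, prepend $v_0$ to the outputs on the transitions leaving $f|_\epsilon$ (keeping a separate initial state so that later visits to $f|_\epsilon$ are unaffected). With that change your induction gives the output after $n\ge 1$ steps as exactly $v_n$, and the rest of your argument goes through unchanged.
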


\begin{definition}\cite[Definition 2.1]{Belk Rational embeddings}
    Let $\Gamma$ be a finite directed graph, then the \textit{subshift of finite type} is assigned to $\Gamma$ is the set $\Sigma_\Gamma$ of all infinite directed paths in $\Gamma$.
\end{definition}

Let $\alpha$ be a finite path in a directed graph $\Gamma$, then a \textit{cone} is the set $\CC_\alpha \subseteq \Sigma_\Gamma$ containing all infinite paths that start with the prefix $\alpha$. When $v$ is a vertex in $\Gamma$, then cone $\CC_v$ is the of all infinite directed paths that start from $v$. We denote $\CC_\emptyset$ as the set of all infinite paths in $\Gamma$. Every cone $\CC$ is a clopen subset of $\Sigma_\Gamma$ with the product topology, in addition, they form a basis for a topology. \cite[Section 2.1]{Belk Rational embeddings}

\begin{definition}\cite[Definition 2.19]{Belk Boone-Higman}
    A subshift $\Sigma_\Gamma$ has an \textit{irreducible core} if there exists an induced subgraph $\Gamma_0$ of $\Gamma$ such that the following conditions hold:
    \begin{enumerate}
        \item the graph $\Gamma_0$ is irreducible;
        \item for every vertex $v \in \Gamma_0$, there is a directed path in $\Gamma$ from $v$ to a vertex in $\Gamma_0$; 
        \item there exists $n \geq 0$ such that every directed path in $\Gamma$ of length $n$ terminates in $\Gamma_0$.
    \end{enumerate}
\end{definition}

\begin{prop}\cite[Proposition 2.12]{Belk Rational embeddings}
    Let $\Sigma_\Gamma$ be a subshift of finite type with no isolated points or empty cones, and $E\subseteq \Sigma_\Gamma$ be a nonempty clopen set. Then the set $\RR_{\Gamma,E}$  of rational homeomorphisms~$E \rightarrow E$ forms a group under composition.
\end{prop}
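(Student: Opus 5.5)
The statement is that $\RR_{\Gamma,E}$ is closed under composition and inverses and contains the identity. My plan is to reduce everything to the corresponding facts for rational maps on full shifts, namely \cite[Proposition 1.3]{Belk Rational embeddings}. That result gives that compositions and inverses of rational bijections are rational. It also gives the characterization of rationality by finitely many local actions \cite[Theorem 2.5]{Grigorchuk Automata}.

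The first step is to make sense of ``rational'' on $E\subseteq\Sigma_\Gamma$. Let $A$ be the finite set of edges of $\Gamma$, so that $\Sigma_\Gamma\subseteq A^\omega$ is a closed subset cut out by the adjacency condition. A homeomorphism $f\colon E\to E$ is rational if it is computed by a finite automaton reading edge symbols; equivalently, if it has finitely many distinct local actions $f|_\alpha$ as $\alpha$ ranges over finite paths with $\CC_\alpha\cap E\neq\emptyset$. Here local actions are taken up to the natural identification of cones ending at the same vertex. The hypotheses of no isolated points and no empty cones are what make these local actions well defined. Every cone $\CC_\alpha\cap E$ with $\CC_\alpha\subseteq E$ is then nonempty and infinite, so its image under an injective map is not a single point. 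The longest common prefix of $f(\CC_\alpha)$ is therefore finite, and $f|_\alpha$ is defined. Since $E$ is clopen, it is a finite union of cones, so beyond some finite depth every cone meeting $E$ lies inside $E$.

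Next I verify the group axioms. The identity is computed by a one-state automaton restricted to $E$ (with finitely many states handling the finite prefixes describing $E$), so it is rational. For composition, write $(g\circ f)|_\alpha = g|_{\beta}\circ f|_\alpha$, up to a finite prefix shift, where $\beta$ is the output prefix. Then every local action of $g\circ f$ is determined by a local action of $f$, a local action of $g$, and a bounded leftover finite word. The bound holds because $f$ is uniformly continuous on the compact set $E$ and has finitely many local actions. Hence there are finitely many local actions, and $g\circ f$ is rational. Alternatively, I can compose the automata directly, using states in $Q_f\times Q_g\times(\text{bounded buffer})$.

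For inverses I plan the following. Given a rational homeomorphism $f$ of $E$, the local action $f^{-1}|_\beta$ at an output word $\beta$ is determined by a state of $f$ together with a bounded discrepancy word. Concretely, pick $\alpha$ maximal with $f(\CC_\alpha)\supseteq\CC_\beta$ after a finite refinement. Compactness and openness of $f$ (a homeomorphism) give a uniform constant $N$: every cone of depth $n$ in the range contains the image of a cone of depth at most $n+N$ in the domain. This bound is what makes the discrepancies uniformly bounded, and finiteness of the local actions of $f^{-1}$ then follows.

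The main obstacle is this inverse step: producing the uniform bound $N$ and checking that the no-isolated-points hypothesis prevents degenerate (finite-output) behaviour of the inverse automaton. I would handle it exactly as in the full-shift case of \cite[Proposition 1.3]{Belk Rational embeddings}, using the Lebesgue number of the cover of $E$ by images of cones.
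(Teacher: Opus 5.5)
\textbf{Comparison with the paper.} The paper gives no proof here; it only quotes Belk--Bleak--Matucci. Any proof of this statement amounts to closure of rational maps on $E$ under composition and inversion, so your plan (characterize rationality by finitely many local actions, then handle composition and inverses) is the right one. Your composition step is correct, though more is argued than needed. With $\beta$ the longest common prefix of $f(\CC_\alpha)$, the local action $(g\circ f)|_\alpha$ is determined by the pair $(f|_\alpha,g|_\beta)$ alone, since the leftover word is the longest common prefix of the image of $g|_\beta\circ f|_\alpha$. Uniform continuity of $f$ is needed only to guarantee $\CC_\beta\subseteq E$ for all but finitely many $\alpha$.

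\textbf{The gap: the uniform constant in the inverse step is false.} This is the only non-formal part of the argument. You claim that every cone of depth $n$ in the range contains the image of a cone of depth at most $n+N$. This fails for rational homeomorphisms; compactness gives only some modulus of continuity, not an additive one. Take $f(0\omega)=00f(\omega)$, $f(10\omega)=01f(\omega)$, $f(11\omega)=1f(\omega)$. This is a rational homeomorphism of $\{0,1\}^\omega$: it has two local actions, and $\{0,10,11\}$ and $\{00,01,1\}$ are complete prefix codes. Yet $f^{-1}(\CC_{1^n})=\CC_{(11)^n}$, so no cone of depth below $2n$ maps into $\CC_{1^n}$.

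\textbf{Even a true bound of that shape would not control the right quantity.} Let $u$ be the longest common prefix of $f^{-1}(\CC_v)$ and $w$ that of $f(\CC_u)$. Since $\CC_v\subseteq f(\CC_u)\subseteq\CC_w$ and cones are nonempty, you may write $v=wv'$ (discarding the trivial case $\CC_v=\CC_w$). Then $f^{-1}|_v(\eta)=(f|_u)^{-1}(v'\eta)$, so $f^{-1}|_v$ is determined by the pair $(f|_u,v')$. What you must bound is $|v'|$.

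\textbf{The fix.} For each of the finitely many local actions $h$ of $f$, the images $h(\CC_e)$ of the first-level subcones are pairwise disjoint compact sets. Hence there is $N_h$ such that no cone of depth at least $N_h$ meets two of them. Maximality of $u$ forces $h^{-1}(\CC_{v'})$, with $h=f|_u$, to meet two distinct $\CC_e$, so $|v'|<N_h$. This is the Lebesgue-number argument you gesture at in your last sentence. It must be applied to the images of first-level subcones under each local action, not used to derive the linear depth bound you state.
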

\begin{definition}\cite[Section 2.1]{Belk Rational embeddings} A \textit{canonical similarity} 
\[
L_\alpha: \CC_{t(\alpha)} \rightarrow \CC_\alpha
\]
is a homeomorphism defined by the formula $L_\alpha(\omega) = \alpha \cdot \omega$, where $\alpha$ is a finite path. Generally, when $\alpha$ and~$\beta$ are finite paths with $t(\alpha) = t(\beta)$, the composition $L_\beta \circ L_\alpha^{-1}$ defines the canonical similarity~$\CC_\alpha \rightarrow \CC_\beta$, which maps $\alpha \cdot \omega$ to $\beta \cdot \omega$, for all $\omega \in \CC_\alpha$.
\end{definition}

\begin{definition}\cite[Definition 2.32]{Belk Rational embeddings}
    Let $\Sigma_\Gamma$ be a subshift of finite type with no isolated points or empty cones. Let $E\subseteq \Sigma_\Gamma$ be a nonempty clopen set, and let $\RR_{\Gamma,E}$ be the associated rational group. A subgroup $G \leq \RR_{\Gamma,E}$ is called a \textit{rational similarity group} if, for every pair of cones $\CC_\alpha, \CC_\beta$ neither of which is contained within $E$ with $t(\alpha) = t(\beta)$, there exists an element $g \in G$ that maps $\CC_\alpha$ to $\CC_\beta$ through the canonical similarity.
\end{definition}

\begin{definition}\cite[Section 2.6]{Belk Boone-Higman}
    Let $f \in \RR_{X,E}$ be a non-degenerate map and let $\alpha, \beta$ be finite paths in~$X$, then $f$ has only a finite number of local actions $f{|}_\alpha$. The set of all local actions that occur infinitely many times is called the \textit{nucleus} and is denoted by $\NN_f$. Formally $\NN_f = \{{f|}_\alpha :  {f|}_\alpha = {f|}_\beta $  for infinitely many $\beta\}$. As a consequence $f{|}_\alpha \notin \NN_f$ only for a finite  number of different $\alpha$.
\end{definition}

\begin{definition}\cite[Definition 2.41]{Belk Boone-Higman}
    Let $G \leq \RR_X$, then the \textit{nucleus} $\NN_G$ of $G$ is defined as the union of all $\NN_g$ for $g\in G$. Hence, $\NN_G$ is the smallest set of functions such that for any $g \in G$, $g|_\alpha \in \NN_G$ holds for all but for a finite number of $\alpha \in X^*$. We call $G \leq \RR_X$ \textit{contracting} whenever $\NN_G$ is finite and $\Sigma_X$ has an irreducible core.
    \end{definition}

\begin{definition}\label{nuc def}\cite[Definition 2.43]{Belk Boone-Higman}
A set of maps $\NN$ is the \textit{nucleus of injections} if it satisfies the following conditions:
\begin{enumerate}
    \item The identity map belongs to $\NN$;
    \item For every map $x \in \NN$, $x|_\alpha \in \NN$.
    \item For every map $x \in \NN$ there exists $f \in \NN$ such that $x \in \NN_f$;
    \item For every map $x \in \NN$, $\NN_{x^{-1}} \subseteq \NN$;
    \item For every pair of maps $x$ and $y \in \NN$, $\NN_{xy}\subseteq \NN$.
\end{enumerate}
\end{definition}

\begin{thm}\label{thm: 2.46}\cite[Theorem 2.46]{Belk Boone-Higman} Let $\NN$ be a nucleus of injections over $\Sigma_X$ and $E \subseteq \Sigma_X$ be n nonempty clopen set, then
\[
    G=\{f \in \RR_{X,E}: \NN_f \subseteq \NN\}
\]
is a rational similarity group with nucleus $\NN$.
\end{thm}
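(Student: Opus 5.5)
The plan is to check the three things the statement asks for, in order. First, $G$ is a subgroup of $\RR_{X,E}$. Second, $G$ contains every canonical similarity between cones in $E$ with matching terminal vertices. Third, the nucleus $\NN_G$ is exactly $\NN$. Throughout I would use the two basic cocycle identities for local actions of rational maps on $\Sigma_X$:
\[
(fg)|_\alpha = f|_{g(\alpha)}\, g|_\alpha ,\qquad (f^{-1})|_{f(\alpha)} = (f|_\alpha)^{-1}.
\]
Here $g(\alpha)$ denotes the common output prefix, and both identities hold up to the finitely many short prefixes where the common prefix is not yet determined. I would also use the fact from \cite[Section 2.6]{Belk Boone-Higman} that the nucleus of a rational map is well defined and closed under taking further local actions.

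\textbf{Step 1 (subgroup).} Let $f,g\in G$. For all but finitely many $\alpha$, $g|_\alpha\in\NN_g\subseteq\NN$. Since $g$ is a homeomorphism, $g(\alpha)$ is long when $\alpha$ is long, so $f|_{g(\alpha)}\in\NN$ as well. Hence, up to finitely many exceptions, $(fg)|_\alpha$ is a product $xy$ with $x,y\in\NN$. Pushing this one level deeper, the local actions of $fg$ that recur infinitely often are local actions of such products $xy$, and these lie in $\NN_{xy}\subseteq\NN$ by condition (5) together with condition (2). Inverses work the same way: the recurring local actions of $f^{-1}$ are eventually of the form $(f|_\alpha)^{-1}$ with $f|_\alpha\in\NN$, and condition (4) puts them in $\NN$. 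The identity lies in $G$ because its only local action is the identity, which is in $\NN$ by condition (1).

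\textbf{Step 2 (similarities).} Take cones $\CC_\alpha,\CC_\beta\subseteq E$ with $t(\alpha)=t(\beta)$. Since $E$ is clopen and $\Sigma_X$ has no isolated points, I would shrink to subcones so that the two cones are disjoint or equal. Then define $h$ to be $L_\beta L_\alpha^{-1}$ on $\CC_\alpha$, its inverse on $\CC_\beta$, and the identity on the rest of $E$. The complement of $\CC_\alpha\cup\CC_\beta$ in $E$ is a finite union of cones, so $h$ is rational. Every local action of $h$ at a sufficiently long prefix is the identity, so $\NN_h=\{\mathrm{id}\}\subseteq\NN$ and $h\in G$. The non-disjoint case, where one cone contains the other, would be handled by composing such swaps through an auxiliary disjoint cone with the same terminal vertex. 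This uses the irreducible core to find one.

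\textbf{Step 3 (nucleus).} The inclusion $\NN_G\subseteq\NN$ is immediate from the definition of $G$. For the reverse, fix $x\in\NN$ and use condition (3) to choose $f\in\NN$ with $x\in\NN_f$. I then want an element $g\in G$ that restricts to $f$ on some cone $\CC_\gamma\subseteq E$.

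I expect this to be the main obstacle, because $f$ is only an injection between cones and not a homeomorphism of $E$. I would first try the following. The image $f(\CC_{\gamma})$ is a clopen subset of $E$, hence a finite union of cones. Using the flexibility from Step 2, together with a counting argument on cones over the irreducible core, the complements $E\setminus \CC_\gamma$ and $E\setminus f(\CC_\gamma)$ can be cut into finitely many cones matched by terminal vertex. Define $g$ to be $f$ on $\CC_\gamma$ and canonical similarities on the matched complementary pieces.

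The recurring local actions of $g$ then lie in $\NN_f\cup\{\mathrm{id}\}$. Condition (2) puts $\NN_f$ inside $\NN$, and the identity is in $\NN$ by condition (1), so $g\in G$. Since $x\in\NN_f$ occurs infinitely often inside $\CC_\gamma$, we get $x\in\NN_g\subseteq\NN_G$.

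If the complement matching fails because of a parity or index obstruction among cone types, I would replace $\gamma$ by a longer path, or use two disjoint copies of $\CC_\gamma$, to adjust the cone counts. This is the step where the irreducible core hypothesis is really needed.
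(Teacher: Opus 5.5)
The paper gives no proof of this statement; it only quotes it from Belk--Bleak--Matucci--Zaremsky. So I am judging your argument on its own.

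\textbf{Steps 1 and 2.} These are sound in substance, with two repairs.
\begin{itemize}
\item In Step 1 you need a pigeonhole over the finitely many prefixes of a fixed length. That is what lets you pass from ``local actions of some product $xy$ of nucleus elements'' to ``elements of $\NN_{xy}$''.
\item In Step 2 you may not shrink the cones: the canonical similarity is required on all of $\CC_\alpha$, and both cones must be proper subsets of $E$. Still, the disjoint swap handles the disjoint case, and the two-swap trick through an auxiliary cone handles the nested case. In the nested case $t(\alpha)$ lies on a cycle, hence in the core, so the auxiliary cone exists.
\end{itemize}

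\textbf{The gap is the cone matching in Step 3.} Let $K$ be the abelian group generated by symbols $[v]$ subject to $[v]=\sum_{s(e)=v}[t(e)]$. Give a clopen set $A=\bigsqcup_i\CC_{\alpha_i}$ the class $[A]=\sum_i[t(\alpha_i)]$. Piecewise canonical similarities preserve this class. Hence a homeomorphism that is a transplanted copy of $f\colon\CC_v\to\CC_w$ on one cone and piecewise canonical elsewhere exists only if $[f(\CC_v)]=[v]$. Nothing in the axioms forces this.

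For example, on the full $3$-shift define $\theta(0\omega)=1\omega$, $\theta(1\omega)=20\omega$, $\theta(2\omega)=2\theta(\omega)$.
\begin{itemize}
\item $\{\mathrm{id},\theta\}$ is a nucleus of injections: $\theta|_2=\theta^{-1}|_2=\theta$, and $\theta^2$ is the canonical similarity onto $\CC_2$.
\item Yet $\theta$ carries one cone onto the two cones $\CC_1\cup\CC_2$.
\item Here $K=\mathbb{Z}/2$ counts cones mod $2$, so your $g$ cannot exist.
\end{itemize}

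Your fallbacks do not fix this in general. Lengthening $\gamma$ leaves the discrepancy $[f(\CC_v)]-[v]$ unchanged. Taking $k$ disjoint copies multiplies it by $k$, which helps only when it is torsion. But $K$ can be infinite. Take a two-vertex graph with adjacency matrix $\left(\begin{smallmatrix}2&1\\1&2\end{smallmatrix}\right)$, so $K\cong\mathbb{Z}$, with loops $a_1,a_2$ and an edge $c$ at the first vertex. The shift $a_1^ka_2\omega\mapsto a_1^{k+1}a_2\omega$, $a_1^kc\omega\mapsto a_1^kc\omega$ lies in a nucleus of injections and omits the whole cone $\CC_{a_2}$ from its image. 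That is a discrepancy of infinite order. Even in the torsion case you would still owe a lemma that clopen sets of equal class are piecewise canonically similar.

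\textbf{The missing idea is to use condition (4) instead of counting.} Pair the copy of $f$ with a copy of $f^{-1}$, so that the discrepancy cancels automatically.
\begin{itemize}
\item By (3), $f$ is itself a recurring local action. So its domain and codomain vertices $v,w$ lie in the irreducible core, and $E$ contains disjoint cones $\CC_\alpha,\CC_\beta$ with $t(\alpha)=v$ and $t(\beta)=w$.
\item Let $g$ be the involution of $E$ given by $\alpha\omega\mapsto\beta f(\omega)$ on $\CC_\alpha$, by $\beta f(\omega)\mapsto\alpha\omega$ on the clopen set $\beta f(\CC_v)\subseteq\CC_\beta$, and by the identity elsewhere.
\item Then $\NN_g\subseteq\NN_f\cup\NN_{f^{-1}}\cup\{\mathrm{id}\}\subseteq\NN$ by conditions (2), (4) and (1), so $g\in G$.
\item Moreover $g|_{\alpha\delta}=f|_\delta$ for every $\delta$, so $x\in\NN_f\subseteq\NN_g$.
\end{itemize}
This is just the swap you already used in Step 2, with $f$ in place of a canonical similarity. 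It makes the complement matching unnecessary.
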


\subsection{The group G} Let's recall the generators of the Thompson group $V_\tau$. In \cite[Section 4]{Burillo T} it has been shown that $V_\tau$ has an infinite generating set $\{x_i, y_i, c_{i+1}, \pi_i\}$, where $i \in \mathbb{Z}$, and a finite generating set $\{x_0, x_1, y_0, y_1$, $c_1, c_2, \pi_0,\pi_1\}$, see Figures \ref{fig:x_n gen1}, \ref{fig:y_n gen1}, \ref{fig:c_1 c2 gen} and \ref{fig:pi_0 pi_1 gen} for the generating elements. Our goal is to demonstrate that there exists an asynchronous automaton with initial states corresponding to the generating set $\{x_0, x_1, y_0, y_1$, $c_1, c_2, \pi_0,\pi_1\}$.

To accurately interpret the domain and ranges in trees using binary sequences, we need to establish a standardized notation for caret addresses. Let a left caret be denoted $L$ and a right caret as~$R$ to provide clarity in representation. In this context, two letters of the same kind represent a long caret, while one letter symbolizes a short caret. Consistently, we assign values 0 and 1 to $LL$ and $R$, respectively, to ensure a precise and coherent representation of caret addresses in binary sequences. 

\begin{example}
    Figure \ref{fig: LR example} demonstrates the intuition behind the notation that we use to address carets in a tree representation.
    \begin{figure} 
    \centering
        \includegraphics[width=0.37\linewidth]{"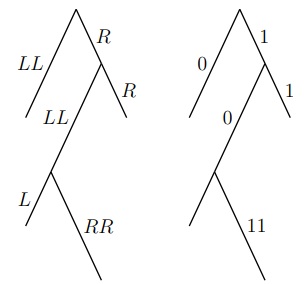"}
    \caption{Carets of the $y_1$ generator addressed by pairs $L$'s,$R$'s and $0$'s, $1$'s }
    \label{fig: LR example}
\end{figure}
    Note that a small left caret denoted by $L$ is not defined in the binary $\{0,1\}$ representation. However, this is not a problem, as we are allowed to add additional carets and perform basic moves.
    
    Here, a basic move is an interchange of an $x$-type caret followed by an extra $x$-type on the right with a $y$-type caret followed by another $y$-type caret from the left, and vise versa. See \cite{Ttau} for details. These operations can be performed, because we can split the unit interval into subintervals of length $\tau^2$, $\tau^3$ and $\tau^2$ in two different ways. First, by splitting the interval into subintervals of length $\tau$ and $\tau^2$ and then further dividing the subinterval of length $\tau$ into sequential subintervals of length $\tau^2$ and $\tau^3$. Or we can start by dividing the interval into subintervals  of length $\tau^2$ and $\tau$.
\end{example}

\begin{remark}
     Recall that the tree diagrams for the generators $x_n$ and $y_n$ are read following a sequence of carets that start from the root and, at each step, slide down the tree until a caret that does not have children. Note that a finite tree has at most $m$ such sequences, where $m$ is the total number of carets in the tree. 
\end{remark}

Our proof that the generators of $G$ are rational homeomorphisms will use the golden ratio base for numbers, where a base-$\tau$ expansion of $x \in \mathbb{R}$ is $0.\alpha_1\alpha_2\cdots$, where $\alpha_i \in \{0,1\}$ and $\sum_{i=1}^\infty \alpha_i \tau^i=x$. See \cite{golden ratio base} for a general introduction to irrational bases. Every $x \in \mathbb{R}$ has many different base $\tau$ expansions. For example, since $\tau = \tau^2 + \tau^3$, it follows that $\tau$ has expansions $0.1000\cdots$ and $0.011000\cdots$, additionally~$\tau = \tau^2 + \tau^4+\tau^5$, so $\tau$ also has expansion $0.01011000\cdots$. However, we can make the base $\tau$ expansion unique for most numbers in $[0,1]$ if we impose the rules that $\alpha_1=0$ and all finite strings of $0$'s in~$\alpha_2, \alpha_3,\ldots$ have even length. If we use these rules, then any number in $\mathbb{Z}[\tau] \cap (0,1)$ has 2 expansions and any other number in ~$[0,1]$ has a unique expansion. Applying these rules, we find that two base $\tau$ expansions represent the same $x$ if they have forms $0.\beta00111\cdots$ and $0.\beta1000\cdots$ for some finite~$\beta \in \{0,1\}^*$. For example, imposing these rules limits the possible expansions of $\tau$ to $0.1000\cdots$ and~$0.00111\cdots$.

The following propositions follow immediately from the golden-ratio base.
\begin{prop} \label{prop: q map}
    Let $q: \{0,1\}^\omega \rightarrow [0,1]$ be defined as $q = q_2q_1(\omega)$, where:
\[
\begin{aligned}
    q_1(\omega_1) &= 
    \begin{cases}
    00q_1(\omega_2), & \text{when } \omega_1 = 0\omega_2  \\ 
    1q_1(\omega_2), & \text{when } \omega_1 = 1\omega_2
    \end{cases}; \\
    q_2(\omega) &= 1 - \sum_{n=1}^\infty \tau^{n+1} (1-\omega_n) = \sum_{n=1}^\infty \tau^{n+1} \omega_n.\\
\end{aligned}
\]
\end{prop}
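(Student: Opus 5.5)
The statement as displayed only defines $q$, so I read it as the implicit claim that the next step needs: $q$ is a well-defined continuous surjection $\{0,1\}^\omega\to[0,1]$. Its only nontrivial identifications are the pairs $\beta 0 1^\infty$ and $\beta 1 0^\infty$, and these are exactly the preimages of the points of $\mathbb{Z}[\tau]\cap(0,1)$. So $q$ is a quotient map. My plan is to avoid manipulating base-$\tau$ expansions digit by digit. Instead I would interpret $q$ through nested interval subdivisions, mirroring the caret calculus of Definition~\ref{def:caret_types}.

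\textbf{Step 1: the two formulas for $q_2$ agree.} This is immediate from $\sum_{n\ge1}\tau^{n+1}=\tau^2/(1-\tau)=1$, using $1-\tau=\tau^2$.

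\textbf{Step 2: $q_1$ and the image of $q_2\circ q_1$.} I would observe that $q_1$ is a continuous injection onto the closed set of sequences in which every maximal block of $0$'s has even length. Here an infinite tail of $0$'s is allowed, which is why $q_1(0^\infty)=0^\infty$ is fine. Since $q_2$ weights the $n$-th output letter by $\tau^{n+1}$, the base-$\tau$ convention $\alpha_1=0$ is built in.

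\textbf{Step 3: images of cylinders.} This is the core of the argument. For a finite word $w$, I would prove by induction on $|w|$ that $q(c(w))$ is the closed interval
\[
I_w=[a_w,\,a_w+\tau^{|q_1(w)|}],\qquad a_w=\sum_{n=1}^{|q_1(w)|}\tau^{n+1}(q_1(w))_n .
\]
The two children of $I_w$ are obtained as follows.
\begin{itemize}
\item Appending $0$ outputs $00$. This gives the left subinterval of $I_w$ of relative length $\tau^2$.
\item Appending $1$ outputs a single $1$, which adds $\tau^{|q_1(w)|+1}$ to $a_w$. This gives the right subinterval of relative length $\tau$.
\end{itemize}
Because $\tau^2+\tau=1$, the two children tile $I_w$ and overlap in exactly one point. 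This is precisely an $x$-type caret subdivision. The base case is $I_\epsilon=[0,1]$.

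\textbf{Step 4: consequences of the nested structure.} Every letter shrinks the interval length by a factor of at least $\tau$. So for $\omega\in\{0,1\}^\omega$ the intervals $I_{\omega_1\cdots\omega_k}$ are nested with lengths tending to $0$, and $q(\omega)$ is their unique common point. This gives three facts.
\begin{itemize}
\item \emph{Continuity:} the cylinder $c(w)$ maps into $I_w$, whose diameter is small once $w$ is long.
\item \emph{Surjectivity:} every $x\in[0,1]$ lies in some child at each stage, which produces a nested chain and hence a preimage.
\item \emph{Closedness:} $q$ is a continuous map from a compact space to a Hausdorff space, so it is closed. Therefore $q$ is a quotient map.
\end{itemize}

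\textbf{Step 5: the fibers.} This is the step I expect to be the main obstacle. I would first show that two sequences have the same image exactly when their chains of intervals eventually split at a shared endpoint. The shared endpoint of the children $I_{w0}$ and $I_{w1}$ is approached only by $w01^\infty$ (always the right child on the left side) and by $w10^\infty$ (always the left child on the right side). Every other point lies in the interior of a unique child at each stage and so has a unique preimage. The endpoints $0$ and $1$ are reached only by $0^\infty$ and $1^\infty$.

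It then remains to show that the set of split points over all $w$ equals $\mathbb{Z}[\tau]\cap(0,1)$.
\begin{itemize}
\item \emph{Inclusion $\subseteq$:} every $a_w$ and every $\tau$-power lies in $\mathbb{Z}[\tau]$, so all split points do.
\item \emph{Inclusion $\supseteq$:} this is the harder direction. I would argue that every element of $\mathbb{Z}[\tau]\cap(0,1)$ has a finite greedy base-$\tau$ expansion, using that $\mathbb{Z}[\tau]$ is closed under multiplication by $\tau^{-1}=1+\tau$ and a Pisot-type finiteness argument. A point with a finite expansion is then an endpoint of some $I_w$. If that fails, I would fall back on Burillo--Nucinkis--Reeves: in \cite{Ttau}, breakpoints of $\tau$-subdivisions are exactly $\mathbb{Z}[\tau]\cap[0,1]$.
\end{itemize}
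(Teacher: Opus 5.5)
Your proposal is correct in outline, but it takes a different and more substantive route than the paper. The paper gives no argument for this proposition. It records the normalisation of base-$\tau$ expansions ($\alpha_1=0$, finite blocks of $0$'s of even length) and asserts that under it each point of $\mathbb{Z}[\tau]\cap(0,1)$ has exactly the two expansions $0.\beta00111\cdots$ and $0.\beta1000\cdots$, while every other point has one. From this it reads off that $q$ is an order-preserving surjection with these fibres.

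You replace digit manipulation by the nested $x$-caret intervals $I_w$, i.e.\ by $q(0\omega)=\tau^2q(\omega)$ and $q(1\omega)=\tau^2+\tau q(\omega)$. This is the same bookkeeping as $f_L,f_R$ in the proof of Proposition~\ref{prop q commutes for x_0}. It yields continuity, surjectivity, order preservation, the quotient property, and fibres of size at most two, with doubletons exactly over split points of the $I_w$. It also isolates the one arithmetic input hidden in the paper's ``follows immediately'': that the split points exhaust $\mathbb{Z}[\tau]\cap(0,1)$. One ordering point: before Step~4 you only have $q(c(w))\subseteq I_w$ and the tiling. The equality $q(c(w))=I_w$ uses the surjectivity proved in Step~4, so state the induction in Step~3 accordingly.

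\textbf{First repair: an off-by-one in Step~3.} Appending $1$ to $w$ writes a $1$ in position $k+1$, where $k=|q_1(w)|$, and that position has weight $\tau^{k+2}$, not $\tau^{k+1}$. So the right child is $[a_w+\tau^{k+2},a_w+\tau^k]$. With your offset the two children would not tile $I_w$.

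\textbf{Second repair: the inclusion of $\mathbb{Z}[\tau]\cap(0,1)$ in the set of split points is not yet proved.}
\begin{itemize}
\item A finite greedy expansion is generally not of the form $q(w0^\infty)$. For example, $\tau$ (digits $0.1$) violates $\alpha_1=0$, and $\tau^3$ (digits $0.001$) violates the even-block rule.
\item Rewriting with $\tau^j=\tau^{j+1}+\tau^{j+2}$ creates carries on general greedy strings, e.g.\ $\tau^3+\tau^5=\tau^4+2\tau^5$. So ``finite expansion $\Rightarrow$ endpoint of some $I_w$'' needs its own argument.
\item The fallback to \cite{Ttau} has the same problem. It concerns subdivisions mixing both caret types, and the left child of a $y$-caret is not an $I_w$.
\end{itemize}

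The cleanest fix is to run the Pisot argument directly on your subdivision.
\begin{enumerate}
\item Let $T(x)=x/\tau^2$ on $[0,\tau^2]$ and $T(x)=(x-\tau^2)/\tau$ on $(\tau^2,1]$. Then $x=a_w+\tau^{|q_1(w)|}T^n(x)$, where $w$ is the word of branches used. $T$ preserves $\mathbb{Z}[\tau]$ since $\tau^{-1}=1+\tau$.
\item Galois conjugation $\sigma$ (with $\sigma(\tau)=-\tau^{-1}$) turns the two branches into the contractions $z\mapsto\tau^2z$ and $z\mapsto-\tau z+\tau^{-1}$. Both map $[0,\tau^{-1}]$ into itself.
\item Hence $\sigma(T^nx)$ stays bounded. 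Since $y\mapsto(y,\sigma(y))$ embeds $\mathbb{Z}[\tau]$ as a lattice in $\mathbb{R}^2$, the orbit of any $x\in\mathbb{Z}[\tau]\cap[0,1]$ is finite, hence eventually periodic.
\item A periodic point $y=s+t\tau$ has $y\in[0,1]$ and $\sigma(y)\in[0,\tau^{-1}]$, since $\sigma(y)$ is the fixed point of a composite of these contractions. Then $t\sqrt5=y-\sigma(y)\in[-\tau^{-1},1]$ forces $t=0$, so $y\in\{0,1\}$.
\item Thus $T^nx\in\{0,1\}$ for some $n$, so $x$ is an endpoint of some $I_w$. An endpoint lying in $(0,1)$ is a split point: if $x=a_w$ with $w=v10^j$, it is the split point of $I_v$, and symmetrically for right endpoints.
\end{enumerate}
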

\noindent Note that $q$ is an order preserving surjection, where each point in $\mathbb{Z}[\tau] \cap (0,1)$ has 2 preimages and every other point in $[0,1]$ has 1 preimage; that is, $q$ is an almost one-to-one surjection. Moreover, $q$ commutes with the rational map that generates $V_\tau$. We check this for the generators $X_0$ and $x_0$ in Proposition \ref{prop q commutes for x_0}

The function $q_1$ performs a simple operation, replacing all occurrences of 0 with 00. Whereas $q_2$ mimics the behavior of the left $L$ or right $R$ operations. Observe that $L$ only changes the upper bound of the interval by a power of $\tau$, whereas $R$ only changes the lower bound. Take a finite sequence $RRR\dots$. In each sequential $R$ the interval is reduced from the left by $\tau^{n+1}$. This representation forces the lower bound to converge with the upper bound when the sequence is infinite. As a result, there are exactly two formulas that can convert the infinite sequence to a point in the interval. Either by summing up powers of $\tau$ at the positions of 1's or by subtracting from 1 the powers of $\tau$ at the 0's positions.

To illustrate how the formula works, let's consider an example. Let $\omega_{ex} = 010100\dots$
\[\begin{aligned}q(\omega_{ex})&= q(010100\dots) = q_2q_1(010100\dots) = q_2(0010010000\dots) \\&= \sum_{n=1}^\infty \tau^{n+1}(\omega_n) = \tau^4 +\tau^7.
\end{aligned}\]
The following corollary follows from Proposition \ref{prop: q map}.`
\begin{cor}\label{h commutes}
    There exists a homeomorphism $h$ such that the following diagram commutes.
\end{cor}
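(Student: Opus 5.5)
The diagram (not reproduced above) should be the square
\[
\begin{CD}
\{0,1\}^\omega @>H>> \{0,1\}^\omega\\
@VqVV @VVqV\\
[0,1] @>h>> [0,1]
\end{CD}
\]
where $H$ is one of the rational homeomorphisms under consideration and $h$ is the induced map on the interval. I read the corollary as a descent statement: a homeomorphism of $\{0,1\}^\omega$ that respects the fibres of $q$ pushes down to a homeomorphism $h$ of $[0,1]$ with $q\circ H = h\circ q$. The plan is to obtain $h$ purely from point-set topology, using only the properties of $q$ recorded after Proposition~\ref{prop: q map}.

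\emph{Step 1: $q$ is a quotient map.} Each $q_2$-term is bounded by $\tau^{n+1}$, and the $n$-th term of $q_2 q_1(\omega)$ depends only on finitely many letters of $\omega$. Hence $q=q_2q_1$ is a uniform limit of locally constant functions, so it is continuous. It is surjective and order preserving, as noted after Proposition~\ref{prop: q map}. Since $\{0,1\}^\omega$ is compact and $[0,1]$ is Hausdorff, $q$ is a closed map and therefore a quotient map. Its fibres are singletons except over $\mathbb{Z}[\tau]\cap(0,1)$. There each fibre consists of the two sequences whose $q_1$-images have the forms $\beta00111\cdots$ and $\beta1000\cdots$, i.e.\ the two expansions described just before Proposition~\ref{prop: q map}.

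\emph{Step 2: $H$ respects the fibres.} I must show that $H$ and $H^{-1}$ send each such pair of sequences to a pair with the same $q$-value. For a generator this can be read off its automaton in Figure~\ref{fig:v_tau automaton}: following the two tails $00111\cdots$ and $1000\cdots$ from a common state, the outputs must again become a pair of the form $\gamma00111\cdots$, $\gamma1000\cdots$ after $q_1$. Equivalently, it suffices that $q\circ H$ agrees with the piecewise-linear generator $g\in V_\tau$ composed with $q$ on a dense set. This is exactly the kind of check promised in Proposition~\ref{prop q commutes for x_0}, and it follows by continuity from agreement on the cones corresponding to the leaves of the tree diagram.

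\emph{Step 3: define $h$ and conclude.} Set $h(x)=q(H(\omega))$ for any $\omega\in q^{-1}(x)$; this is well defined by Step~2. By the universal property of the quotient map, $h$ is continuous. The same construction applied to $H^{-1}$ gives a continuous inverse, so $h$ is a bijection. Being a continuous bijection of the compact Hausdorff space $[0,1]$, $h$ is a homeomorphism, and $q\circ H=h\circ q$ holds by construction.

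The main obstacle is Step~2. It is the only step that depends on the specific automata, and asynchronous outputs can shift where the $00111\cdots$ versus $1000\cdots$ ambiguity sits. I would handle it by checking the finitely many states of the automaton on the two tails $00111\cdots$ and $1000\cdots$, using that $H$ has only finitely many local actions. Alternatively, I would identify $h$ directly with the corresponding element of $V_\tau$ and invoke its continuity on the circle or interval.
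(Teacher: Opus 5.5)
\textbf{Genuine gap: Step 2 fails, and no homeomorphism $h$ of $[0,1]$ exists for half the generators.} Step 2 is not merely unverified; for $C_1,C_2,\Pi_0,\Pi_1$ it is false, so the square you drew cannot commute for them. The nontrivial fibres of $q$ are exactly the lexicographically adjacent pairs $\beta01^\omega<\beta10^\omega$, with nothing strictly between them. An order-preserving homeomorphism preserves adjacency, so it maps fibres to fibres. This covers $X_0$ (which sends $00\omega,01\omega,1\omega$ to $0\omega,10\omega,11\omega$) and the other $F_\tau$ generators. By contrast, $C_i$ and $\Pi_i$ move pieces out of their linear order. At a breakpoint where $c_i$ or $\pi_i$ jumps, they send the two points of a fibre to points with different $q$-values. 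No homeomorphism $h$ of $[0,1]$ can exist for them. Away from the countably many breakpoints, $q\circ H$ agrees with $c_i\circ q$ or $\pi_i\circ q$. So $h$ would have to agree with $c_i$ or $\pi_i$ on the dense set $[0,1]\setminus\mathbb{Z}[\tau]$, which is impossible since these maps have jump discontinuities. Your ``agreement on a dense set plus continuity'' argument presupposes that the element of $V_\tau$ is continuous on $[0,1]$, which is exactly what fails. Your fallback has the same flaw: $c_i$ is continuous only on the circle, and $\pi_i$ on neither.

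\textbf{What can be salvaged.} The paper's only justification is the remark that $q$ is order-preserving with two-point fibres exactly over $\mathbb{Z}[\tau]\cap(0,1)$. Used correctly, that remark gives two things.
\begin{enumerate}
\item For order-preserving $H$, your Steps 1 and 3 go through, with Step 2 proved via adjacency rather than by inspecting automata. In fact, once $q\circ H=g\circ q$ has been checked cone by cone as in Proposition~\ref{prop q commutes for x_0}, you can simply take $h=g\in F_\tau$.
\item For all of $G$, the correct target is the blowup $\CC_\tau$, not $[0,1]$. Send $\beta01^\omega\mapsto x^-$ and $\beta10^\omega\mapsto x^+$, where $x=q(\beta01^\omega)=q(\beta10^\omega)$, and every other $\omega\mapsto q(\omega)$. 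This is an order isomorphism $\{0,1\}^\omega\to\CC_\tau$ lying over $q$, hence a homeomorphism. It is on $\CC_\tau$ that $V_\tau$ acts by homeomorphisms.
\end{enumerate}
So either restrict your descent claim to the order-preserving part of $G$, or replace the bottom row $[0,1]$ of your square by $\CC_\tau$.
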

\begin{figure}[ht]
    \centering
    \includegraphics[width=0.25\linewidth]{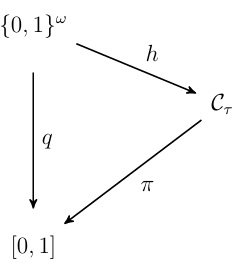}
\end{figure}
\noindent Since $q$ is order-preserving, the fiber over each $x$ in $[0,1]$ consists of 2 points if the $x$ is in $\mathbb{Z}[\tau]$ and one point otherwise.

\begin{prop}\label{prop q commutes for x_0}
    The map $q$ makes the following diagram commute, where $X_0$ is a generator of~$G$ and $x_0$ is a generator of $V_\tau$.
\begin{figure}[ht]
    \centering
    \includegraphics[width=0.4\linewidth]{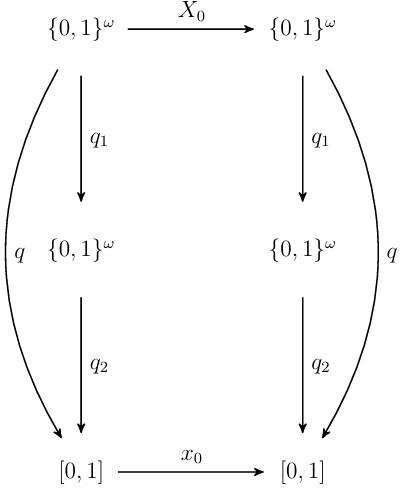}
\end{figure}
\end{prop}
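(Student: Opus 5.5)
The plan is to reduce the commutativity $q\circ X_0 = x_0\circ q$ to a finite check on cylinder sets. I will first show that $q$ sends each cylinder $c(w)$, $w\in\{0,1\}^*$, affinely onto an interval with endpoints in $\mathbb{Z}[\tau]$ and length a power of $\tau$. Then I will show that $x_0$ and $X_0$ are both "prefix replacement" maps, $w_iu\mapsto v_iu$, for the same finite list of prefix pairs $(w_i,v_i)$.

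\textbf{Step 1 (self-similarity of $q$).} For a finite word $w$, put $k=|q_1(w)|$, which is $|w|$ plus the number of $0$'s in $w$. Let $\sigma=q_1(w)\in\{0,1\}^k$ and $a_w=\sum_{n=1}^k\tau^{n+1}\sigma_n$. Since $q_1$ acts letter by letter, $q_1(wu)=\sigma\, q_1(u)$. Splitting the sum defining $q_2$ at position $k$ then gives
\[
q(wu)=a_w+\tau^{k}\,q(u)\qquad\text{for all } u\in\{0,1\}^\omega .
\]
The image of $q$ is $[0,1]$, because $q_2(0^\infty)=0$ and $q_2(1^\infty)=\tau^2/(1-\tau)=1$, using $1-\tau=\tau^2$. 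Hence $q(c(w))=[a_w,a_w+\tau^k]$ and $q$ restricted to $c(w)$ is the affine map $u\mapsto a_w+\tau^k q(u)$. With the convention $LL\leftrightarrow 0$, $R\leftrightarrow 1$, the interval $[a_w,a_w+\tau^k]$ is exactly the interval attached to the caret address of $w$ in the sense of Definition~\ref{def:caret_types}. I will check this by induction on $|w|$: appending $1$ cuts off a right piece of relative length $\tau$, and appending $0$, via two left steps, produces a piece of relative length $\tau^2$.

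\textbf{Step 2 (writing $x_0$ as a prefix replacement).} The tree diagram of $x_0$ in Figure~\ref{fig:x_n gen1} has some leaves reached through a single short left edge $L$, and these have no $\{0,1\}$ address. I will apply basic moves (the $x$-$x$ / $y$-$y$ interchange, justified by $\tau=\tau^2+\tau^3$) together with simultaneous caret additions in the domain and range trees. The goal is a diagram for $x_0$ in which every leaf address lies in $\{0,1\}^*$. I expect this to give domain leaves $w_1,\dots,w_m$ and range leaves $v_1,\dots,v_m$ with $|q_1(w_i)|$ and $|q_1(v_i)|$ possibly different; the slope of $x_0$ on the $i$-th piece is $\tau^{|q_1(v_i)|-|q_1(w_i)|}$. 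Step 1 then gives, for every $u$,
\[
x_0(q(w_iu)) = a_{v_i}+\tau^{|q_1(v_i)|}q(u)=q(v_iu).
\]

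\textbf{Step 3 (matching the automaton).} I will read off from Figure~\ref{fig:v_tau automaton} that, starting from the initial state $X_0$, the transducer reads each $w_i$, emits $v_i$, and then lands in the identity state. Then $X_0(w_iu)=v_iu$, and combining with Step 2 gives $q(X_0(\omega))=x_0(q(\omega))$ for every $\omega$, since the cylinders $c(w_i)$ partition $\{0,1\}^\omega$. The points of $\mathbb{Z}[\tau]\cap(0,1)$ with two preimages need no separate treatment: the identity holds pointwise on all of $\{0,1\}^\omega$, and both preimages land in cylinders where it has already been verified.

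The main obstacle is Step 2. Finding the right sequence of basic moves to eliminate the short left carets and obtain all-$\{0,1\}$ addresses requires care, as does keeping the domain and range leaf orders aligned so that the resulting $(w_i,v_i)$ agree exactly with the transitions of the automaton. I would first try expanding every leaf under a short $L$ edge with a $y$-type caret and then applying the $\tau=\tau^2+\tau^3$ move, and check the result against the figure. The same template should handle the other generators, with a leaf permutation in the cases of $c_i$ and $\pi_i$.
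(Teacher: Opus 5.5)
Your plan is correct and is essentially the paper's proof. The paper sets $f_L(t)=\tau t$, $f_R(t)=\tau^2+\tau t$ and uses $q_2(0\omega)=f_Lq_2(\omega)$ and $q_2(1\omega)=f_Rq_2(\omega)$, which is your Step 1. It then checks the three cylinders $00,01,1$: $X_0$ sends them to prefixes $0,10,11$, just as $x_0$ sends $[0,\tau^4],[\tau^4,\tau^2],[\tau^2,1]$ affinely onto $[0,\tau^2],[\tau^2,\tau],[\tau,1]$. Your anticipated obstacle in Step 2 does not arise for $x_0$: its pieces already have addresses $L^4,L^2R,R$ in the domain and $L^2,RL^2,R^2$ in the range, all in $\{LL,R\}^*$, so no basic moves are needed.
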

\begin{proof}
We will verify $x_0q = qX_0$. Let's recall the $x_0$ generator and the $X_0$ generator, see Figures \ref{fig:x_n gen1} and \ref{fig:X_0 gen}. From reading the diagram that describes the generator $x_0$, we introduce two linear functions~$f_L$, $f_R : [0,1] \rightarrow [0,1]$, defined by: 
\[
\begin{aligned}
    f_L(t) &= \tau t;\\
    f_R(t) &= (1-\tau)+\tau t.\\
\end{aligned}
\]
\begin{figure}
    \centering
    \includegraphics[width=0.4\linewidth]{"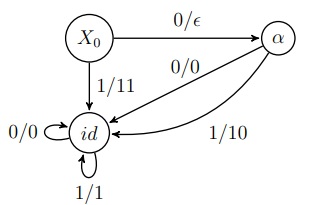"}
    \caption{The $X_0$ generator}
    \label{fig:X_0 gen}
\end{figure}
\noindent For simplicity, we denote $f_x \circ f_y = f_{xy}$. Due to being linear, these functions satisfy the following equations:
\[
\begin{aligned}
    &x_0f_{L^4}(\omega_1) = f_{L^2}(\omega_1);\\
    &x_0f_{L^2R}(\omega_1) = f_{RL^2}(\omega_1);\\
    &x_0f_R = f_{R^2}(\omega_1);\\
    &f_Lq_2(\omega_1) = q_2(0\omega_1) ;\\
    &f_Rq_2(\omega_1) = q_2(1\omega_1) .\\
\end{aligned}
\]

We distinguish 3 cases where the input word starts with 00, 01, and 1, corresponding to all possible inputs. 
\[
\begin{aligned}
    x_0q_2q_1(00\omega) &=  x_0q_2(0000\omega_2) =x_0f_{L^4}q_2(\omega_2) = f_{L^2}q_2(\omega_2)\\
    &=q_2(00\omega_2) =  q_2q_1(0\omega) = q_2q_1X_0(00\omega);\\
    x_0q_2q_1(01\omega) &=  x_0q_2(001\omega_2) =x_0f_{L^2R}q_2(\omega_2) = f_{RL^2}q_2(\omega_2)\\
    &=q_2(100\omega_2) =  q_2q_1(10\omega) = q_2q_1X_0(01\omega);\\
    x_0q_2q_1(1\omega) &=  x_0q_2(1\omega_2) =x_0f_Rq_2(\omega_2) = f_{R^2}q_2(\omega_2)\\
    &=q_2(11\omega_2) =  q_2q_1(11\omega) = q_2q_1X_0(1\omega).\\
    \end{aligned}
\]
Hence, $q$ in fact maps $X_0$ to $x_0$.
\end{proof}
\noindent It is similarly straightforward to check that this rational map acts the same on other pairs of generators.

\begin{proof}[Proof of Theorem \ref{v_tau automaton}]
    For each $g \in G$, let $\phi(g)$ be the unique element of $V_\tau$ that makes the following diagram commute:
\begin{figure}[H]
    \centering
    \includegraphics[width=0.3\linewidth]{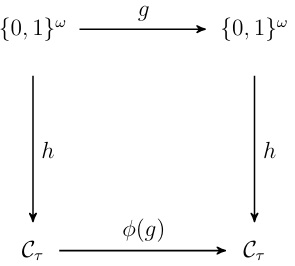}
\end{figure}

\noindent Let $v \in \{0,1\}^\omega$, then by construction it will be mapped to $[0,1]$ in two different ways $q\circ g (v)$ and $\phi(g)\circ q (v)$, thus $\phi$ is well defined. We will now show that $\phi$ is a homomorphism. Let $g_1, g_2 \in G$, then~$g_2 \circ g_1$ is also an element of $G$. Hence, by the definition of $\phi$ the following holds:
\[\begin{aligned}
    q \circ g_1 &= \phi(g_1) \circ q;\\
    q \circ g_2 &= \phi(g_2) \circ q;\\
    q \circ g_2 \circ g_1 &= \phi(g_2 \circ g_1) \circ q.\\
    \end{aligned}
\]
By manipulating the first two equations we obtain:
\[
    q\circ g_2 \circ g_1 = \phi(g_2) \circ \phi(g_1) \circ q.
\]
And thus:
\[
    \phi(g_2 \circ g_1) \circ q=\phi(g_2) \circ \phi(g_1) \circ q.
\]
Therefore, $\phi$ is a homomorphism. Figure \ref{fig: phi homomorphism} represents a diagram of this homomorphism for an arbitrary pair $g_1,g_2 \in G$.
\begin{figure}[htbp]
     \centering
     \includegraphics[width=0.5\linewidth]{"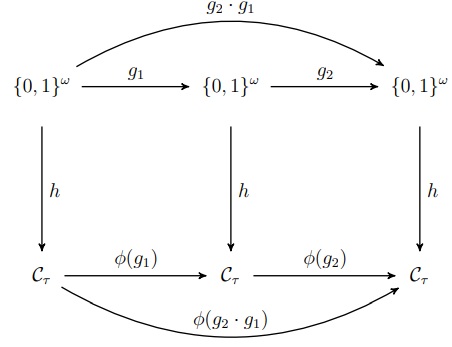"}
    \caption{A diagram representing the homomorphism of $\phi$}
    \label{fig: phi homomorphism}
\end{figure}
    Combining this with Lemma \ref{h commutes} makes $\phi$ an isomorphism from $G$ to $V_\tau$.
\end{proof}

\noindent Since by Theorems \ref{thm t gen} and \ref{thm v gen} the generators of both $T_\tau$ and $F_\tau$ are contained in $V_\tau$, we obtained the following two corollaries.
\begin{cor}
    The group of homeomorphisms of $\{0,1\}^\omega$ generated by $X_0, X_1, Y_0$ and $Y_1$ is isomorphic to $F_\tau$. See Figure \ref{fig:v_tau automaton} for the automata. 
\end{cor}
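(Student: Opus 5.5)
The plan is to restrict the isomorphism $\phi\colon G\to V_\tau$ built in the proof of Theorem~\ref{v_tau automaton} to the subgroup $H=\langle X_0,X_1,Y_0,Y_1\rangle\le G$. Since the restriction of an injective homomorphism is injective, $\phi|_H$ is an isomorphism onto its image $\phi(H)$. So the whole task reduces to showing $\phi(H)=F_\tau$, viewed as the subgroup of $V_\tau$ of orientation-preserving homeomorphisms of $[0,1]$.

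To identify the image, I first note that $\phi$ is characterised by $q\circ g=\phi(g)\circ q$. Proposition~\ref{prop q commutes for x_0} gives $\phi(X_0)=x_0$. I would carry out the same three-case verification (splitting inputs by the prefixes read off the automaton in Figure~\ref{fig:v_tau automaton}) to obtain
\[
\phi(X_1)=x_1,\qquad \phi(Y_0)=y_0,\qquad \phi(Y_1)=y_1 .
\]
These checks use the identities $f_Lq_2(\omega)=q_2(0\omega)$ and $f_Rq_2(\omega)=q_2(1\omega)$, together with the linear relations that the tree diagrams of $x_1,y_0,y_1$ impose on compositions of $f_L$ and $f_R$. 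For $y_0,y_1$ the relevant subdivisions are $y$-type carets, i.e.\ a split into $f_L$ (length $\tau$) and $f_R$ (length $\tau^2$). I expect this to need either the relation $f_L=f_{L^2}\cup f_{L^3}$-type basic moves or input prefixes of length two or three. Because $\phi$ is a homomorphism, it then follows that $\phi(H)=\langle x_0,x_1,y_0,y_1\rangle$.

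By the cited result \cite[Theorem 4.4]{Ttau}, the group $\langle x_0,x_1,y_0,y_1\rangle$ is exactly $F_\tau$, and $F_\tau$ sits inside $V_\tau$ as recalled before Theorem~\ref{thm t gen}. Hence $H\cong F_\tau$.

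The main obstacle is the verification of $\phi(Y_i)=y_i$ and $\phi(X_1)=x_1$. The non-binary nature of the $L/R$ encoding (where $0\leftrightarrow LL$) means a short left caret has no direct binary address, so the case split must be chosen to refine the domain tree until every leaf has an address that is a word in $\{LL,R\}$. My first attempt would be to expand each tree diagram by basic moves until all leaves are of that form, and then to read the output words directly off the automaton.
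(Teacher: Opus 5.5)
Your proposal is correct and is the paper's argument: restrict the isomorphism $\phi$ from Theorem~\ref{v_tau automaton} to $\langle X_0,X_1,Y_0,Y_1\rangle$, use $\phi(X_i)=x_i$ and $\phi(Y_i)=y_i$, and conclude because $x_0,x_1,y_0,y_1$ generate $F_\tau$; these identities are checked as in Proposition~\ref{prop q commutes for x_0}, and the paper likewise only asserts the remaining cases. One slip in your sketch: the relation you need for the $y$-type caret is not ``$f_L=f_{L^2}\cup f_{L^3}$''. It is that $f_L([0,1])=[0,\tau]$ is the union of $f_{L^2}([0,1])=[0,\tau^2]$ (binary address $0$) and $f_{LR^2}([0,1])=f_{RL^2}([0,1])=[\tau^2,\tau]$ (binary address $10$), and your length-two input prefixes are what implement this.
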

\begin{cor}
    The group of homeomorphisms of $\{0,1\}^\omega$ generated by $X_0, X_1, Y_0, Y_1, C_1$ and $C_2$ is isomorphic to $T_\tau$. See Figure \ref{fig:v_tau automaton} for the automata. 
\end{cor}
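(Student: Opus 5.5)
This follows from Theorem~\ref{v_tau automaton} by restriction. By the isomorphism $\phi\colon G\to V_\tau$ built there, which is characterized by $q\circ g=\phi(g)\circ q$, I would show that $\phi$ sends the subgroup $H=\langle X_0,X_1,Y_0,Y_1,C_1,C_2\rangle\leq G$ onto $T_\tau$.

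First I need $\phi(X_i)=x_i$, $\phi(Y_i)=y_i$ and $\phi(C_j)=c_j$ for these six generators. Proposition~\ref{prop q commutes for x_0} gives $x_0\circ q=q\circ X_0$, and the remark after it says the same check works for the other generator pairs. Since $q$ is surjective, $\phi(g)$ is determined by the identity $q\circ g=\phi(g)\circ q$, so each of these intertwining relations fixes $\phi$ on that generator. For $C_1,C_2$ I would redo the computation of Proposition~\ref{prop q commutes for x_0}. Read off from Figure~\ref{fig:c_1 c2 gen} the affine pieces of $c_j$ as compositions $f_w$ of $f_L,f_R$. Then split the input words of $C_j$ by their first few letters into the cases the automaton distinguishes. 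In each case check that $q_2q_1$ turns the output prefix into the matching composition $f_{w'}$.

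Since $\phi$ is a homomorphism, $\phi(H)$ is the subgroup of $V_\tau$ generated by $x_0,x_1,y_0,y_1,c_1,c_2$. By Theorem~\ref{thm t gen} this subgroup is exactly $T_\tau$, viewed inside $V_\tau$. Because $\phi$ is injective, its restriction $\phi|_H\colon H\to T_\tau$ is an isomorphism.

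The one step that needs care is the compatibility of the two pictures of $T_\tau$. Elements of $T_\tau$ are homeomorphisms of the circle, while $V_\tau$ as used here acts on $[0,1]$ modulo finite sets. So I would say explicitly that $T_\tau$ is identified with the subgroup of $V_\tau$ whose elements induce homeomorphisms of $[0,1]/\{0\sim 1\}$. The generators $c_j$, drawn as tree pairs with a cyclic relabeling of leaves, lie in this subgroup, and Theorem~\ref{thm t gen} says they generate it together with the $F_\tau$ generators. Apart from that, the main work is the routine case check that $C_1$ and $C_2$ intertwine with $c_1$ and $c_2$ under $q$.
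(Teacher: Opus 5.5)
Your proposal is correct and follows the paper's argument: the paper obtains this corollary by restricting the isomorphism $\phi\colon G\to V_\tau$ of Theorem~\ref{v_tau automaton} to the subgroup generated by $X_0,X_1,Y_0,Y_1,C_1,C_2$, and then uses Theorem~\ref{thm t gen} to identify the image with $T_\tau$. Your proposal also spells out two details the paper only asserts or leaves implicit: that $C_1$ and $C_2$ intertwine with $c_1$ and $c_2$ under $q$, and how $T_\tau$ is identified with a subgroup of $V_\tau$.
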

\noindent Having established the automaton representation of $V_\tau$, we can study its nucleus.

 \begin{thm}\label{nuc}
    The nucleus of the automaton that generates the Thompson group $V_\tau$ is given by the states $\{\beta, \gamma, id\}$.
\end{thm}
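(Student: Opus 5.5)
We would show that the set $\NN=\{\beta,\gamma,\mathrm{id}\}$ of states of the automaton in Figure~\ref{fig:v_tau automaton} is the nucleus $\NN_G$, and that $G=\{f\in\RR_{X}:\NN_f\subseteq\NN\}$. By Theorem~\ref{thm: 2.46} this also makes $V_\tau$ a rational similarity group.

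The first step is to read off local actions. Every generator $X_i,Y_i,C_{i+1},\Pi_i$ starts in a transient initial state. After reading a bounded prefix (at most a few symbols, matching the depth of the carets in the tree diagrams of Figures~\ref{fig:x_n gen1}--\ref{fig:pi_0 pi_1 gen}), every path enters one of the states $\beta,\gamma,\mathrm{id}$ and never leaves them. Hence each generator $g$ has $\NN_g\subseteq\{\beta,\gamma,\mathrm{id}\}$. From the automaton we then compute $\beta|_a$ and $\gamma|_a$ for $a\in\{0,1\}$. We expect that $\beta$ and $\gamma$ are the local maps that rewrite the two ways of expressing a point of $\mathbb{Z}[\tau]$ in base $\tau$, namely the tails $00111\cdots$ and $1000\cdots$ coming from the basic move. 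Concretely, we expect $\beta$ and $\gamma$ to return to $\beta$, $\gamma$ or $\mathrm{id}$ along the cycles $0\to0$ and $1\to1$, with all other letters going to $\mathrm{id}$. This checks conditions (1)--(3) of Definition~\ref{nuc def}: $\mathrm{id}\in\NN$, $\NN$ is closed under restriction, and $\beta$ and $\gamma$ each recur infinitely often along the cycles, so $\beta\in\NN_\beta$ and $\gamma\in\NN_\gamma$ (or each lies in the nucleus of the other).

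The second step, and the main obstacle, is conditions (4) and (5): $\NN_{x^{-1}}\subseteq\NN$ and $\NN_{xy}\subseteq\NN$ for $x,y\in\NN$. We would first identify $\beta^{-1}$. We guess that $\beta$ and $\gamma$ are mutually inverse, or that each is an involution, since they swap the tails $0.\beta00111\cdots$ and $0.\beta1000\cdots$. We then compute the products $\beta\gamma,\gamma\beta,\beta^2,\gamma^2$ by the standard product automaton construction. For each product we follow the pairs of states and check that along every infinite path the pair eventually becomes one of $(\mathrm{id},\mathrm{id})$, $(\beta,\mathrm{id})$, $(\mathrm{id},\gamma)$, and so on. These pairs represent elements of $\NN$, so the infinitely recurring local actions lie in $\NN$. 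The asynchronous outputs make this delicate: a product state also carries a buffered output word. The key claim to verify is that these buffers stay bounded, of length at most $2$, because $q_1$ replaces $0$ by $00$. With bounded buffers the product automaton is finite and its recurrent part is small enough to list by hand.

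Finally we put the pieces together. From the first step, $\NN_g\subseteq\NN$ holds for every generator $g$. Conditions (4) and (5), together with the standard inclusions $\NN_{fg}\subseteq\NN_{\NN_f\NN_g}$ and $\NN_{f^{-1}}\subseteq\NN_{\NN_f^{-1}}$ from \cite{Belk Boone-Higman}, then give $\NN_g\subseteq\NN$ for every $g\in G$. Since $\beta$, $\gamma$ and $\mathrm{id}$ all occur in $\NN_g$ for suitable generators, we conclude $\NN_G=\{\beta,\gamma,\mathrm{id}\}$. If $G$ turns out to be all of $\{f:\NN_f\subseteq\NN\}$, Theorem~\ref{thm: 2.46} then yields the rational similarity property. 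That last identification would be argued through $q$ and Theorem~\ref{v_tau automaton}: any such $f$ is piecewise a canonical similarity followed by a nucleus map, and hence corresponds under $q$ to an element of $V_\tau$.
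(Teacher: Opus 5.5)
Your proposal follows essentially the same route as the paper. The paper also guesses $\NN=\{\beta,\gamma,\mathrm{id}\}$ from the cycles of the automaton and checks the five conditions of Definition~\ref{nuc def} directly, disposing of $\gamma^2,\gamma\beta,\beta\gamma,\beta^2$ by one-line identities and the observation $\beta^2=\gamma^2=\mathrm{id}$. That observation is the involution you guessed, and you need it because the pair $(\beta,\beta)$ recurs along $0^\infty$ rather than reducing to a pair involving $\mathrm{id}$; the bounded-buffer issue you flag never arises, since composing two transducers requires no buffer in any case. Your closing step, deriving $\NN_g\subseteq\NN$ for all $g\in G$ from the generators via conditions (4)--(5) and the composition and inversion inclusions of \cite{Belk Boone-Higman}, is correct and makes explicit the identification of $\NN$ with the nucleus of $G$ that the paper leaves implicit.
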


\begin{proof}
To identify the nucleus of the automaton that generates the Thompson group $V_\tau$, we make an educated guess based on the properties of the nucleus. It is known that every map in the nucleus must occur for infinitely many words. The only maps that potentially satisfy this condition are $\beta$, $\gamma$, $id$ since the nucleus elements must follow a cycle in the automaton diagram, see Figure \ref{fig:v_tau automaton}. Let us denote the set of these maps as $\NN = \{\beta, \gamma, id\}$.

We now proceed to verify that the set $\NN$ satisfies the conditions required  to be the nucleus of injections.

The first condition follows since the identity map $id$ belongs to $\NN$.

The second condition is satisfied, as for every map $x \in \NN$, $x{|}_\alpha \in \NN$ for all $\alpha$, by observation of Figure \ref{fig:v_tau automaton}.

The third condition is immediate since $\{\beta,\gamma,id\} \subseteq \NN_\beta$ by observation of Figure \ref{fig:v_tau automaton}.

The fourth condition is verified by directly computing the inverse maps of $\beta, \gamma,$ and $ id$, as shown in Figure \ref{fig:beta inverse automaton}. Observe that~$\beta^{-1}$ is defined for words that start with~$10$ or $0$, and~$\beta^{-1}|_{10} = id$, and~$\beta^{-1}|_{0} = \beta$. Whereas $\gamma^{-1}$ is defined only for words that start with $011$ or $10$, and~$\gamma^{-1}|_{011} = \gamma^{-1}|_{10} = id$. Therefore,~$\NN_{\beta^{-1}} = \{\beta,\gamma,id\}$,~$\NN_{\gamma^{-1}} = \{id\}$ and~$\NN_{id^{-1}} = \NN_{id}$. Thus, the fourth condition is satisfied.

\begin{figure}
    \centering
    \includegraphics[width=0.65\linewidth]{"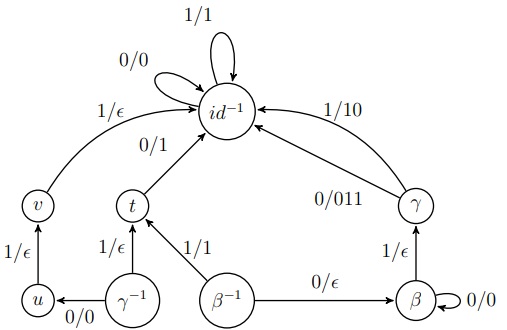"}
\caption{Automata of the the inverse maps of $\beta$, $\gamma$ and $id$ of the Thompson group $V_\tau$}
    \label{fig:beta inverse automaton}
\end{figure}

The fifth condition is the most complex, as it requires us to consider all possible compositions of maps in the nucleus. We will analyze these compositions case by case, showing that they always result in maps within our proposed nucleus. Verifying that the nucleus of a product with the identity state $id$ belongs to the set $\NN$ is trivial. Therefore, we only need to confirm that the nuclei of $\gamma^2$, $\gamma\beta$, $\beta\gamma$, and $\beta^2$ are contained in the set $\NN$. We can verify this by considering the following eight possible scenarios:
\[
\begin{aligned}
    &\gamma^2(0\omega) = \gamma(1\omega) = 0\omega;\\
    &\gamma^2(1\omega) = \gamma(0\omega) = 1\omega;\\
    &\gamma\beta(0\omega) = \gamma(0)\beta(\omega) = 1\beta(\omega);\\
    &\gamma\beta(1\omega) = \gamma^2(\omega);\\
    &\beta\gamma(0\omega) = \beta(1\omega) = 1\gamma(\omega);\\
    &\beta\gamma(1\omega) = \beta(0\omega) = 0\beta(\omega);\\
    &\beta^2(0\omega) = \beta(0)\beta(\omega) = 0\beta^2(\omega);\\
    &\beta^2(1\omega) = \beta(1)\gamma(\omega) = 1\gamma^2(\omega).\\
\end{aligned}
\]
See Figure \ref{fig:gamma square} for the automaton diagram of these maps.
Observe that the maps~$\gamma^2$ and~$\beta^2$ act as the identity. Whereas the maps~$\gamma \beta$ and~$\beta \gamma$ map all possible sequences to the maps~$\gamma$,~$\beta$ and~$\gamma^2 = id$.  Therefore, since every possible sequence is mapped to a state in the set $\NN$, the fifth condition is also satisfied.

\begin{figure}
    \centering
    \includegraphics[width=0.6\linewidth]{"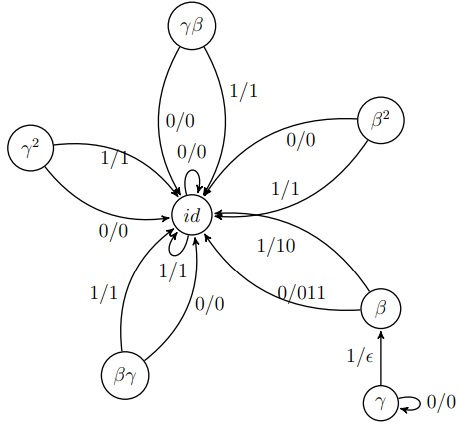"}
    \caption{The composition of states of $\NN$ }
    \label{fig:gamma square}
\end{figure}

Therefore, the set $\NN=\{\beta, \gamma, id\}$ satisfies all five conditions required to be the nucleus of the automaton that generates the Thompson group $V_\tau$.

Furthermore, given that the binary alphabet is irreducible and the nucleus $\NN$ is finite, we conclude that the nucleus is contracting.
\end{proof}

\begin{remark}
   It is not hard to see that $V_\tau$ is precisely this group $G$, in particular $V_\tau$ is a rational similarity group.
\end{remark}

We end this section by constructing the Cantor set on which $V_\tau$ acts. For a detailed introduction to the Cantor set, we refer the reader to \cite[Section 30]{Willard}.   
\begin{definition}
    Let $E$ be a countable dense subset of $(0,1)$. \textit{The blowup} of $[0,1]$ along~$E$ is the set
    \[
    ([0,1] \setminus E) \cup \{x^-: x \in E\} \cup \{x^+ :x \in E\}.
    \]
We endow this blowup with the order topology, where $a<x^-<x^+<b$ for any $x\in E$ and $a,b\in [0,1]\setminus E$ such that $a<x<b$.
\end{definition}

\begin{thm} \cite[Cantor's isomorphism theorem. Theorem 4.3]{Jech} 
Any two countable, dense, unbounded linear orders are order-isomorphic
\end{thm}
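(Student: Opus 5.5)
The statement is Cantor's back-and-forth theorem. I will prove it by building an order isomorphism between two countable dense linear orders without endpoints, $A$ and $B$. I enumerate them as $A=\{a_0,a_1,\dots\}$ and $B=\{b_0,b_1,\dots\}$. Then I construct an increasing chain of finite partial isomorphisms $f_0\subseteq f_1\subseteq\cdots$. Here a finite partial isomorphism means a strictly order-preserving bijection between a finite subset of $A$ and a finite subset of $B$. I start with $f_0=\emptyset$. The union $f=\bigcup_n f_n$ will be the desired isomorphism.

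The key step is the extension lemma, which I prove first. Let $p$ be a finite partial isomorphism and $a\in A\setminus\mathrm{dom}(p)$. Then there is $b\in B$ such that $p\cup\{(a,b)\}$ is still a partial isomorphism. To see this, let $x_1<\dots<x_k$ list the domain of $p$. The point $a$ lies in exactly one gap: below $x_1$, between some $x_i$ and $x_{i+1}$, or above $x_k$. If it lies between $x_i$ and $x_{i+1}$, then $p(x_i)<p(x_{i+1})$, and density of $B$ gives $b$ strictly between them. If $a$ lies below $x_1$ or above $x_k$, unboundedness of $B$ gives $b$ below $p(x_1)$ or above $p(x_k)$. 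If $p$ is empty, any $b$ works, since $B\neq\emptyset$. In every case $b$ lies in the matching gap, so order is preserved. The symmetric statement, adding a point $b$ to the range, follows by applying the lemma to $p^{-1}$, which is a partial isomorphism from $B$ to $A$.

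The construction alternates. At step $2n+1$, let $a$ be the least-indexed element of $A$ not yet in the domain, and extend forth by the lemma. At step $2n+2$, let $b$ be the least-indexed element of $B$ not yet in the range, and extend back by the symmetric lemma. By induction each $a_n$ lies in the domain of $f_{2n+1}$ and each $b_n$ lies in the range of $f_{2n+2}$. So $f$ is defined on all of $A$ and onto all of $B$. Any two points of $A$ already lie in the domain of a common $f_m$, so $f$ is order preserving; strictness then gives injectivity. Hence $f$ is an order isomorphism.

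There is no real obstacle. The one point needing care is the gap analysis in the extension lemma: I must check the boundary cases (the empty map, and the two unbounded ends) using exactly the hypotheses of nonemptiness, density and no endpoints. I must also make sure the alternation is arranged so that both $A$ and $B$ are exhausted, which is what the least-index bookkeeping guarantees. In the paper's setting this result is presumably applied to $\mathbb{Z}[\tau]\cap(0,1)$ and to the dyadic rationals in $(0,1)$, both countable, dense and without endpoints, to identify the blowup of $[0,1]$ with the standard Cantor set.
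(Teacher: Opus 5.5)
Your proposal is correct, and it is essentially the standard back-and-forth proof. The paper gives no proof of its own: it cites Jech's Theorem 4.3, whose proof builds the same increasing chain of finite partial isomorphisms with alternating forth and back steps. Your handling of the nonemptiness, density and no-endpoints cases in the extension lemma, and of the least-index bookkeeping, is complete.
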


\noindent We can now prove that a $blowup$ along $[0,1]$ is an efficient tool in construction of a Cantor set. Note that we consider the Cantor set in a topological setting.
\begin{thm}
Any blowup of $[0,1]$ along a countable dense subset $E$ of $(0,1)$ is homeomorphic to the Cantor set.    
\end{thm}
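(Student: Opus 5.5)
Call the blowup $B$. The plan is to apply Brouwer's characterization: a nonempty, compact, metrizable, totally disconnected space with no isolated points is homeomorphic to the Cantor set. The order topology on the linearly ordered set $B$ is Hausdorff, so I will verify each hypothesis directly from the ordering $a<x^-<x^+<b$. Cantor's isomorphism theorem stated just above could alternatively identify $E\cap(0,1)$ with the dyadic rationals in $(0,1)$. That would show $B$ is order-isomorphic to the standard blowup of $[0,1]$ along the dyadics, which is the middle-thirds Cantor set, but I would use it only as a cross-check.

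First I show that $B$ is totally disconnected and has no isolated points. For total disconnectedness, take $u<v$ in $B$. Since $E$ is dense, there is $x\in E$ with $u<x^-<x^+\le v$, after adjusting slightly if $u$ or $v$ is itself of the form $x^{\pm}$. Then $\{w: w\le x^-\}=\{w:w<x^+\}$ is both closed and open, and it separates $u$ from $v$. For no isolated points, note that a basic neighbourhood of $x^+$ has the form $[x^+,b)$ with $b>x^+$. By density of $E$ and of its complement, this interval contains further points of $B$, and the same reasoning applies to $x^-$, to the endpoints $0$ and $1$, and to the points of $[0,1]\setminus E$.

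Next I prove compactness by showing $B$ is order-complete with a minimum $0$ and maximum $1$. Given a nonempty $S\subseteq B$, project it to $[0,1]$ by sending $x^\pm\mapsto x$ and take $s=\sup$ of the projection. If $s\notin E$, then $s$ is the supremum of $S$ in $B$. If $s\in E$, the supremum is $x^+$ when $s^+\in S$ and $x^-$ otherwise. A linear order that is complete and has both endpoints is compact in its order topology, which is a standard fact.

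Finally, for metrizability I would use the countable clopen base $\{[0,x^-],[x^+,y^-],[y^+,1] : x<y\in E\}$. A compact Hausdorff space that is second countable is metrizable by Urysohn's theorem, and then Brouwer's theorem gives the result. I expect the main care to be needed in the supremum case analysis and in checking that these clopen intervals really form a base at points of $[0,1]\setminus E$. The latter follows because $E$ accumulates on both sides of every such point.
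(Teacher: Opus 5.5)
Your proof is correct, but it takes a different route from the paper. The paper reduces to a model case. Cantor's back-and-forth theorem gives an order-isomorphism $E\to D=\mathbb{Z}[1/2]\cap(0,1)$. This extends to an increasing homeomorphism of $[0,1]$, and hence to an order-isomorphism, and so a homeomorphism, between the two blowups. The dyadic blowup is then declared to be ``obviously'' the Cantor set, via binary expansions, where the two expansions of a dyadic rational $d$ give $d^-$ and $d^+$. This is exactly the cross-check you mention.

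You instead verify the hypotheses of Brouwer's characterization directly from the order:
\begin{itemize}
\item clopen separation by the cuts $\{w\le x^-\}=\{w<x^+\}$;
\item no isolated points;
\item order-completeness with endpoints, where your supremum case analysis is right;
\item a countable clopen base of intervals with endpoints of the form $x^{\pm}$, which gives metrizability by Urysohn.
\end{itemize}
Two small slips do not affect the argument. You wrote $x^{\pm}$ where you meant $s^{\pm}$. In the case $u=x^-$, $v=x^+$ you only get $u\le x^-$ rather than a strict inequality, which still suffices.

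The paper's route is shorter and gives the stronger conclusion that any two such blowups are order-isomorphic, not merely homeomorphic. However, it leaves unproved both the lifting of the extension to the blowups and the identification of the dyadic blowup with the Cantor set. Your route depends on the heavier Brouwer and Urysohn theorems, but is otherwise self-contained and needs no model. Your completeness and clopen-base computations also carry over verbatim to the double blowup $\mathcal{D}_\tau$ used later for the horofunction boundary. There they would rigorously show that $\mathcal{D}_\tau$ is compact, metrizable and zero-dimensional, with isolated points exactly the middle points $x\in I_\tau$. The paper only asserts this via an informal ``closed and bounded'' argument.
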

\begin{proof}
  By Cantor's theorem we have an order isomorphism $f\colon E \to D$, where $D=\mathbb{Z}\bigl[\tfrac{1}{2}\bigr]\cap(0,1)$. This extends continuously to a homeomorphism $f\colon [0,1]\to [0,1]$ that maps $E$ to $D$, and it follows that the blowup of $[0,1]$ along $E$ is homeomorphic to the blowup of $[0,1]$ along $D$.  But the latter is obviously homeomorphic to the usual Cantor set. 
\end{proof}

\begin{definition}\label{def Ctau}
Let $I_\tau = \mathbb{Z}[\tau] \cap (0,1)$, where $\mathbb{Z}[\tau] = \{a + b\tau : a,b \in \mathbb{Z}\}$. Then $\CC_\tau$ is a Cantor set defined by a blowup along $I_\tau$.
\end{definition}

\noindent Let $x_i \in I_\tau$ then the Cantor set $\CC_\tau$ is depicted as:
\[
\begin{tikzpicture}[scale=0.85]
\draw (1.5,0) node[anchor=north]{}
-- (3,0) node[anchor=south]{}
    (4.5,0) node[anchor=north]{}
-- (6,0) node[anchor=north]{}
    (7.5,0) node[anchor=north]{}
-- (9,0) node[anchor=north]{}
    (10.5,0) node[anchor=north]{}
-- (12,0) node[anchor=north]{}
    (13.5,0) node[anchor=north]{}
-- (15,0) node[anchor=north]{};

\node[circle,fill=black,inner sep=0pt,minimum size=0pt,label=above:{$\cdots$}] (a) at (0.5,-0.27) {};
\node[circle,fill=black,inner sep=0pt,minimum size=0pt,label=above:{$\cdots$}] (a) at (16.5,-0.27) {};

\node[circle,fill=black,inner sep=0pt,minimum size=4pt,label=above:{$x_1^+$}] (a) at (1.5,0) {};
\node[circle,fill=black,inner sep=0pt,minimum size=4pt,label=above:{$x_2^-$}] (a) at (3,0) {};
\node[circle,fill=black,inner sep=0pt,minimum size=4pt,label=above:{$x_2^+$}] (a) at (4.5,0) {};
\node[circle,fill=black,inner sep=0pt,minimum size=4pt,label=above:{$x_3^-$}] (a) at (6,0) {};
\node[circle,fill=black,inner sep=0pt,minimum size=4pt,label=above:{$x_3^+$}] (a) at (7.5,0) {};
\node[circle,fill=black,inner sep=0pt,minimum size=4pt,label=above:{$x_4^-$}] (a) at (9,0) {};
\node[circle,fill=black,inner sep=0pt,minimum size=4pt,label=above:{$x_4^+$}] (a) at (10.5,0) {};
\node[circle,fill=black,inner sep=0pt,minimum size=4pt,label=above:{$x_5^-$}] (a) at (12,0) {};
\node[circle,fill=black,inner sep=0pt,minimum size=4pt,label=above:{$x_5^+$}] (a) at (13.5,0) {};
\node[circle,fill=black,inner sep=0pt,minimum size=4pt,label=above:{$x_6^-$}] (a) at (15,0) {};
\end{tikzpicture}
\]
Here, each interval above represents a similar Cantor set. The Thompson's group $V_\tau$ acts on this Cantor set. Specifically, if $f\in V_\tau$ maps an interval $(a,b)$ to $(c,d)$ for some  $a,b,c,d\in \Z[\tau]$, then $f$ maps $[a^+,b^-]$ homeomorphically to $[c^+,d^-]$. It can be seen as the group of all possible permutations of non-dividable intervals contained in $\CC_\tau$,

\section{The monoid \textit{M}}\label{sec: Monoid}
This section introduces the reader to the main object of study in this paper: the monoid~$M = \langle L,R : LR^2 =RL^2\rangle$. We start by building up the language necessary to talk about graphs, after which we introduce the monoid $M$ and its associated Cayley graph. We end the section by presenting the distance formula for the Cayley graph of $M$ and introducing an intriguing representation of $M$ in terms of the real line.

\subsection{Preliminaries on graphs}\label{graph back}We first state some graph background definitions as stated in \cite[Section 2]{Kong}. We will define what we mean by graph is, impose a partial order relation on it and induce a metric and a topology on it.

An \textit{undirected graph} $\Gamma =(V, E)$ consists of a countable set $V$, elements of which are called \textit{vertices}, and a symmetric subset~$E \subseteq \big\{ \{x,y\} \text{ : } x,y \in V  \text{ and } x \neq y \big\}$, whose elements are unordered pairs and are called \textit{edges}. We represent edges as pairs of vertices, we do not allow loops, and we allow at most one edge between any given pair of vertices. The graph is said to be \textit{locally finite} if for every vertex~$x \in V$,~$\deg(x) := \#\{y : \{x, y\} \in E\} < \infty$, in other words, if every vertex emits a finite number of edges. A path of length $n$ is a sequence of edges $\{x_0,x_1\},\{x_1,x_2\}\dots\{x_{n-1},x_{n}\}$, where paths of length~$0$ are vertices.

A \textit{geodesic} is a shortest path between two vertices. For $x, y \in V$, a geodesic from $x$ to~$y$ is denoted by $\pi(x, y)$. The distance between $x$ and $y$ is the length of a geodesic and is denoted by $d(x, y)$. In case no geodesic exists, then $d(x, y) = \infty$. If for all $x, y \in V$, $d(x, y)$ is finite, then the graph $\Gamma = (V, E)$ is \textit{connected}; in this case $d$ is a word metric on~$V$.

A \textit{root} of a connected graph is a fixed vertex from which there exists at least one geodesic for every other vertex in the graph. When a graph $(V, E)$ is a locally finite connected graph and there exists a vertex $r \in V$ that acts as a fixed root, then the triple $(V, E, r)$ is called a \textit{rooted graph}. From now on, we reserve the variable $r$ for the root of a given graph. Note that for a given connected graph, any vertex can be chosen as its root. For $x, r \in V$ we denote $|x| := d(x, r)$  and let $V_n := \{x \in V : |x| = n\}$. Then~$V = \bigcup_{n=0}^\infty V_n$. We introduce a partial order relation $\preceq$ on $V$ with $y \preceq x$ if and only if $y$ belongs to some~$\pi(r, x)$.

Let $m \geq 0$ and $x \in V$, then
\[J_m(x) := \{y \in V : x \preceq y, |y| = |x| + m\}\]
is called the \textit{$m$-th descendant set}, and
\[\quad J_{-m}(x) := \{z \in V : x \in J_m(z)\}\]
is called the \textit{$m$-th predecessor set} of $x$, respectively. In principle, we allow $J_m(x)$ to be empty.

\subsection{The monoid \textit{M} and its Cayley graph}
Let $M = \langle L,R : LR^2= RL^2 \rangle$ be the monoid generated by 2 elements $L$ and $R$ with the action being multiplication by the right with the relation $LR^2=RL^2$. The Cayley graph of $M$ is an undirected locally finite rooted graph denoted by $Cay(M) = (M,E,1)$, where:
\begin{itemize}
    \item the vertex set $M$ is in bijection with the set of elements of the monoid $M$, i.e., every vertex in $Cay(M)$ represents a unique element in $M$; 
    
    \item the edge set $E$ consists of all pairs of vertices $x,y \in M$ such that $x =yL$ or $x=yR$, i.e. there exists an edge between two vertices if one of their corresponding elements in $M$ can be obtained from the second one by right multiplication by $L$ or $R$;

    \item the root of the graph is the identity element 1 of the monoid $M$.
\end{itemize}
\noindent In the case of the monoid  $M$, the defining relation $LR^2 = RL^2$ imposes specific self-similarity constraints on the structure of $Cay(M)$. For a visual representation of $Cay(M)$ refer to Figure \ref{fig: MM}. Note that the picture does not accurately represent the bottom layer; we expect the reader to imagine a fractal structure at the boundary of the graph. 

\begin{figure}
    \centering
    \includegraphics[width=0.89\linewidth]{"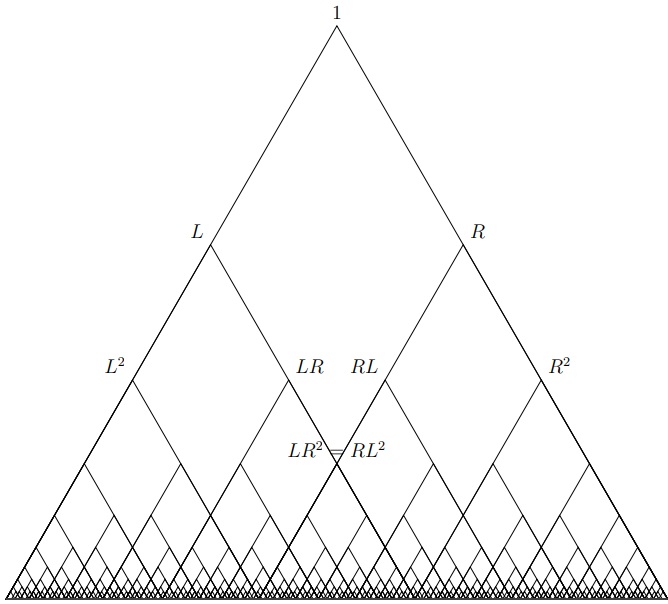"}
    \caption{The Cayley graph $Cay(M)$ of the monoid $M$ with some labeled elements}
    \label{fig: MM}
\end{figure}

\begin{definition}
    For any pair of vertices $x, y \in M$ in the graph $Cay(M) = (M, E, 1)$, a \textit{code} for a path between $x$ and $y$ is a word $w = w_1w_2\cdots w_n$, where each $w_i \in {L, R, L^{-1}, R^{-1}}$, with $L$ and~$R$ being the generators of the monoid $M$, and $L^{-1}$ and $R^{-1}$ being their formal inverses. The code for a path $w$ from~$x$ to~$y$ satisfies the equation $xw = xw_1w_2\cdots w_n = y$ in the free group generated by $L$ and $R$. The length of $w$ is defined as $n$ corresponding to the number of elements in the word $w$. We denote a geodesic code between $x$ and $y$ by $c(x,y)$.
\end{definition}   
\noindent Although $L^{-1}$ and $R^{-1}$ are used in the representation, they do not exist in the monoid $M$ itself, but rather represent backtracking in the Cayley graph. In the actual graph $Cay(M)$, each step of the path corresponds to an edge, which is always a right multiplication by $L$, $R$, $L^{-1}$ or $R^{-1}$ with inverses indicating moving ``backwards'' along an edge $L$ or $R$, respectively. 

The \textit{cone} of an element $x \in M$, denoted by $Cone(x)$, is the set of all elements in $M$ that can be obtained by multiplying $x$ on the right by any element of the monoid $M$, formally, 
\[
Cone(x) = \{x \cdot m : m \in M\},
\]
in other words, $m \in M$ is in the cone of $x$ when $x$ is a prefix of $m$.
\begin{prop}
    If $x,y \in M$ then $Cone(x) \subseteq Cone(y)$ if and only if $x \in Cone(y)$.
\end{prop}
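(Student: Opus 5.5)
This is a direct consequence of the definition $Cone(x)=\{xm : m\in M\}$ together with associativity of the monoid product and the presence of the identity $1\in M$. I would prove the two implications separately.

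For the forward direction, suppose $Cone(x)\subseteq Cone(y)$. Since $1\in M$, we have $x = x\cdot 1\in Cone(x)$, hence $x\in Cone(y)$. No use of the relation $LR^2=RL^2$ is needed here.

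For the converse, suppose $x\in Cone(y)$, so $x = yn$ for some $n\in M$. Take an arbitrary element $z\in Cone(x)$ and write $z = xm$ with $m\in M$. Then
\[
z = (yn)m = y(nm),
\]
by associativity, and $nm\in M$ because $M$ is closed under multiplication. Thus $z\in Cone(y)$, and since $z$ was arbitrary, $Cone(x)\subseteq Cone(y)$.

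The only point requiring any care is that equalities are taken in $M$ itself, i.e.\ modulo the defining relation, not in the free monoid. The argument never compares words letter by letter; it only uses that $M$ is a monoid (associative with identity). Consequently the statement holds regardless of how many words represent $x$ and $y$. In particular, the paper's informal gloss that ``$x$ is a prefix of $m$'' should be read as ``some word representing $m$ begins with some word representing $x$'', and the proof goes through with that reading. I expect no real obstacle in this proof.
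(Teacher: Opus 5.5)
Your proposal is correct and follows the paper's proof: the forward direction uses $x = x\cdot 1 \in Cone(x) \subseteq Cone(y)$, and the converse writes $x = yn$ and concludes $Cone(x)\subseteq Cone(y)$. The only difference is that you spell out the associativity step $xm = y(nm)$, which the paper leaves implicit when it writes $Cone(ym)\subseteq Cone(y)$.
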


\begin{proof}
    We will start from the forward condition; let $x, y \in M$ be such that~$Cone(x) \subseteq Cone(y)$. Then~$x \in Cone(x) \subseteq Cone(y)$.

    For the opposite direction, let $x \in Cone(y)$. Then there exists $m \in M$ such that $x = ym$, then~$Cone(x) = Cone(ym) \subseteq Cone(y)$.
\end{proof}
\noindent We end this section with a small lemma, which is clear from Figure \ref{fig: MM}.
\begin{lemma}
    $Cone(L) \cap Cone(R) = Cone(LR^2) =Cone(RL^2).$
\end{lemma}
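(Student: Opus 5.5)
The plan is to argue combinatorially with words, using a normal form for the monoid $M=\langle L,R : LR^2=RL^2\rangle$. The defining relation is length-preserving and both sides begin with different letters. So two words represent the same element only if they have the same length and are connected by a finite chain of elementary substitutions $LR^2 \leftrightarrow RL^2$ applied to subwords.

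The second equality $Cone(LR^2)=Cone(RL^2)$ is immediate: $LR^2=RL^2$ in $M$, so these are literally the same element, and the cone depends only on the element. The inclusion $Cone(LR^2)\subseteq Cone(L)\cap Cone(R)$ is equally direct. Any element $LR^2m$ equals $L\cdot(R^2m)$, which lies in $Cone(L)$, and also equals $RL^2m = R\cdot(L^2 m)$, which lies in $Cone(R)$.

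The main work is the reverse inclusion. Take $x\in Cone(L)\cap Cone(R)$. Then $x$ is represented by a word $u$ beginning with $L$ and by a word $v$ beginning with $R$, and there is a chain of elementary moves $u=u_0\to u_1\to\cdots\to u_k=v$. Since the first letter changes somewhere along the chain, there is a first index $i$ at which $u_i$ starts with $L$ and $u_{i+1}$ starts with $R$. A substitution changes the first letter only if it is applied at position $1$, because any other substitution leaves the first letter untouched. So the move $u_i\to u_{i+1}$ must replace a prefix $LR^2$ of $u_i$ by $RL^2$. Hence $u_i=LR^2w$ for some word $w$, and therefore $x\in Cone(LR^2)$.

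The one point requiring care is the claim that equality in $M$ is generated by such elementary substitutions on words. This holds by the standard description of a monoid presentation: the congruence generated by the single relation $LR^2=RL^2$ is exactly the equivalence closure of replacing one side by the other inside a word. This is the key fact to state explicitly. Once it is in place, the first-letter argument above is routine, so I expect no real obstacle beyond making this standard fact precise. It also justifies the informal appeal to Figure \ref{fig: MM}.
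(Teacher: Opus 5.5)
Your proof is correct, and it differs genuinely from the paper, which gives no argument beyond asserting that the lemma is clear from Figure~\ref{fig: MM}.

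You work with words instead. Equality in $M$ is the equivalence closure of the one-step substitutions $LR^2 \leftrightarrow RL^2$ inside a word. A substitution starting at a position $j\geq 2$ leaves the first letter untouched, while one at position $1$ always swaps it. So any chain from a word beginning with $L$ to a word beginning with $R$ must pass through a word with prefix $LR^2$.

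This gives a self-contained, rigorous proof that does not presuppose the shape of $Cay(M)$. The figure shows only finitely many vertices, drawn under the tacit assumption that the two cones overlap exactly in the middle subcone, which is precisely what the lemma asserts. So the picture conveys the intuition but does not rule out some element of $Cone(L)\setminus Cone(LR^2)$ also admitting a word that starts with $R$. Your argument closes that gap and thereby justifies the figure.

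It also transfers verbatim to $M_n=\langle L,R : LR^n=RL^n\rangle$, giving $Cone(L)\cap Cone(R)=Cone(LR^n)$. This works because the argument only uses that the two sides of the relation are nonempty and begin with different letters.
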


\subsection{The distance formula for the graph \textit{Cay(M)}}
In this section, we introduce the distance formula for the graph $Cay(M)$. We start by proving that every cone in $Cay(M)$ is geodesically convex, and then through a number of lemmas we introduce a geodesic formula for any given pair of elements in $Cay(M)$. The main purpose of this section is to prove the following theorem, which will be used later in multiple sections.

\begin{thm}\label{M distance}
    Let $x,y $ be any given pair of vertices in the graph $Cay(M)$. Then the distance between $x$ and $y$ is given  by:
\[
d(x,y) =\begin{cases}
        d(x',y'), & \begin{tabular}[t]{@{}l@{}} 
        $\textit{when } x=mx', y=my' \textit{ for some nontrivial } m\in M$\\
\end{tabular}\\
        |x|+|y| -2, & \begin{tabular}[t]{@{}l@{}} 
        $\text{when } x \in Cone(LR), y \in Cone(RL) \textit{ and } x,y \notin Cone(LR^2);$\\
\end{tabular}\\
        |x|+|y|,  & \begin{tabular}[t]{@{}l@{}} 
        else.\\
\end{tabular} 
    \end{cases}
\]
\end{thm}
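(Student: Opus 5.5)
The plan rests on three preliminary facts about $M$: equal-length normal forms, left cancellation, and convexity of cones.

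\emph{Normal forms and cancellation.} The relation $LR^2=RL^2$ is length preserving and homogeneous, so every word representing $x\in M$ has the same length. This common length equals $|x|=d(1,x)$: an edge always changes word length by exactly one, so a path from $1$ to $x$ has at least $|x|$ steps, and reading off a word gives a path of exactly that length. The rewriting system $RL^2\to LR^2$ is terminating and confluent, since the only overlap is $RL^2\cdot$ with $\cdot LR^2$ and it resolves. Hence $M$ has unique normal forms. From these normal forms I would deduce left cancellativity: $mx'=my'$ implies $x'=y'$. This also makes the map $x'\mapsto mx'$ an injective graph map onto $Cone(m)$ that preserves edges in both directions.

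\emph{Strong convexity of cones.} Next I would prove that every $Cone(m)$ is strongly geodesically convex. The idea is that a path starting in $Cone(m)$ can leave it only through a backward edge from some $m'\in Cone(m)$ with $|m'|=|m|$, that is, from $m$ itself. I would verify this using the lemma $Cone(L)\cap Cone(R)=Cone(LR^2)$ and its translates.

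Suppose a geodesic from $x$ to $y$, both in $Cone(m)$, leaves the cone at $m$ and re-enters at $m$. Then the excursion between these two visits can be cut out, so a geodesic never does this. If instead the geodesic re-enters through a different vertex at the boundary level, that is impossible, because $m$ is the unique minimal element of the cone.

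Under the isometry $x'\mapsto mx'$ this gives the first case, $d(mx',my')=d(x',y')$.

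\emph{The remaining cases.} After cancelling the longest common prefix, $x$ and $y$ share no nontrivial left factor. In particular, one lies in $Cone(L)\setminus Cone(R)$ and the other in $Cone(R)\setminus Cone(L)$, or one of them is $1$.

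The lower bound comes from tracking the height $|\cdot|$ along a geodesic. Any path from $x$ to $y$ either passes through $1$, giving length at least $|x|+|y|$, or stays inside $Cone(L)\cup Cone(R)$. In the latter case it must cross from $Cone(L)$ into $Cone(R)$ through their intersection $Cone(LR^2)$, since the only vertices adjacent to both cones lie there or at $1$. So the path visits some $z\in Cone(LR^2)$, and its length is at least $d(x,z)+d(z,y)$.

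I would then bound $d(x,z)$ from below using a height argument. Let $p$ be the point where the path from $x$ reaches its minimal height; any path from $x$ to $z$ has length at least $|x|+|z|-2|p|$. If $x\notin Cone(LR)$, then the route from $x$ into $Cone(LR^2)$ must come down to $L$ or to $1$. If $x\in Cone(LR)\setminus Cone(LR^2)$, the best one can do is descend to $LR$ at height $2$, step to $LR^2$, and then climb.

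Combining these estimates should give the following.
\begin{itemize}
\item When $x\in Cone(LR)$ and $y\in Cone(RL)$, both outside $Cone(LR^2)$, the best path goes $x\to LR\to LR^2=RL^2\to RL\to y$. Its length is $(|x|-2)+1+1+(|y|-2)=|x|+|y|-2$.
\item Otherwise, every detour through $Cone(LR^2)$ costs at least as much as passing through the root, and the distance is $|x|+|y|$.
\end{itemize}
The matching upper bounds are explicit paths, so the work is entirely in the lower bounds.

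\emph{Expected obstacle.} The hard step is the lower bound in the crossing case. One must rule out geodesics that dip into $Cone(LR^2)$ several times or go deep into it. Here I would use the self-similarity of $Cone(LR^2)$, which is isometric to $M$ by the first step, together with induction on $|x|+|y|$. Because $Cone(LR^2)$ is convex, a geodesic enters it at most once. If the geodesic meets $Cone(LR^2)$ only at its lowest vertex $LR^2$, the computation above applies. Otherwise the entry and exit points are deeper, and $d(x,y)$ splits into three pieces: the part outside the cone, the part inside it, and the part outside again. The inside piece is handled by induction applied inside the isometric copy of $M$. The height-counting argument should then show that such a deeper crossing never beats $|x|+|y|-2$.
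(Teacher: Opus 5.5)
Your overall route is reasonable: normal forms for $RL^2\to LR^2$, left cancellation, convexity of cones, then height counting with induction on $|x|+|y|$. The cancellation part is fine. The gap is in the convexity step, on which everything else rests, because it is argued through a false claim.

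You assert that a path can leave $Cone(m)$ only through a backward edge at $m$ itself. That would make $m$ a cut vertex, so any excursion could simply be excised. This fails for every cone. For $m=L$ one has $(RL)^kL=LR^{2k}$, so each $LR^{2k}$ is joined by an $L$-edge to $(RL)^k\notin Cone(L)$. For instance, the path $L\to LR\to LR^2=RL^2\to RL\to R$ leaves $Cone(L)$ at $LR^2$, not at $L$. Your own later analysis already contradicts the cut-vertex claim: you cross from $Cone(L)$ to $Cone(R)$ through $Cone(LR^2)$ while avoiding $1$, and you let paths enter $Cone(LR^2)$ at deeper points. So convexity is not proved as written. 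With it you lose three things: the first case of the formula, the transfer of the induction into isometric copies, and the claim that a geodesic meets $Cone(LR^2)$ in a single segment.

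The missing ingredient is an identification of the actual frontier of a cone, followed by a length comparison.
\begin{itemize}
\item \emph{Frontier of $Cone(L)$.} Use left cancellation and $Cone(L)\cap Cone(R)=Cone(LR^2)$; the latter gives $Cone(L^2)\cap Cone(R)=\emptyset$. From these, the vertices outside $Cone(L)$ adjacent to it are exactly the $(RL)^k$, $k\ge 0$, and the only neighbour of $(RL)^k$ inside $Cone(L)$ is $LR^{2k}$.
\item \emph{Length comparison.} An excursion leaving at $LR^{2k}$ and returning at $LR^{2j}$ has length at least $2+|2k-2j|$ by height counting. The $R$-edge path inside $Cone(L)$ has length $|2k-2j|$, so no geodesic leaves the cone.
\end{itemize}
This is essentially the paper's argument along the ray $LR^m$. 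After it, nested cones follow by self-similarity as you intend.

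The same frontier computation is what makes your crossing case work. For $Cone(LR^2)$ the entry points are $LR^2L^{2i}$, reached from $(LR)^{i+1}$, and $LR^2R^{2j}$, reached from $(RL)^{j+1}$. Combine this with $d(L^{2i},R^{2j})=2i+2j$, which is the paper's proposition on $\pi(L^n,R^m)$ or a consequence of your induction. Then height counting gives the bound $|x|+|y|-2$.

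Your intermediate claim that a route from $x\in Cone(L)\setminus Cone(LR)$ into $Cone(LR^2)$ must pass through $L$ or $1$ is also false. For example, $L^2\to L^2R\to L^2R^2=LRL^2\to LRL\to LR\to LR^2$ avoids both. The bound $d(x,LR^2)\ge |x|+1$ therefore has to come from the inductive hypothesis, via $d(Lx'',LR^2)=d(x'',R^2)$, not from the shape of the path.
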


\noindent This formula offers a straightforward approach to calculating the distance between any given pair of vertices $x,y$ in $M$. There are a total of three steps to follow. The first step is to remove the common prefix, resulting in the obtaining $x'$ and $y'$. The next step is to identify the first pair of prefixes of $x'$ and~$y'$. Finally, if $x' \in Cone(LR)$ and $y' \in Cone(RL)$, then the distance is $|x'| + |y'| - 2$; if not, it is simply~$|x'| + |y'|$.

We will prove this theorem by a number of lemmas throughout this section.
\begin{definition}
Let $S \subseteq X$ be a subset of a metric space $X$. The set $S$ is said to be \textit{geodesically convex} if for any two elements $x, y \in S$, there exists at least one geodesic joining $x$ and $y$ that is contained entirely in $S$.

In addition, when every geodesic that joins any two elements $x, y \in S$ is completely contained in $S$, the set is called \textit{strongly geodesically convex}.
\end{definition}

\begin{prop} \label{monoid convex}
    Each cone of $Cay(M)$ is strongly geodesically convex.
\end{prop}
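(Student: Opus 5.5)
Fix $m\in M$, points $x,y\in Cone(m)$, and a geodesic $\pi$ from $x$ to $y$. We argue by contradiction: suppose $\pi$ leaves $Cone(m)$. Then $\pi$ contains a maximal subpath $\sigma$ that lies outside $Cone(m)$ except at its two endpoints $a,b\in Cone(m)$. We will show that such an excursion can always be replaced by a strictly shorter path inside $Cone(m)$, contradicting that $\pi$ is a geodesic.

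\textbf{Step 1: the length function along edges.} First we observe that $|\cdot|$ changes by exactly $\pm1$ along every edge. The defining relation $LR^2=RL^2$ is homogeneous, so word length is well defined on $M$ and equals $d(1,\cdot)$. An edge joins $z$ to $zL$ or $zR$, so it raises or lowers $|z|$ by one.

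\textbf{Step 2: how an edge enters $Cone(m)$.} An edge $\{z,zs\}$ with $s\in\{L,R\}$ crosses from outside $Cone(m)$ into it only if $zs\in Cone(m)$ and $z\notin Cone(m)$. In that case $zs$ has a word beginning with $m$. We will show that this forces $zs=m$, or more generally $zs$ minimal in the cone. Consequently the endpoints $a,b$ of the excursion lie at the bottom level of the cone, with $|a|,|b|\le |m|$ up to the single relation.

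To prove this we need a normal-form understanding of $Cone(m)$. The relevant fact is that prefixes in $M$ behave almost as in a free monoid, with the only ambiguity coming from $LR^2=RL^2$. This is captured by the stated lemma $Cone(L)\cap Cone(R)=Cone(LR^2)$. Applying it recursively, using left cancellativity of $M$ (which follows from the relation being homogeneous and of the form required by Adjan's criterion), we get: if $zs=mu$ with $z\notin Cone(m)$, then $u$ is trivial, or the last letter was absorbed through a rewriting $LR^2\leftrightarrow RL^2$ at the end of the word.

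\textbf{Step 3: the excursion is too long.} Since $\sigma$ starts and ends at bottom points $a,b$ of $Cone(m)$ and leaves the cone, Step 1 shows it must descend below level $|m|$. This gives a lower bound $|\sigma|\ge |a|+|b|-2|p|$, where $p$ is the lowest vertex visited. Inside the cone, the only low points are $m$ itself, together with, when $m$ ends in $R^2$ or $L^2$, the alternative representative obtained through the relation. Connecting $a$ to $b$ through $m$ gives a path of length $d(a,m)+d(m,b)$. Using the lemma on $Cone(L)\cap Cone(R)$, we show this is at most $|\sigma|-2$. The two strands of the relation form a hexagon, which is the unique short cycle at the bottom of a cone. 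So any path leaving the cone must go strictly around the outside of this hexagon, and it is longer than the path through the interior.

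\textbf{Main obstacle.} The main difficulty is Step 2: rigorously describing when $zs\in Cone(m)$ with $z\notin Cone(m)$. This needs a solution to the word problem, or at least the left-divisibility structure, of $M$. I would first prove left cancellativity, then the statement that $u\in Cone(L)\cap Cone(R)$ iff $u\in Cone(LR^2)$, and induct on $|m|$. The induction reduces to $m$ a single generator, and uses self-similarity: $Cone(m)\cong Cay(M)$ via left multiplication.
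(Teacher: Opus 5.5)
\textbf{There is a genuine gap, in Step 2, and Step 3 depends on it.} The relation $LR^2=RL^2$ cascades, so a cone is entered from outside at vertices of arbitrarily large depth, not only at its apex or one relation away from it.

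Take $m=L$. The word $RLRL$ contains neither $LR^2$ nor $RL^2$, so it is the only word for its element, and hence $RLRL\notin Cone(L)$. Yet
$RLRL\cdot L=RL\cdot RL^2=RL\cdot LR^2=RL^2\cdot R^2=LR^4\in Cone(L)$.
More generally, by induction, $(RL)^{k+1}L=(RL)^kRL^2=(RL)^kLR^2=LR^{2k+2}$. So each $LR^{2k}$, of depth $2k+1$, is adjacent to the outside vertex $(RL)^k$.

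Consequently an excursion need not descend below level $|m|$. The path $LR^4,\ RLRL,\ RLR,\ RL,\ LR^2$ has length $4$, and its interior lies outside $Cone(L)$ at depths $\ge 2$. Your replacement through $m=L$ has length $d(LR^4,L)+d(L,LR^2)=4+2=6$, so the inequality $d(a,m)+d(m,b)\le|\sigma|-2$ fails. The ``hexagon at the bottom of the cone'' picture misses the infinite interface between $Cone(L)$ and $Cone(R)\setminus Cone(L)$, which runs along the side of $Cone(LR^2)$.

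\textbf{How to repair it.} You need the correct description of that interface. With it, your level-counting idea from Step 1 works, but with a different replacement path.
\begin{itemize}
\item From left cancellativity and $Cone(L)\cap Cone(R)=Cone(LR^2)$ you get $Cone(L^2)\cap Cone(LR^2)=\emptyset$. Indeed, a shortest common element $w$ yields, after cancelling $L$ and then $R$, a common element of length $|w|-2$.
\item Then, by induction on $|z|$: if $z\notin Cone(L)$ and $zs\in Cone(L)$ with $s\in\{L,R\}$, then $s=L$ and $z=(RL)^k$. To see this, write $z=Rz'$. Then $z's\in Cone(L^2)$ and $z'\notin Cone(L^2)$, and the disjointness above forces $z'=Lz''$ with $z''$ of the same kind.
\item So a maximal excursion runs from $LR^{2k}$ through $(RL)^k$ and $(RL)^j$ to $LR^{2j}$. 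By Step 1 its length is at least $2+|2k-2j|$.
\item The path $LR^{2k},LR^{2k\pm1},\dots,LR^{2j}$ along $R$-edges inside $Cone(L)$ has length $2|k-j|$. This is the strict shortening you need.
\end{itemize}
The case $m=R$ is symmetric. Your reduction of general $m$ to $m\in\{L,R\}$ by left multiplication is fine, provided you use the convexity of $Cone(m)$ from the induction to identify the intrinsic metric of $Cone(m)$ with the ambient one.

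This repaired argument is essentially the paper's route. The paper also places the exits of $Cone(L)$ on the vertices $LR^k$, and claims that $R^{m-n}$ is the unique geodesic between $LR^m$ and $LR^n$. It only asserts both facts; the argument above supplies them.
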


\begin{proof}
\begin{figure}
    \centering
    \includegraphics[width=0.89\linewidth]{"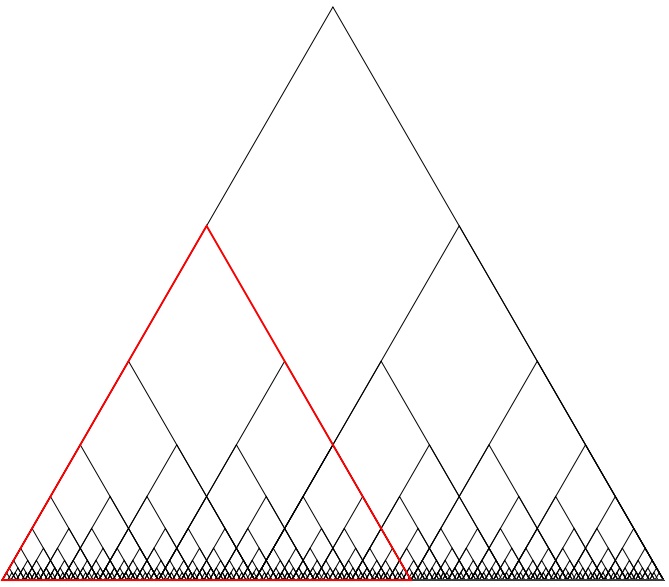"}
    \caption{$Cone(L)$ of the graph $Cay(M)$}
    \label{fig:LM}
\end{figure}

Let $x,y \in Cone(L)$ (represented by the red triangle in Figure \ref{fig:LM}). Let's assume, for the sake of contradiction, that a geodesic $\pi(x,y)$ at some point leaves $Cone(L)$. Then $\pi(x,y)$ can be decomposed into 3 subgeodesics $\pi(x,y) = \pi(x,LR^m)\cdot \pi(LR^m,LR^n)\cdot \pi(LR^n,y)$. We will focus on the second subgeodesic $\pi(LR^m,LR^n)$, as it represents the portion of the path that is supposed to leave $Cone(L)$. 

We will now show that the unique code $c(LR^m, LR^n)$ is $R^{m-n}$ (without loss of generality let $m>n $). Any deviation from the direct ascending path, whose code is $R^{m-n}$, would introduce additional steps, making the path longer. Therefore, $R^{m-n}$ is the unique geodesic code from~$LR^m$ to~$LR^n$. Note that it lies entirely within $Cone(L)$. This contradicts the assumption that a geodesic $\pi(LR^m, LR^n)$ leaves $Cone(L)$. Hence, $Cone(L)$ is stronlgy geodesically convex.

Due to symmetry, we apply the same argument to $Cone(R)$ by interchanging the roles of $L$ and $R$.

Observe that $Cay(M)$ is isometric to $Cone(L)$ and $Cone(R)$. Now, since~$Cone(LL)$ and $Cone(LR)$ are strongly geodesically convex in $Cone(L)$ they are also strongly geodesically convex in~$Cay(M)$. By induction, every cone in $Cone(L)$ and $Cone(R)$ is strongly geodesically convex. 

Suppose $Cone(m)$ is strongly geodesically convex, then $Cone(m)$ is isometric to $Cay(M)$ itself. Since $Cone(L)$ and $Cone(R)$ are strongly geodesically convex in $Cay(M)$, then $Cone(mL)$ and~$Cone(mR)$ are strongly geodesically convex in $Cone(m)$. Thus, $Cone(mL)$ and $Cone(mR)$ are strongly geodesically convex in $Cay(M)$. Hence, every~$Cone(m) \in Cay(M)$ is strongly geodesically convex.
\end{proof}

\begin{prop} \label{boundary path}
    Let $n,m \geq 0$, then the unique geodesic code of $\pi(L^n,R^m)$ is~$L^{-n}R^m$.
\end{prop}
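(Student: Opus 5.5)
The plan is to first establish the \emph{length grading} on $M$. The defining relation $LR^2=RL^2$ is length-preserving, so every word representing a given element of $M$ has the same length. Each edge of $Cay(M)$ joins $y$ to $yL$ or $yR$, so it changes this word length by exactly one. Since words are right-multiplied from $1$, this shows that $|x|$ equals the word length of $x$. In particular $|L^n|=n$, $|R^m|=m$, and every edge joins levels $V_k$ and $V_{k+1}$.

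The path with code $L^{-n}R^m$ descends from $L^n$ to $1$ and then ascends to $R^m$, so it has length $n+m$. Hence $d(L^n,R^m)\le n+m$, and it remains to prove the lower bound $n+m$ together with uniqueness. If $n=0$ or $m=0$, the grading already finishes the job: any path from $1$ to $R^m$ has length at least $m$, and a path of length exactly $m$ must go up one level at every step. The word $R^m$ has no other representative, because the relation cannot be applied to a word without an $L$. So its only prefixes are $R^k$, which forces the path to be $R^m$. The case $m=0$ is symmetric.

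For $n,m\ge 1$ I would argue by induction on $n+m$, using the structure of the crossing between $Cone(L)$ and its complement. The point $L^n$ lies in $Cone(L)\setminus Cone(R)$ and $R^m$ lies in $Cone(R)\setminus Cone(L)$; this holds because neither $L^n$ nor $R^m$ admits another word, so neither has a representative beginning with the other letter. A path from $L^n$ to $R^m$ must therefore at some point leave $Cone(L)\setminus Cone(R)$. I would check, by inspecting where the relation can be applied at the front of a word, that an edge leaving this set either is the edge $L$--$1$ or enters $Cone(L)\cap Cone(R)=Cone(LR^2)$, the intersection identified in the earlier lemma. The proof then splits into two cases:
\begin{itemize}
\item If the path passes through $1$, the grading gives length at least $n+m$, with equality only for the path $L^{-n}R^m$.
\item If the path avoids $1$, it contains a subpath from $L^n$ to some $z\in Cone(LR^2)$ and a subpath from $z$ to $R^m$.
\end{itemize}

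In the second case I would use the isometry $Cone(L)\cong Cay(M)$ and the strong geodesic convexity from Proposition~\ref{monoid convex}. Under this isometry $L^n$ corresponds to $L^{n-1}$ and $z=LR^2w$ corresponds to $R^2w$. The inductive hypothesis, combined with the grading, then gives $d(L^n,z)\ge (n-1)+|z|-1$; symmetrically, via $Cone(R)$ and $z=RL^2w'$, it gives $d(z,R^m)\ge (m-1)+|z|-1$. Since $z\in Cone(LR^2)$ forces $|z|\ge 3$, the sum exceeds $n+m$, so such a path is strictly longer than $L^{-n}R^m$. This shows that $L^{-n}R^m$ is the unique geodesic code.

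The main obstacle is the edge-crossing claim in the third paragraph. Because $M$ is not obviously left-cancellative, I must carefully classify the elements $y$ such that $yL$ or $yR$ has a representative beginning with a different letter than $y$ does. I expect to handle this by a rewriting argument showing that any such change of first letter comes from applying the relation at the very front of the word.
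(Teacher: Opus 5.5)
\textbf{The gap.} The problem is the sentence claiming that the inductive hypothesis plus the grading gives $d(L^n,z)\ge (n-1)+|z|-1$, and symmetrically $d(z,R^m)\ge (m-1)+|z|-1$, for an \emph{arbitrary} $z\in Cone(LR^2)$.

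After the isometry $Cone(L)\cong Cay(M)$, the first bound says $d(L^{n-1},R^2w)\ge n+1+|w|$ for every $w$. Your hypothesis only controls pairs $(L^a,R^b)$, and $R^2w$ is not a power of $R$ unless $w$ is. The available tools do not reach it:
\begin{itemize}
\item The grading gives only $d(L^{n-1},R^2w)\ge |w|+3-n$.
\item The triangle inequality through $R^2$ gives $d(L^{n-1},R^2w)\ge n+1-|w|$, which is the wrong direction.
\item You cannot route the path through $R^2$, since geodesics to $R^2w$ need not visit it. For instance, $R^2L^2=RLR^2$ is a child of $RLR\notin Cone(R^2)$.
\end{itemize}
The bound you want is exactly the ``else'' case of Theorem~\ref{M distance}, applied to $L^{n-1}$ and a point of $Cone(R^2)$. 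The paper deduces that theorem \emph{from} this proposition, so you cannot import it. As written, your induction does not close. Enlarging the hypothesis to general second endpoints essentially means proving the whole distance formula at once.

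\textbf{The missing idea.} Your two bounds \emph{are} instances of the hypothesis when $w$ is empty, i.e.\ for $z=LR^2$: they become $d(L^{n-1},R^2)=n+1$ and $d(L^2,R^{m-1})=m+1$. So what is missing is forcing the path through $LR^2$ itself, which is how the paper argues.
\begin{itemize}
\item Take $z$ to be the vertex where the path first enters $Cone(LR^2)$. One shows that it lies on the boundary ray $RL^{2+a}=LR^2L^a$.
\item Stripping the prefix $R$ turns the remaining piece into the pair $(L^{2+a},R^{m-1})$. By induction its unique geodesic climbs through $RL^2=LR^2$, so the path visits $LR^2$.
\end{itemize}
Identifying the entry point as $RL^{2+a}$ is where a rewriting or interval argument is genuinely needed. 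The crossing claim you flagged as the main obstacle is immediate, because cones are closed under right multiplication:
\begin{itemize}
\item a child of $x\in Cone(L)\setminus Cone(R)$ stays in $Cone(L)$, so it can only leave into $Cone(L)\cap Cone(R)=Cone(LR^2)$;
\item a parent of $x$ cannot lie in $Cone(R)$, so if it is outside $Cone(L)$ it must be $1$.
\end{itemize}
On this route you must still check that every pair fed back into the induction is strictly smaller. That depends on $a$, and $(L^{n-1},R^2)$ is not smaller when $m=1$; the paper passes over this quickly.

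\textbf{A cleaner alternative.} By Lemma~\ref{lem:I_M_homeomorphic_M}, the function $x\mapsto |x|-2\max\{k: x\in Cone(L^k)\}$ changes by exactly $\pm1$ along every edge. It takes the value $-n$ at $L^n$ and $m$ at $R^m$, which gives $d(L^n,R^m)\ge n+m$ with no path surgery. Combined with its mirror image under $L\leftrightarrow R$, it also yields uniqueness.
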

    See Figure \ref{fig:LRjM} for an illustration of the unique geodesic $\pi(L^n,R^m)$ in blue and $Cone(LR^2)$ in red. This visualization will aid in understanding the subsequent proof by contradiction.

\begin{figure}
    \centering
    \includegraphics[width=0.89\linewidth]{"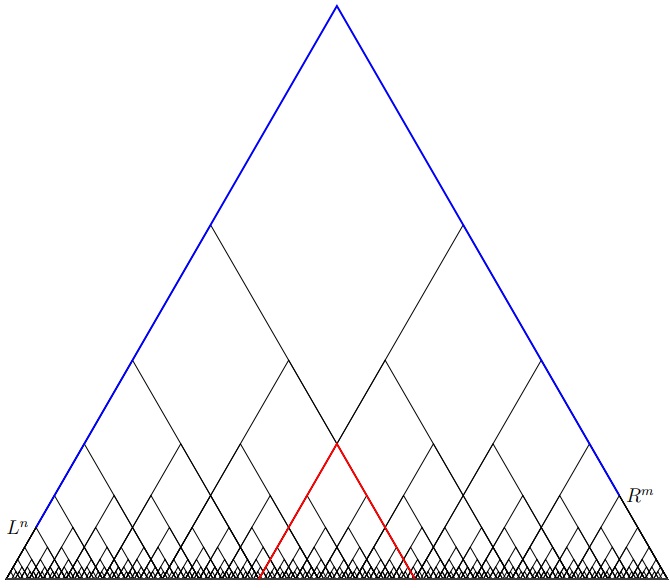"}
    \caption{The graph $Cay(M)$ with the $L^{-n}R^m$ code and $Cone(LR^2)$ highlighted}
    \label{fig:LRjM}
\end{figure}
\begin{proof}

We will prove this by induction on $k$. For $k\geq0$, let $n,m \geq 0$, then $n+m\leq k$ implies~$d(L^n,R^m) = n+m$. Let $k \leq 1$, then the statement holds trivially. Now let's assume that the statement holds for $k-1$. We will now prove it for $k$. 

Suppose, for contradiction, that $L^{-n}R^m$ is not the geodesic code $\pi(L^n,R^m)$, and does not visit the root $1$. Then this geodesic must pass through $Cone(LR^2)$. In this case, a code $c(L^n,R^m)$ can be decomposed into 2 subcodes:
    \[
    c(L^n,R^m) = c(L^n,RL^{2+a}) \cdot c(RL^{2+a}, R^m),
    \]
where $a \leq n-2$. If $a \geq n-2$, we will end-up with a code longer than $L^{-n}R^m$, contradicting the assumption that $c(L^n,R^m)$ is not the geodesic code.

Let's look at the second subcode $c(RL^{2+a}, R^m)$. Due to the convex property of the monoid $M$, a code $c(RL^{2+a}, R^m)$ cannot leave $Cone(R)$, therefore the code $c(RL^{2+a}, R^m)$ is the same as $c(L^{2+a}, R^{m-1})$. 

Note that $(a +2) +(m-1) \leq n+m - 1 = k-1$. Then, according to the induction hypothesis, the length of a code $c(RL^{2+a}, R^m)$ is $a+m+1$. The code $c(L^{-2-a}R^{m-1})$ is of such length, this implies that a code~$c(L^n,R^m)$ visits $LR^2$.

Given that a code $c(L^n,R^m)$ visits $LR^2$, we observe that it can be decomposed as:
\[
c(L^n,R^m) = c(L^n, LR^2) \cdot c(LR^2, R^m).
\]

Consider the subcode $c(L^n, LR^2)$. From the convex property of $Cay(M)$, we observe that $c(L^n, LR^2) = c(L^{n-1}, R^2)$. Since~$n-1+2 < k$, according to the inductive hypothesis,~$d(L^{n-1}, R^2)  = n+1$.

This gives $d(L^n, LR^2) = n+1$ and similarly, $d(LR^2, R^m) = m+1$. Hence, 
\[
d(L^n,R^m) = (n+1)+(m+1) = n+m+2.
\]
But this is greater than $n+m$, which is the length of the code $c(L^{-n}R^m)$. This contradicts the assumption that $\pi(L^n,R^m)$ does not pass through the root 1. 

It is left to prove uniqueness. Observe that $c(L^n,R^m) = c(L^n,1) \cdot c(1,R^m)$, where both subgcodes have unique ascending and descending respective codes. Therefore, the unique code for the geodesic $\pi(L^n,R^m)$ is~$c(L^{-n}R^m)$.
\end{proof}

\begin{prop}\label{x R path}
    Let $x \in \big(Cone(L)\setminus Cone(LR)\big)$, then $d(x,R^n) = |x|+n$.
\end{prop}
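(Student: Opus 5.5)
The inequality $d(x,R^n)\le |x|+n$ is immediate: go from $x$ down to the root $1$ along a geodesic and then up along $R^n$. So everything rests on the lower bound. I will argue by contradiction, following the pattern of Proposition~\ref{boundary path} and inducting on $|x|+n$. Suppose some path from $x$ to $R^n$ of length less than $|x|+n$ exists. Then a geodesic $\pi(x,R^n)$ avoids the root, since any path through $1$ has length at least $|x|+n$. Such a geodesic starts in $Cone(L)$ and ends in $Cone(R)$, so it must cross from $Cone(L)\setminus Cone(R)$ into $Cone(R)$ without touching $1$. By the lemma $Cone(L)\cap Cone(R)=Cone(LR^2)$, the crossing happens inside $Cone(LR^2)$. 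In particular the geodesic visits a vertex of $Cone(LR)$.

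The hypothesis $x\notin Cone(LR)$ now enters. The vertex $x$ lies in $Cone(L)$, which is isometric to $Cay(M)$ via $Lz\mapsto z$, and under this identification $x\notin Cone(R)$. Inside $Cone(L)$ the sets $Cone(LL)$ and $Cone(LR)$ play the roles of $Cone(L)$ and $Cone(R)$, and they meet only in $Cone(L^2R^2)=Cone(LRL^2)$. The part of the geodesic from $x$ to its first vertex in $Cone(LR)$ stays in $Cone(L)$ by Proposition~\ref{monoid convex}. So it either passes through $L$ itself, or enters $Cone(LR)$ through $Cone(LRL^2)$.

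In the first case the path already has length at least $d(x,L)=|x|-1$. From $L$ it still has to reach $R^n$ without visiting $1$. By Proposition~\ref{boundary path}, or by its induction argument, that costs at least $n+1$, giving total length at least $|x|+n$, a contradiction.

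In the second case I decompose the geodesic through a vertex $LRL^{2}w$ and the vertex where it enters $Cone(R)$. Each piece is shorter, so I can bound it using the induction hypothesis together with the convexity reduction $c(mu,mv)=c(u,v)$ from Proposition~\ref{monoid convex}. For example, the segment inside $Cone(L)$ from $x=Lx'$ to $LRL^2w$ has length $d(x',RL^2w)$, and this is the same kind of distance with $x'\notin Cone(R)$.

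I expect the main obstacle to be the accounting in the second case. I need to show that the detour into $Cone(LRL^2)$ and then through $Cone(LR^2)$ to $R^n$ costs at least $|x|+n$. The plan is to phrase the induction statement more generally, as ``$d(x,y)=|x|+|y|$ whenever $x\in Cone(L)\setminus Cone(LR)$ and $y\in Cone(R)$ with $y$ on the boundary ray $R^n$''. I will then mirror exactly the step in Proposition~\ref{boundary path} where passing through $Cone(LR^2)$ was shown to add at least $2$.
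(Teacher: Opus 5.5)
Your reduction and your first case are fine, and they amount to the paper's own count. The paper asserts that a root-avoiding geodesic must pass through $LR$, then combines $d(LR,R^n)=n+2$ with $d(x,1)\le d(x,LR)+2$. That is the same bookkeeping as your $d(x,L)+d(L,R^n)=(|x|-1)+(n+1)$.

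The gap is your second case, which is where the whole content of the proposition sits. The paper covers it only by its one-line claim, attributed to Proposition~\ref{boundary path}, that the geodesic must visit $LR$. You isolate this case correctly but do not prove it, and the plan you sketch cannot work as stated.

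\begin{itemize}
\item Your ``more general'' induction statement is literally the proposition again: sources in $Cone(L)\setminus Cone(LR)$, targets on the ray $R^n$. So it says nothing about the pieces your decomposition produces.
\item The piece $d(x',RL^2w)$ has its target in $Cone(RL^2)=Cone(LR^2)$, off the ray. Moreover $x'$ and $RL^2w=LR^2w$ share the prefix $L$ whenever $x'\neq 1$, so this distance is typically well below $|x'|+|RL^2w|$. For example, $d(L^2,RL^2)=d(L,R^2)=3$, not $5$.
\item The last piece runs from the entry vertex $RL^2m$ of $Cone(R)$ to $R^n$, and by Proposition~\ref{monoid convex} it reduces to $d(L^2m,R^{n-1})$. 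But $L^2m$ can lie in $Cone(L^2R^2)=Cone(LRL^2)\subseteq Cone(LR)$, outside your hypothesis.
\item Induction on $|x|+n$ is not well founded here: nothing prevents the crossing vertices from lying deeper than $x$, so the parameters of the pieces need not be smaller.
\end{itemize}

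Two things are missing: a lower bound for the piece inside $Cone(L)$ that does not come from the induction hypothesis, and the right widening of that hypothesis. Here is one way to supply both.

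Cut the geodesic only at its first vertex $p$ in $Cone(R)$. If the geodesic avoids $1$, the predecessor of $p$ is neither $1$ nor in $Cone(R)$. So $p$ is a child of a vertex of $Cone(L)$, and hence $p=LR^2m=RL^2m$.

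Every edge joins consecutive word lengths. Therefore $d(x,p)\ge |x|-|p|$, with equality only if $x\in Cone(p)$, and every path from $x$ to $p$ has the parity of $|x|-|p|$. Since $x\notin Cone(LR^2)\supseteq Cone(p)$, this gives $d(x,p)\ge |x|-|p|+2$.

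Now prove ``$d(x,R^n)=|x|+n$ for all $x\in Cone(L)\setminus Cone(LR^2)$'' by induction on $d(x,R^n)$ rather than on $|x|+n$. Three facts close the step:
\begin{itemize}
\item $d(p,R^n)=d(L^2m,R^{n-1})<d(x,R^n)$, because $p\neq x$ lies on the geodesic.
\item $L^2m\notin Cone(LR^2)$: compare $[0,1]_{L^2}=[0,\tau^2]$ with $[0,1]_{LR^2}=[\tau^2,\tau]$, which meet in a single point.
\item The induction hypothesis then gives $d(p,R^n)=|p|+n-2$, and adding the bound on $d(x,p)$ yields $d(x,R^n)\ge |x|+n$.
\end{itemize}

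This handles both of your cases at once, with no recursion on how the geodesic enters $Cone(LR)$.
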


\begin{proof}
The distance $d(x,R^n)$ depends on a geodesic $\pi(x,R^n)$, which in its half has two possibilities: it visits the root 1 or it does not. If it does, then we are trivially done. If $n=0$ we are also trivially done, let $n>0$.

For the purpose of contradiction, let's assume that there exists a geodesic $\pi(x,R)$ that does not visit the root and has length at most $|x|+n-1$. If such a geodesic exists, then at some point it must enter and leave $Cone(LR)$. Then Proposition \ref{boundary path} implies that it must visit~$LR$.

We can split the assumed geodesic into two subgeodesics in the following way:
\[
\pi(x,R) = \pi(x,LR) \cdot \pi(LR,R^n).
\]
Following Proposition \ref{boundary path}, we observe that $d(LR,R^n) = n+2$ has two possible geodesics, one that travels through the root and the other that goes through $LR^2$. Then $d(x,R^n) = d(x,LR) +d(LR,R^n) = d(x,LR) +n+2 \leq |x|+n-1$ implies that $d(x,LR) \leq |x|-3$. 

But this means that
\[
    d(x,1) =d(x,LR)+d(LR,1) \leq |x|-3+2 = |x|-1.
\]
This is a contradiction, as there is no path from $x$ to the root of length $|x|-1$. Therefore, if $x \in \big(Cone(L) \setminus Cone(LR)\big)$, then every geodesic $\pi(x,R^n)$ must pass through the root, consequently,~$d(x,R^n) = |x| + n$.
\end{proof}

We now have enough tools to introduce a geodesic code formula $c(x,y)$ for any pair of vertices $x,y \in Cay(M)$. Before we formally state it, we will explain how it works. This is done in a number of steps:
\begin{enumerate}
    \item identify the smallest $Cone(m)$ such that $x,y \in Cone(m)$, note that $m$ can be the empty word;
    \item remove the common prefix $m$ from $x$ and $y$, obtaining elements $x'$ and $y'$, respectively. Without loss of generality, let $x'$ start with $L$ and $y'$ start with $R$;
    \item if $x' \in Cone(LR)$ and~$y' \in Cone(RL)$, then one of the geodesics from~$x$ to~$y$ starts from~$x$, takes the ascending path to~$mLR$, then through~$mLR^2$ travels to~$mRL$, and finally takes the descending path to~$y$. We denote the code of such geodesic as~$c(x,y) = (x'^{-1}RL)(R^{-2}L^{-1}y)$;
    \item if $x' \notin Cone(LR)$ or $y' \notin Cone(RL)$, then there may exist multiple geodesics from~$x$ to~$y$. One of these takes the ascending path from~$x$ to the root of~$Cone(m)$ (the vertex~$m$) and then takes the descending path to~$y$. We denote the code of such geodesic as~$c(x,y) = x'^{-1} y'$.
\end{enumerate}

\begin{prop} \label{M geo}
   Let $x,y $ be any given pair of vertices in the graph $Cay(M)$, and let $x'$ and $y'$ be elements obtained from $x$ and $y$ by removing their common prefix $m$. The geodesic code between~$x$ and~$y$ is given by the following equation:
   \[
    c(x,y) = \begin{cases}
    c(x',y'), & \begin{tabular}[t]{@{}l@{}}
$\text{if } x,y \in Cone(m);$\\
\end{tabular} \\
    (x^{-1}RL^2) (R^{-2}L^{-1}y), & \begin{tabular}[t]{@{}l@{}}
$\text{if }x \in Cone(A_2) \text{ and } y \in Cone(A_4);$\\
\end{tabular} \\
    x^{-1}y, & \begin{tabular}[t]{@{}l@{}}
$\text{else.}$\\
\end{tabular}
   \end{cases}
   \]
   where $Cone(m) \neq M$, $Cone(A_2) = Cone(LR) \setminus Cone(LR^2)$ and $Cone(A_4) = $ $Cone(RL) \setminus Cone(LR^2)$.
\end{prop}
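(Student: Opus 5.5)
The plan is to reduce to the case of trivial common prefix, then split according to the first letters of $x$ and $y$, and to the position of the geodesic relative to the root and to $Cone(LR^2)$.

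\emph{First case.} Suppose $x,y\in Cone(m)$ with $m$ nontrivial. By Proposition \ref{monoid convex}, $Cone(m)$ is strongly geodesically convex, so every geodesic from $x$ to $y$ stays inside $Cone(m)$. Left multiplication by $m$ is a graph isomorphism $Cay(M)\to Cone(m)$, since edges are right multiplications by $L$ or $R$. This map sends $x'$ to $x$ and $y'$ to $y$. Hence $c(x,y)=c(x',y')$, and we may iterate until the common prefix is trivial.

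\emph{Reduction.} Assume the common prefix is trivial. If one of $x,y$ is the root, the code is just the ascending or descending path, so assume both are nontrivial. If both started with $L$, the common prefix would be nontrivial, so without loss of generality $x\in Cone(L)$ and $y\in Cone(R)$. Neither lies in $Cone(LR^2)=Cone(L)\cap Cone(R)$, by the Lemma preceding this subsection.

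A geodesic $\pi(x,y)$ either passes through $1$ or avoids it. If it passes through $1$, its length is $|x|+|y|$ with code $x^{-1}y$. If it avoids $1$, it must cross from $Cone(L)$ to $Cone(R)$ without using the root. The only other connection is through the common part $Cone(LR^2)$, so the geodesic enters $Cone(LR^2)$.

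\emph{Entry and exit through the boundary.} Inside $Cone(L)\cong Cay(M)$, the set $Cone(LR^2)$ is $Cone(R^2)$ relative to $L$. A geodesic from outside to a point of it must pass through the boundary vertices $LR^k$, and similarly through $RL^k$ on the $Cone(R)$ side. By Proposition \ref{boundary path} and convexity, the portion of the geodesic inside $Cone(LR^2)$, joining some $LR^{2+a}=RL\cdot(\ldots)$ to some $RL^{2+b}$, can be replaced by the path through $LR^2=RL^2$. Thus we may assume the geodesic visits $LR^2$, and it remains to compute $d(x,LR^2)+d(LR^2,y)$.

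\emph{Computing the two pieces.} If $x\in Cone(A_2)$, the ascending path from $x$ to $LR$ followed by the edge to $LR^2$ gives $d(x,LR^2)\le |x|-2+1=|x|-1$. Here I use that $|x|=|x''|+2$ for $x=LRx''$, which follows from convexity of $Cone(LR)$. Symmetrically, $d(LR^2,y)\le |y|-1$ when $y\in Cone(A_4)$. This gives total length $|x|+|y|-2$, with the code $(x^{-1}RL^2)(R^{-2}L^{-1}y)$ as stated.

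Conversely, if $x\notin Cone(LR)$, Proposition \ref{x R path} gives $d(x,R^n)=|x|+n$. More generally, a route to $LR^2$ avoiding $1$ must go through $LR$, because $LR^2\in Cone(LR)$ and that cone is convex. Therefore $d(x,LR^2)\ge d(x,1)+1$. Indeed, a path from $x$ into $Cone(LR)$ through $LR$ costs at least $|x|-?$; I plan to show it costs at least $|x|+1$, via $d(x,LR)\ge |x|$, using the same counting trick as in Proposition \ref{x R path}. Combining with $d(LR^2,y)\ge |y|-1$ (the triangle inequality through $1$) gives total length at least $|x|+|y|$. The same argument applies when $y\notin Cone(RL)$. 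In these cases the route through the root, with code $x^{-1}y$, is a geodesic.

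\emph{Main obstacle.} The step I expect to be hardest is the lower bound $d(x,LR)\ge |x|$ for $x\in Cone(L)\setminus Cone(LR)$, together with the lower bound $d(x,LR^2)\ge |x|-1$ for $x\in Cone(A_2)$. The concern is that geodesics might wander into $Cone(LR^2)$ and return. I would handle this by induction on $|x|+|y|$, as in Proposition \ref{boundary path}, using self-similarity to shorten any excursion.
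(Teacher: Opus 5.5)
\textbf{Verdict: the proposal has a genuine gap.} Your outline is the paper's: strip the common prefix using strong convexity, argue that a geodesic from $Cone(L)$ to $Cone(R)$ visits $1$ or $LR^2$, and compare $|x|+|y|$ with $d(x,LR^2)+d(LR^2,y)$. The paper reads off $d(x,LR^2)=|x|-1$ on $A_2$ and $|x|+1$ on $A_1$ (and the mirror values for $y$) from Proposition~\ref{x R path}. Those values are the whole content of the proposition, and your proposal does not establish them.

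(i) The bound $d(LR^2,y)\ge |y|-1$ does not follow from the triangle inequality through $1$. That only gives $d(LR^2,y)\ge |y|-|LR^2|=|y|-3$. With it, for $x\in A_1$, $y\in A_4$ you cannot exclude a route through $LR^2$ shorter than $|x|+|y|$.

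(ii) ``A route to $LR^2$ avoiding $1$ must go through $LR$ because $Cone(LR)$ is convex'' does not follow. Convexity controls geodesics whose two endpoints lie in the cone, not where a path from outside enters it. Indeed $L^2R\in A_1$ is adjacent to $L^2R^2=LRL^2\in Cone(LR)$.
\begin{itemize}
\item The same error affects your entry claim. From the $L$-side, the edges into $Cone(LR^2)$ are $(LR)^k\to (LR)^kR=RL^{2k}$.
\item For example, the geodesic $L^2RL\to L^2R\to LRL^2\to LRL\to LRLR\to LRLR^2=LR^2L^2$ has length $5$ and meets no $LR^k$.
\item The conclusion that a geodesic avoiding $1$ visits $LR^2$ still holds. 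Its segment inside $Cone(LR^2)$ joins some $RL^{2k}$ to some $LR^{2j}$, so Proposition~\ref{boundary path} applies. But this is not the reason you give.
\end{itemize}

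(iii) The bounds $d(x,LR)\ge |x|$ on $A_1$ and $d(x,LR^2)\ge |x|-1$ on $A_2$ are only announced. The ``induction shortening excursions'' is never specified.

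There are two ways to close this.

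\emph{Following the paper.} Strong convexity plus left translation give $d(LRz,LR^2)=d(z,R)$, $d(Lx',LR^2)=d(x',R^2)$ and $d(RL^2,Ry')=d(L^2,y')$, with $z,x'\notin Cone(R)$ and $y'\notin Cone(L^2)$. So everything reduces to $d(w,R^n)=|w|+n$ for $w\notin Cone(R)$, and its mirror. Proposition~\ref{x R path} is stated only for $w\in Cone(L)\setminus Cone(LR)$, so you also need it on $A_2$.

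\emph{Using the grading.} The relation $LR^2=RL^2$ is length-preserving, so $|\cdot|$ is word length and every edge changes it by exactly $1$. This, not convexity, is why $|LRx''|=|x''|+2$.
\begin{itemize}
\item Hence $d(u,v)\equiv |u|-|v| \pmod 2$.
\item A path of length $|u|-|v|$ must move toward the root at every step, which forces $u\in Cone(v)$.
\item So $d(u,v)\ge |u|-|v|+2$ whenever $u\notin Cone(v)$.
\end{itemize}
This gives $d(x,LR^2)\ge|x|-1$, $d(LR^2,y)\ge|y|-1$ and $d(x,LR)\ge |x|$ at once.

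The only remaining bound is $d(x,LR^2)\ge |x|+1$ for $x\in A_1$.
\begin{itemize}
\item By parity, a shorter path has length $|x|-1$ and takes exactly one step away from the root, along an edge $p\to ps$ with $x\in Cone(p)$ and $ps\in Cone(LR^2)$.
\item Necessarily $p\in Cone(L)\setminus Cone(LR^2)$. Write $p=Lp'$; left cancellation gives $p's\in Cone(R^2)$.
\item $p'\neq 1$ by length, and $p'\in Cone(L)$ is impossible. Indeed $X=Cone(L)\cap Cone(R^2)$ satisfies $X=RL\,X$, hence is empty.
\item So $p'\in Cone(R)$, i.e.\ $p\in Cone(LR)$. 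Then $x\in Cone(LR)$, contradicting $x\in A_1$.
\end{itemize}
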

    
\noindent Before proving Proposition \ref{M geo} we invite the reader to view Figure \ref{fig:a1-5}. This figure demonstrates 5 subsets of vertices of the graph $Cay(M)$.
    \[\begin{aligned}
    &A_1=Cone(L) \setminus Cone(LR);\\
    &A_2 =Cone(LR) \setminus Cone(LR^2);\\
    &A_3 =Cone(LR^2);\\
    &A_4 =Cone(RL) \setminus Cone(LR^2);\\ 
    &A_5 =Cone(R) \setminus Cone(RL).\\
    \end{aligned}\]
    Essentially, if $x'$ and $y'$ are in $Cone(m)$, then a geodesic will not pass through the root of $Cone(m)$ only if both $x'$ and $y'$ belong to the areas $A_2$ and $A_4$, respectively. If only one of them belongs to its respective area, then a geodesic has the choice of passing through the root or not. In case none of them are, then every geodesic passes through the root $m$. 

\begin{figure}
    \centering
    \includegraphics[width=0.89\linewidth]{"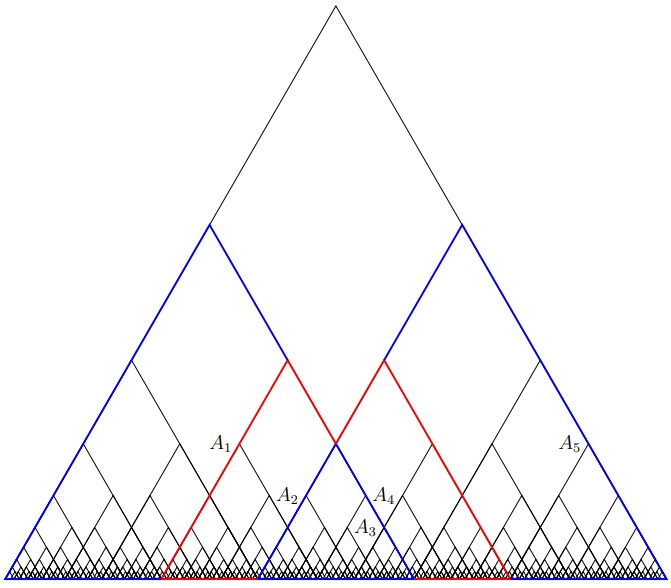"}
    \caption{The areas $A_1$ - $A_5$ on the Cayley graph $M$}
    \label{fig:a1-5}
\end{figure}

\begin{proof}
Suppose $x,y$ are in $Cone(m)$, where $m$ is not the root, then the first case follows immediately from Proposition~\ref{monoid convex}. 

Now suppose $x$ and $y$ are not in any nontrivial $Cone(m)$. Clearly~$x$ and~$y$ cannot belong to $Cone(LR^2)$. Without loss of generality, let~$x$ start with~$L$ and~$y$ start with~$R$. Let's assume that a geodesic $\pi(x,y)$ passes through $Cone(LR^2)$. From Proposition~\ref{boundary path} it is known that such a geodesic must travel along the boundary of~$Cone(LR^2)$. Hence, if~$\pi(x,y)$ passes though~$Cone(LR^2)$ it must visit~$LR^2$. 

Therefore, to pass from $Cone(L)$ to $Cone(R)$ a geodesic $\pi(x,y)$ must visit $1$ or $LR^2$. This implies that
\[
\pi(x,y) = min \{\pi (x ,1) \cdot \pi(1,y), \text{  } \pi(x, LR^2) \cdot \pi (LR^2, y)\}.
\]
We only need to find the codes for $\pi(x ,1)$ and $\pi (x, LR^2)$, since the codes for $y$ will be analogous cases.

Let's start with the geodesic that goes to the root, then
\[
c(x ,1) = x^{-1}\text{ and }d(x ,1)= |x|.
\] 
We now analyze the second possible geodesic $\pi(x, LR^2)$. We will split this into 2 cases, when $x \in \big(Cone(LR) \setminus Cone(LR^2)\big)$ (area $A_2$ of Figure \ref{fig:a1-5}) and when $x \in \big(Cone(L) \setminus Cone(LR)\big)$ (area $A_1$). Then by Proposition \ref{x R path}:
\[
\pi(x, LR^2) = \begin{cases}
    |x|-1, \text{ when } x \in Cone(LR);\\
    |x|+1, \text{ else.}
\end{cases}
\]
Due to symmetry, we obtain the following:
\[
\pi(y, LR^2) = \begin{cases}
    |y|-1, \text{ when } y \in Cone(RL);\\
    |y|+1, \text{ else.}
\end{cases}
\]
This is enough to conclude the following three statements.
\begin{enumerate}
    \item If $x \in \big(Cone(LR) \setminus Cone(LR^2)\big)$ and $y \in \big(Cone(RL) \setminus Cone(LR^2)\big)$, then a geodesic $\pi(x,y)$ passes through $LR^2$ and has length $|x| + |y| - 2$. One of the possible codes for a geodesic is $(x^{-1}RL^2)(R^{-2}L^{-1}y)$;
    \item If only one of the elements is in $Cone(LR) \setminus Cone(LR^2)$ or $Cone(RL) \setminus Cone(LR^2)$ respectively, then a geodesic has a choice of visiting either the root or $LR^2$, both resulting in the same length. One of the possible codes for this geodesic is $(x^{-1}y)$
    \item When neither of the elements is in these regions, a geodesic $\pi(x,y)$ will always pass through the root.
\end{enumerate}

Recall from Proposition \ref{monoid convex} that every cone in $Cay(M)$ is strongly geodesically convex. Hence, combining this with the initial reduction to the smallest common $Cone(m)$ concludes the proof.
\end{proof}

\begin{proof}[Proof of Theorem \ref{M distance}]
    Following Proposition \ref{M geo} we  take the length of a code between $x$ and $y$.
\end{proof}

Let $x,y$ be a pair of vertices in $Cay(M)$ and let $w \in \{L,R\}^*$. Observe that multiplying~$x$ and~$y$ from the left by $w$ does not alter their distance. This means that the left multiplication is an isometry. We end this section with a small but important consequence of Theorem~\ref{M distance}.

\begin{cor}
    The monoid $M$ is left cancellative.
\end{cor}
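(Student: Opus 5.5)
To show $M$ is left cancellative, I need: whenever $wx = wy$ in $M$ for some $w \in M$ and $x,y \in M$, then $x = y$. By induction on the length of $w$ it suffices to treat $w \in \{L,R\}$, since $w = w_1 w_2\cdots w_k$ and cancelling the generators one at a time from the left reduces everything to the single-generator case. So the goal is: if $Lx = Ly$ (or $Rx = Ry$) in $M$, then $x = y$.

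The tool is the remark preceding the corollary, that left multiplication by any $w \in \{L,R\}^*$ preserves distance in $Cay(M)$. It follows from the first case of Theorem~\ref{M distance}: for $x,y$ arbitrary and $m = w$ nontrivial, $d(wx, wy) = d(x,y)$. Granting this, the argument is immediate. If $wx = wy$ as elements of $M$, then they are the same vertex of $Cay(M)$, so $0 = d(wx,wy) = d(x,y)$. Since $d$ is a genuine metric on the vertex set, $d(x,y)=0$ forces $x = y$ as vertices, i.e.\ as elements of $M$.

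The delicate point is whether the first case of Theorem~\ref{M distance} really applies to \emph{every} pair $x,y$, i.e.\ that $d(mx,my)=d(x,y)$ holds with no hypothesis on $x,y$. This is the statement that $Cone(m)$ is isometric to $Cay(M)$ via $z \mapsto mz$. To justify it I will proceed in three steps.
\begin{enumerate}
\item By Proposition~\ref{monoid convex}, every geodesic between $mx$ and $my$ stays in $Cone(m)$, so distances between points of $Cone(m)$ may be computed inside the induced subgraph on $Cone(m)$.
\item The map $z\mapsto mz$ from $M$ onto $Cone(m)$ sends each edge $z \to zL$, $z\to zR$ to an edge $mz\to mzL$, $mz \to mzR$, and every edge inside $Cone(m)$ has this form. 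So the map is a surjective graph morphism that is $1$-Lipschitz, and paths inside $Cone(m)$ lift to paths in $M$.
\item The point requiring care is whether the map is injective. Noninjectivity would mean that two distinct elements of $M$ become equal after prefixing $m$, which is exactly the failure of left cancellation. Here I rely on the paper's assertion (used in the proof of Proposition~\ref{monoid convex}) that $Cone(L)$ and $Cone(R)$ are isometric copies of $Cay(M)$.
\end{enumerate}

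For the case $m=L$, this can be checked directly from the presentation. The relation $LR^2=RL^2$ is the only one, so the only rewriting available on a word beginning with $L$ is $LRR \leftrightarrow RLL$ applied somewhere. Two words $Lu$ and $Lv$ that are equal in $M$ are connected by a chain of such rewrites, and I would argue the leading letter can only change via a rewrite at position one, passing through a word beginning $RLL$. Tracking such a chain, the suffixes $u$ and $v$ are equal in $M$ because every excursion $LRR\cdots \to RLL\cdots \to LRR\cdots$ returns the same suffix. This word-level check is the main obstacle; once it holds for $L$ and $R$, the distance-preservation argument above, with induction on $|w|$, finishes the proof.
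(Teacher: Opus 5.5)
The distance argument you give (``Granting this, the argument is immediate'') is exactly the paper's proof. The paper derives the corollary from the first case of Theorem~\ref{M distance}, i.e.\ from $0=d(wx,wy)=d(x,y)$. You are right that this is not self-contained. That case rests on the claim in Proposition~\ref{monoid convex} that $z\mapsto Lz$ and $z\mapsto Rz$ are isometries of $Cay(M)$ onto $Cone(L)$ and $Cone(R)$. An isometry is injective, so that claim already contains left cancellation by $L$ and $R$. Your step~3 therefore leans on the statement being proved. The only non-circular content of your proposal is the word-level check, and you leave that at ``I would argue''. Once that check is done, the metric detour is superfluous: cancellation by single letters plus your induction on $|w|$ already gives the corollary. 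Injectivity of $z\mapsto mz$ is a consequence of cancellation, not a route to it.

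The word-level check has a genuine gap at its key sentence. It is not true that an excursion ``returns the same suffix''. While the leading letter is $R$, rewrites at positions $\ge 2$ can destroy and recreate the $LL$ after that $R$. For instance, $RLLRR\cdots \to RLRLL\cdots$ is a rewrite at position~$3$. So an excursion runs from $RLLt$ to $RLLt'$ with $t,t'$ possibly different words, and all you know is $LLt=LLt'$ in $M$. To conclude $RRt=RRt'$ you must cancel $LL$, which is left cancellation again.

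The fix is an induction on word length. This is legitimate because $LR^2=RL^2$ preserves length, so every word in a rewriting chain has the same length. Assume cancellation of a single leading letter holds for shorter words. Then $LLt=LLt'$ gives $Lt=Lt'$ and hence $t=t'$, so each excursion returns a suffix equal in $M$. Segments with rewrites only at positions $\ge 2$ preserve the suffix in $M$ trivially, and the case of a leading $R$ is symmetric. With this induction your argument is complete. It is the one-relator instance of Adyan's criterion: the two sides $LR^2$ and $RL^2$ begin with different letters.
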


\subsection{A representation of the monoid \textit{M} in terms of the real line}This section introduces a set of subintervals $I_M$ that will allow us to study the properties of $M$ in terms of the real line. In particular, for every element of $M$ we associate a closed subinterval from the unit interval of the real line.

\begin{definition}
Let $I_c$ be the set of all closed intervals $[x,y] \subseteq [0,1]$, such that $x,y \in \mathbb{R}$ and $x < y$.
\end{definition}

If $[x,y] \subseteq I_C$ then we define the subinterval as follows:
\begin{equation}\label{eq: intervals}
\begin{aligned}
[x,y]_1 & \coloneq [x,y];\\
[x,y]_L & \coloneq [x, y - (y-x)\tau^2];\\
[x,y]_R & \coloneq [x + (y-x)\tau^2, y].\\
\end{aligned}
\end{equation}
Recall that $\tau$ is the positive root of the equation $x+x^2=1$, approximately 0.618034.

If $w = w_1w_2\cdots w_n$ is a word in $\{L,R\}^*$, we define $[x,y]_w$ recursively by $[x,y]_w= ([x,y]_{w_1w_2\cdots w_{n-1}})_{w_n}$.

\begin{definition}\label{def:I_M}
Let $w = w_1w_2\cdots w_n$ be a word in $\{L,R\}^*$, then we define $I_M \subseteq I_c$ as the collection of all subintervals $[0,1]_w$.
\end{definition}

\begin{lemma} 
Let $[x,y] \in I_M$, then the following equality holds:
\[
[x,y]_{LR^2} = [x,y]_{RL^2}.
\]
\end{lemma}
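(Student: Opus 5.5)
We prove the identity by direct computation, tracking only the endpoints. Both sides are obtained from $[x,y]$ by three successive applications of the operations in \eqref{eq: intervals}. Each operation replaces an interval by a subinterval whose length is $\tau$ times the original: $[x,y]_L$ and $[x,y]_R$ both have length $(y-x)(1-\tau^2)$, and $1-\tau^2=\tau$ because $\tau+\tau^2=1$. Hence after three letters both $[x,y]_{LR^2}$ and $[x,y]_{RL^2}$ have length $\tau^3(y-x)$. It therefore suffices to show that their left endpoints agree. Nothing specific to $I_M$ is used, so the identity holds for any $[x,y]\in I_c$.

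Write $\ell=y-x$. For $LR^2$, the first step $L$ keeps the left endpoint $x$ and gives length $\tau\ell$. Each subsequent $R$ shifts the left endpoint by $\tau^2$ times the current length. The second letter therefore adds $\tau^2\cdot\tau\ell=\tau^3\ell$, after which the length is $\tau^2\ell$. The third letter adds $\tau^2\cdot\tau^2\ell=\tau^4\ell$. The left endpoint of $[x,y]_{LR^2}$ is thus $x+(\tau^3+\tau^4)\ell$.

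For $RL^2$, the first step $R$ moves the left endpoint to $x+\tau^2\ell$ and gives length $\tau\ell$. The two subsequent $L$'s keep the left endpoint fixed, so the left endpoint of $[x,y]_{RL^2}$ is $x+\tau^2\ell$.

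It remains to check that $\tau^3+\tau^4=\tau^2$. This follows by multiplying $\tau+\tau^2=1$ by $\tau^2$. Since the left endpoints and the lengths agree, the two intervals coincide.

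There is no real obstacle. The only care needed is in the recursive definition $[x,y]_w=([x,y]_{w_1\cdots w_{n-1}})_{w_n}$: each new letter must act on the current subinterval, so its $\tau^2$-shift is scaled by the current length and not the original one. Keeping track of this is exactly why the length factor $\tau$ per step is recorded first.
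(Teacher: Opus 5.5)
Your proof is correct and is essentially the paper's argument: a direct computation of the endpoints of both intervals using $\tau+\tau^2=1$. Your bookkeeping is a tidier version of the paper's computation, which expands both endpoints fully in $x$, $y$ and powers of $\tau$. You note that each letter scales the length by $\tau$, so only the left endpoints need to be compared.
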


\begin{proof}
We will verify this rule by direct computation:
\[\begin{aligned}
[x,y]_{LR^2} &= [x, x\tau^2 + y - y\tau^2]_{R^2} \\
&= [x - x\tau^2 + x\tau^4 + y\tau^2 -  y\tau^4, x\tau^2 + y -  y\tau^2]_R \\
&= [x-2x\tau^2+3x\tau^4-x\tau^6+2y\tau^2-3y\tau^4+y\tau^6, x\tau^2 +y-y\tau^2]\\
&= [x - x\tau^2 +  y\tau^2, x\tau^2 +y-y\tau^2];
\end{aligned}\]

\[\begin{aligned}
[x,y]_{RL^2} &= [x - x\tau^2 + y\tau^2, y]_{L^2} \\
&= [x - x\tau^2 + y\tau^2,  x\tau^2 - x\tau^4 + y - y\tau^2 + y\tau^4]_L \\
&= [x - x\tau^2 +  y\tau^2, 2x\tau^2 - 3x\tau^4 + x\tau^6 + y - 2y\tau^2 + 3y\tau^4 - y\tau^6]\\
&= [x - x\tau^2 +  y\tau^2, x\tau^2 +y-y\tau^2].
\end{aligned}\]
\noindent The final simplification in both cases is based on the property $\tau = \tau^2+\tau^3$. Both computations give the same result, thus proving the lemma.
\end{proof}

\begin{lemma}\label{lem:I_M_homeomorphic_M}
If $x,y \in M$ then $Cone(x) \subseteq Cone (y)$ if and only if $[0,1]_x \subseteq [0,1]_y$. See Figure \ref{fig: I_m} for an illustration.
\end{lemma}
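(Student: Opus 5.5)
The plan is to prove both directions by working with the assignment $w \mapsto [0,1]_w$ from Definition~\ref{def:I_M}, after first checking that it is a well-defined function on $M$. The maps $[a,b]\mapsto[a,b]_L$ and $[a,b]\mapsto[a,b]_R$ are given by the affine formulas \eqref{eq: intervals}. So for any words $u,v$ and any interval $[a,b]$ we have $[a,b]_{uv}=([a,b]_u)_v$.

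\textbf{Well-definedness.} The preceding lemma gives $[a,b]_{LR^2}=[a,b]_{RL^2}$ for every interval $[a,b]$. Hence replacing a subword $LR^2$ by $RL^2$ inside a word $uLR^2v$ does not change the interval: apply the lemma to $[0,1]_u$ and then apply $v$. Consequently $[0,1]_x$ depends only on the element $x\in M$.

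\textbf{Two basic facts.} First, both operations shrink an interval by the factor $\tau$, so $[0,1]_w$ has length $\tau^{|w|}$. (The defining relation preserves word length, so $|w|$ is well defined on $M$.) Second, $[a,b]_L,[a,b]_R\subseteq[a,b]$.

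\textbf{Forward direction.} Suppose $Cone(x)\subseteq Cone(y)$. Then $x=ym$ for some $m\in M$, so $[0,1]_x=([0,1]_y)_m\subseteq[0,1]_y$ by the second fact applied letter by letter.

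\textbf{Backward direction.} This is the real content. I argue by induction on $|y|$; the case $y=1$ is trivial. Write $y=ay'$ with $a\in\{L,R\}$, and assume $[0,1]_x\subseteq[0,1]_y$. Note that $[0,1]_{L}=[0,\tau]$ and $[0,1]_R=[\tau^2,1]$.

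\emph{Case 1: some word for $x$ begins with $a$,} say $x=ax'$. The map $[a',b']\mapsto[a',b']_a$ is induced by an injective increasing affine map of $[0,1]$ onto $[0,1]_a$, namely $t\mapsto\tau t$ or $t\mapsto\tau^2+\tau t$. Therefore $[0,1]_{ax'}\subseteq[0,1]_{ay'}$ if and only if $[0,1]_{x'}\subseteq[0,1]_{y'}$. By induction $x'\in Cone(y')$, hence $x=ax'\in Cone(ay')=Cone(y)$.

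\emph{Case 2: every word for $x$ begins with the other letter $b\ne a$.} By the symmetry $L\leftrightarrow R$, $t\mapsto 1-t$, I take $a=L$ and $b=R$. Then $[0,1]_x\subseteq[0,\tau]\cap[\tau^2,1]=[\tau^2,\tau]$, and the preceding lemma's computation with $x=0,y=1$ shows $[\tau^2,\tau]=[0,1]_{LR^2}$. The aim is to show that $x\in Cone(RL^2)$, which equals $Cone(LR^2)\subseteq Cone(L)$; this contradicts the case assumption and returns us to Case 1.

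To do this, I would examine the first letters of $x=Rx''$ and rule out the alternatives by direct computation. If $x''$ begins with $R$, then $[0,1]_x\subseteq[0,1]_{RR}=[\tau,1]$, which meets $[\tau^2,\tau]$ in a single point; this is impossible for a nondegenerate interval. If $x''=L$ alone, then $[0,1]_{RL}=[\tau^2,2\tau^2]$ has length $\tau>\tau-\tau^2$, so it cannot lie in $[\tau^2,\tau]$. If $x''$ begins with $LR$, then $[0,1]_{RLR}=[\tau^2+\tau^4,2\tau^2]$ has right endpoint $2\tau^2\approx0.764>\tau$. Since every deeper interval for a word extending $RLR$ lies inside $[\tau^2+\tau^4,2\tau^2]$ and has positive length, I expect to show it cannot fit inside $[\tau^2,\tau]$. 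Hence $x''$ begins with $LL$, i.e.\ $x\in Cone(RL^2)$.

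\textbf{Main obstacle.} The case $x''$ beginning with $LR$ is where I expect the difficulty. The clean endpoint comparison above handles $[0,1]_{RLR}$ itself, but I still need an argument covering all deeper intervals of the form $[0,1]_{RLRw}$. The approach I would try first is to show that every such interval in fact protrudes to the right of $\tau$.
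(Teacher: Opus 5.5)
Your proposal has a genuine gap, and it is the step you flag yourself: in Case~2 you never exclude $x''$ beginning with $LR$. The strategy you suggest for it cannot work as stated. It is false that every interval $[0,1]_{RLRw}$ sticks out to the right of $\tau$. For $w=L^2$ one has $RLRL^2=RL\,LR^2=RL^2R^2=LR^4$ in $M$, and $[0,1]_{RLRL^2}=[\tau^2+\tau^4,\tau]\subseteq[\tau^2,\tau]$. So you must use the Case~2 hypothesis on the \emph{element} $x$, not just the shape of one word for it.

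Once you do use it, the claim is no easier than the lemma itself. Put $\phi_L(t)=\tau t$ and $\phi_R(t)=\tau^2+\tau t$, so that $[0,1]_{aw}=\phi_a([0,1]_w)$. Then the condition $[0,1]_{RLRw}\subseteq[0,1]_{RL^2}=\phi_R\phi_L([0,\tau])$ is equivalent to $[0,1]_{Rw}\subseteq[0,1]_L$. That is the Case~2 question again for a shorter word, and iterating only produces words $R(LR)^k\cdots$. Finitely many endpoint computations cannot close this; you need an induction. Your induction on $|y|$ does not supply it, because the recursive instance has $y=L$ or $y=L^2$, which need not be shorter than the original $y$ (e.g.\ when $y=L$).

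The repair is to induct on $|x|$, simultaneously for all $y$; this is also how the paper sets it up. Case~2 then collapses to one line using your own Case~1 mechanism:
\begin{itemize}
\item From $[0,1]_{Rx''}\subseteq[\tau^2,\tau]=[0,1]_{RL^2}$ and injectivity of $\phi_R$, you get $[0,1]_{x''}\subseteq[0,1]_{L^2}$.
\item The induction hypothesis applied to the shorter $x''$ with $y=L^2$ gives $x''\in Cone(L^2)$.
\item Hence $x\in Cone(RL^2)=Cone(LR^2)\subseteq Cone(L)$, contradicting the case assumption.
\end{itemize}
Case~1 likewise only needs the hypothesis for $x'$ with $|x'|<|x|$. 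With this, the $RR$, $RL$, $RLR$ endpoint analysis becomes unnecessary.

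Two small corrections. First, the affine map in Case~1 implements prefixing, $[0,1]_w\mapsto[0,1]_{aw}$, not $[a',b']\mapsto[a',b']_a$. Second, $[0,1]_{RL}$ has length $\tau^2$, not $\tau$, though your conclusion there still holds.

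The paper's written proof simply asserts the crucial step ("$[0,1]_{Rx'}\subseteq[0,1]_L$ \ldots\ is only possible if $x'$ starts with $L^2$"). The argument above is what actually justifies it.
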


\begin{proof}    
We will start by proving the forward direction. Let $x, y \in M$ be such that $Cone(x) \subseteq Cone(y)$. Then there exists $m \in M$ such that $x = ym$. Therefore,
\[
[0,1]_x = [0,1]_{ym} = ([0,1]_y)_m \subseteq [0,1]_y.
\]

We will prove the opposite direction by induction on the length of $x$. Let $x,y \in M$ be such that~$|y| \leq |x| \leq n$, then $[0,1]_x \subseteq [0,1]_y$ implies $Cone(x) \in Cone(y)$.

Let's verify the base case for $|x| = 1$. Then $Cone(y)$ is equal to $Cone(x)$ or the whole monoid $Cone(1)$. Hence, for $x,y \in M$ such that $|y|\leq|x| = 1$, $Cone(x) \subseteq Cone(y)$ implies that $y$ being a prefix of $x$ or the empty word. This forces $[0,1]_x \subseteq [0,1]_y$. 

Assume that the statement holds for all $x$ such that $|x| \leq n$. Now, let $|x| = n+1$. We point out the key property of $I_M$: 
\[
[0,1]_L \cap [0,1]_R = [0,1]_{LR^2} = [0,1]_{RL^2}.
\]
Without loss of generality, let $x$ and $y$ have no common prefix, let $x$ start with~$R$, and~$y$ start with~$L$. Then $[0,1]_{Rx'} \subseteq [0,1]_{Ly'}$ for some $x', y'$. This implies $[0,1]_{Rx'} \subseteq [0,1]_L$, which is only possible if $x'$ starts with $L^2$. Therefore,
\[
[0,1]_{Rx'} \subseteq [0,1]_{RL^2} = [0,1]_{LR^2} \subseteq [0,1]_L.
\]
Hence, $[0,1]_{x'} \subseteq [0,1]_{R^2} \subseteq [0,1]_{R}$. By the induction hypothesis,~$Cone(x') \subseteq Cone(R)$, this forces $Cone(x) \subseteq Cone(RL^2) = Cone(LR^2) \subseteq Cone (L)$. But this means that $x$ and $y$ have a common prefix that can be canceled. Then by the induction hypothesis~$Cone(x_2x_3\cdots x_n) \subseteq Cone(y_2y_3\cdots y_n)$, which forces~$Cone(x) \subseteq Cone(y)$.
\end{proof}

\begin{figure}
    \centering
    \includegraphics[width=0.89\linewidth]{"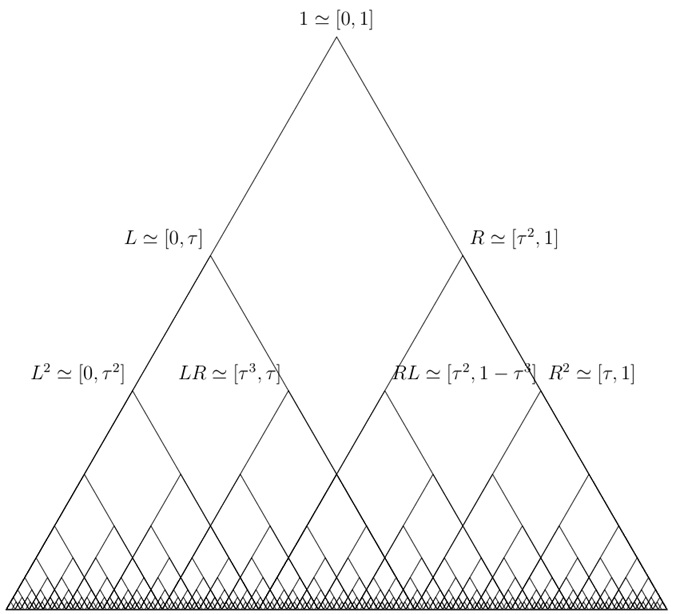"}
    \caption{The Cayley graph $Cay(M)$ with labeled  elements of $I_M$}
    \label{fig: I_m}
\end{figure}
\begin{lemma}
    Let $x, y, z \in M$. Then $[0,1]_x \cap [0,1]_y = [0,1]_z$ if and only if $Cone(x) \cap Cone(y) = Cone(z)$.
\end{lemma}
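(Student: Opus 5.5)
The plan is to reduce the statement to Lemma~\ref{lem:I_M_homeomorphic_M}, which identifies the order given by cone inclusion with the order given by inclusion of the intervals $[0,1]_w$. To make this work, I first need an explicit description of how two intervals of $I_M$ (and two cones of $M$) can meet. The guiding picture is that the left action $w\cdot$ on $M$ matches the affine map $[0,1]\to[0,1]_w$. So for any $w$ the map $[x,y]\mapsto ([0,1]_w)$ applied to $[x,y]$, i.e.\ $[0,1]_v\mapsto[0,1]_{wv}$, is an order-isomorphism. This lets me strip common prefixes on both sides simultaneously.

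The core is a structural claim, proved by induction on $|x|+|y|$. For $x,y\in M$, exactly one of three things happens: $Cone(x)\subseteq Cone(y)$, or $Cone(y)\subseteq Cone(x)$, or the two cones meet exactly in a single cone. Correspondingly, $[0,1]_x\cap[0,1]_y$ is either one of the two intervals, or an interval $[0,1]_z$, or it is degenerate (empty, or a single point, which is not in $I_c$ and so not of the form $[0,1]_z$).

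I will prove this claim as follows. If $x$ and $y$ share a nontrivial prefix $m$, I cancel it using left cancellativity together with the affine correspondence above, and apply induction. Otherwise, without loss of generality, $x=Lx'$ and $y=Ry'$. Then by the previous lemma $Cone(x)\cap Cone(y)\subseteq Cone(L)\cap Cone(R)=Cone(LR^2)$. On the interval side, $[0,1]_L\cap[0,1]_R=[0,1]_{LR^2}$. Intersecting with $Cone(LR^2)$, I replace $x$ by a representative of $x$ or of $LR^2$, whichever is smaller (using the trichotomy for the pair $x, LR^2$ inside $Cone(L)$, which has smaller total length after cancelling the common letter $L$). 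I do the same for $y$ and $RL^2=LR^2$ inside $Cone(R)$. Either one side becomes disjoint from $Cone(LR^2)$, in which case both intersections are empty or degenerate, or both sides land inside $Cone(LR^2)$. In the latter case $x$ and $y$ can be rewritten with a common prefix $LR^2$, and induction applies.

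With the trichotomy in hand, both directions of the lemma follow at once. Suppose $[0,1]_x\cap[0,1]_y=[0,1]_z$. This is a nondegenerate interval, so we are in one of the three intersection cases. The intersection equals some $[0,1]_{z'}$ with $Cone(x)\cap Cone(y)=Cone(z')$ and $[0,1]_z=[0,1]_{z'}$. Lemma~\ref{lem:I_M_homeomorphic_M} applied in both directions then gives $Cone(z)=Cone(z')$. The converse is symmetric.

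The main obstacle is the mismatch of degenerate cases. Cones that are disjoint in $M$ may correspond to intervals meeting in a single point, such as $[0,1]_{L^3}$ and $[0,1]_{LR^2}$ sharing an endpoint. I therefore have to check carefully that whenever the cones are disjoint, the interval intersection has measure zero. I would verify this using lengths: $[0,1]_w$ has length $\tau^{k}$ determined additively by $w$. If the induction's "disjoint" branch ever yields a positive-length overlap, that overlap would be a union of intervals $[0,1]_v$, which Lemma~\ref{lem:I_M_homeomorphic_M} would force into both cones, a contradiction.
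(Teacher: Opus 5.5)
Your deduction of the lemma from the structural claim is fine, but the induction you give for that claim does not close. After cancelling $L$, the auxiliary pair $(x',R^2)$ has total length $|x|+1$. This is smaller than $|x|+|y|$ only when $|y|\ge 2$. For $y=R$ it is not, and the recursion actually cycles. The pair $(L^2,R)$ has no common prefix, since the words $LL$ and $R$ admit no relation. Its $x$-side sends you to $(L,R^2)$, and the $y$-side of that pair sends you to $(R,L^2)$, which is where you started. So the base facts $Cone(L^2)\cap Cone(R)=\emptyset=Cone(L)\cap Cone(R^2)$ are never produced by your scheme and need a separate argument. For instance, a $w$ in $Cone(L^2)\cap Cone(R)$ would give $[0,1]_w\subseteq[0,\tau^2]\cap[\tau^2,1]=\{\tau^2\}$. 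Alternatively, $A=Cone(L^2)\cap Cone(R)$ satisfies $A=LR\,A$, which forces $A=\emptyset$ by length.

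There is a second problem. $Cone(x)\cap Cone(LR^2)$ need not be $Cone(x)$ or $Cone(LR^2)$: for $x=LRL$ it equals $LR\,(Cone(L)\cap Cone(R))=Cone(LRLR^2)$. So ``whichever is smaller'' is wrong. You must replace $x$ by the meet $x_1=LR^2x_2$, and you then owe a bound such as $|x_1|\le |x|+2$ to know that $(x_2,y_2)$ is shorter than $(x,y)$. Two smaller slips: your claim lists three cases but omits disjoint cones, and your example is off. $[0,1]_{L^3}=[0,\tau^3]$ and $[0,1]_{LR^2}=[\tau^2,\tau]$ are disjoint; the touching pair is $[0,1]_{L^2}$ and $[0,1]_R$, which meet at $\tau^2$.

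More to the point, the structural claim is unnecessary. The paper argues by direct double inclusion from Lemma~\ref{lem:I_M_homeomorphic_M}. If $[0,1]_x\cap[0,1]_y=[0,1]_z$, then $Cone(z)\subseteq Cone(x)\cap Cone(y)$. Any $w$ in both cones has $[0,1]_w\subseteq[0,1]_z$, so $w\in Cone(z)$. Conversely, $Cone(x)\cap Cone(y)=Cone(z)$ gives $[0,1]_z\subseteq[0,1]_x\cap[0,1]_y$ at once.

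The only delicate step is the reverse inclusion in that converse. The paper treats it loosely, speaking of ``the element of $M$ corresponding to $t$''. Your final-paragraph covering idea is exactly what makes it rigorous:
\begin{itemize}
\item the overlap has positive length, because it contains $[0,1]_z$;
\item since $[0,1]_L\cup[0,1]_R=[0,1]$, the intervals $[0,1]_{xv}$ with $|v|=n$ cover $[0,1]_x$ and have length $\tau^{|x|+n}$;
\item so, for $n$ large depending on $t$, every interior point $t$ of the overlap lies in some $[0,1]_{xv}$ contained in the overlap;
\item Lemma~\ref{lem:I_M_homeomorphic_M} then puts $xv$ in $Cone(y)$, hence in $Cone(z)$, so $t\in[0,1]_z$;
\item closedness of $[0,1]_z$ handles the endpoints.
\end{itemize}
Once repaired, your route proves more: cone intersections in $M$ are empty or principal, matching the interval picture. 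That extra information is not needed for this statement.
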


\begin{proof}
    We will first prove the forward condition. Let $[0,1]_x \cap [0,1]_y = [0,1]_z$, then there exists $m, n \in M$ such that $[0,1]_{xm} = [0,1]_z = [0,1]_{yn}$. This implies:
\[
        [0,1]_z = [0,1]_{xm} \subseteq [0,1]_x \quad \text{and} \quad [0,1]_z = [0,1]_{yn} \subseteq [0,1]_y
\]
    By Lemma \ref{lem:I_M_homeomorphic_M}, this is equivalent to:
\[
        Cone(z) \subseteq Cone(x) \quad \text{and} \quad Cone(z) \subseteq Cone(y)
\]
    Therefore, $Cone(z) \subseteq Cone(x) \cap Cone(y)$.

    To show the reverse inclusion, let $w \in Cone(x) \cap Cone(y)$. Then $[0,1]_w \subseteq [0,1]_x \cap [0,1]_y = [0,1]_z$, which implies $w \in Cone(z)$. Thus, $Cone(x) \cap Cone(y) \subseteq Cone(z)$. Now, combining both inclusions, we conclude $Cone(x) \cap Cone(y) = Cone(z)$.

    For the opposite direction, let $Cone(x) \cap Cone(y) = Cone(z)$. Then there exist $m, n \in M$ such that~$Cone(xm) = Cone(z) = Cone(yn)$. This implies:
\[
        Cone(z) \subseteq Cone(x) \quad \text{and} \quad Cone(z) \subseteq Cone(y)
\]
    By Lemma \ref{lem:I_M_homeomorphic_M}, this is equivalent to:
    \[
        [0,1]_z \subseteq [0,1]_x \quad \text{and} \quad [0,1]_z \subseteq [0,1]_y
    \]
    Therefore, $[0,1]_z \subseteq [0,1]_x \cap [0,1]_y$.

    It is left to check the reverse inclusion, let $t \in [0,1]_x \cap [0,1]_y$. Then by Lemma \ref{lem:I_M_homeomorphic_M} the element of $M$ corresponding to $t$ is in $Cone(x) \cap Cone(y) = Cone(z)$, which implies~$t \in [0,1]_z$. Thus, $[0,1]_x \cap [0,1]_y \subseteq [0,1]_z$. By combining both inclusions, we conclude~$[0,1]_x \cap [0,1]_y = [0,1]_z$.
\end{proof}

\section{The horofunction boundary of \textit{Cay(M)}}\label{horofunction boundary section}
This section presents one of the main results of this paper: the horofunction boundary of $\MM$. We will start by briefly introducing the reader to the geometric concept of horofunction boundaries, introduced by Gromov in \cite{Gromov}. This will create a solid foundation for research of new properties of the monoid $M$.

\subsection{Preliminaries on horofunction boundaries}\label{tree back}
We briefly restate some modified definitions stated in \cite[Section 1]{Belk Rational embeddings} and introduce an alternative approach to finding the horofunction boundary using equivalence classes based on the paper of Belk, Bleak, and Matucci \cite[Section 1.3]{Belk Rational embeddings}.

    Let $\Gamma = (V,E)$ be a locally finite connected graph. We impose the path metric on~$V$. We define~$F(V,\mathbb{Z})$ as the abelian group of all integer-valued functions on~$V$. Let~$\overline{F}(V,\mathbb{Z})$ be the quotient of~$F(V,\mathbb{Z})$ by the subgroup of constant functions. This means that within~$\overline{F}(V,\mathbb{Z})$ two functions~$f$ and~$g$ from~$F(V,\mathbb{Z})$ are equivalent if their difference~$f-g$ is a constant function. This construction allows us to focus on the variations of functions over~$V$, where~$\overline{F}$ denotes the equivalence classes of~$F$. If~$f \in F (V, \mathbb{Z})$ we let~$\overline{f}$ denote its image on~$\overline{F}(V, \mathbb{Z})$.

Observe that $F(V,\mathbb{Z})=\mathbb{Z}^V$ is a topological space under the product
topology. Consequently, $\overline{F}(V,\mathbb{Z})$, being a quotient of $F(V,\mathbb{Z})$, naturally inherits a quotient topology.

\begin{definition}
    \cite[Definition 1.22]{Belk Rational embeddings} Let $\Gamma=(V,E)$ be a locally finite connected graph. Let~$x \in V$ then for all $y \in V$ the corresponding \textit{distance function} $d_x: V \rightarrow \mathbb{Z}$ is defined by:
    \[
        d_x(y) = d(x, y).
    \]
The function $i: V \rightarrow \overline{F}(V,\mathbb{Z})$ defined by 
    \[ i(x)=\overline{d}_x\]
    for all $x\in V$, is called the \textit{canonical embedding}.

\end{definition}

\begin{definition}
    \cite[Definition 1.23]{Belk Rational embeddings} Let ${\partial}_hV$ be the set of accumulation points of $i(V)$ in~$\overline{F}(V,\mathbb{Z})$, this set is called the \textit{horofunction boundary} of $V$.

    We call a function $f:V \rightarrow \mathbb{Z}$ a \textit{horofunction} when $\overline{f} \in {\partial}_hV$.
\end{definition}
The  horofunction boundary $\partial_h V$ is equipped with the topology of pointwise convergence. Specifically, for a sequence of functions $\overline{f}_n \in \overline{F}(V,\mathbb{Z})$ and a function $f \in \overline{F}(V,\mathbb{Z})$, we say $f_n \to f$ if and only if for every $v \in V$, $f_n(v) \to f(v)$ as $n \to \infty$. This topology can be alternatively described as the subspace topology inherited from the product topology on $\mathbb{Z}^V$, where $\mathbb{Z}$ is given the discrete topology, additionally this gives us the quotient topology on $\overline{F}(V,\mathbb{Z})$.
   
The definition of the horofunction boundary given above, while mathematically precise, may not provide intuitive insight into its structure. Following \cite{Belk Rational embeddings}, we will adopt an alternative characterization of the horofunctions introduced by Belk, Bleak, and Matucci. This approach uses combinatorial methods that are more aligned with the graph-theoretic context presented in this paper and provides a less complex realization of horofunctions.

A \textit{vector field} on a locally connected graph $\Gamma = (V,E)$ with respect to a chosen distance function $d$ is a function $F: E\rightarrow E\cup E_{\pm}$, where $E_{\pm} = \big\{(x,y) \text{ : } \{x,y\} \in E \big\}$, such that for all pairs $x,y \in E$:
\[
F(\{x,y\}) =\begin{cases}
\{x,y\} & \text{if } d(x) = d(y);\\
(x,y) & \text{if } d(x) = d(y);\\
(y,x) & \text{if } d(y) > d(x);
\end{cases}
\]
where $\{x,y\}$ denotes an unordered pair of vertices, and $(x,y)$ denotes an ordered pair. 

Essentially, the graph $\Gamma_F=\big(V,F(E)\big)$ has the same set of vertices as $\Gamma$, however, now we allow some or all edges to be directed.

For any vertex $x \in V$, we can define a \textit{principal vector field} $F_x$ such that for all pairs or vertices~$y,z \in E$ the following hold:
\[F_x(\{y, z\}) = \begin{cases}
\{y,z\} & \text{if } d_x(y) = d_x(z);\\(y,z) & \text{if } d_x(z) < d_x(y); \\
(z,y) & \text{if } d_x(z) > d_x(y). \\
\end{cases}\]
In a principal vector field $F_x$ all oriented edges point along geodesics towards the given vertex $x$. Essentially $F_x$ depicts the distance function $d_x$ from all vertices to $x$. For simplicity, we refer to $\Gamma_F=\big(V,F_x(E)\big)$ as the principal vector field $F_x$, since it is rather simple to depict vectors on a graph than to formally express all relations.

Let's consider a simple example to illustrate how the principal vector fields are constructed.
\begin{example}
    Let $\Gamma=(V,E)$ be the following graph with a fixed vertex $x$:
\begin{figure}[H]
    \centering
    \includegraphics[width=0.25\linewidth]{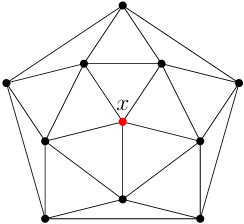}
\end{figure}
Then the principal vector field $F_x$ corresponding to the vertex $x$, is the following:
\begin{figure}[H]
    \centering
    \includegraphics[width=0.25\linewidth]{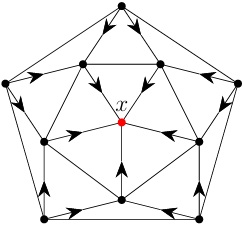}
\end{figure}
\end{example}

\begin{definition}
Fix $B \subseteq E$ a subset of edges. We say $x \underset{B}{\sim} y$ if $F_x |_B = F_y |_B$.
Note that $\underset{B}{\sim}$ is an equivalence relation. We denote this equivalence class by $[x]_B$. 
\end{definition}
\noindent A set $B_n = \{x \in V : d_r(x) \leq n\}$ is called a \textit{ball}, where $r$ denotes the root of the graph. The corresponding equivalence class is denoted by $\AA(B_n)$ and is called an \textit{atom} \cite[Definition 3.1]{Belk Rational embeddings}.  

\begin{example}
Let $\Gamma = (V,E,r)$ be a locally finite connected rooted graph and $B_0 \subseteq E$. Then its 0-level atom $\AA(B_0)$ is the whole graph $V$ itself.
\end{example}

We call a graph $\Gamma=(V,E)$ a \textit{tree} whenever there exists a unique geodesic for every pair of vertices in $V$, consecutively $\Gamma =(V,E,r)$ is called a \textit{rooted tree} when this condition is met. 

A \textit{geodesic ray} is an infinite sequence of edges in which every finite subsequence of edges acts as a geodesic for the corresponding pair of vertices; in other words, it is an infinite sequence of geodesics~$\pi(x_0, x_1), \pi(x_1,x_2), \dots$ such that for any $n, m \geq 0$ the sequence 
\[
\pi(x_n, x_{n+1}), \pi(x_{n+1},  x_{n+2}), \dots, \pi(x_{m-2}, x_{m-1}), \pi(x_{m-1} x_m)\] 
acts as a geodesic from $x_n$ to $x_m$.

Let $\Gamma =(V,E,r)$ be a rooted tree, then the set of equivalence classes of infinite geodesic rays is called the \textit{boundary} of a tree. 

\begin{definition}\cite[Definition 3.4]{Belk Rational embeddings} 
    For every $n\geq 0$, we denote $\AA_n(V)$ as the set of \textit{infinite atoms} in~$\AA(B_n)$, i.e, an \textit{atom} is in $\AA_n(V)$ if and only if its cardinality is infinite. The disjoint union
    \[\AA(V) = \bigsqcup^\infty_{n=0} \AA_n(V)\]
    is called the \textit{tree of atoms} of $V$.
\end{definition}
The tree of atoms $\mathcal{A}(V)$ is equipped with the following topologies:
\begin{itemize}
    \item for each $n \geq 0$, $\mathcal{A}_n(V)$ has the discrete topology;
    \item the topology on $\mathcal{A}(V) = \bigsqcup^\infty_{n=0} \mathcal{A}_n(V)$ is the disjoint union topology;
    \item the boundary of the tree of atoms $\partial \mathcal{A}(V)$ inherits the subspace topology of the product topology on $\mathcal{A}(V)^{\mathbb{N}}$, where the elements of the boundary are identified with infinite descending paths in the tree.
\end{itemize}

Note that since $\AA(V)$ is a disjoint union, each element in $\AA(V)$ can be represented as an ordered pair $(n, A)$, where $n \geq 0$ and $A \in \AA_n(V)$. This distinction is important, because it is possible for the same atom $A$ to be in $\AA_n(V)$ for multiple values of $n$. Observe that in the tree of atoms there exists an edge from $(n,A)$ to $(n+1,A')$ if and only if $A' \subseteq A$. 

\begin{definition}
    \cite[Definition 3.7]{Belk Rational embeddings} A \textit{morphism} from $[x]_{B_m}$ to $[y]_{B_n}$ is a bijection~$\phi: [x]_{B_m} \to [y]_{B_n}$ such that:
    \begin{enumerate}
        \item if $u,v \in [x]_{B_m}$ and $p \in \{L,R\}^*$, then $u =pv$ if and only if $\phi(u) = p\phi(v)$;
        \item for all $z\in [x]_{B_m}$ and all $k\geq 0$,~$\phi([z]_{B_{m+k}}) = [\phi(z)]_{B_{n+k}}$.
    \end{enumerate} 
\end{definition}
\noindent This definition of morphism is a weaker modification than the one presented in \cite{Belk Rational embeddings}, as we have to use arbitrary graph isomorphisms since there are no group elements.

The corresponding morphism is a pair of isomorphisms $\phi_1: [x]_{B_m} \to [y]_{B_n}$ and $\phi_2: [y]_{B_n} \to [x]_{B_n}$. We say that $[x]_{B_m} \underset{T}{\sim} [y]_{B_n}$ have the \textit{same equivalence type} if such a morphism exists. We will briefly check that $\underset{T}{\sim}$ is an equivalence relation. 
\begin{proof}
We will first check reflexivity. Let $[x]_{B_m} \underset{T}{\sim} [x]_{B_m}$, we choose $\phi: [x]_{B_m} \to [x]_{B_m}$ to be the identity map:
\begin{itemize}
    \item  let $u,v \in [x]_{B_m}$ and $p \in \{L,R\}^*$, then $u = pv \iff\phi(u) = u = pv = p\phi(v)$;
    \item for all $z \in  [x]_{B_m}$ and $k \geq 0$, $\phi([z]_{B_{m+k}}) = [z]_{B_{m+k}} = [\phi(z)]_{B_{m+k}}$. 
\end{itemize}
Hence, $ \underset{T}{\sim}$ is reflexive.

We will now check symmetry. Let $[x]_{B_m} \underset{T}{\sim} [y]_{B_n}$, then $\phi: [x]_{B_m} \to [y]_{B_n}$ is a morphism, we will show that $\phi^{-1}: [y]_{B_n} \to [x]_{B_m}$ is also a morphism:
\begin{itemize}
    \item let $u,v \in [y]_{B_n}$ and $p \in E$, then $u = pv \iff \phi\big(\phi^{-1}(u)\big) = p\phi\big(\phi^{-1}(v)\big) \iff \phi^{-1}(u) = p\phi^{-1}(v)$;
    \item for all $z \in [y]_{B_n}$ and $k \geq 0$, $\phi^{-1}([z]{B_{n+k}}) = \phi^{-1}\Big([\phi\big(\phi^{-1}(z)\big)]_{B_{n+k}}\Big) = [\phi^{-1}(z)]_{B_{m+k}}$.
\end{itemize}
Hence, $ \underset{T}{\sim}$ is symmetric.

It is left to check transitivity. Let $[x]_{B_m} \underset{T}{\sim} [y]_{B_n}$ and $[y]_{B_n} \underset{T}{\sim} [z]_{B_r}$, then~$\phi: [x]_{B_m} \to [y]_{B_n}$ and $\psi: [y]_{B_n} \to [z]_{B_r}$ are morphisms. We will now show that $\psi \cdot \phi: [x]_{B_m} \to [z]_{B_r}$ is also a morphism:
\begin{itemize}
    \item let $u,v \in [x]_{B_m}$ and $p \in \{L,R\}^*$, then $u = pv \iff \phi(u) = p\phi(v) \iff \psi\big(\phi(u)\big) = p\psi\big(\phi(v)\big) \iff (\psi \cdot \phi)(u) = p(\psi \cdot \phi)(v)$;
    \item for all $z \in [x]_{B_m}$ and $k \geq 0$, $(\psi \cdot \phi)([z]_{B_{m+k}}) = \psi\big(\phi([z]_{B_{m+k}})\big) = \psi\big([\phi(z)]_{B_{n+k}}\big) = [\psi\big(\phi(z)\big)]_{B_{r+k}} = [(\psi \cdot \phi)(z)]_{B_{r+k}}$.
\end{itemize}
Hence, $\underset{T}{\sim}$ is transitive. Since all conditions are met, we conclude that $\underset{T}{\sim}$ is indeed an equivalence relation.
\end{proof}

\begin{thm}\label{equivalence boundary = horofunction boundary}
    \cite[Theorem 3.6]{Belk Rational embeddings} The boundary of the tree of atoms $\AA(V)$ is homeomorphic to the horofunction boundary ${\partial}_h V$ of $V$.
\end{thm}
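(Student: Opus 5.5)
Theorem~\ref{equivalence boundary = horofunction boundary} is cited from \cite[Theorem 3.6]{Belk Rational embeddings}. The plan is to re-run that argument in the locally finite graph $\Gamma=(V,E,r)$. It never uses a group action, only local finiteness and the path metric. The natural map is $\Phi\colon \partial\AA(V)\to\partial_h V$. A boundary point is a descending chain of infinite atoms $A_0\supseteq A_1\supseteq A_2\supseteq\cdots$ with $A_n\in\AA_n(V)$. By the definition of $\underset{B_n}{\sim}$, all vertices $x\in A_n$ induce the same vector field $F_x|_{B_n}$ on the edges of the ball $B_n$. Since $B_n$ is connected (every vertex is joined to $r$ by a geodesic inside $B_n$), this vector field determines the function $d_x|_{B_n}$ up to an additive constant. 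So each $A_n$ determines a well-defined element $\overline{f}_n$ of $\overline{F}(B_n,\mathbb{Z})$. Nestedness makes these compatible under restriction, so together they define a single $\overline{f}\in\overline{F}(V,\mathbb{Z})$. We set $\Phi(A_0,A_1,\dots)=\overline{f}$.

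The key steps, in order, are as follows. First, show that $\overline{f}$ is a horofunction. Each $A_n$ is infinite and $\Gamma$ is locally finite, so we can pick $x_n\in A_n$ with $|x_n|\to\infty$. Then $\overline{d}_{x_n}$ agrees with $\overline{f}$ on $B_n$, so $i(x_n)\to\overline{f}$ pointwise. This limit is an accumulation point of $i(V)$ rather than some $i(y)$: if $\overline{f}=\overline{d}_y$, then on a large ball the unique minimum of $d_y$ sits at $y$, whereas $d_{x_n}$ decreases along geodesics leaving the ball, which gives a contradiction. Second, show that $\Phi$ is surjective. Given $\overline{h}\in\partial_hV$, write it as a limit of $i(x_k)$ with $x_k$ distinct. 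For each $n$ only finitely many classes of $\underset{B_n}{\sim}$ occur, since $B_n$ has finitely many edges. So a diagonal subsequence lies eventually in one class at every level $n$, and each of these classes contains infinitely many $x_k$, hence is an infinite atom. Third, show injectivity. Two distinct chains differ at some level $n$, and distinct atoms at level $n$ give distinct vector fields on $B_n$, hence distinct restrictions of $\overline{f}$.

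Finally, we check that $\Phi$ is a homeomorphism. A basic open set in $\partial\AA(V)$ consists of the chains passing through a fixed $(n,A)$. Its image is exactly the set of horofunctions whose restriction to $B_n$ equals the class determined by $A$. Since $\mathbb{Z}^V$ carries the product topology, these sets form a basis of the pointwise-convergence topology on $\partial_hV$, so $\Phi$ is continuous and open. Alternatively, $\partial\AA(V)$ is compact, as an inverse limit of finite sets, and $\partial_hV$ is Hausdorff, so continuity plus bijectivity already suffices.

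The step I expect to be the main obstacle is the subtle point in the first step. I must ensure that vertices $x\in A_n$ with $x$ inside $B_n$ itself do not spoil the argument, and that a chain of infinite atoms never converges to an interior point $i(y)$. I would handle this as in \cite{Belk Rational embeddings}: note that $x\in B_n$ forces every edge of $B_n$ at $x$ to point towards $x$, so $x$ is a sink of the field on $B_n$. Such a field is shared only by finitely many vertices at later levels, which contradicts infiniteness of the descendant atoms.
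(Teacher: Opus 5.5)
Your proposal is correct and is essentially the standard argument behind \cite[Theorem 3.6]{Belk Rational embeddings}, which the paper simply cites rather than reproves. In that argument, infinite descending chains of atoms correspond to compatible classes $\overline{f}|_{B_n}$, and the cylinder sets of $\partial\AA(V)$ match the basic open sets of the pointwise-convergence topology on $\partial_h V$. The obstacle you flag is harmless: $\partial_h V$ is defined as the set of accumulation points of $i(V)$, and you may choose distinct $x_n\in A_n\setminus B_n$, so $\overline{f}\in\partial_h V$ is automatic; the sink argument (which applies only at levels $m>|y|$, where all neighbours of $y$ lie in $B_m$) merely confirms that $i(V)$ is discrete.
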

\noindent This theorem plays a key role in Section \ref{horofunction boundary section}. Not only does it provide the necessary tools to find the horofunction boundary using a combinatorial approach, but it also allows us to visualize the horofunction boundary.

A finite directed \textit{multigraph} is a graph $\Gamma = (V,E)$, satisfying the conditions of a finite directed graph with the only difference that it allows multiple edges between the same pair of vertices. The corresponding \textit{path language} $\LL(\gamma,r) \subseteq E^*$ is the set of all finite paths $e_1e_2\cdots e_n \in \Gamma$ that begin with a fixed vertex $r$, since the graph is directed, we do not allow a path a vertex $v$ to move along edges that enter $v$. The set $\LL(\gamma,r)$ has a natural structure of a locally finite tree, where the root acts as the empty path $\epsilon$, and the boundary $\partial \LL(\gamma,r)$ is the set of all infinite paths in $\Gamma$ starting from the root $r$  \cite[Section 2.1]{Belk Rational embeddings}.

Consider a short example, which demonstrates the intuition behind the path language.
\begin{example}
    Figure \ref{fig: langauge} demonstrates such a tree. 
\begin{figure}
    \centering
    \includegraphics[width=0.8\linewidth]{"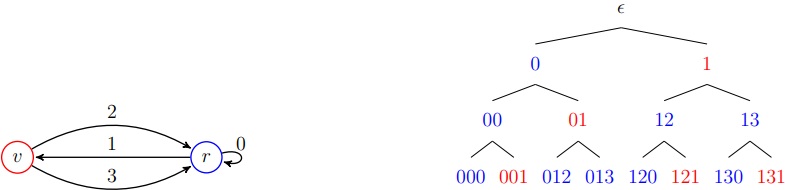"}
    \caption{A directed multigraph $\Gamma$ and its corresponding path language tree $\LL (\Gamma,r)$}
    \label{fig: langauge}
\end{figure}

A \textit{self-similar} structure \cite[Section 2.1]{Belk Rational embeddings} on $\LL(\Gamma,r)$ can be defined by the following statements:
\begin{itemize}
    \item if two paths $p,q \in \LL(\Gamma,r)$ end up in the same vertex of $\Gamma$, then we say that they have the same type;
    \item let two paths $p,q \in \LL(\Gamma,r)$ end up in the same vertex $v$, then for every finite directed path $r$ staring from $v$, we define a single morphism $\gamma_{pq} : \LL(\Gamma,r)_p \rightarrow \LL(\Gamma,r)_q$ by 
    \[
    \gamma_{pq} (pr) =qr.
    \]
\noindent We call $\gamma_{pq}$ a \textit{prefix replacement morphism}.
\end{itemize}
\end{example}

According to \cite[Proposition 2.21]{Belk Rational embeddings}, every self-similar tree $T$ is isomorphic to a path language. We define the
\textit{type graph} $\Gamma$ of a self-similar tree $T$ as follows:
\begin{itemize}
    \item for every vertex type in $T$ there is exactly one vertex in $\Gamma$;
    \item for every pair of vertices $x,y \in T$, the number of edges in $\Gamma$ between $x$ and $y$ corresponds to the number of children of type $y$ that every vertex of type $x$ has in~$T$.
\end{itemize}

By \cite[Proposition 2.21]{Belk Rational embeddings}, the tree of atoms $\AA (V)$ is isomorphic to the set of all finite directed paths starting from the root. Hence, the horofunction boundary $\partial_h V$ is naturally homeomorphic to the space of all infinite paths that start from the root in its corresponding type graph.  

\subsection{The relationship between cones and distances in \textit{Cay(M)}} This section introduces the tools required to study the atoms of $\MM$. In particular, we will introduce the connection between the cones of $\MM$ and the distance functions. Although intuitively the structure of the cones of $\MM$ seems complicated, by the end of the section we will demonstrate that the structure is understood through 4 statements.

\begin{lemma}\label{field distance eq}

    Let $x \in M$, then the following hold:
    \begin{enumerate}
        \item $x \in Cone (L) \Leftrightarrow d(x,L) < d(x,1)$;
        \item $x \in Cone (R) \Leftrightarrow d(x,R) < d(x,1)$;
        \item $x \notin Cone (L) \Leftrightarrow d(x,L) > d(x,1)$;
        \item $x \notin Cone (R) \Leftrightarrow d(x,R) > d(x,1)$;
        \item $x \in Cone (LR^2) \Leftrightarrow d(x,R), d(x,L) < d(x,1)$;
        \item $x \in \big(Cone (LR) \cup Cone(RL)\big) \Leftrightarrow d(x,LR) < d(x,L) \Leftrightarrow d(x,RL) < d(x,R)$.
    \end{enumerate}
\end{lemma}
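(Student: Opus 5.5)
The plan is to read every item off the distance formula of Theorem~\ref{M distance}, together with two elementary facts. First, $d(x,1)=|x|$, and this equals the length of any word in $L,R$ representing $x$. This holds because the relation $LR^2=RL^2$ is length-homogeneous, so word length is well defined on $M$. Each edge changes word length by exactly one, and the descending path along a word for $x$ reaches the root in that many steps. Second, left multiplication is an isometry, which is noted after Theorem~\ref{M distance}. Hence if $x=mx'$ then $d(x,m)=d(x',1)=|x'|=|x|-|m|$.

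For items (1)--(4), note that either $x\in Cone(L)$ or not, so it suffices to compute $d(x,L)$ in both cases. If $x=Lx'$ then $d(x,L)=|x|-1<|x|=d(x,1)$. If $x\notin Cone(L)$, then $x$ and $L$ have no nontrivial common prefix, since the only nontrivial prefix of $L$ is $L$ itself. The middle case of the formula needs one element in $Cone(LR)$ and the other in $Cone(RL)$. It cannot apply, because $L\notin Cone(LR)\cup Cone(RL)$ for length reasons. So $d(x,L)=|x|+1>d(x,1)$. The statements for $R$ follow by the $L\leftrightarrow R$ symmetry. Item (5) then follows from (1), (2) and the lemma $Cone(L)\cap Cone(R)=Cone(LR^2)$.

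For item (6), I would split $M\setminus\{1\}$ into the regions $A_1,\dots,A_5$ of Proposition~\ref{M geo} and compute $d(x,LR)$ against $d(x,L)$ in each region.
\begin{itemize}
\item On $A_2\cup A_3=Cone(LR)$ we have $d(x,LR)=|x|-2<|x|-1=d(x,L)$.
\item On $A_4$, $x$ and $LR$ share no nontrivial prefix, since $x\notin Cone(L)$. Here $LR\in Cone(LR)\setminus Cone(LR^2)$ and $x\in Cone(RL)\setminus Cone(LR^2)$, so the middle case gives $d(x,LR)=|x|+2-2=|x|<|x|+1=d(x,L)$.
\item On $A_1$, cancel the common prefix $L$ to get $d(x,LR)=d(x',R)$ with $x'\notin Cone(R)$. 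By item (4) this equals $|x'|+1=|x|$, which is larger than $d(x,L)=|x|-1$.
\item On $A_5$ (and at $x=1$) the third case applies and gives $d(x,LR)=|x|+2>|x|+1=d(x,L)$.
\end{itemize}
This proves $x\in Cone(LR)\cup Cone(RL)\iff d(x,LR)<d(x,L)$. The second equivalence follows by applying the $L\leftrightarrow R$ symmetry of $M$, which swaps $Cone(LR)$ and $Cone(RL)$, to the first.

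The main obstacle is applying the first case of Theorem~\ref{M distance} correctly. One must cancel the maximal common prefix, and in particular check that no hidden common prefix exists, for example that $x\in A_4$ really shares nothing with $LR$. This rests on the cone-intersection lemma, which says that a common nontrivial prefix forces membership in $Cone(L)$ or $Cone(R)$ for both elements. Handling $Cone(LR^2)$ points, which lie in both $Cone(L)$ and $Cone(R)$, requires choosing the representative $x=LR^2\cdots$ when cancelling.
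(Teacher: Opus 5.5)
Your proposal is correct and follows the same route as the paper: every item is read off from Theorem~\ref{M distance}, together with left multiplication being an isometry and $Cone(L)\cap Cone(R)=Cone(LR^2)$. Items (3)--(4) come from the dichotomy $d(x,L)\in\{|x|-1,|x|+1\}$, and (5) follows from (1) and (2). Your region-by-region check of (6) over $A_1,\dots,A_5$ is more complete than the paper's: the paper only gestures at the converse via the proof of Proposition~\ref{M geo}, and it misstates the values for $x\in Cone(RL)\setminus Cone(LR^2)$ as $|x|-2$ and $|x|-1$, whereas your $|x|$ and $|x|+1$ are the correct ones.
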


\begin{proof}
    We begin by proving the first equivalent statement.
    \begin{enumerate}
    \item We will start from the forward direction. Let $x \in Cone(L)$. By the definition of the graph $\mathcal{M}$, $d(x,1) = |x|$. Since $x \in Cone(L)$, there exists $x' \in M$ such that~$x = Lx'$. For $d(x,L)$, we apply Proposition \ref{monoid convex} that states that every cone of $\MM$ is convex, and Theorem \ref{M distance} which tells us that~$d(x,L) = d(x',1) = |x'|$. Note that~$|x'| < |x|$ because $x = Lx'$. Hence, $x \in Cone (L) \Rightarrow d(x,L) < d(x,1)$.

    For the opposite direction, let $x \in M$ be such that~$d(x,L) < d(x,1)$. Then $d(x,1) = |x|$, which implies that~$d(x,L) \leq |x| -1$. By Theorem~\ref{M distance} this is possible only when~$x \in Cone(L)$.

    \item The second equivalent statement is proven analogously to the first by interchanging the roles of $L$ and $R$.
    
    \item We will show that for all $x \in M$ $d(x,1) \neq d(x,L)$. Observe that $d(x,1) = |x|$. Now by Theorem~\ref{M distance} $d(x,L)$ is $|x|-1$ or $|x|+1$ depending on $x$. Hence, the statement $x \notin Cone (L) \Leftrightarrow d(x,L) > d(x,1)$ is the contrapositive of the first equivalence and therefore follows directly from it.

    \item The fourth equivalent statement is a symmetric case of the third.

    \item The fifth equivalent statement is a direct consequence of the first two. 

    For the forward direction, let $x\in M$ be such that $x\in Cone(LR^2)$. Then $x \in Cone(LR^2) \subseteq Cone(L)$ and $x \in Cone(LR^2) = Cone(RL^2) \subseteq Cone(R)$. This implies that $x \in Cone(L)$ and $x \in Cone (R)$. Now we apply equivalences 1 and 2 to conclude that $d(x,L) < d(x,1)$ and $d(x,R) < d(x,1)$.
    
    For the opposite direction, let $x\in M$ be such that $d(x,L) < d(x,1)$ and $d(x,R) < d(x,1)$. Then by equivalences 1 and 2, $x \in Cone (L)$ and $x \in Cone(R)$. Hence,~$x \in Cone(L) \cap Cone(R) = Cone(LR^2)$, thus proving the equivalence.
    
    \item The last equivalent statement follows from Theorem \ref{M distance}. We will prove $x \in \big(Cone (LR) \cup Cone(RL)\big) \Leftrightarrow d(x,LR) < d(x,L).$ As the last equivalence is an analogous case of the second one.

    We will start with the forward direction. Let $x \in \big(Cone(LR) \cup Cone(RL)\big)$. We have two cases that correspond to the first symbol of $x$. If $x$ begins with $L$, we are trivially done. Let $x$ start with~$R$, then by Theorem \ref{M distance} $d(x,LR) = |x|-2$, while $d(x,L) = |x|-1$.

    We will now prove the opposite direction. Let $d(x,LR) < d(x,L)$, then following the proof of Proposition \ref{M geo}, we observe that the set of points for which this inequality holds is precisely~$Cone(LR) \cup Cone(RL)$.
    \end{enumerate}
\end{proof}

\begin{definition}
    A \textit{hyperedge} is a line that splits a space into 2 disjoint \textit{half-planes}.
\end{definition}
In the context of the graph $\MM$, where the space is the set of vertices $M$, a hyperedge selects a specific subset of vertices. To visualize this concept, see Figure \ref{fig: hyper}, where the hyperedge, depicted in red, divides the set of vertices into two parts:
\begin{itemize}
    \item the left half satisfies the first equivalent statement of Lemma \ref{field distance eq};
    \item the right half satisfies the negation of this statement, i.e. the third statement of Lemma \ref{field distance eq}.
\end{itemize}
\noindent The orientation of a vector field from $L$ to $1$ indicates the direction towards a half-plane where the vertices are closer to $1$ than to vertex $L$. Note that every hyperedge crosses at least 2 edges. Whenever a vector field is imposed on one edge that is crossed by a hyperedge, it automatically predetermines the orientations of all other edges that this hyperedge crosses. See Figure \ref{fig: hyper vect} for a visualization of a vector-field imposed on a hyperedge that satisfies statement 2 of Lemma \ref{field distance eq}.

\begin{figure}
    \centering
    \includegraphics[width=0.89\linewidth]{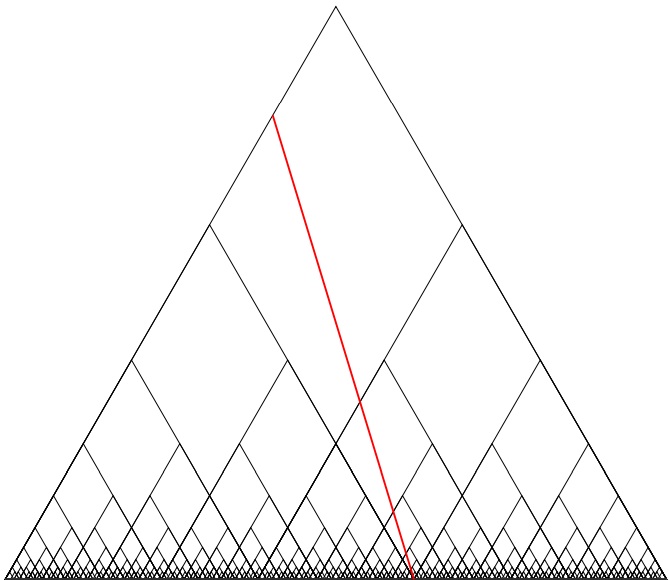}
    \caption{A hyperedge in red splitting $Cone(L)$ and $M\backslash Cone(L)$}
    \label{fig: hyper}
\end{figure}

\begin{figure}
    \centering
    \includegraphics[width=0.89\linewidth]{"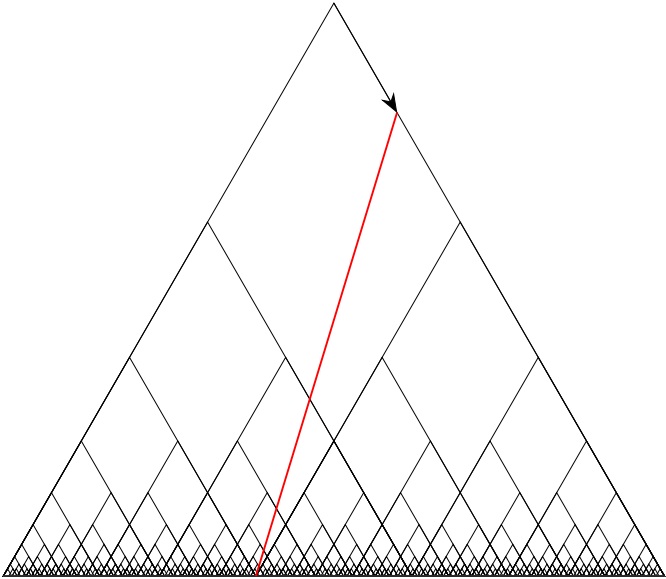"}
    \caption{A vector field pointing towards the half-plane, whose elements are closer to $R$ than to $1$}
    \label{fig: hyper vect}
\end{figure}

\begin{lemma}\label{cone mlr = d mlr}
    Let $x,m \in M$. Then $ x \in \big(Cone(mLR) \cup Cone(mRL)\big) \Leftrightarrow d(x,mLR) < d(x,mL)\Leftrightarrow d(x,mRL) < d(x,mR)$. See Figure \ref{fig:mlr mrl} for visualization.
\end{lemma}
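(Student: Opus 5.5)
The plan is to reduce the statement to the case $m=1$, which is item (6) of Lemma~\ref{field distance eq}. I would split according to whether $x$ lies in $Cone(m)$. The two ingredients are strong geodesic convexity of cones (Proposition~\ref{monoid convex}) and the prefix-cancellation clause of Theorem~\ref{M distance}, which together say that left multiplication by $m$ is an isometry onto $Cone(m)$.

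\emph{Case 1: $x\in Cone(m)$.} Write $x=mx'$. By the first case of Theorem~\ref{M distance},
\[
d(x,mLR)=d(x',LR),\quad d(x,mL)=d(x',L),
\]
\[
d(x,mRL)=d(x',RL),\quad d(x,mR)=d(x',R).
\]
I would also use $x\in Cone(mLR)\cup Cone(mRL)$ if and only if $x'\in Cone(LR)\cup Cone(RL)$. This needs left cancellativity of $M$, the corollary to Theorem~\ref{M distance}, or Lemma~\ref{lem:I_M_homeomorphic_M} applied to the rescaled intervals $[0,1]_{mw}=([0,1]_m)_w$. With these identities, Case 1 is exactly Lemma~\ref{field distance eq}(6) applied to $x'$.

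\emph{Case 2: $x\notin Cone(m)$.} Then $x$ lies in none of the cones on the left-hand side, so I must show that both inequalities fail. Concretely, I aim to prove
\[
d(x,mLR)=d(x,mL)+1 \quad\text{and}\quad d(x,mRL)=d(x,mR)+1 .
\]
Let $p$ be the longest common prefix of $x$ and $m$. I would pick one by taking a maximal common cone, using the $I_M$ description to make this well defined. Write $x=px''$ and $m=pm''$; here $m''$ is nontrivial, and $x''$, $m''$ share no nontrivial prefix. By the first case of Theorem~\ref{M distance} I may work with $x''$ and $m''w$ for $w\in\{L,LR,R,RL\}$. Since $x''$ and $m''w$ have no common prefix, their distance is either $|x''|+|m''w|$ or $|x''|+|m''w|-2$. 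The $-2$ occurs exactly when one of them lies in $A_2=Cone(LR)\setminus Cone(LR^2)$ and the other in $A_4=Cone(RL)\setminus Cone(LR^2)$.

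It then suffices to show that $m''L$ lies in $A_2$ (respectively $A_4$) if and only if $m''LR$ does, and similarly for $m''R$ and $m''RL$. Given that, the correction term is the same for both points, while the lengths differ by one, which yields the two displayed equalities.

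The main obstacle is this last membership bookkeeping. For $|m''|\ge 2$, membership in $A_2$ or $A_4$ is decided by the first two or three letters of $m''$, so appending letters changes nothing. Only short words need direct checking: $m''\in\{L,R\}$ and words beginning with $LR$ or $RL$. For example, when $m''=L$ we have $LL\notin Cone(LR)\cup Cone(RL)$ and $LLR\notin Cone(LR)\cup Cone(RL)$. I must also rule out that appending $R$ or $L$ pushes the word into $Cone(LR^2)=Cone(RL^2)$ while $x''$ sits on the opposite side. That situation would make $x''$ and $m''w$ share a prefix, contradicting the maximality of $p$. I would handle this with Lemma~\ref{lem:I_M_homeomorphic_M} and the intersection lemma for intervals, and I expect this to be the delicate point of the proof.
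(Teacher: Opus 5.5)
Your Case 1 is the paper's argument: strip $m$ by convexity and Theorem~\ref{M distance}, then apply Lemma~\ref{field distance eq}(6). Reducing Case 2 to $d(x,mLR)=d(x,mL)+1$ and $d(x,mRL)=d(x,mR)+1$ is also legitimate, because the relation preserves length and so the graph is bipartite. The gap is in how you compute these distances.

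You assert that $x''$ and $m''w$ share no nontrivial prefix, and you dismiss the contrary as contradicting the maximality of $p$. But $p$ is maximal only for the pair $x,m$. Knowing $m''w\in Cone(R)$ does not force $m''\in Cone(R)$, because right multiplication can change the first letter: $(LR)^kR=RL^{2k}$ for every $k\ge 1$.

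Concretely, take $m=LR$ and $x=RL$. Then $p=1$, $x''=RL$ and $m''=LR$. Yet $m''R=LR^2=RL^2$ lies in $Cone(RL)$ together with $x''$. So $d(x'',m''R)=1$, while your dichotomy predicts $5$ or $3$.

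The same identity refutes your claim that membership in $A_2$ or $A_4$ is decided by the first two or three letters of $m''$. Indeed $(LR)^k\in A_2$ for every $k\ge 1$, but $(LR)^kR\in A_3$. So the situation you flagged as delicate is not ruled out. It genuinely occurs, and your argument does not cover it.

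To close the gap you have to treat this configuration explicitly. By Lemma~\ref{lem:I_M_homeomorphic_M}, an element of $Cone(L)\setminus Cone(LR^2)$ with a child in $Cone(LR^2)$ must have $\tau^2$ in the interior of its interval. The only such elements are $(LR)^k$ and $(LR)^kL$. Among their children, only $(LR)^kR$ with $k\ge 1$ lies in $Cone(LR^2)$.

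Hence the only bad case is $m''=(LR)^k$, $w\in\{R,RL\}$, $x''=Rx'''$, together with its mirror image $m''=(RL)^k$, $w\in\{L,LR\}$. In the first case $d(x'',m''R)=d(x''',L^{2k})$ and $d(x'',m''RL)=d(x''',L^{2k+1})$. These differ by $+1$ unless $x'''\in Cone(L^{2k+1})$. That would give $x\in Cone(p(LR)^kRL)\subseteq Cone(m)$, a contradiction.

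The paper sidesteps this bookkeeping differently. It takes the smallest cone containing $x$ and $mL$, rather than $x$ and $m$, and uses the ascend-first shape of geodesics from Proposition~\ref{M geo}. That way the change of first letter is absorbed automatically, though the paper's argument is only sketched.
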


\begin{figure}
    \centering
    \includegraphics[width=0.89\linewidth]{"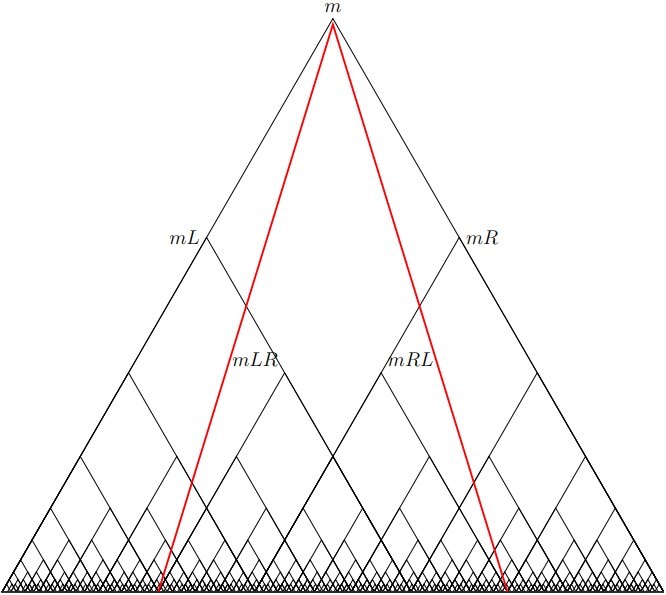"}
    \caption{A hyperedge showing $Cone(mLR) \cup Cone(mRL)$}
    \label{fig:mlr mrl}
\end{figure}

\begin{proof}
    We will start from the forward direction. Let $x \in \big(Cone(mLR) \cup Cone(mRL)\big)$, then by Proposition \ref{monoid convex} we know that $d(x,mLR) = d(x',LR)$ and $d(x,mL) = d(x',L)$, where $x = mx'$. Now by Theorem \ref{M distance} $d(x',L) < d(x',LR)$. An analogous argument is applied to prove $d(x',R) < d(x',RL)$.

    Now let $d(x,mLR) < d(x,mL)$, now assume that there exists $x \notin Cone(m)$ that satisfies this condition. Then there exists a geodesic from $mL$ to $x$ that passes through $mR$ but by Proposition \ref{M geo} this is not possible as a geodesic first takes the ascending path until it reaches the root $n$, $nLR$ or $nRL$ of the smallest $Cone(n)$ containing both $mL$ and $x$.

    Hence, the condition $d(x,mLR) < d(x,mL)$ only makes sense when $x \in Cone(m)$. Now we remove the common prefix $m$ and apply part 6 of Lemma \ref{field distance eq}. Hence, $d(x,mLR) < d(x,mL)$ implies $x \in \big(Cone(mLR) \cup Cone(mRL)\big)$.

    To prove that $d(x,mRL) < d(x,mR)$ implies $x \in \big(Cone(mLR) \cup Cone(mRL)\big)$, we apply an analogous argument by interchanging the roles of $L$ and $R$.

    Hence, the statement $ x \in \big(Cone(mLR) \cup Cone(mRL)\big) \Leftrightarrow d(x,mLR) < d(x,mL)\Leftrightarrow d(x,mRL) < d(x,mR)$ is indeed valid.
\end{proof}

\begin{cor}
    Let $x,m \in M$, such that there exists $m' \in M$ that satisfies $m = m'L$  then $d(x,mR) < d(x,m)\Leftrightarrow x \in \big(Cone(mR) \cup Cone(m'RL)\big)$.
\end{cor}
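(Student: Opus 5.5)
The plan is to reduce the statement directly to Lemma~\ref{cone mlr = d mlr}, applied with $m'$ in place of $m$. Since $m = m'L$, we have $mR = m'LR$, so the two distances involved are
\[
d(x,mR) = d(x,m'LR) \quad\text{and}\quad d(x,m) = d(x,m'L).
\]
Likewise the cone $Cone(mR)$ is literally $Cone(m'LR)$. With these substitutions the claimed equivalence becomes
\[
d(x,m'LR) < d(x,m'L) \iff x \in \big(Cone(m'LR)\cup Cone(m'RL)\big).
\]
This is precisely the first equivalence in Lemma~\ref{cone mlr = d mlr} for the element $m'$, so the two sides match and the corollary follows.

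The only step that needs care is the identification $Cone(mR) = Cone(m'LR)$. Here I would note that $mR$ and $m'LR$ are the same element of $M$, not merely equal as words. Cones are defined in terms of elements of $M$, so the two cones coincide.

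I do not expect a genuine obstacle: all the geometric content was already handled in Lemma~\ref{cone mlr = d mlr}, via Proposition~\ref{monoid convex} and Theorem~\ref{M distance}. The corollary is simply a relabelling of that lemma from the viewpoint of the vertex $m$, where the hyperedge separating $m$ from $mR$ is the one separating $m'L$ from $m'LR$. For completeness I would add a remark that the symmetric statement also holds. If $m = m'R$, then interchanging $L$ and $R$ gives $d(x,mL) < d(x,m) \iff x \in \big(Cone(mL)\cup Cone(m'LR)\big)$, using the second equivalence of the same lemma.
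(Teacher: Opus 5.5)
Your proposal is correct and is essentially the paper's own proof. The paper likewise applies Lemma~\ref{cone mlr = d mlr} to $m'$, using $m=m'L$ and $mR=m'LR$ to identify $d(x,mR)<d(x,m)$ with $d(x,m'LR)<d(x,m'L)$ and $Cone(mR)$ with $Cone(m'LR)$; the paper's extra case in which $m'$ does not exist is vacuous under the stated hypothesis.
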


\begin{figure}
    \centering
    \includegraphics[width=0.89\linewidth]{"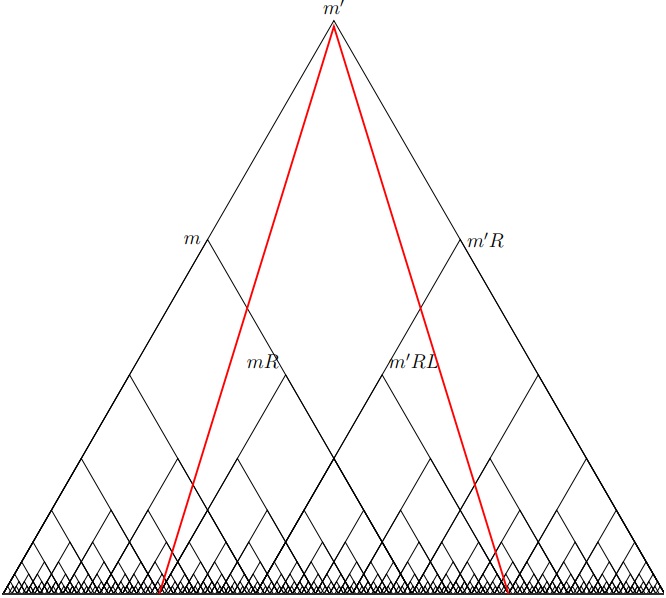"}
    \caption{A hyperedge picking out elements that satisfy the inequality $d(x, m) < d(x,mR)$}
    \label{fig: mLR shift}
\end{figure}

\begin{proof}
    The proof requires studying two cases where $m'$ exists and not. When $m' \notin M$, the conclusion follows trivially. 
    
    Let's look at the first case where $m' \in M$. We apply Lemma \ref{cone mlr = d mlr} to $Cone(m')$, see Figure \ref{fig: mLR shift}. Hence, $d(x,mR) < d(x,m)\Leftrightarrow x \in \big(Cone(mR) \cup Cone(m'RL)\big)$ follows immediately.
\end{proof}

\begin{cor}
    Let $x, m \in M$, then $d(x,mLR^2) < d(x,mLR) \Leftrightarrow d(x,mR)< d(x,m)$.
\end{cor}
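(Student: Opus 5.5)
The plan is to reduce everything to explicit distance computations with Theorem~\ref{M distance}, and to sort the vertices $x$ into a few regions on which both inequalities can be decided. Left multiplication is an isometry and cones are strongly geodesically convex (Proposition~\ref{monoid convex}). So I would first split according to whether $x\in Cone(m)$ or not.

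\emph{Case $x\in Cone(m)$.} Write $x=mx'$. Cancelling $m$, the statement becomes
\[
d(x',LR^2)<d(x',LR)\iff d(x',R)<d(x',1).
\]
By Lemma~\ref{field distance eq}(2) the right-hand side is equivalent to $x'\in Cone(R)$. So it suffices to show that $d(x',LR^2)<d(x',LR)$ holds exactly on $Cone(R)$. I would check this over the regions $A_1,\dots,A_5$ of Figure~\ref{fig:a1-5}, using $LR^2=RL^2$ to apply Theorem~\ref{M distance} on whichever side a common prefix is available.
\begin{itemize}
\item On $A_3=Cone(LR^2)$: $d(x',LR^2)=|x'|-3$ and $d(x',LR)=|x'|-2$, so the inequality holds.
\item On $A_4$: write $x'=RLy$. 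Then $d(x',RL^2)=d(y,L)$. The formula gives $d(x',LR)=|x'|$ (the $-2$ case), and also $d(y,L)\le |x'|-1$. Examples such as $x'=RL,\,RLR$ confirm this.
\item On $A_5$: $x'=R^k$ or $x'=R^kL\cdots$. We get $d(x',RL^2)=|x'|+1$ against $d(x',LR)=|x'|+2$, so the inequality holds.
\item On $A_1$: $d(x',LR^2)$ exceeds $d(x',LR)$ by one, since the ascending path must pass through $LR$ first. So the inequality fails.
\item On $A_2$: since $x'\notin Cone(LR^2)$ and cones are convex, any geodesic from $x'$ to $LR^2$ passes through $LR$, so again the inequality fails.
\item For $x'=1$ the check is immediate.
\end{itemize}

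\emph{Case $x\notin Cone(m)$.} Let $n$ be the longest common prefix of $x$ and $m$. Following the proof of Proposition~\ref{M geo}, a geodesic from $x$ to any vertex $mw$ first reaches one of the gateway vertices $n$, $nLR^2$ of $Cone(n)$. It then continues inside the relevant cone, by convexity. Consequently each of the four distances $d(x,mLR^2)$, $d(x,mLR)$, $d(x,mR)$, $d(x,m)$ equals
\[
\min_{g}\bigl(d(x,g)+d(g,\cdot)\bigr)
\]
over the finitely many gateways $g$. This is precisely the mechanism behind the previous corollary, where the set $Cone(m'RL)$ appears when $m=m'L$. I would then compute both sides in the subcases according to the last letter of $m$ (whether $m=m'L$ or $m=m'R$). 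In each subcase I would show that the right-hand inequality holds exactly when $x\in Cone(m'RL)$ (or the symmetric set), by the previous corollary, and verify the same set for the left-hand inequality by comparing gateway distances.

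I expect this second case to be the main obstacle. Controlling geodesics that enter $Cone(m)$ from outside requires care about which gateway is used. The subtle point is checking that the left-hand comparison changes sign on exactly the same hyperedge as the right-hand one. I would first try to avoid this by rewriting $mLR^2=mRL\cdot L$ and applying the previous corollary with $mRL$ in place of $m$. If that does not close the gap, I would fall back on the explicit gateway computation.
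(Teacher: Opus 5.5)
\paragraph{Where the proposal stands.} Your treatment of $x\in Cone(m)$ is fine. After cancelling $m$, the region check correctly shows that $d(x',LR^2)<d(x',LR)$ holds exactly on $Cone(R)$.

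One justification there is wrong. In $A_2$ it is not true that every geodesic from $x'$ to $LR^2$ passes through $LR$: for $x'=LRLR$, the path $LRLR\to LRLR^2=LR^2L^2\to LR^2L\to LR^2$ is a geodesic avoiding $LR$. The inequality itself still follows from Theorem~\ref{M distance}: writing $x'=LRu$ with $u\notin Cone(R)$, one has $d(x',LR^2)=d(u,R)=|u|+1>|u|=d(x',LR)$.

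\paragraph{The gap.} The real gap is the case $x\notin Cone(m)$. This is where the statement has content, and you leave it as a plan. None of the tools you name settles it.

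First, the previous corollary describes $\{x:d(x,\mu R)<d(x,\mu)\}$ only when $\mu=\mu'L$. No word for $mLR$ ends in $L$: the relation changes the length of the final block of $R$'s by $0$ or $\pm2$, so that length stays odd. Hence the edge $\{mLR,mLR^2\}$ is never covered by the corollary. Substituting $mRL$ for $m$ gives the half-space of a different edge, $\{mRL,mRLR\}$.

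For the same reason, when $m$ has no word ending in $L$, neither the corollary nor its mirror image (which concerns $d(x,mL)<d(x,m)$) describes the right-hand set. That set can lie mostly outside $Cone(m)$. For $m=LR$, your own computation shows it is all of $Cone(R)$.

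Second, the gateway principle is false as stated. Take $m=R$ and $x=LRLR$, so $n=1$, and take $w=L^3$. Then $mw=RL^3=LR^2L$, and $d(x,mw)=2$ via $LRLR\to LR^2L^2\to LR^2L$. Any path through $1$ or $LR^2$ has length at least $4$. The cause is that $Cone(L)\cap Cone(R)=Cone(LR^2)$ is nonempty. So $x$ can share a longer prefix with vertices below $m$ than with $m$ itself, and Theorem~\ref{M distance} must then be applied with that longer prefix.

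\paragraph{What is missing, and how it could be closed.} You need control over how $m,mR,mLR,mLR^2$ can cross into the other top-level cone.

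Using left cancellativity and $Cone(L)\cap Cone(R)=Cone(LR^2)$, one can show by induction on length that the only edges entering $Cone(L)$ from outside are $\{(RL)^j,(RL)^jL\}$, where $(RL)^jL=LR^{2j}$ and $j\ge0$. Symmetrically, the only edges entering $Cone(R)$ are $\{(LR)^j,(LR)^jR\}$, where $(LR)^jR=RL^{2j}$.

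Now cancel a longest common prefix of $x$ and $m$; the case $x=1$ is trivial.
\begin{enumerate}
\item If $m\in Cone(R)\setminus Cone(L)$ and $x\in Cone(L)\setminus Cone(R)$, both inequalities fail.
\item If $m\in Cone(L)\setminus Cone(R)$ and $x\in Cone(R)\setminus Cone(L)$, both inequalities hold when $m=(LR)^j$ with $j\ge1$. They also both hold when $m=L(LR)^j$ with $j\ge0$ and $x\in Cone(RL)$. In every other subcase both fail.
\end{enumerate}
Each of these is a finite computation with Theorem~\ref{M distance}.

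\paragraph{Comparison with the paper.} The paper's argument is a short hyperedge-style observation. It notes that $x=mR$, and hence all of $Cone(mR)$, satisfies $d(x,mLR^2)<d(x,mLR)$. It then identifies this half-space with $\{x:d(x,mR)<d(x,m)\}$, the two edges being opposite sides of the hexagon $m,mL,mLR,mLR^2=mRL^2,mRL,mR$. A proof by explicit distance computation, like yours, has to carry out the out-of-cone case in full.
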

\begin{proof}
    This is another consequence of Theorem \ref{M distance}. Observe that the element $mR$ satisfies the relation $d(x,mLR^2) <$ $ d(x,mLR)$. This implies that every element of $Cone(mR)$ satisfies the inequality, which implies $d(x,mR)<$ $d(x,m)$.
\end{proof}
\noindent These corollaries have non-trivial results that play a key role in understanding the atoms of $\MM$. Observe that whenever $m = m'L$, $d(x,mR^2) < d(x,mR)$ triggers a recursive effect as $d(x,m'LR^2) < d(x,m'LR)$ implies $d(x,m'R) < d(x,m')$. This will continue to recur until it hits the boundary of the graph or a state $d(x,nLR) < d(x,nL)$. This means that the set of points corresponding to the inequality~$d(x,mR) < d(x,m)$ will be the largest cone containing $mR$ but not $m$, and the cone $m'RL$.

Bringing it together gives us the following equivalent statements.
\begin{prop}\label{distance cone statements}
        Let $x, m \in M$, then there exists $n \in M$ such that $mL = nR^i$, where~$i$ is as large as possible. There exists $n' \in M$ such that $nL = n'R^j$, where $j = max\{0,1\}$. Then the following equivalent statements are valid:
    \begin{enumerate}
        \item $d(x,mL) < d(x,m) \Leftrightarrow x \in \big(Cone (n) \cup Cone(n')\big)$;
        \item $d(x,mL) > d(x,m) \Leftrightarrow x \notin \big(Cone (n) \cup Cone(n')\big)$.
    \end{enumerate}
    
    By interchanging the roles of $L$ and $R$, we get the following. Let $x, m \in M$, then there exists $n \in M$ such that $mR = nL^i$, where $i$ is as large as possible. There exists $n' \in M$ such that $nR = n'L^j$, where $j = max\{0,1\}$. Then the following equivalent statements are valid:
    \begin{enumerate}\addtocounter{enumi}{2}
        \item $d(x,mR) < d(x,m)\Leftrightarrow x \in \big(Cone (n) \cup Cone(n')\big)$;
        \item $d(x,mR) > d(x,m)\Leftrightarrow x \notin \big(Cone (n) \cup Cone(n')\big)$.
    \end{enumerate}
\end{prop}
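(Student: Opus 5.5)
The plan is to treat the set $H(m,mL)=\{x\in M : d(x,mL)<d(x,m)\}$ as the ``half-plane'' cut out by the hyperedge crossing the edge $\{m,mL\}$, and to identify it by pushing that edge upward through the relation $LR^2=RL^2$ until it reaches a position covered by Lemma~\ref{field distance eq} or Lemma~\ref{cone mlr = d mlr}. Statements (3) and (4) follow from (1) and (2) by the $L\leftrightarrow R$ symmetry of the presentation, so I only treat (1) and (2).

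First I reduce (2) to (1). Since the defining relation preserves word length, $|\cdot|$ is well defined on $M$, and every edge joins a vertex of length $k$ to one of length $k+1$. Hence $Cay(M)$ is bipartite, and $d(x,mL)$ and $d(x,m)$ have opposite parity. As $m$ and $mL$ are adjacent, $|d(x,mL)-d(x,m)|=1$. So exactly one of $d(x,mL)<d(x,m)$ and $d(x,mL)>d(x,m)$ holds, and (2) is the contrapositive of (1).

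For (1), I argue by induction on the exponent $i$ in $mL=nR^i$.

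\emph{Base case $i=0$.} Here $n=mL$, and I must show $H(m,mL)=Cone(mL)\cup Cone(n')$. If $x\in Cone(m)$, I write $x=mx'$. Left multiplication is an isometry and $Cone(m)$ is strongly geodesically convex (Proposition~\ref{monoid convex}), so the question becomes whether $d(x',L)<d(x',1)$. By Lemma~\ref{field distance eq}(1) this holds exactly when $x'\in Cone(L)$. If $x\notin Cone(m)$, I use Theorem~\ref{M distance} (via Proposition~\ref{M geo}): a geodesic from $x$ to $mL$ climbs to the root of the smallest cone containing both points, or passes through its $LR^2$-vertex. Such a geodesic can reach $mL$ without passing through $m$ only when $m=m'R$ and the geodesic enters through $m'LR^2=m'RL^2=mL^2$. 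This is exactly the situation described in the Corollary to Lemma~\ref{cone mlr = d mlr} (with $L,R$ interchanged), and it contributes the extra cone $Cone(n')$.

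\emph{Inductive step.} Suppose $mL=nR^i$ with $i\ge1$. I apply the Corollary $d(x,pLR^2)<d(x,pLR)\Leftrightarrow d(x,pR)<d(x,p)$, together with its $L/R$ mirror, to replace the edge $\{m,mL\}$ by an edge one level higher with the same half-plane. This strips one factor of the $R$-tail, so the induction hypothesis applies.

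The main obstacle is the bookkeeping that makes the base case and the recursion meet correctly. I need to check that maximality of $i$ stops the recursion exactly at $n$, and that the single extra cone $Cone(n')$ (with $j\in\{0,1\}$) is precisely the set of $x$ falling into the ``$|x|+|y|-2$'' case of Theorem~\ref{M distance}, with no further cones appearing. I would settle this by a direct case check on the regions $A_1,\dots,A_5$ of Figure~\ref{fig:a1-5}, relative to the cone rooted at $n'$.
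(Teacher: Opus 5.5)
Your route is the paper's own: push the edge $\{m,mL\}$ upward with the hexagon Corollary until the $R$-tail is used up, then finish with Lemma~\ref{field distance eq} or Lemma~\ref{cone mlr = d mlr}. Your reduction of (2) to (1) by bipartiteness is correct, and cleaner than going through Lemma~\ref{field distance eq}(3).

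\textbf{The gap is in the inductive step, which fails as you state it.} The mirrored Corollary, $d(x,pRL^2)<d(x,pRL)\Leftrightarrow d(x,pL)<d(x,p)$, identifies the half-plane of $\{pRL,pRL^2\}$ with that of $\{p,pL\}$. That is the opposite edge of the hexagon $p,\,pL,\,pLR,\,pLR^2=pRL^2,\,pRL,\,pR$. It lies two levels up, and it removes $R^2$ from the tail, not $R$.

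In general no edge one level up does the job. For $m=RL$ we have $H(RL,RL^2)=Cone(L)=H(1,L)$. But the edges between levels $1$ and $2$ cut off only $Cone(L^2)$, $Cone(R^2)$ and $Cone(LR)\cup Cone(RL)$, and none of these (or their complements) is $Cone(L)$.

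More importantly, the Corollary applies to $\{m,mL\}$ only if $m$ has the form $qRL$. Nothing in your proposal establishes this.

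\textbf{What you need to add.} The missing fact is: if $i\ge 1$, then $m=qRL$, and $qL=nR^{i-2}$ with $i-2$ maximal.

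\emph{First part.} One application of $LR^2=RL^2$ changes the last letter of a word only when it is applied to the last three letters, and then it always changes it. So take any rewriting of a word $wL$ for $mL$ into a word ending in $R$. Every rewrite before the first change of the last letter happens inside $w$. Hence the word just before that change is $w'L$, where $w'\equiv w$ and $w'$ ends in $RL$. This gives $m=qRL$.

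\emph{Second part.} Consequently $mL=qLR^2$. Right cancellation then gives $qL=nR^{i-2}$. Right cancellation does hold in $M$, for example by Adian's theorem, since $LR^2$ and $RL^2$ end in different letters; the paper only records left cancellativity. If $qL$ had a longer $R$-tail, so would $mL$, so $i-2$ is maximal.

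\emph{Closing the induction.} The edge $\{q,qL\}$ therefore produces the same $n$, hence the same $n'$. Strong induction in steps of two then closes the argument. In particular $i$ is always even, so the case $i=1$, which your one-step induction would have to handle, never arises.

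Right cancellation is also what you use implicitly in the base case. When $i=0$, $m$ is the only parent of $mL$. That is what forces a geodesic from outside $Cone(m)$ either to pass through $m$ or to reach $mL$ from inside $Cone(mL)$.

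With these additions, your argument becomes a complete version of the paper's recursive sketch.
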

\noindent Note that it is possible for $Cone(n') \subset Cone(n)$. See Figure \ref{fig: equiv statements} for a visualization of this process.
\begin{figure}
    \centering
    \includegraphics[width=0.89\linewidth]{"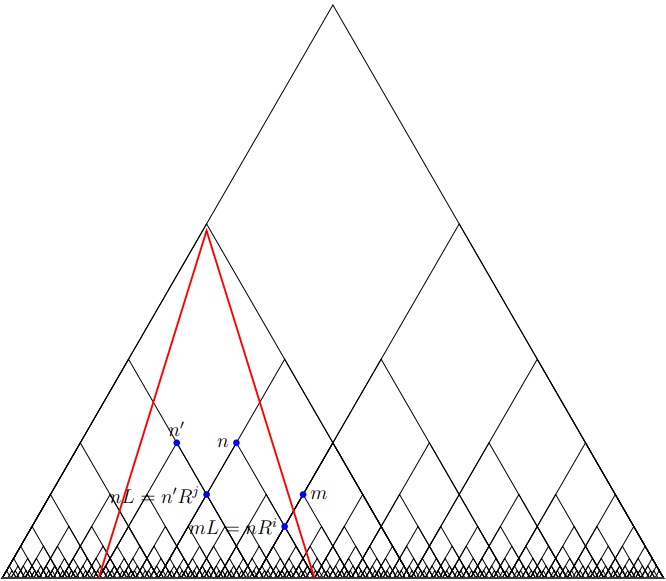"}
    \caption{Visualization of statements 1 and 2 of Proposition \ref{distance cone statements}}
    \label{fig: equiv statements}
\end{figure}
Although this proposition alone may not appear immediately insightful, it has promising consequences. If $x$ is an element in $Cone(m)$, then knowing the signs of the inequalities~$d(x,m) \neq d(x,mL)$ and $d(x,m) \neq d(x,mR)$, tells all the distances between every element~$n \notin Cone(m)$ and its offsprings. In other words, for every $x \in Cone(m)$, the distance vectors $\{m,mL\}$ and $\{m,mR\}$ determine the distance vectors outside $Cone(m)$. 

We end this section with a theorem that is a final consequence of these propositions. For any $m \in M$ we call the set $Cone(pLR) \cup Cone(pRL)$ a \textit{triangle} in $\MM$. We call the sets $Cone(L^n)$ and $Cone(R^n)$ \textit{side cones} for any $n>0$.

\begin{thm}
    Let $S$ be a triangle or a side cone in $\MM$. Let $p,q \in M$ be adjacent vertices with $p \in S$ and $q \in S^c$. Then the following statements hold:
    \begin{enumerate}
        \item if $x \in S$ then $d(x,p) < d(x, q)$; 
        \item if $x \in S^c$ then $d(x,q) < d(x, p)$.
    \end{enumerate}
\end{thm}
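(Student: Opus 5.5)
The plan is to reduce the statement to the "hyperedge" lemmas already proved: Lemma~\ref{field distance eq}, Lemma~\ref{cone mlr = d mlr} and Proposition~\ref{distance cone statements}. The key point is that an edge $\{p,q\}$ with $p\in S$ and $q\in S^c$ is crossed by the hyperedge bounding $S$. These lemmas show that the half-plane of vertices closer to one endpoint of such an edge is exactly $S$ or $S^c$. Since adjacent vertices always have distances to any $x$ differing by exactly $1$ (as seen in the proof of Lemma~\ref{field distance eq}, part 3, applied to translates via left multiplication being an isometry), $d(x,p)\neq d(x,q)$. So statement (2) follows from statement (1) once we show that the set $\{x: d(x,p)<d(x,q)\}$ equals $S$.

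First I would classify the boundary edges of each type of $S$. Every edge has the form $\{m,mL\}$ or $\{m,mR\}$.

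For a side cone $S=Cone(L^n)$, the edges leaving $S$ are those where one endpoint lies in $Cone(L^n)$ and the other does not. Using the description of cones as prefix sets and Lemma~\ref{lem:I_M_homeomorphic_M}, I expect these to be the edge $\{L^{n-1},L^n\}$ together with edges entering $Cone(L^n)$ through the relation $LR^2=RL^2$. The latter are edges of the form $\{m,mL\}$ where $mL=nR^i$ as in Proposition~\ref{distance cone statements}, i.e.\ $Cone(L^n)$ appears as the cone $Cone(n)$ there.

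For a triangle $S=Cone(pLR)\cup Cone(pRL)$, the boundary edges are $\{pL,pLR\}$ and $\{pR,pRL\}$, plus possibly edges attached to $Cone(pLR^2)$ from outside. By strong geodesic convexity, Proposition~\ref{monoid convex}, and left cancellativity, I would argue that no other boundary edges exist.

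Next, for each boundary edge I would apply the matching earlier result:
\begin{itemize}
\item For the triangle edges $\{pL,pLR\}$ and $\{pR,pRL\}$, Lemma~\ref{cone mlr = d mlr} (with $m=p$) states precisely that $d(x,pLR)<d(x,pL)$ iff $d(x,pRL)<d(x,pR)$ iff $x\in S$.
\item For side cones and the remaining triangle edges, Proposition~\ref{distance cone statements} gives that $d(x,mL)<d(x,m)$ iff $x\in Cone(n)\cup Cone(n')$. One then checks that this union equals $S$. For a side cone $Cone(L^n)$, the maximality of $i$ forces $n'$ to be absent or $Cone(n')\subset Cone(n)$. For a triangle, the union is exactly the pair of cones $Cone(pLR)\cup Cone(pRL)$.
\end{itemize}
The same reasoning with $L$ and $R$ interchanged handles the mirror cases.

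The main obstacle is the first step: proving that the listed edges are the \emph{only} edges joining $S$ to $S^c$. In particular one must rule out edges entering a triangle at vertices deep inside, via rewritings like $LR^2=RL^2$ applied to long words. I would attack this by induction on $|q|$. Any edge $\{q,qa\}$ with $qa\in S$ and $q\notin S$ means $qa$ has a word beginning with the defining prefix while $q$ does not. Comparing intervals $[0,1]_q\supsetneq[0,1]_{qa}$ and using Lemma~\ref{lem:I_M_homeomorphic_M}, the interval of $qa$ sits inside that of $S$ while the interval of $q$ does not. Interval arithmetic with the subdivisions \eqref{eq: intervals} should then pin down $q$ to the boundary vertices identified above.
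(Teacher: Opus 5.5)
Your overall reduction is the route the paper intends. The paper writes no separate proof and presents the theorem as a consequence of Lemma~\ref{cone mlr = d mlr} and Proposition~\ref{distance cone statements}. So the task is to show that, for every edge leaving $S$, the half-space $\{x: d(x,p)<d(x,q)\}$ given there equals $S$. The gap is that the step you single out as the crux aims at a false statement.

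For a triangle you list $\{pL,pLR\}$, $\{pR,pRL\}$ and ``edges attached to $Cone(pLR^2)$ from outside'', and you propose to prove there are no others. Take $S=Cone(LR)\cup Cone(RL)$ and the edge $\{L^2R,\,L^2R^2\}$.
\begin{itemize}
\item Since $L^2R^2=L\cdot RL^2=LRL^2$, the vertex $L^2R^2$ lies in $Cone(LR)\subseteq S$.
\item $L^2R$ contains no subword $LR^2$ or $RL^2$, so it has a unique spelling and lies in $S^c$.
\item This edge is neither of your two edges, and it does not touch $Cone(LR^2)$: neither spelling $L^2R^2$, $LRL^2$ begins with $LR^2$ or $RL^2$.
\end{itemize}
In fact no vertex of $S^c$ is adjacent to $Cone(pLR^2)$ at all. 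The edges actually leaving $S$ form the infinite families $\{pL(LR)^k,\,pLRL^{2k}\}$ and $\{pR(RL)^k,\,pRLR^{2k}\}$ for $k\ge 0$, using $(LR)^kR=RL^{2k}$. Your interval induction therefore cannot ``pin down $q$ to the boundary vertices identified above''. As written, your plan would assert a false classification and never treat the edges with $k\ge 1$.

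The repair fits inside your framework.
\begin{itemize}
\item Edges entering $Cone(p)$ from outside land on vertices $pL^j$ or $pR^j$, which are not in $S$. So by left cancellation you may take $p=1$.
\item Then $q\notin S$, $qa\in S$ forces, up to symmetry, $q=Lq'$ with $q'\notin Cone(R)$ and $q'a\in Cone(R)$.
\item The edges entering $Cone(R)$ are $\{(LR)^k,(LR)^kR\}$, the mirror of $\{(RL)^k,(RL)^kL\}$ for $Cone(L)$. Likewise the edges leaving $Cone(L^n)$ are $\{L^{n-1}(RL)^k,\,L^nR^{2k}\}$.
\item For $m=pL(LR)^k$, Proposition~\ref{distance cone statements}(3) gives $mR=pLR\cdot L^{2k}$, so $n=pLR$. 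Then $nR=pLR^2=pRL\cdot L$, so $n'=pRL$, and the half-space is exactly $S$.
\end{itemize}

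A smaller point concerns your claim that $d(x,p)\neq d(x,q)$. Transporting part~3 of Lemma~\ref{field distance eq} by left multiplication only gives $d(x,q)\neq d(x,qa)$ for $x\in Cone(q)$. The clean reason for all $x$ is that the relation $LR^2=RL^2$ preserves word length. Hence $Cay(M)$ is bipartite, and $d(x,p)$, $d(x,q)$ have opposite parity.
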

This theorem implies that for every edge in the Cayley graph $\MM$, there is exactly one hyperedge that crosses it. As a consequence, we say that two atoms $\AA$ and $\BB$ are of the same type if there exist $Cone(m) \supseteq \AA$ and $Cone(n) \supseteq \BB$ such that the following hold:

    \begin{itemize}
        \item for every vertex $a \in \AA$ there exists a vertex $b \in \BB$ such that $m^{-1}a = n^{-1}b$, i.e., the atoms have the same shapes;
        \item $F_{\AA} |_{Cone(m)} = F_{\BB}|_{Cone(n)}$, i.e., the cones that contain $\AA$ and $\BB$ have the same vector fields.
    \end{itemize}

\subsection{The horofunction boundary of \textit{M}} This section presents one of the main results of this paper, in particular demonstrating the connection between the horofunction boundary $\partial_h M$ of the Cayley graph $\MM$ of the monoid $M$ and the small golden ratio $\tau = \frac{\sqrt{5}-1}{2}$. We start by introducing the space $\DD_\tau$, followed by a series of lemmas that establish a connection between the horofunction boundary $\partial_h M$ and $\DD_\tau$. 

Let $I_\tau = \mathbb{Z}[\tau] \cap (0,1)$, where $\mathbb{Z}[\tau] = \{a + b\tau : a,b \in \mathbb{Z}\}$ and $\tau = \frac{1+\sqrt{5}}{2} \approx 0.618034$ is the small golden ratio. Recall that by Definition \ref{def Ctau} a blowup of $[0,1]$ along $I_\tau$ is a Cantor set denoted by $\CC_\tau$.

\begin{definition}\label{def:D_tau}
Let $\DD_\tau$ be the set
\[
\DD_\tau = ([0,1] \setminus I_\tau) \cup \{x^- : x \in I_\tau\} \cup \{x : x \in I_\tau\} \cup \{x^+ : x \in I_\tau\}
\]
equipped with a linear order $\prec$ satisfying the following conditions:
\begin{itemize}
    \item $\prec$ agrees with the standard order $<$ on $[0,1] \setminus I_\tau$;
    \item for all $x \in I_\tau$, $x^- \prec x \prec x^+$;
    \item if $x \in I_\tau$ and $y \in [0,1] \setminus I_\tau$, then $x^+ \prec y \iff x \prec y \iff x^- \prec y$;
    \item if $x, y \in I_\tau$ and $x < y$, then $x^- \prec x \prec x^+ \prec y^- \prec y \prec y^+$.
\end{itemize}
\end{definition}
\noindent The topology on $\DD_\tau$ is the order topology induced by $\prec$. The set $\DD_\tau$ can be viewed as a "double blowup" of $[0,1]$ along $I_\tau$, that is, every point in $I_\tau$ is replaced by three points instead of two points.

Although $\DD_\tau$ is not itself a Cantor set, it contains $\CC_\tau$ as a subset, which is homeomorphic to the Cantor set. The key difference is that $\DD_\tau$ contains additional isolated points (the elements $x \in I_\tau$) between each pair of blowup points in $\CC_\tau$.

Let $x_i \in I_\tau$ then the Cantor set $\CC_\tau$ is depicted as:
\[
\begin{tikzpicture}[scale=0.85]
\draw (1.5,0) node[anchor=north]{}
-- (3,0) node[anchor=south]{}
    (4.5,0) node[anchor=north]{}
-- (6,0) node[anchor=north]{}
    (7.5,0) node[anchor=north]{}
-- (9,0) node[anchor=north]{}
    (10.5,0) node[anchor=north]{}
-- (12,0) node[anchor=north]{}
    (13.5,0) node[anchor=north]{}
-- (15,0) node[anchor=north]{};

\node[circle,fill=black,inner sep=0pt,minimum size=0pt,label=above:{$\cdots$}] (a) at (0.5,-0.27) {};
\node[circle,fill=black,inner sep=0pt,minimum size=0pt,label=above:{$\cdots$}] (a) at (16.5,-0.27) {};

\node[circle,fill=black,inner sep=0pt,minimum size=4pt,label=above:{$x_1^+$}] (a) at (1.5,0) {};
\node[circle,fill=black,inner sep=0pt,minimum size=4pt,label=above:{$x_2^-$}] (a) at (3,0) {};
\node[circle,fill=black,inner sep=0pt,minimum size=4pt,label=above:{$x_2^+$}] (a) at (4.5,0) {};
\node[circle,fill=black,inner sep=0pt,minimum size=4pt,label=above:{$x_3^-$}] (a) at (6,0) {};
\node[circle,fill=black,inner sep=0pt,minimum size=4pt,label=above:{$x_3^+$}] (a) at (7.5,0) {};
\node[circle,fill=black,inner sep=0pt,minimum size=4pt,label=above:{$x_4^-$}] (a) at (9,0) {};
\node[circle,fill=black,inner sep=0pt,minimum size=4pt,label=above:{$x_4^+$}] (a) at (10.5,0) {};
\node[circle,fill=black,inner sep=0pt,minimum size=4pt,label=above:{$x_5^-$}] (a) at (12,0) {};
\node[circle,fill=black,inner sep=0pt,minimum size=4pt,label=above:{$x_5^+$}] (a) at (13.5,0) {};
\node[circle,fill=black,inner sep=0pt,minimum size=4pt,label=above:{$x_6^-$}] (a) at (15,0) {};
\end{tikzpicture}
\]
However, $\DD_\tau$ has additional isolated points in between every pair of non-dividable intervals. The following depicts the Cantor-like set $\DD_\tau$.
\[
\begin{tikzpicture}[scale=0.85]
\draw (1.5,0) node[anchor=north]{}
-- (3,0) node[anchor=south]{}
    (4.5,0) node[anchor=north]{}
-- (6,0) node[anchor=north]{}
    (7.5,0) node[anchor=north]{}
-- (9,0) node[anchor=north]{}
    (10.5,0) node[anchor=north]{}
-- (12,0) node[anchor=north]{}
    (13.5,0) node[anchor=north]{}
-- (15,0) node[anchor=north]{};

\node[circle,fill=black,inner sep=0pt,minimum size=0pt,label=above:{$\cdots$}] (a) at (0.5,-0.27) {};
\node[circle,fill=black,inner sep=0pt,minimum size=0pt,label=above:{$\cdots$}] (a) at (16.5,-0.27) {};

\node[circle,fill=black,inner sep=0pt,minimum size=4pt,label=above:{$x_1^+$}] (a) at (1.5,0) {};
\node[circle,fill=black,inner sep=0pt,minimum size=4pt,label=above:{$x_2^-$}] (a) at (3,0) {};
\node[circle,fill=black,inner sep=0pt,minimum size=4pt,label=above:{$x_2$}] (a) at (3.75,0) {};
\node[circle,fill=black,inner sep=0pt,minimum size=4pt,label=above:{$x_2^+$}] (a) at (4.5,0) {};
\node[circle,fill=black,inner sep=0pt,minimum size=4pt,label=above:{$x_3^-$}] (a) at (6,0) {};
\node[circle,fill=black,inner sep=0pt,minimum size=4pt,label=above:{$x_3$}] (a) at (6.75,0) {};
\node[circle,fill=black,inner sep=0pt,minimum size=4pt,label=above:{$x_3^+$}] (a) at (7.5,0) {};
\node[circle,fill=black,inner sep=0pt,minimum size=4pt,label=above:{$x_4^-$}] (a) at (9,0) {};
\node[circle,fill=black,inner sep=0pt,minimum size=4pt,label=above:{$x_4$}] (a) at (9.75,0) {};
\node[circle,fill=black,inner sep=0pt,minimum size=4pt,label=above:{$x_4^+$}] (a) at (10.5,0) {};
\node[circle,fill=black,inner sep=0pt,minimum size=4pt,label=above:{$x_5^-$}] (a) at (12,0) {};
\node[circle,fill=black,inner sep=0pt,minimum size=4pt,label=above:{$x_5$}] (a) at (12.75,0) {};
\node[circle,fill=black,inner sep=0pt,minimum size=4pt,label=above:{$x_5^+$}] (a) at (13.5,0) {};
\node[circle,fill=black,inner sep=0pt,minimum size=4pt,label=above:{$x_6^-$}] (a) at (15,0) {};
\end{tikzpicture}
\]
Recall that according to Lemma \ref{lem:I_M_homeomorphic_M} there is a  one-to-one correspondence between the elements of the monoid $M$ and the subintervals of the unit interval $I_M$, where $x \in M$ can be represented by $[0,1]_x \in I_M$. The system of equations \ref{eq: intervals} represents the actions of $L$ and $R$ on the interval.

We claim that these equations must be slightly modified to be able to use them in terms of infinite paths of $\MM$:
\begin{equation}\label{equation atoms}
\begin{aligned}
[x,y]_1 & \coloneq [x,y];\\
[x,y]_L & \coloneq [x, (y - (y-x)\tau^2)^-];\\
[x,y]_R & \coloneq [(x + (y-x)\tau^2)^+, y].\\
\end{aligned}
\end{equation}
The logic behind this step is motivated by the first 2-level decompositions of atoms of $\MM$ presented in Propositions \ref{prop: level-1 decomposition} and \ref{prop: level-2 decomposition}. In these propositions, we will see that the same element $RLLL\cdots = L^2RRR\cdots = LRLRLR\cdots$ of the monoid, depending on the chosen path, belongs to distinct atoms. This is not allowed by the definition of atoms. To solve this issue, we introduce a double blowup along elements of $\mathbb{Z}[\tau]\cap(0,1)$, resulting in the introduced modified equations above.

\begin{thm}\label{horofunction boundary of $M$}
    The horofunction boundary $\partial_h \MM$ of $\MM$ is naturally homeomorphic to~$\DD_\tau$.
\end{thm}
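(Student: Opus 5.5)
The plan is to follow the strategy of \cite{Belk Rational embeddings}. Theorem~\ref{equivalence boundary = horofunction boundary} identifies $\partial_h \MM$ with the boundary $\partial\AA(M)$ of the tree of atoms. It therefore suffices to (i) describe the tree of atoms explicitly as a self-similar tree, (ii) read off its type graph, and (iii) show that the space of infinite paths from the root in this type graph is homeomorphic to $\DD_\tau$ in its order topology.

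For step (i), I would use Proposition~\ref{distance cone statements} and the hyperedge theorem. Every edge of $\MM$ is crossed by exactly one hyperedge, whose two sides are a triangle or side cone and its complement. Hence the vector field of $d_x$ restricted to $B_n$ is determined by which side of each hyperedge meeting $B_n$ the point $x$ lies on. An atom at level $n$ is thus an intersection of cones, complements of cones, and triangles $Cone(pLR)\cup Cone(pRL)$, and only the infinite ones are kept. I would first compute the level-1 and level-2 decompositions by hand; these are Propositions~\ref{prop: level-1 decomposition} and~\ref{prop: level-2 decomposition}, which I would prove directly from Lemma~\ref{field distance eq}.
\begin{itemize}
\item Level 1: the infinite atoms are $Cone(L)\setminus Cone(R)$, $Cone(R)\setminus Cone(L)$ and $Cone(LR^2)$, which are distinguished by the signs of $d(x,L)-d(x,1)$ and $d(x,R)-d(x,1)$.
\item Level 2: using part 6 of Lemma~\ref{field distance eq}, each of these splits further. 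In particular, the middle atom $Cone(LR^2)$ is isometric to the whole graph by Proposition~\ref{monoid convex} and left-multiplication invariance, and is flanked by pieces living in $A_2$ and $A_4$.
\end{itemize}
From this computation I would identify finitely many atom types up to the morphism equivalence $\underset{T}{\sim}$. Candidate types are: a full type (the root, or the cone $Cone(LR^2)$); a left-side type; a right-side type; the two flanking strip types near a breakpoint; and a degenerate type consisting of a single ray such as $RL^\infty=LR^2R^\infty$. For each type I would check, using that left multiplication is an isometry and that by the discussion after Proposition~\ref{distance cone statements} the vector field outside a cone is determined by the two edges at its apex, that its children are again atoms of the listed types. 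The morphisms are the prefix replacements $v\mapsto pv$, which verify conditions (1) and (2) of the definition of morphism. This gives the type graph $\Gamma$.

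For step (iii), I would define a map from infinite paths in $\Gamma$ to $\DD_\tau$ by following the modified interval equations~\eqref{equation atoms}. Each atom is assigned a closed interval of the blown-up line, with $L$- and $R$-type children receiving $[x,(y-(y-x)\tau^2)^-]$ and $[(x+(y-x)\tau^2)^+,y]$. The isolated middle vertex $x\in I_\tau$ is assigned to the degenerate type, whose path is eventually constant. Using Lemma~\ref{lem:I_M_homeomorphic_M} and the computation $[x,y]_{LR^2}=[x,y]_{RL^2}$, I would check three things:
\begin{itemize}
\item nested intervals shrink, by factors $\tau$ or $\tau^2$, to single points of $\DD_\tau$;
\item every point of $\DD_\tau$ is hit exactly once, with the rays approaching a breakpoint from the two sides landing on $x^-$ and $x^+$, and the degenerate ray landing on $x$;
\item the map is order-preserving.
\end{itemize}
A continuous bijection from the compact path space onto the Hausdorff order space $\DD_\tau$ is then a homeomorphism.

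The main obstacle is step (i), namely proving that the list of atom types is complete and closed under refinement. In particular, one must show that the ray $RL^\infty$ genuinely forms its own infinite atom (a single infinite geodesic's points), separate from both neighbouring Cantor pieces at every level. This is what produces the isolated points. I would attack it by computing $d_x$ on the two edges at $LR^2$ for $x$ far along $RL^k$, via Theorem~\ref{M distance}, and comparing it with nearby $x$ in $A_2$ and $A_4$.
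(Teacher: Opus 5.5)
Your overall route is the paper's: Theorem~\ref{equivalence boundary = horofunction boundary}, a level-by-level decomposition into atoms, a finite type graph, and a nested-interval map onto $\DD_\tau$ closed off by the compact-to-Hausdorff argument. The gap is in the step you single out as the main obstacle. That claim is false, and it is exactly the step that must produce the isolated points.

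First, $RL^\infty\neq LR^2R^\infty$. The relation gives $RL^k=LR^2L^{k-2}$, whereas $LR^2R^\infty=LR^\infty$ converges to $\tau$, not to $\tau^2$.

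More seriously, the points $RL^k$ ($k\ge 2$) never form their own atom. By Lemma~\ref{field distance eq}(5) they satisfy $d(x,L),d(x,R)<d(x,1)$, so they lie in $b_1=Cone(LR^2)$. Since $RL^{k+2}=LR\cdot RL^k$, they then stay inside the nested full cones $Cone\bigl((LR)^jLR^2\bigr)$, which are the paper's $M_1$/$M_2$ atoms. So $RL^\infty$ represents $(\tau^2)^+$, an endpoint of a Cantor piece. The computation you propose at $LR^2$ would only confirm this, because every $x\in Cone(LR^2)$ orients the two edges into $LR^2$ the same way.

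The single-ray atoms are the \emph{alternating} rays. Take $x\in a_1$ with $d(x,LR)<d(x,L)$ and $d(x,L^2)>d(x,L)$. This gives $Cone(LR)\setminus\bigl(Cone(LRL^2)\cup Cone(LR^2)\bigr)=\{LR,LRL,LRLR,\dots\}$, which is the paper's $c_2$ of type $P_L$. It remains a single ray at every later level, and its translates $p(LR)^\infty$ and $p(RL)^\infty$ give all the other isolated points. Over $\tau^2$ there are therefore three rays, $L^2R^\infty$, $(LR)^\infty$ and $RL^\infty$, landing on $(\tau^2)^-$, $\tau^2$ and $(\tau^2)^+$ respectively. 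Only the middle one is isolated: it is the part of $A_2$ outside $Cone(L^2R^2)$, not a boundary ray of $Cone(LR^2)$.

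A related correction concerns your type list: $Cone(LR^2)$ at level $1$ is not of the root's type. Its apex is at distance $3$ from the root, so the atom passes unchanged to level $2$. It splits only at level $3$, into three pieces:
\begin{itemize}
\item $Cone(LR^2L^2)$;
\item $Cone(LR^4)$;
\item the middle piece $Cone(LR^2)\setminus\bigl(Cone(LR^2L^2)\cup Cone(LR^4)\bigr)$, which itself contains two further alternating rays.
\end{itemize}
In the paper this is the chain $M_1\to M_2\to\{M_1,M_3,M_1\}$, with $M_3\to\{\text{point},M_1,\text{point}\}$. Your type graph needs this delay, and it means the intervals do not shrink at every step, although they still shrink to points. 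With the alternating rays as the degenerate types and the delayed middle types included, your steps (ii) and (iii) go through as in Proposition~\ref{tree homeomorphism}.
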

\noindent The remainder of the section is dedicated to proving this theorem. To understand the structure of the horofunction boundary $\partial_h \MM$, we will decompose infinite atoms into sequential infinite atom levels until no new infinite atom types appear.

\begin{prop}\label{prop: level-1 decomposition}
Let $\AA_1(\MM)$ be the set of the infinite atom at the 1-level of $\MM$. Then~$\AA_1(\MM)  = a_1 \cup b_1 \cup c_1$, where 
\[\begin{aligned}    
&a_1 = Cone(L) \setminus Cone(LR^2);\\
&b_1 = Cone(LR^2);\\
&c_1 = Cone(R) \setminus Cone(LR^2).\\
\end{aligned}\]
In other words, the graph $\MM$ is decomposed into 3 infinite sets by different vector fields of the unit ball centered at the root. See Figure \ref{fig: abc_1} for a visualization.  

\begin{figure}
    \centering
    \includegraphics[width=0.89\linewidth]{"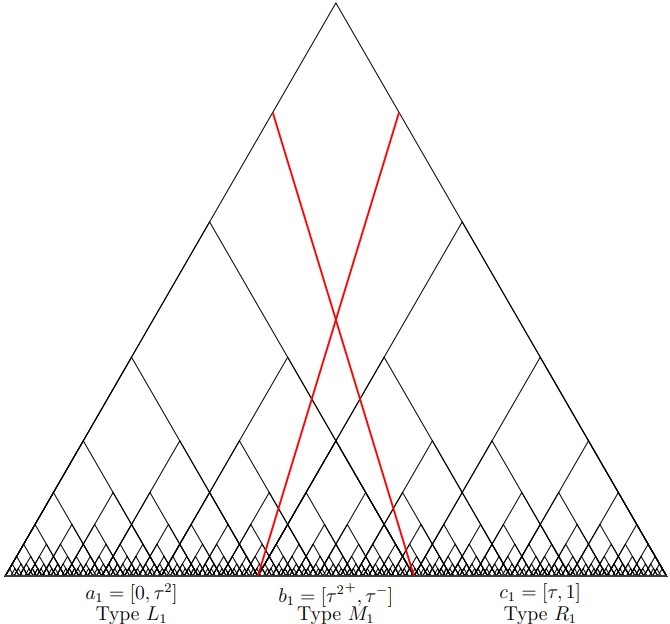"}
    \caption{Partition of 1-level infinite atoms $\AA_1(\MM)$}
    \label{fig: abc_1}
\end{figure}
\end{prop}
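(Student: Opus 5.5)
The ball $B_1$ contains exactly the two edges $\{1,L\}$ and $\{1,R\}$. A level-1 atom is therefore an equivalence class of vertices $x$ under the relation that records the orientation of these two edges in the principal vector field $F_x$. By Lemma \ref{field distance eq}(3),(4), neither edge can be undirected: $d(x,L)\neq d(x,1)$ and $d(x,R)\neq d(x,1)$ for every $x$. So a priori there are at most four classes, indexed by the signs of $d(x,L)-d(x,1)$ and $d(x,R)-d(x,1)$. The plan is to identify each class via Lemma \ref{field distance eq}, check which ones are infinite, and conclude.

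First, by parts (1) and (3), $d(x,L)<d(x,1)$ holds exactly when $x\in Cone(L)$. By parts (2) and (4), $d(x,R)<d(x,1)$ holds exactly when $x\in Cone(R)$. So the four sign patterns correspond to the sets
\[
Cone(L)\cap Cone(R),\quad Cone(L)\setminus Cone(R),\quad Cone(R)\setminus Cone(L),\quad M\setminus\bigl(Cone(L)\cup Cone(R)\bigr).
\]
Using the earlier lemma $Cone(L)\cap Cone(R)=Cone(LR^2)$ (equivalently Lemma \ref{field distance eq}(5)), the first set is $b_1$. The second is $Cone(L)\setminus Cone(LR^2)=a_1$, and the third is $Cone(R)\setminus Cone(LR^2)=c_1$. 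The fourth set is $\{1\}$, because every nontrivial element of $M$ begins with $L$ or $R$ and hence lies in $Cone(L)\cup Cone(R)$.

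Second, I check infiniteness, which is what membership in $\AA_1(\MM)$ requires. The class $\{1\}$ is finite, so it is discarded. Each of the other three classes contains an infinite family of distinct vertices. For $b_1$, take $LR^2L^k$ for $k\ge 0$. For $a_1$, take $L^k$ with $k\ge1$; these lie outside $Cone(LR^2)$ because Theorem \ref{M distance} gives $d(L^k,R)=k+1$, whereas every element $y$ of $Cone(R)$ satisfies $d(y,R)=|y|-1$. For $c_1$, the same argument applies to $R^k$ by symmetry. The words in each family have distinct lengths, and length is well defined since the relation is length-preserving, so the elements are distinct.

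The only step needing care is making sure the sign classes really coincide with the cone descriptions, including the strictness that rules out ties. This is already supplied by Lemma \ref{field distance eq}, so I expect no real obstacle beyond the bookkeeping of the excluded finite atom $\{1\}$. That gives $\AA_1(\MM)=\{a_1,b_1,c_1\}$, whose union is $M\setminus\{1\}$, as stated.
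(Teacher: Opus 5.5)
Your proposal is correct and follows essentially the same route as the paper: the four possible orientations of $\{1,L\}$ and $\{1,R\}$ are translated via Lemma \ref{field distance eq} into $Cone(LR^2)$, $Cone(L)\setminus Cone(LR^2)$, $Cone(R)\setminus Cone(LR^2)$ and the finite class $\{1\}$, which is discarded. Your no-tie remark and your infinite witnesses supply details the paper only asserts. One citation should change: to show $L^k\notin Cone(R)$, take $d(L^k,R)=k+1$ from Proposition \ref{boundary path}, or simply note that no relator applies to the word $L^k$. Reading that distance off Theorem \ref{M distance} already presupposes that $L^k$ and $R$ have no common prefix, which is what you are trying to show.
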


\begin{proof}
     Let $B_1$ be a unit ball centered at the root of $\MM = (M,E,1)$. Observe that there are a total of 4 possible principal vectors in $B_1$. We will denote them as:
     \begin{enumerate}
         \item $F_1(\{1,L\}) = (L,1)$ and $F_1(\{1,R\})=(R,1)$;
         \item $F_L(\{1,L\}) = (1,L)$ and $F_L(\{1,R\})=(R,1)$;
         \item $F_R(\{1,L\}) = (L,1)$ and $F_R(\{1,R\})=(1,R)$;
         \item $F_{LR^2}(\{1,L\}) = (1,L)$ and $F_{LR^2}(\{1,R\})=(1,R)$.
     \end{enumerate}
    To ease the understanding of the complicated representations of the possible vector fields in $B_1$ we invite the reader to take a look at their scaled graphic representation:
\[
\begin{tikzpicture}[scale=0.3]
\draw(4.32624,7.49329) node[anchor=north]{}
-- (7,12.1244) node[anchor=south]{}
-- (9.67376,7.49329) node[anchor=north]{}
;

\draw (2.67376, 4.63111) node[anchor=north]{}
-- (4.32624,7.49329) node[anchor=south]{}
-- (5.97871, 4.63111) node[anchor=north]{}
;

\draw (8.02129, 4.63111) node[anchor=north]{}
-- (9.67376,7.49329) node[anchor=south]{}
-- (11.3262, 4.63111) node[anchor=north]{}
;

\draw (1.6524745577599187,2.862183385639525) node[anchor=north]{}
-- (2.67376, 4.63111) node[anchor=south]{}
-- (3.6950454422400814, 2.862183385639525) node[anchor=north]{};

\draw (8.652480-3.6950454422400814, 2.862183385639525) node[anchor=north]{}
-- (5.97871, 4.63111) node[anchor=south]{}
-- (7,2.862183385639525) node[anchor=north]{};

\draw (7,2.862183385639525) node[anchor=north]{}
-- (8.02129, 4.63111) node[anchor=south]{}
-- (8.652480-3.6950454422400814+2.042570884480163+2.042570884480163, 2.862183385639525) node[anchor=north]{};

\draw (14-1.6524745577599187,2.862183385639525) node[anchor=north]{}
-- (11.3262, 4.63111) node[anchor=south]{}
-- (14-3.6950454422400814,2.862183385639525) node[anchor=north]{}
;

\draw[-{Stealth[length=3mm]}]
(4.32624,7.49329) node[anchor=south]{}
--(5.66312,9.808845) node[anchor=north]{}
;

\draw[-{Stealth[length=3mm]}]
(9.67376,7.49329) node[anchor=north]{}
--(14-5.66312,9.808845) node[anchor=south]{}
;

\end{tikzpicture}
\quad
\begin{tikzpicture}[scale=0.3]
\draw(4.32624,7.49329) node[anchor=north]{}
-- (7,12.1244) node[anchor=south]{}
-- (9.67376,7.49329) node[anchor=north]{}
;

\draw (2.67376, 4.63111) node[anchor=north]{}
-- (4.32624,7.49329) node[anchor=south]{}
-- (5.97871, 4.63111) node[anchor=north]{}
;

\draw (8.02129, 4.63111) node[anchor=north]{}
-- (9.67376,7.49329) node[anchor=south]{}
-- (11.3262, 4.63111) node[anchor=north]{}
;

\draw (1.6524745577599187,2.862183385639525) node[anchor=north]{}
-- (2.67376, 4.63111) node[anchor=south]{}
-- (3.6950454422400814, 2.862183385639525) node[anchor=north]{};

\draw (8.652480-3.6950454422400814, 2.862183385639525) node[anchor=north]{}
-- (5.97871, 4.63111) node[anchor=south]{}
-- (7,2.862183385639525) node[anchor=north]{};

\draw (7,2.862183385639525) node[anchor=north]{}
-- (8.02129, 4.63111) node[anchor=south]{}
-- (8.652480-3.6950454422400814+2.042570884480163+2.042570884480163, 2.862183385639525) node[anchor=north]{};

\draw (14-1.6524745577599187,2.862183385639525) node[anchor=north]{}
-- (11.3262, 4.63111) node[anchor=south]{}
-- (14-3.6950454422400814,2.862183385639525) node[anchor=north]{}
;

\draw[-{Stealth[length=3mm]}]
(7,12.1244) node[anchor=south]{}
--(5.66312,9.808845) node[anchor=north]{}
;

\draw[-{Stealth[length=3mm]}]
(9.67376,7.49329) node[anchor=north]{}
--(14-5.66312,9.808845) node[anchor=south]{}
;

\end{tikzpicture}
\quad
\begin{tikzpicture}[scale=0.3]
\draw(4.32624,7.49329) node[anchor=north]{}
-- (7,12.1244) node[anchor=south]{}
-- (9.67376,7.49329) node[anchor=north]{}
;

\draw (2.67376, 4.63111) node[anchor=north]{}
-- (4.32624,7.49329) node[anchor=south]{}
-- (5.97871, 4.63111) node[anchor=north]{}
;

\draw (8.02129, 4.63111) node[anchor=north]{}
-- (9.67376,7.49329) node[anchor=south]{}
-- (11.3262, 4.63111) node[anchor=north]{}
;

\draw (1.6524745577599187,2.862183385639525) node[anchor=north]{}
-- (2.67376, 4.63111) node[anchor=south]{}
-- (3.6950454422400814, 2.862183385639525) node[anchor=north]{};

\draw (8.652480-3.6950454422400814, 2.862183385639525) node[anchor=north]{}
-- (5.97871, 4.63111) node[anchor=south]{}
-- (7,2.862183385639525) node[anchor=north]{};

\draw (7,2.862183385639525) node[anchor=north]{}
-- (8.02129, 4.63111) node[anchor=south]{}
-- (8.652480-3.6950454422400814+2.042570884480163+2.042570884480163, 2.862183385639525) node[anchor=north]{};

\draw (14-1.6524745577599187,2.862183385639525) node[anchor=north]{}
-- (11.3262, 4.63111) node[anchor=south]{}
-- (14-3.6950454422400814,2.862183385639525) node[anchor=north]{}
;

\draw[-{Stealth[length=3mm]}]
(4.32624,7.49329) node[anchor=south]{}
--(5.66312,9.808845) node[anchor=north]{}
;

\draw[-{Stealth[length=3mm]}]
(7,12.1244) node[anchor=north]{}
--(14-5.66312,9.808845) node[anchor=south]{}
;

\end{tikzpicture}
\quad
\begin{tikzpicture}[scale=0.3]
\draw(4.32624,7.49329) node[anchor=north]{}
-- (7,12.1244) node[anchor=south]{}
-- (9.67376,7.49329) node[anchor=north]{}
;

\draw (2.67376, 4.63111) node[anchor=north]{}
-- (4.32624,7.49329) node[anchor=south]{}
-- (5.97871, 4.63111) node[anchor=north]{}
;

\draw (8.02129, 4.63111) node[anchor=north]{}
-- (9.67376,7.49329) node[anchor=south]{}
-- (11.3262, 4.63111) node[anchor=north]{}
;

\draw (1.6524745577599187,2.862183385639525) node[anchor=north]{}
-- (2.67376, 4.63111) node[anchor=south]{}
-- (3.6950454422400814, 2.862183385639525) node[anchor=north]{};

\draw (8.652480-3.6950454422400814, 2.862183385639525) node[anchor=north]{}
-- (5.97871, 4.63111) node[anchor=south]{}
-- (7,2.862183385639525) node[anchor=north]{};

\draw (7,2.862183385639525) node[anchor=north]{}
-- (8.02129, 4.63111) node[anchor=south]{}
-- (8.652480-3.6950454422400814+2.042570884480163+2.042570884480163, 2.862183385639525) node[anchor=north]{};

\draw (14-1.6524745577599187,2.862183385639525) node[anchor=north]{}
-- (11.3262, 4.63111) node[anchor=south]{}
-- (14-3.6950454422400814,2.862183385639525) node[anchor=north]{}
;

\draw[-{Stealth[length=3mm]}]
(7,12.1244) node[anchor=south]{}
--(5.66312,9.808845) node[anchor=north]{}
;

\draw[-{Stealth[length=3mm]}]
(7,12.1244) node[anchor=north]{}
--(14-5.66312,9.808845) node[anchor=south]{}
;

\end{tikzpicture}
\]   
Recall that a vector $F(\{n,m\}) = (n,m)$ is equivalent to the statement $d(x,n) > d(x,m)$.

We will now analyze the four cases that correspond to the possible vector fields in $B_1$. We start from the first case.

Let $x \in M$ be such that $d(x,L) > d(x,1)$ and $d(x,R) > d(x,1)$, following Lemma \ref{field distance eq} we observe that~$x \notin Cone(L)$ and~$x \notin Cone(R)$. Hence,~$x \in \{1\}$. This is a finite set; hence, by definition, it does not contribute to the horofunction boundary.

We move on to the second case. Let $x \in M$ be such that $d(x,L) < d(x,1)$ and $d(x,R) > d(x,1)$. Following Lemma \ref{field distance eq} we observe that $x \in Cone(L)$ and $x \notin Cone(R)$. Hence, $x \in Cone(L) \setminus Cone(LR^2)$. Let this atom be denoted as $a_1$. This is an infinite set; hence it contributes to the horofunction boundary.

The third case is similar to the second one. Let $x \in M$ be such that $d(x,L) > d(x,1)$ and~$d(x,R) < d(x,1)$, following Lemma \ref{field distance eq} we observe that $x \notin Cone(L)$ and $x \in Cone(R)$. Hence,~$x \in Cone(R) \setminus Cone(LR^2)$. Let this atom be denoted as $c_1$

We are left with the last case. Let $x \in M$ be such that $d(x,L) < d(x,1)$ and $d(x,R) < d(x,1)$, following Lemma \ref{field distance eq} we observe that $x \in Cone(L)$ and $x \in Cone(R)$. Hence, $x \in Cone(LR^2)$. Let this atom be denoted as $b_1$

The following is a scaled visualization of these 4 sets, for a full size picture we refer the reader to Figure \ref{fig: abc_1}.
\[
\begin{tikzpicture}[scale=0.3]
\draw(4.32624,7.49329) node[anchor=north]{}
-- (7,12.1244) node[anchor=south]{}
-- (9.67376,7.49329) node[anchor=north]{}
;

\draw (2.67376, 4.63111) node[anchor=north]{}
-- (4.32624,7.49329) node[anchor=south]{}
-- (5.97871, 4.63111) node[anchor=north]{}
;

\draw (8.02129, 4.63111) node[anchor=north]{}
-- (9.67376,7.49329) node[anchor=south]{}
-- (11.3262, 4.63111) node[anchor=north]{}
;

\draw (1.6524745577599187,2.862183385639525) node[anchor=north]{}
-- (2.67376, 4.63111) node[anchor=south]{}
-- (3.6950454422400814, 2.862183385639525) node[anchor=north]{};

\draw (8.652480-3.6950454422400814, 2.862183385639525) node[anchor=north]{}
-- (5.97871, 4.63111) node[anchor=south]{}
-- (7,2.862183385639525) node[anchor=north]{};

\draw (7,2.862183385639525) node[anchor=north]{}
-- (8.02129, 4.63111) node[anchor=south]{}
-- (8.652480-3.6950454422400814+2.042570884480163+2.042570884480163, 2.862183385639525) node[anchor=north]{};

\draw (14-1.6524745577599187,2.862183385639525) node[anchor=north]{}
-- (11.3262, 4.63111) node[anchor=south]{}
-- (14-3.6950454422400814,2.862183385639525) node[anchor=north]{}
;

\draw[-{Stealth[length=3mm]}]
(4.32624,7.49329) node[anchor=south]{}
--(5.66312,9.808845) node[anchor=north]{}
;

\draw[-{Stealth[length=3mm]}]
(9.67376,7.49329) node[anchor=north]{}
--(14-5.66312,9.808845) node[anchor=south]{}
;

\draw [red,line width=0.4mm,](9.67376544224,2.862183385639525) node[anchor=north]{}
--(5.66312,9.808845);
\draw [red,line width=0.4mm,](14-9.67376544224,2.862183385639525) node[anchor=north]{}
--(14-5.66312,9.808845);

\end{tikzpicture}
\quad
\begin{tikzpicture}[scale=0.3]
\draw(4.32624,7.49329) node[anchor=north]{}
-- (7,12.1244) node[anchor=south]{}
-- (9.67376,7.49329) node[anchor=north]{}
;

\draw (2.67376, 4.63111) node[anchor=north]{}
-- (4.32624,7.49329) node[anchor=south]{}
-- (5.97871, 4.63111) node[anchor=north]{}
;

\draw (8.02129, 4.63111) node[anchor=north]{}
-- (9.67376,7.49329) node[anchor=south]{}
-- (11.3262, 4.63111) node[anchor=north]{}
;

\draw (1.6524745577599187,2.862183385639525) node[anchor=north]{}
-- (2.67376, 4.63111) node[anchor=south]{}
-- (3.6950454422400814, 2.862183385639525) node[anchor=north]{};

\draw (8.652480-3.6950454422400814, 2.862183385639525) node[anchor=north]{}
-- (5.97871, 4.63111) node[anchor=south]{}
-- (7,2.862183385639525) node[anchor=north]{};

\draw (7,2.862183385639525) node[anchor=north]{}
-- (8.02129, 4.63111) node[anchor=south]{}
-- (8.652480-3.6950454422400814+2.042570884480163+2.042570884480163, 2.862183385639525) node[anchor=north]{};

\draw (14-1.6524745577599187,2.862183385639525) node[anchor=north]{}
-- (11.3262, 4.63111) node[anchor=south]{}
-- (14-3.6950454422400814,2.862183385639525) node[anchor=north]{}
;

\draw[-{Stealth[length=3mm]}]
(7,12.1244) node[anchor=south]{}
--(5.66312,9.808845) node[anchor=north]{}
;

\draw[-{Stealth[length=3mm]}]
(9.67376,7.49329) node[anchor=north]{}
--(14-5.66312,9.808845) node[anchor=south]{}
;

\draw [red,line width=0.4mm,](9.67376544224,2.862183385639525) node[anchor=north]{}
--(5.66312,9.808845);
\draw [red,line width=0.4mm,](14-9.67376544224,2.862183385639525) node[anchor=north]{}
--(14-5.66312,9.808845);

\end{tikzpicture}
\quad
\begin{tikzpicture}[scale=0.3]
\draw(4.32624,7.49329) node[anchor=north]{}
-- (7,12.1244) node[anchor=south]{}
-- (9.67376,7.49329) node[anchor=north]{}
;

\draw (2.67376, 4.63111) node[anchor=north]{}
-- (4.32624,7.49329) node[anchor=south]{}
-- (5.97871, 4.63111) node[anchor=north]{}
;

\draw (8.02129, 4.63111) node[anchor=north]{}
-- (9.67376,7.49329) node[anchor=south]{}
-- (11.3262, 4.63111) node[anchor=north]{}
;

\draw (1.6524745577599187,2.862183385639525) node[anchor=north]{}
-- (2.67376, 4.63111) node[anchor=south]{}
-- (3.6950454422400814, 2.862183385639525) node[anchor=north]{};

\draw (8.652480-3.6950454422400814, 2.862183385639525) node[anchor=north]{}
-- (5.97871, 4.63111) node[anchor=south]{}
-- (7,2.862183385639525) node[anchor=north]{};

\draw (7,2.862183385639525) node[anchor=north]{}
-- (8.02129, 4.63111) node[anchor=south]{}
-- (8.652480-3.6950454422400814+2.042570884480163+2.042570884480163, 2.862183385639525) node[anchor=north]{};

\draw (14-1.6524745577599187,2.862183385639525) node[anchor=north]{}
-- (11.3262, 4.63111) node[anchor=south]{}
-- (14-3.6950454422400814,2.862183385639525) node[anchor=north]{}
;

\draw[-{Stealth[length=3mm]}]
(4.32624,7.49329) node[anchor=south]{}
--(5.66312,9.808845) node[anchor=north]{}
;

\draw[-{Stealth[length=3mm]}]
(7,12.1244) node[anchor=north]{}
--(14-5.66312,9.808845) node[anchor=south]{}
;

\draw [red,line width=0.4mm,](9.67376544224,2.862183385639525) node[anchor=north]{}
--(5.66312,9.808845);
\draw [red,line width=0.4mm,](14-9.67376544224,2.862183385639525) node[anchor=north]{}
--(14-5.66312,9.808845);
\end{tikzpicture}
\quad
\begin{tikzpicture}[scale=0.3]
\draw(4.32624,7.49329) node[anchor=north]{}
-- (7,12.1244) node[anchor=south]{}
-- (9.67376,7.49329) node[anchor=north]{}
;

\draw (2.67376, 4.63111) node[anchor=north]{}
-- (4.32624,7.49329) node[anchor=south]{}
-- (5.97871, 4.63111) node[anchor=north]{}
;

\draw (8.02129, 4.63111) node[anchor=north]{}
-- (9.67376,7.49329) node[anchor=south]{}
-- (11.3262, 4.63111) node[anchor=north]{}
;

\draw (1.6524745577599187,2.862183385639525) node[anchor=north]{}
-- (2.67376, 4.63111) node[anchor=south]{}
-- (3.6950454422400814, 2.862183385639525) node[anchor=north]{};

\draw (8.652480-3.6950454422400814, 2.862183385639525) node[anchor=north]{}
-- (5.97871, 4.63111) node[anchor=south]{}
-- (7,2.862183385639525) node[anchor=north]{};

\draw (7,2.862183385639525) node[anchor=north]{}
-- (8.02129, 4.63111) node[anchor=south]{}
-- (8.652480-3.6950454422400814+2.042570884480163+2.042570884480163, 2.862183385639525) node[anchor=north]{};

\draw (14-1.6524745577599187,2.862183385639525) node[anchor=north]{}
-- (11.3262, 4.63111) node[anchor=south]{}
-- (14-3.6950454422400814,2.862183385639525) node[anchor=north]{}
;

\draw[-{Stealth[length=3mm]}]
(7,12.1244) node[anchor=south]{}
--(5.66312,9.808845) node[anchor=north]{}
;

\draw[-{Stealth[length=3mm]}]
(7,12.1244) node[anchor=north]{}
--(14-5.66312,9.808845) node[anchor=south]{}
;

\draw [red,line width=0.4mm,](9.67376544224,2.862183385639525) node[anchor=north]{}
--(5.66312,9.808845);
\draw [red,line width=0.4mm,](14-9.67376544224,2.862183385639525) node[anchor=north]{}
--(14-5.66312,9.808845);
\end{tikzpicture}
\]   
Let the atoms $a_1$, $b_1$, and $c_1$ be of types $L_1$, $M_1$, and $R_1$, respectively.
\end{proof}

Following Lemma \ref{lem:I_M_homeomorphic_M} we should be able to represent the atoms $a_1$, $b_1$, and $c_1$ in terms of the real line. However, we immediately face a problem. The atom $a_1 = Cone(L) \setminus Cone(LR^2)$ corresponds to the interval $[0,\tau^2]$, and the atom $b_1 = Cone(LR^2)$ corresponds to the interval $[\tau^2, \tau]$. Observe that the point $\tau^2$ belongs both to $a_1$ and~$b_1$. This is due to the fact that $\tau^2$ corresponds to multiple elements of~$M$ in particular $L^2RRR\cdots$ and~$RLLL\cdots$. When studying atoms, we do not allow such constructions, as infinite paths of atoms must lead to distinct elements. Hence, there is a blowup of $\tau^2$ in such a way that~$\tau^2 \neq {\tau^2}'$,~$\tau^2 < {\tau^2}'$, and $\tau^2 \in a_1$, ${\tau^2}' \in b_1$.

However, the statement above is not true when studying the atoms of $\MM$. To avoid repetition of the argument, we will just state that the atom $a_1$ will decompose in such a way that the point $\tau^2$ will blowup again. This will be seen in the next proposition.

Hence, equations \ref{equation atoms} can be used to represent atoms. We will now state the atoms $a_1$, $b_1$, and $c_1$ in terms of intervals.

We start from $a_1$. The condition $a_1 \subseteq Cone(L)$ is equivalent to $a_1 \subseteq [0,1]_L = [0,{\tau}^-]$ and $a_1 \not\subseteq [0,1]_R = [\tau^+,1]$, so $a_1$ is the set $[0,\tau^2]$.   

For the atom $b_1$ the condition $b_1 \subseteq Cone(LR^2)$ is equivalent to $b_1 = [0,1]_{LR^2} = [{\tau^2}^+, \tau^-]$. 

We invite the reader to verify that $c_1 = [\tau,1]$.

\begin{prop} \label{prop: level-2 decomposition}
    The atom $a_1$ decomposes into three disjoint infinite atoms in $\AA_2(\MM)$, denoted as~$a_2$, $b_2$ and $c_2$. These infinite atoms correspond to the following subsets of $\MM$: 
    \[\begin{aligned}
    &a_2 = Cone(L^2) \setminus Cone(L^2R^2);\\
    &b_2 = Cone(L^2R^2);\\
    &c_2 = Cone(LR) \setminus \big( Cone(LRL^2) \cup Cone(LR^2)\big).\\
    \end{aligned}\]
Moreover, $a_1 = \{L\} \cup a_2 \cup b_2 \cup c_2$. See Figure \ref{fig: a_2} for a visualization.  
\end{prop}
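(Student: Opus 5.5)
The plan is to mirror the proof of Proposition \ref{prop: level-1 decomposition}. Since $a_1 = Cone(L)\setminus Cone(LR^2)$ and every cone is strongly geodesically convex (Proposition \ref{monoid convex}), the refinement from $B_1$ to $B_2$ only involves the new edges $\{L,L^2\}$, $\{L,LR\}$, $\{R,RL\}$, $\{R,R^2\}$. For $x\in a_1$, I would first use Proposition \ref{distance cone statements} (together with Lemma \ref{cone mlr = d mlr} and its corollaries) to show that the orientations of $\{R,RL\}$ and $\{R,R^2\}$ are the same for all $x\in a_1$. Indeed $x\notin Cone(R)$, so the whole of $a_1$ lies on one side of each of these hyperedges: $d(x,RL)>d(x,R)$ and $d(x,R^2)>d(x,R)$, because $Cone(RL)\cup Cone(LR)\not\supseteq a_1$ is handled by the explicit distance formula of Theorem \ref{M distance}. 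The second claim needs a little care, and I would check it directly with Theorem \ref{M distance}: both are $|x|+2$ versus $|x|+1$, except possibly when $x\in A_2$, where $d(x,RL)=|x|$. So the $\{R,RL\}$ edge in fact distinguishes $A_2$ from $A_1$. This is consistent with the claimed splitting, since $c_2\subseteq Cone(LR)$.

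Next I would compute the sign pattern on the two edges below $L$. By part 1 of Lemma \ref{field distance eq} applied inside $Cone(L)$, using the isometry of left multiplication, $d(x,L^2)<d(x,L)$ if and only if $x\in Cone(L^2)$. By Lemma \ref{cone mlr = d mlr} with $m=1$, $d(x,LR)<d(x,L)$ if and only if $x\in Cone(LR)\cup Cone(RL)$; intersected with $a_1$ this is $Cone(LR)\setminus Cone(LR^2)$. This gives four sign patterns, exactly as in the level-1 proof. The pattern with both edges pointing to $L$ yields $\{L\}$, which is finite. The pattern with $L^2$ closer and $LR$ farther yields $Cone(L^2)\setminus Cone(LR)$. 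Since $Cone(L^2)\cap Cone(LR)=L\cdot Cone(LR^2)=Cone(L^2R^2)$ (the lemma $Cone(L)\cap Cone(R)=Cone(LR^2)$ translated by $L$), this set is $a_2$. The pattern with both closer yields $b_2=Cone(L^2R^2)$. The pattern with only $LR$ closer yields $Cone(LR)\setminus(Cone(L^2)\cup Cone(LR^2))$, and since $Cone(LR)\cap Cone(L^2)=Cone(LRL^2)$, this is $c_2$. Each of the three sets is infinite, since each contains an infinite cone or an infinite ray such as $L^n$, $L^2R^2L^n$, $LR L^{\,}R^n$-type words. They are disjoint and, together with $\{L\}$, exhaust $a_1$, which gives $a_1=\{L\}\cup a_2\cup b_2\cup c_2$.

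The main obstacle is the first step. We must verify that the edges of $B_2$ outside $Cone(L)$ do not split $a_2$, $b_2$ or $c_2$ any further. In particular the edge $\{R,RL\}$ must be constant on each piece, and the earlier observation says it separates $A_2$ from $A_1$. I would check carefully via Theorem \ref{M distance} that $c_2\subseteq A_2$ and $a_2\cup b_2\subseteq A_1$, so that this edge is already determined by the $\{L,LR\}$ orientation. If that holds, the decomposition is as claimed. If it fails (for instance, if $b_2=Cone(L^2R^2)$ behaves like $A_2$ with respect to $RL$), I would instead merge or refine the pieces according to the $\{R,RL\}$ sign.
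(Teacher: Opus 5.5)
\textbf{Gap: the $\{R,RL\}$ edge.} Your core computation matches the paper. You read off four sign patterns on the edges $\{L,L^2\}$ and $\{L,LR\}$ from part 1 of Lemma \ref{field distance eq} (translated by $L$) and Lemma \ref{cone mlr = d mlr}. These give $\{L\}$, $a_2$, $b_2$, $c_2$, and your identity $Cone(L^2)\cap Cone(LR)=Cone(L^2R^2)=Cone(LRL^2)$ is correct.

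The gap is in the step you single out as the main obstacle. You plan to confirm $c_2\subseteq A_2$ and $a_2\cup b_2\subseteq A_1$, so that the edge $\{R,RL\}$ has constant orientation on each piece. The second inclusion is false for $b_2$. Indeed $b_2=Cone(L^2R^2)=Cone(LRL^2)\subseteq Cone(LR)$ and $b_2\cap Cone(LR^2)=\emptyset$, so $b_2\subseteq A_2$. Concretely, $d(LRL^2,RL)=4<5=d(LRL^2,R)$. Your fallback (``merge or refine according to the $\{R,RL\}$ sign'') is never carried out. So as written, the proposal does not show that the $B_2$ vector field is constant on $a_2$, $b_2$ and $c_2$. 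Your opening claim that $\{R,RL\}$ has one orientation on all of $a_1$ is also false, as you notice a sentence later.

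\textbf{The fix.} The missing observation is already in the lemma you cite. Lemma \ref{cone mlr = d mlr} with $m=1$ also gives $d(x,LR)<d(x,L)\iff d(x,RL)<d(x,R)$ for every $x$. So the orientation of $\{R,RL\}$ is determined by that of $\{L,LR\}$. On $a_1$, both edges point to the deeper vertex ($LR$, resp.\ $RL$) exactly on $Cone(LR)\setminus Cone(LR^2)=b_2\cup c_2$, and toward $L$, resp.\ $R$, on $\{L\}\cup a_2$. You already computed correctly that $\{R,R^2\}$ (and trivially $\{1,L\}$, $\{1,R\}$) has constant orientation on $a_1$. Hence no piece splits further. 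No two pieces merge either, since any two of them differ on $\{L,L^2\}$ or $\{L,LR\}$. This is what the paper means when it invokes Proposition \ref{distance cone statements} to say the vectors outside $Cone(L)$ are predetermined.

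\textbf{Two small points.}
First, for the both-closer pattern to give $b_2\subseteq a_1$ you need $Cone(L^2)\cap Cone(LR^2)=\emptyset$. This follows, for example, from $[0,1]_{L^2}\cap[0,1]_{LR^2}=\{\tau^2\}$.
Second, $LRLR^n$ is not a valid witness that $c_2$ is infinite. We have $LRLR^2=LR\cdot RL^2=LR^2L^2\in Cone(LR^2)$, so $LRLR^n\notin c_2$ for $n\ge 2$; use $(LR)^n$ instead.
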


\begin{figure}
    \centering
    \includegraphics[width=0.89\linewidth]{"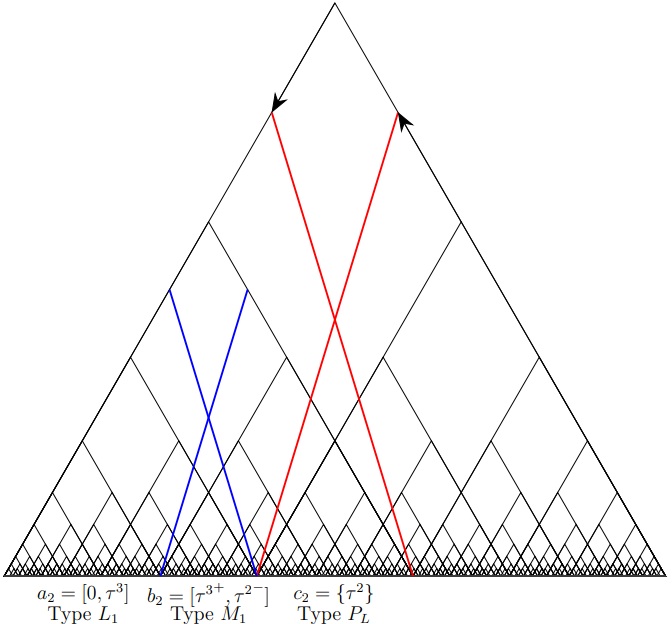"}
    \caption{Decomposition of the atom $a_1$ into $a_2$, $b_2$, and $c_2$ in the second level $\AA_2(\MM)$}
    \label{fig: a_2}
\end{figure}

\begin{proof}
    Let's consider the $a_1$ atom, whose vector field in $B_1$ points towards $Cone(L)$, see case 2 of Proposition \ref{prop: level-1 decomposition} for an illustration. Following Proposition \ref{distance cone statements} we know that the vectors in $Cone(L)$ determine the decomposition of the atoms in $Cone(L)$; therefore, we ignore the vectors of $Cone(L) \setminus Cone(R)$ as they will be predetermined by those in~$Cone(L)$. Hence, there will be a total of 4 cases:
     \begin{enumerate}
         \item $F_L(\{L,L^2\}) = (L^2,L)$ and $F_L(\{L,LR\})=(LR,L)$;
         \item $F_{L^2}(\{L,L^2\}) = (L,L^2)$ and $F_{L^2}(\{L,LR\})=(LR,L)$;
         \item $F_{LR}(\{L,L^2\}) = (L^2,L)$ and $F_{LR}(\{L,LR\})=(L,LR)$;
         \item $F_{L^2R^2}(\{L,L^2\}) = (L,L^2)$ and $F_{L^2R^2}(\{L,LR\})=(L,LR)$.
     \end{enumerate}
\[
\begin{tikzpicture}[scale=0.3]
\draw(4.32624,7.49329) node[anchor=north]{}
-- (7,12.1244) node[anchor=south]{}
-- (9.67376,7.49329) node[anchor=north]{}
;

\draw (2.67376, 4.63111) node[anchor=north]{}
-- (4.32624,7.49329) node[anchor=south]{}
-- (5.97871, 4.63111) node[anchor=north]{}
;

\draw (8.02129, 4.63111) node[anchor=north]{}
-- (9.67376,7.49329) node[anchor=south]{}
-- (11.3262, 4.63111) node[anchor=north]{}
;

\draw (1.6524745577599187,2.862183385639525) node[anchor=north]{}
-- (2.67376, 4.63111) node[anchor=south]{}
-- (3.6950454422400814, 2.862183385639525) node[anchor=north]{};

\draw (8.652480-3.6950454422400814, 2.862183385639525) node[anchor=north]{}
-- (5.97871, 4.63111) node[anchor=south]{}
-- (7,2.862183385639525) node[anchor=north]{};

\draw (7,2.862183385639525) node[anchor=north]{}
-- (8.02129, 4.63111) node[anchor=south]{}
-- (8.652480-3.6950454422400814+2.042570884480163+2.042570884480163, 2.862183385639525) node[anchor=north]{};

\draw (14-1.6524745577599187,2.862183385639525) node[anchor=north]{}
-- (11.3262, 4.63111) node[anchor=south]{}
-- (14-3.6950454422400814,2.862183385639525) node[anchor=north]{}
;

\draw[-{Stealth[length=3mm]}]
(7,12.1244) node[anchor=south]{}
--(5.66312,9.808845) node[anchor=north]{}
;

\draw[-{Stealth[length=3mm]}]
(9.67376,7.49329) node[anchor=north]{}
--(14-5.66312,9.808845) node[anchor=south]{}
;

\draw[-{Stealth[length=3mm]}]
(2.67361632,4.63085322) node[anchor=south]{}
--(3.49980816,6.06186621) node[anchor=north]{}
;

\draw[-{Stealth[length=3mm]}]
(5.97838368,4.63085322) node[anchor=north]{}
--(5.15219184,6.06186621) node[anchor=south]{}
;

\end{tikzpicture}
\quad
\begin{tikzpicture}[scale=0.3]
\draw(4.32624,7.49329) node[anchor=north]{}
-- (7,12.1244) node[anchor=south]{}
-- (9.67376,7.49329) node[anchor=north]{}
;

\draw (2.67376, 4.63111) node[anchor=north]{}
-- (4.32624,7.49329) node[anchor=south]{}
-- (5.97871, 4.63111) node[anchor=north]{}
;

\draw (8.02129, 4.63111) node[anchor=north]{}
-- (9.67376,7.49329) node[anchor=south]{}
-- (11.3262, 4.63111) node[anchor=north]{}
;

\draw (1.6524745577599187,2.862183385639525) node[anchor=north]{}
-- (2.67376, 4.63111) node[anchor=south]{}
-- (3.6950454422400814, 2.862183385639525) node[anchor=north]{};

\draw (8.652480-3.6950454422400814, 2.862183385639525) node[anchor=north]{}
-- (5.97871, 4.63111) node[anchor=south]{}
-- (7,2.862183385639525) node[anchor=north]{};

\draw (7,2.862183385639525) node[anchor=north]{}
-- (8.02129, 4.63111) node[anchor=south]{}
-- (8.652480-3.6950454422400814+2.042570884480163+2.042570884480163, 2.862183385639525) node[anchor=north]{};

\draw (14-1.6524745577599187,2.862183385639525) node[anchor=north]{}
-- (11.3262, 4.63111) node[anchor=south]{}
-- (14-3.6950454422400814,2.862183385639525) node[anchor=north]{}
;

\draw[-{Stealth[length=3mm]}]
(7,12.1244) node[anchor=south]{}
--(5.66312,9.808845) node[anchor=north]{}
;

\draw[-{Stealth[length=3mm]}]
(9.67376,7.49329) node[anchor=north]{}
--(14-5.66312,9.808845) node[anchor=south]{}
;

\draw[-{Stealth[length=3mm]}]
(3.49980816,6.06186621) node[anchor=south]{}
--(2.67361632,4.63085322) node[anchor=north]{}
;

\draw[-{Stealth[length=3mm]}]
(5.97838368,4.63085322) node[anchor=north]{}
--(5.15219184,6.06186621) node[anchor=south]{}
;

\end{tikzpicture}
\quad
\begin{tikzpicture}[scale=0.3]
\draw(4.32624,7.49329) node[anchor=north]{}
-- (7,12.1244) node[anchor=south]{}
-- (9.67376,7.49329) node[anchor=north]{}
;

\draw (2.67376, 4.63111) node[anchor=north]{}
-- (4.32624,7.49329) node[anchor=south]{}
-- (5.97871, 4.63111) node[anchor=north]{}
;

\draw (8.02129, 4.63111) node[anchor=north]{}
-- (9.67376,7.49329) node[anchor=south]{}
-- (11.3262, 4.63111) node[anchor=north]{}
;

\draw (1.6524745577599187,2.862183385639525) node[anchor=north]{}
-- (2.67376, 4.63111) node[anchor=south]{}
-- (3.6950454422400814, 2.862183385639525) node[anchor=north]{};

\draw (8.652480-3.6950454422400814, 2.862183385639525) node[anchor=north]{}
-- (5.97871, 4.63111) node[anchor=south]{}
-- (7,2.862183385639525) node[anchor=north]{};

\draw (7,2.862183385639525) node[anchor=north]{}
-- (8.02129, 4.63111) node[anchor=south]{}
-- (8.652480-3.6950454422400814+2.042570884480163+2.042570884480163, 2.862183385639525) node[anchor=north]{};

\draw (14-1.6524745577599187,2.862183385639525) node[anchor=north]{}
-- (11.3262, 4.63111) node[anchor=south]{}
-- (14-3.6950454422400814,2.862183385639525) node[anchor=north]{}
;

\draw[-{Stealth[length=3mm]}]
(7,12.1244) node[anchor=south]{}
--(5.66312,9.808845) node[anchor=north]{}
;

\draw[-{Stealth[length=3mm]}]
(9.67376,7.49329) node[anchor=north]{}
--(14-5.66312,9.808845) node[anchor=south]{}
;

\draw[-{Stealth[length=3mm]}]
(2.67361632,4.63085322) node[anchor=south]{}
--(3.49980816,6.06186621) node[anchor=north]{}
;

\draw[-{Stealth[length=3mm]}]
(5.15219184,6.06186621) node[anchor=north]{}
--(5.97838368,4.63085322) node[anchor=south]{}
;

\end{tikzpicture}
\quad
\begin{tikzpicture}[scale=0.3]
\draw(4.32624,7.49329) node[anchor=north]{}
-- (7,12.1244) node[anchor=south]{}
-- (9.67376,7.49329) node[anchor=north]{}
;

\draw (2.67376, 4.63111) node[anchor=north]{}
-- (4.32624,7.49329) node[anchor=south]{}
-- (5.97871, 4.63111) node[anchor=north]{}
;

\draw (8.02129, 4.63111) node[anchor=north]{}
-- (9.67376,7.49329) node[anchor=south]{}
-- (11.3262, 4.63111) node[anchor=north]{}
;

\draw (1.6524745577599187,2.862183385639525) node[anchor=north]{}
-- (2.67376, 4.63111) node[anchor=south]{}
-- (3.6950454422400814, 2.862183385639525) node[anchor=north]{};

\draw (8.652480-3.6950454422400814, 2.862183385639525) node[anchor=north]{}
-- (5.97871, 4.63111) node[anchor=south]{}
-- (7,2.862183385639525) node[anchor=north]{};

\draw (7,2.862183385639525) node[anchor=north]{}
-- (8.02129, 4.63111) node[anchor=south]{}
-- (8.652480-3.6950454422400814+2.042570884480163+2.042570884480163, 2.862183385639525) node[anchor=north]{};

\draw (14-1.6524745577599187,2.862183385639525) node[anchor=north]{}
-- (11.3262, 4.63111) node[anchor=south]{}
-- (14-3.6950454422400814,2.862183385639525) node[anchor=north]{}
;

\draw[-{Stealth[length=3mm]}]
(7,12.1244) node[anchor=south]{}
--(5.66312,9.808845) node[anchor=north]{}
;

\draw[-{Stealth[length=3mm]}]
(9.67376,7.49329) node[anchor=north]{}
--(14-5.66312,9.808845) node[anchor=south]{}
;

\draw[-{Stealth[length=3mm]}]
(3.49980816,6.06186621) node[anchor=south]{}
--(2.67361632,4.63085322) node[anchor=north]{}
;

\draw[-{Stealth[length=3mm]}]
(5.15219184,6.06186621) node[anchor=north]{}
--(5.97838368,4.63085322) node[anchor=south]{}
;
\end{tikzpicture}
\]   
We leave it to the reader to check that Cases 1, 2, and 4 are analogous to the corresponding cases analyzed in Propositions \ref{prop: level-1 decomposition} subject to scaling. 

However, we will check the decomposition of the third case, as it has a nontrivial result. Let~$x \in M$ be such that $d(x,LR)<d(x,L)<d(x,1)<d(x,R)$ and $d(x,LR)<d(x,L^2)$. By Lemma \ref{field distance eq} this is equivalent to~$x \notin Cone(R)$,~$x \in Cone(L)$,~$x \notin Cone(L^2)$ and~$x \in Cone(LR)$. Hence,~$x \in Cone(LR) \setminus \big(Cone(L^2R^2) \cup Cone(LR^2)\big)$. Let this atom be denoted as~$c_2$. 

We translate this into terms of intervals $c_2 \subseteq [0,1]_{LR}= [{\tau^3}^+ , \tau^-] $, $b_2 \not\subseteq [0,1]_{L^2R^2} = [0,{\tau^2}^-]$, and $b_2 \not\subseteq [0,1]_{LR^2} = [{\tau^2}^+, \tau^-]$. Hence, $c_2 = \{\tau^2\}$ is a point.
\end{proof}

\noindent Observe that the pairs of atoms $a_1$, $a_2$ and $b_1$ and $b_2$ have the same shape and type. Let~$c_2$ be an atom of type $P_L$.

\begin{prop}
    The infinite atom $b_1$ at the 1-level does not decompose in $\AA_2(\MM)$.
\end{prop}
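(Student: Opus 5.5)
Recall that $b_1 = Cone(LR^2) = Cone(RL^2)$, and that at level 2 the ball $B_2$ adds the edges $\{L,L^2\}$, $\{L,LR\}$, $\{R,RL\}$ and $\{R,R^2\}$ to those of $B_1$. The plan is to show that every $x \in b_1$ induces the same orientation on these four new edges. Then $F_x|_{B_2}$ is constant on $b_1$, and $b_1$ is a single class of $\AA(B_2)$, i.e.\ $b_1$ does not decompose.

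The orientations follow from Lemma~\ref{field distance eq} applied inside $Cone(L)$ and $Cone(R)$, using Proposition~\ref{monoid convex} and Theorem~\ref{M distance}. Write $x = LR^2x''$.

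For the edge $\{L,LR\}$: since $x \in Cone(LR)$, part 6 of Lemma~\ref{field distance eq} (or Lemma~\ref{cone mlr = d mlr} with $m=1$) gives $d(x,LR) < d(x,L)$.

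For the edge $\{L,L^2\}$: strip the prefix $L$, so $x = Lx'$ with $x' = R^2x''$. Here $x' \in Cone(R)$ and $x' \notin Cone(L)$; the latter holds because $x'$ lies in $Cone(R^2)$, which is disjoint from $Cone(LR^2)$ by the interval picture of Lemma~\ref{lem:I_M_homeomorphic_M}. Part 3 of Lemma~\ref{field distance eq} then gives $d(x',L) > d(x',1)$, hence $d(x,L^2) > d(x,L)$.

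The two edges under $R$ are symmetric. Writing $x = RL^2x''$ and using part 6, we get $d(x,RL) < d(x,R)$. Stripping $R$ leaves $x' = L^2x''$, which lies in $Cone(L^2)$ and so avoids $Cone(R)$; this gives $d(x,R^2) > d(x,R)$.

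The edges of $B_1$ already point into $L$ and $R$ by the level-1 analysis. So every $x \in b_1$ has the same vector field on $B_2$, and $b_1$ lies in a single level-2 equivalence class. Since $b_1$ was already a full level-1 atom and the level-2 classes refine the level-1 classes, this class is exactly $b_1$. It is infinite, so $b_1 \in \AA_2(\MM)$, which establishes the proposition.

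The main obstacle is the disjointness claim, namely that $Cone(R^2) \cap Cone(LR^2) = \emptyset$ (and symmetrically $Cone(L^2)\cap Cone(R)=\emptyset$). I would verify it via intervals: $[0,1]_{R^2} = [\tau^{2}+\tau^3\cdot\tau\ldots,1]$ sits to the right of $\tau$, while $[0,1]_{LR^2}=[\tau^2,\tau]$. They meet at most at a boundary point, and with the blown-up conventions of equations \eqref{equation atoms} they are disjoint, so Lemma~\ref{lem:I_M_homeomorphic_M} gives no common cone element. If the boundary point causes trouble, I would instead argue directly with Theorem~\ref{M distance}: for $y = R^2x''$ one has $d(y,L) = |y|+1$, because $y \notin Cone(RL)$, so the $-2$ case never applies.
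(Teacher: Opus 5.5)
Your proposal is correct and is the paper's argument made explicit: the paper only asserts that every $x\in b_1$ induces the same vector field on the six edges of $B_2$, and you verify this edge by edge using Lemma~\ref{field distance eq}, the prefix-stripping case of Theorem~\ref{M distance}, and the disjointness $Cone(R^2)\cap Cone(LR^2)=\emptyset$ (resp.\ $Cone(L^2)\cap Cone(R)=\emptyset$). For that disjointness the clean reason is that $[0,1]_z$ has length $\tau^{|z|}>0$ and so cannot fit inside $[\tau,1]\cap[\tau^2,\tau]=\{\tau\}$, so no blow-up convention is needed; your fallback, by contrast, misstates the reason: $R^2x''$ can lie in $Cone(RL)$ (e.g.\ $R^2L^2=RLR^2$), the $-2$ case is really excluded because $L\notin Cone(LR)$, and the fallback still needs $R^2x''\notin Cone(L)$, which is the same disjointness.
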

\noindent To prove this proposition, one can apply the same method presented above and see that there is only one possibility for the 2-level atom vector field configuration and that it does not decompose $b_1$. We will refer to this infinite atom as $d_2$.
\[
\begin{tikzpicture}[scale=0.3]
\draw(4.32624,7.49329) node[anchor=north]{}
-- (7,12.1244) node[anchor=south]{}
-- (9.67376,7.49329) node[anchor=north]{}
;

\draw (2.67376, 4.63111) node[anchor=north]{}
-- (4.32624,7.49329) node[anchor=south]{}
-- (5.97871, 4.63111) node[anchor=north]{}
;

\draw (8.02129, 4.63111) node[anchor=north]{}
-- (9.67376,7.49329) node[anchor=south]{}
-- (11.3262, 4.63111) node[anchor=north]{}
;

\draw (1.6524745577599187,2.862183385639525) node[anchor=north]{}
-- (2.67376, 4.63111) node[anchor=south]{}
-- (3.6950454422400814, 2.862183385639525) node[anchor=north]{};

\draw (8.652480-3.6950454422400814, 2.862183385639525) node[anchor=north]{}
-- (5.97871, 4.63111) node[anchor=south]{}
-- (7,2.862183385639525) node[anchor=north]{};

\draw (7,2.862183385639525) node[anchor=north]{}
-- (8.02129, 4.63111) node[anchor=south]{}
-- (8.652480-3.6950454422400814+2.042570884480163+2.042570884480163, 2.862183385639525) node[anchor=north]{};

\draw (14-1.6524745577599187,2.862183385639525) node[anchor=north]{}
-- (11.3262, 4.63111) node[anchor=south]{}
-- (14-3.6950454422400814,2.862183385639525) node[anchor=north]{}
;

\draw[-{Stealth[length=3mm]}]
(5.66312,9.808845) node[anchor=south]{}
--(7,12.1244) node[anchor=north]{}
;

\draw[-{Stealth[length=3mm]}]
(9.67376,7.49329) node[anchor=north]{}
--(14-5.66312,9.808845) node[anchor=south]{}
;

\draw[-{Stealth[length=3mm]}]
(2.67361632,4.63085322) node[anchor=south]{}
--(3.49980816,6.06186621) node[anchor=north]{}
;

\draw[-{Stealth[length=3mm]}]
(5.15219184,6.06186621) node[anchor=north]{}
--(5.97838368,4.63085322) node[anchor=south]{}
;

\draw[-{Stealth[length=3mm]}]
(14-5.15219184,6.06186621) node[anchor=north]{}
--(14-5.97838368,4.63085322) node[anchor=south]{}
;

\draw[-{Stealth[length=3mm]}]
(14-2.67361632,4.63085322) node[anchor=south]{}
--(14-3.49980816,6.06186621) node[anchor=north]{}
;

\end{tikzpicture}
\]
Note that $b_1$ and $d_2$ are atoms of different types. Let $d_2$ be of type $M_2$.

From now on we will omit the formal proofs of atom decomposition, as essentially they follow the same format presented above. However, we invite the reader to convince themselves that the statements are valid.

\begin{cor} \label{tau2 and tau breakoff}
The graph $\MM$ contains 7 infinite atoms in $\AA_2(\MM)$. See Figure \ref{fig: aa_2} for visualization.
\begin{figure}
    \centering
    \includegraphics[width=0.89\linewidth]{"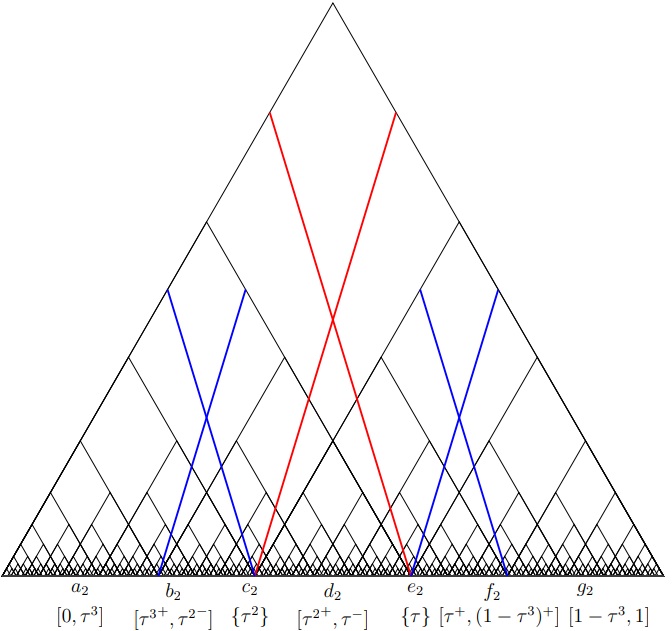"}
     \caption{Partition of 2-level infinite atoms $\AA_2(\MM)$}
     \label{fig: aa_2}
\end{figure}

\end{cor}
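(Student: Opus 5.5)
The plan is a count: refine each infinite atom of $\AA_1(\MM)$ and add up the infinite pieces. Since $B_1 \subseteq B_2$, the relation $\underset{B_2}{\sim}$ refines $\underset{B_1}{\sim}$. So every atom of $\AA(B_2)$ lies inside an atom of $\AA(B_1)$. By Proposition \ref{prop: level-1 decomposition}, those level-1 atoms are the finite atom $\{1\}$ and the infinite atoms $a_1,b_1,c_1$. The finite atom $\{1\}$ can only split into finite pieces, so it contributes nothing to $\AA_2(\MM)$. It therefore suffices to count the infinite $B_2$-classes inside each of $a_1$, $b_1$ and $c_1$, and check that the total is $7$.

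Inside $a_1$, Proposition \ref{prop: level-2 decomposition} gives $a_1=\{L\}\cup a_2\cup b_2\cup c_2$, with $a_2,b_2,c_2$ infinite and $\{L\}$ finite. That proposition only looked at the orientations of the edges $\{L,L^2\}$ and $\{L,LR\}$. I will also argue that, for $x\in a_1$, the orientations of $\{R,RL\}$ and $\{R,R^2\}$ are already forced by the earlier data. This follows from Proposition \ref{distance cone statements} applied with $m=R$:
\begin{itemize}
\item the sign of $d(x,R^2)-d(x,R)$ is decided by membership in $Cone(R^2)$;
\item the sign of $d(x,RL)-d(x,R)$ is decided by membership in $Cone(RL)\cup Cone(LR)$.
\end{itemize}
For $x$ in a fixed piece $a_2$, $b_2$ or $c_2$, both memberships are constant. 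Hence these edges add no further splitting, and $a_1$ contributes exactly $3$ infinite atoms. For $b_1$, the proposition stated just after Proposition \ref{prop: level-2 decomposition} says $b_1$ does not decompose, giving the single atom $d_2$. Justifying that step means checking that, for $x\in Cone(LR^2)$, all four edges of $B_2\setminus B_1$ point into $Cone(LR^2)$, which follows from Lemma \ref{field distance eq} and Lemma \ref{cone mlr = d mlr}.

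For $c_1$, I use symmetry. The relation $LR^2=RL^2$ is invariant under exchanging $L$ and $R$, so this exchange induces an automorphism of $M$. Because the Cayley graph edges are right multiplications by $L$ and $R$, it also gives a graph automorphism of $\MM$ fixing $1$. This automorphism preserves each ball $B_n$ and sends principal vector fields to principal vector fields. Consequently it carries $\underset{B_2}{\sim}$-classes to $\underset{B_2}{\sim}$-classes and preserves finiteness. It swaps $a_1$ and $c_1$, since $Cone(LR^2)=Cone(RL^2)$ is fixed. So $c_1$ splits into the mirror images of $a_2,b_2,c_2$, together with the finite set $\{R\}$, giving $3$ more infinite atoms. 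The total is $3+1+3=7$.

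The main obstacle is the claim that edges of $B_2$ lying outside the cone containing a given atom do not split it further. This is exactly the phrase "we ignore the vectors of $Cone(L)\setminus Cone(R)$" in the proof of Proposition \ref{prop: level-2 decomposition}. To establish it, I would use Proposition \ref{distance cone statements} to write down, for each of the four outer edges, the cone union governing its orientation. I would then verify, case by case, that each of $a_2,b_2,c_2$ and $d_2$ is either contained in or disjoint from that union. I would also confirm that $c_2=Cone(LR)\setminus\bigl(Cone(LRL^2)\cup Cone(LR^2)\bigr)$ is indeed infinite, for instance because it contains $LRL^k$ type elements of arbitrary length not lying in the excluded cones.
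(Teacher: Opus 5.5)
Your proposal is correct and follows the paper's route, where the count of $3+1+3$ comes from the preceding propositions: $a_1$ splits into $a_2,b_2,c_2$, $b_1$ survives as $d_2$, and $c_1$ splits into three by the $L\leftrightarrow R$ symmetry. Your explicit check that the two outer edges at $R$ add no further splitting, and your justification of the symmetry automorphism, fill in what the paper only asserts. One small slip: $LRL^k$ with $k\ge 2$ lies in $Cone(LRL^2)$, so it is not in $c_2$. The correct witnesses that $c_2$ is infinite are the alternating words $(LR)^k$ and $(LR)^kL$ for $k\ge 1$, and these in fact make up all of $c_2$.
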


\begin{prop}
    The infinite atom $d_2$ at 2-level decomposes into three disjoint infinite atoms in $\AA_3(\MM)$, denoted as $f_3$, $g_3$, and $h_3$. These infinite atoms correspond to the following subsets of $\MM$: 
    \[\begin{aligned}
    &f_3 = Cone(LR^2L^2);\\
    &g_3 = Cone(L^2R^2) \setminus \{Cone(LR^2L^2) \cup Cone(LR^4)\};\\
    &h_3 = Cone(LR^4).\\
    \end{aligned}\]
Moreover, $d_2 = f_3 \cup g_3 \cup h_3$. See Figure \ref{fig: d_2} for visualization.

\begin{figure}
    \centering
    \includegraphics[width=0.89\linewidth]{"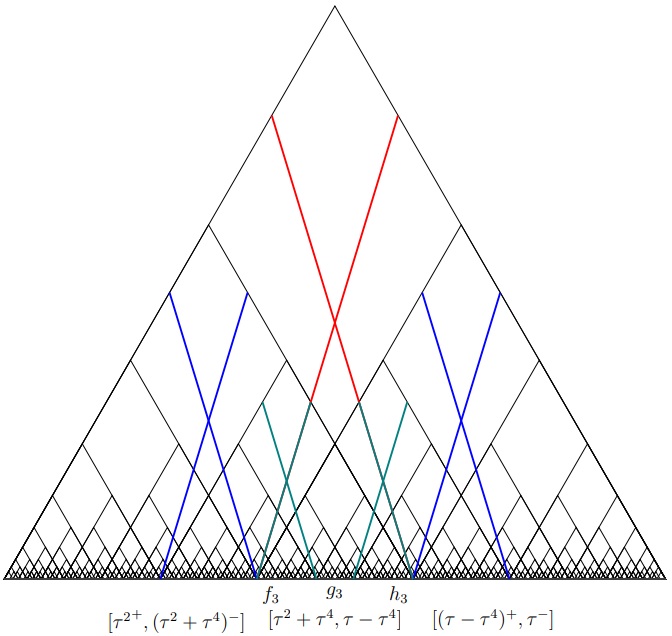"}
    \caption{Decomposition of the atom $d_2$ into $f_3$, $g_3$, and $h_3$ in the sequential level~$\AA_3(\MM)$}
    \label{fig: d_2}
\end{figure}

\end{prop}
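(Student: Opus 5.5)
The plan is to argue exactly as in Propositions \ref{prop: level-1 decomposition} and \ref{prop: level-2 decomposition}. First I list the edges of the next ball whose orientation is not already fixed for vertices of $d_2=Cone(LR^2)$. Then I translate each possible orientation of these edges into cone membership, using Lemma \ref{field distance eq}, Lemma \ref{cone mlr = d mlr} and Proposition \ref{distance cone statements}. Finally I keep only the infinite resulting sets. Throughout I read the middle atom as $g_3 = Cone(LR^2)\setminus\big(Cone(LR^2L^2)\cup Cone(LR^4)\big)$, which is what the equality $d_2=f_3\cup g_3\cup h_3$ requires.

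\textbf{Step 1 (which edges matter).} Every $x\in Cone(LR^2)$ lies in both $Cone(L)$ and $Cone(R)$. By Proposition \ref{distance cone statements}, this fixes the orientation of every edge of the ball whose hyperedge separates cones not containing $LR^2$. For example, all edges inside $Cone(L^2)$ or $Cone(R^2)$ point back toward $L$ or $R$, just as in the picture for $d_2$. The only edges whose orientation can vary are those whose hyperedge passes through $Cone(LR^2)$. These are the edges leaving $LR^2=RL^2$ downward, namely $\{LR^2,LR^2L\}$ and $\{LR^2,LR^3\}$. Because of the relation, they can equally be written as $\{RL^2,RL^3\}$ and $\{RL^2,RL^2R\}$. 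I would check via Proposition \ref{monoid convex} that any other candidate edge is forced.

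\textbf{Step 2 (identify the half-planes).} For $m=LR^2$, I apply Proposition \ref{distance cone statements} to determine the half-planes $\{x: d(x,mL)<d(x,m)\}$ and $\{x: d(x,mR)<d(x,m)\}$.
\begin{itemize}
\item For the $L$-edge, the maximal $R$-suffix decomposition of $mL$ must be taken in the form $RL^3$. This is where the relation shifts the cone. After intersecting with $Cone(LR^2)$, the half-plane should reduce to $Cone(LR^2L^2)$, with a contribution from $Cone(LR^2LR)$ that lies in the triangle of $Cone(LR^2)$.
\item Symmetrically, the $R$-edge should give $Cone(LR^4)$ together with a triangle piece.
\end{itemize}
I expect the main obstacle to be this bookkeeping, specifically pinning down $n,n'$ in Proposition \ref{distance cone statements} when the word can be rewritten using $LR^2=RL^2$. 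To settle it, I would check the few explicit vertices $LR^2L$, $LR^2L^2$, $LR^2LR$, $LR^3$, $LR^4$ directly with the distance formula of Theorem \ref{M distance}, and then extend to whole cones by strong convexity (Proposition \ref{monoid convex}).

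\textbf{Step 3 (the four sign cases).} With the half-planes in hand, I run through the four sign configurations of the two edges.
\begin{itemize}
\item Both edges point away from $LR^2$: this should give $\{LR^2\}$ together with possibly finitely many vertices, so it is a finite set and is discarded.
\item Only the $L$-edge points down: this gives $f_3=Cone(LR^2L^2)$.
\item Only the $R$-edge points down: this gives $h_3=Cone(LR^4)$.
\item The remaining configuration should collect the rest, namely $g_3$.
\end{itemize}
I verify that $g_3$ is infinite by exhibiting an infinite family in it, for instance $LR^2LRL^k$. Disjointness of $f_3$ and $h_3$ follows from Lemma \ref{lem:I_M_homeomorphic_M}: the intervals $[0,1]_{LR^2L^2}$ and $[0,1]_{LR^4}$ overlap at most in one point, which the blowup of equations (\ref{equation atoms}) separates. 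As a sanity check, I translate the three atoms into blown-up subintervals of $[\tau^{2+},\tau^-]$, as was done for $a_1,b_1,c_1$.
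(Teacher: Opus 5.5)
The overall approach matches the paper, but there is a genuine gap. Your Step 1 uses the wrong edges, and the error carries through Steps 2 and 3.

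\textbf{Your edges belong to level 4.} Level-3 atoms are the classes determined by the edges with both endpoints in the ball of radius $3$. Compare Propositions \ref{prop: level-1 decomposition} and \ref{prop: level-2 decomposition}, which use $\{1,L\},\{1,R\}$ and then the edges from $L$ and $R$. Since $|LR^2|=3$, your edges $\{LR^2,LR^2L\}$ and $\{LR^2,LR^3\}$ are level-4 edges.

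\textbf{Even on their own terms they give a different partition.} By strong convexity of $Cone(LR^2)$, for $x=LR^2w$ you get $d(x,LR^2L)<d(x,LR^2)$ iff $w\in Cone(L)$, and likewise for $R$. These two edges therefore cut $Cone(LR^2)$ into a rescaled copy of the level-1 partition: $\{LR^2\}$, $LR^2a_1$, $LR^2b_1=Cone(LR^2LR^2)$ and $LR^2c_1$. On $g_3$ this is the level-4 refinement into $j_4,k_4,l_4$, not the level-3 one. So the half-plane in your Step 2 is really $Cone(LR^2L)$. Your Step 3 assignments ("only the $L$-edge gives $f_3$", "the remaining configuration gives $g_3$") are then false.

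\textbf{The missing idea: the level-3 cut comes from edges outside $Cone(LR^2)$.} This contradicts your claim that all edges inside $Cone(L^2)$ and $Cone(R^2)$ are forced.
\begin{itemize}
\item Take $\{LR,LRL\}$, which is crossed by the same hyperedge as $\{L^2,L^2R\}$. For $x=LR^2w$, strip the prefix $LR$ and compare $d(Rw,L)$ with $|Rw|$.
\item Now $Rw\in Cone(L)$ iff $Rw\in Cone(RL^2)$ iff $w\in Cone(L^2)$. Hence $d(x,LRL)<d(x,LR)$ iff $x\in Cone(LR^2L^2)=Cone(LRLR^2)$. For instance, $x=LR^2L^2$ has $d(x,L^2R)=4<5=d(x,L^2)$.
\item Symmetrically, $\{RL,RLR\}$ (equivalently $\{R^2,R^2L\}$) selects $Cone(RL^2R^2)=Cone(LR^4)$.
\item Every other edge of the radius-$3$ ball has constant orientation on $Cone(LR^2)$.
\end{itemize}
The four sign patterns then give $f_3$, $h_3$, and $\emptyset$ when both edges point down, since $f_3\cap h_3=\emptyset$. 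The pattern where neither points down gives the complement $g_3$, which is infinite, not finite as you assert.

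\textbf{Your witnesses for $g_3$ are wrong.} Since $LRL^2=L\cdot RL^2=L^2R^2$, you have $LR^2LRL^k\in f_3$ for $k\ge 2$. Use $LR^2(LR)^k$ instead. The word $(LR)^k$ admits no relation move, so it lies in neither $Cone(L^2)$ nor $Cone(R^2)$.
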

The proof follows the same logic as for the previous decompositions. 

Note that $f_3$ and  $h_3$ are atoms of type $M_1$, this can be seen by looking at~$Cone(LR)$ and~$Cone(RL)$ and observing that the regions of atoms $f_3$, $h_3$ and $b_1$ within their respective cones are the same. Whereas~$g_3$ is an infinite atom of a new type, let it be denoted by~$M_3$.

\begin{prop}
    The infinite atom $g_3$ at the 3-level decomposes into three disjoint infinite atoms in $\AA_4(\MM)$, denoted as $j_4$, $k_4$, and $l_4$. These infinite atoms correspond to the following subsets of $\MM$: 
    \[\begin{aligned}
    &j_4 = Cone(LR^2L) \setminus \big(Cone(LR^2L^2) \cup Cone(LR^2LR^2)\big);\\
    &k_4 = Cone(LR^2LR^2);\\
    &l_4 = Cone(RL^2R) \setminus \big(Cone(RL^2R^2) \cup Cone(RL^2RL^2)\big).\\
    \end{aligned}\]
Moreover, $f_3 = j_4 \cup k_4 \cup l_4 \cup \{LR^2\}$. See Figure \ref{fig: f_3} for visualization.

\begin{figure}
    \centering
    \includegraphics[width=0.89\linewidth]{"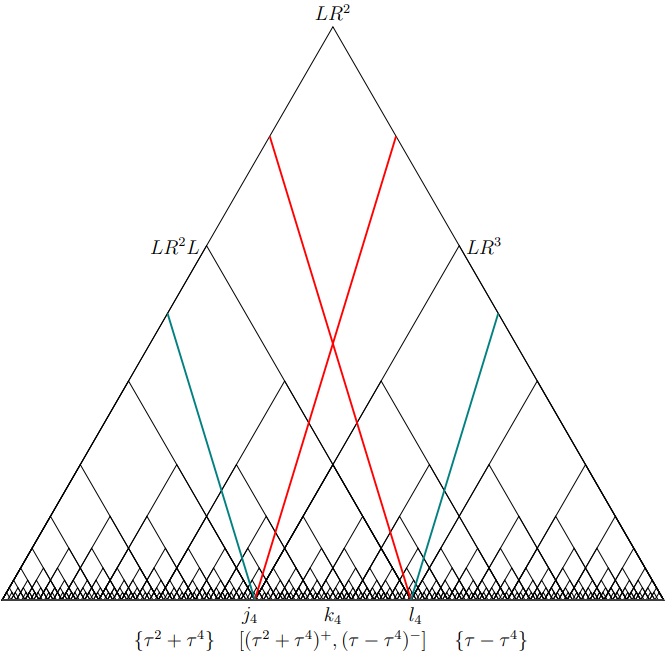"}
    \caption{Representation of the decomposition of the atom $f_3$ into $j_4$, $k_4$, and $l_4$}
    \label{fig: f_3}
\end{figure}

\end{prop}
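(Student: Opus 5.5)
The plan is to follow the same template as the earlier decompositions (Propositions \ref{prop: level-1 decomposition} and \ref{prop: level-2 decomposition}), working inside the atom in question. Note first a small inconsistency in the statement: the atom that decomposes is $g_3 = Cone(L^2R^2)\setminus\big(Cone(LR^2L^2)\cup Cone(LR^4)\big)$, while the final equality is written for $f_3$. I would prove the decomposition for $g_3$, with the point $LR^2$ as the finite leftover, and read the final equality as $g_3 = j_4\cup k_4\cup l_4\cup\{LR^2\}$.

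First I would identify which new edges of the ball $B_4$ are not already forced. By Proposition \ref{distance cone statements} and the remark after it, for $x\in Cone(LR^2)$ the orientations of the two edges $\{LR^2, LR^2L\}$ and $\{LR^2,LR^3\}$ determine all other new orientations. Here $LR^3 = RL^2R$ by the defining relation. The membership of $x$ in $Cone(LR^2L^2)$ and $Cone(LR^4)$ is already fixed inside $g_3$, so only these two edges carry information. That gives at most four vector-field configurations, just as in case analysis of Proposition \ref{prop: level-2 decomposition}.

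Next I would translate each sign pattern into cone membership. For this I use Lemma \ref{field distance eq} and Lemma \ref{cone mlr = d mlr}, transported into $Cone(LR^2)$ through the isometry given by left multiplication, which rests on Proposition \ref{monoid convex} and Theorem \ref{M distance}.
\begin{enumerate}
\item If both edges point toward $LR^2$, then $x\notin Cone(LR^2L)\cup Cone(LR^3)$, so $x = LR^2$, a finite set.
\item If only the edge toward $LR^2L$ points away from $LR^2$, then $x\in Cone(LR^2L)$. Removing $Cone(LR^2L^2)$ and the overlap $Cone(LR^2LR^2)=Cone(LR^2)\cap\dots$ with the right branch gives $j_4$.
\item Symmetrically, orienting only the edge toward $LR^3$ away from $LR^2$ gives $l_4$.
\item If both edges point away from $LR^2$, then $x\in Cone(LR^2L)\cap Cone(LR^3)$, which equals $Cone(LR^2LR^2)=k_4$ by the lemma $Cone(L)\cap Cone(R)=Cone(LR^2)$ applied in $Cone(LR^2)$.
\end{enumerate}

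Finally, each of $j_4$, $k_4$, $l_4$ contains an infinite ray, such as $LR^2LR^n$ or $LR^2LR^2L^n$, so each is infinite. Combined with the case analysis, this yields disjointness and the stated union.

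The main obstacle is case 2 and its mirror case 3, where cones interact with the boundary of $g_3$. I must check that the removed sets are exactly $Cone(LR^2L^2)$ and $Cone(LR^2LR^2)$, and nothing from the hyperedge recursion described after Proposition \ref{distance cone statements}. Concretely, I would use Proposition \ref{M geo} to compute $d(x,LR^2L)$ versus $d(x,LR^2)$ for $x$ in $Cone(LR^2L)\setminus Cone(LR^2LR^2)$. I would also verify that no point of $Cone(RL^2R)$ lies on the wrong side, since the identification $LR^3=RL^2R$ makes the two branches share boundary.
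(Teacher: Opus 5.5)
Your four-case analysis of the two new edges at $LR^2$ is the paper's argument; its proof says only ``same logic as before''. You are also right that the final equality concerns $g_3$, not $f_3$. Three points need fixing.

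\textbf{The definition of $g_3$.} You copied a second typo. The atom must be $g_3 = Cone(LR^2)\setminus\big(Cone(LR^2L^2)\cup Cone(LR^4)\big)$, as $d_2 = Cone(LR^2) = f_3\cup g_3\cup h_3$ forces. $Cone(L^2R^2)$ is disjoint from $Cone(LR^2)$; their intervals are $[\tau^3,\tau^2]$ and $[\tau^2,\tau]$. So with the definition you wrote, your equality $g_3 = j_4\cup k_4\cup l_4\cup\{LR^2\}$ is false, even though your argument tacitly uses the correct set.

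\textbf{The infinitude witnesses.} Your rays $LR^2LR^n$ ($n\ge 2$) and $LR^2LR^2L^n$ both lie in $k_4$, since $LR^n\in Cone(LR^2)$. For the thin atoms $j_4$ and $l_4$ (types $P_R$, $P_L$), use $LR^2L(RL)^n$ and $LR^3(LR)^n$ instead. The alternating words $L(RL)^n$ and $R(LR)^n$ contain no $LR^2$ or $RL^2$, so each is the only word for its element. Left cancellation then places these rays in $j_4$ and $l_4$ respectively.

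\textbf{The real missing step.} The obstacle you single out is not one. For $x = LR^2x'$, left multiplication is an isometry and the edge $\{LR^2,LR^2L\}$ lies in $Cone(LR^2)$. So Lemma~\ref{field distance eq}(1) gives $d(x,LR^2L)<d(x,LR^2)\iff x'\in Cone(L)$ directly, with no hyperedge recursion.

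What actually needs proof is the claim you (and the paper) take from the informal remark after Proposition~\ref{distance cone statements}. Namely, the twelve other new edges of $B_4$, at $L^3$, $L^2R$, $LRL$, $RLR$, $R^2L$ and $R^3$, must not cut any of your four pieces. Theorem~\ref{M distance} settles this by direct computation:
\begin{itemize}
\item on $g_3$, ten of these edges have constant orientation;
\item $\{LRL,LRLR\}$ points to $LRLR$ exactly when $x\in Cone(LR^2L)$, by Lemma~\ref{cone mlr = d mlr} with $m = LR$;
\item $\{RLR,RLRL\}$ points to $RLRL$ exactly when $x\in Cone(LR^3)$, by the same lemma with $m = RL$.
\end{itemize}
The two varying edges therefore move in lockstep with the two edges at $LR^2$, and your four pieces are indeed the level-4 atoms inside $g_3$.
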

The proof follows the same logic as before. 

Note that $j_4$ is an atom of type $P_R$, $k_4$ is of type $M_1$, and $l_4$ is of type $P_L$. Since no new infinite atom types have appeared, we obtain the following corollary.

\begin{cor}The tree of infinite atoms $\AA(\MM)$ has only a finite number of atom types. See Figure \ref{fig: type graph} for the type graph $\TT$ and Figure \ref{fig: tree of atoms} for the tree of infinite atoms $\AA(\MM)$.
\end{cor}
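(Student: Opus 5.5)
The statement to prove is that the tree of infinite atoms $\AA(\MM)$ has only finitely many atom types. The plan is to close the list of types found so far under the operation ``decompose into children at the next level''. That list is $L_1, M_1, R_1$, $P_L, P_R$, $M_2$ and $M_3$.

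The first step is to make the type of an atom local. By the theorem on triangles and side cones, every edge of $\MM$ is crossed by exactly one hyperedge. By Proposition \ref{distance cone statements}, for $x$ inside a cone $Cone(m)$, the orientations of the edges $\{m,mL\}$ and $\{m,mR\}$ determine all vector-field data outside $Cone(m)$. Left multiplication by $m$ is an isometry $M\to Cone(m)$, and cones are strongly geodesically convex (Proposition \ref{monoid convex}). Hence, for an atom $\AA\subseteq Cone(m)$ with $m$ minimal, its children are computed entirely inside $Cone(m)$, by applying $m^{-1}$ and reading off the restricted vector field. The map $x\mapsto m'm^{-1}x$ then gives a morphism between two atoms of the same type, in the sense of Definition 3.7: it commutes with prefixes, and it carries $[z]_{B_{k}}$ to $[\phi(z)]_{B_{k'}}$ because the new edges of the larger ball that split an atom lie in the cone and correspond under the isometry. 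So the type depends only on the shape $m^{-1}\AA$ together with the finite vector field on $Cone(m)$ near its root. This justifies the type definition given before Theorem \ref{horofunction boundary of $M$}.

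The second step is to check the decompositions of each type, using the decompositions already computed:
\begin{itemize}
\item $L_1$ has children $L\cdot L_1$, $L\cdot M_1$ and $P_L$, together with the finite set $\{L\}$.
\item $M_1$ has the single child $M_2$.
\item $M_2$ has children $M_1, M_3, M_1$.
\item $M_3$ has children $P_R, M_1, P_L$.
\end{itemize}
By the $L\leftrightarrow R$ symmetry of the relation $LR^2=RL^2$, $R_1$ mirrors $L_1$. What remains is to decompose $P_L$ and $P_R$ (the singleton-interval atoms of the form $c_2$ and $j_4$) and to show that their children are again of types already on the list.

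For $P_L=c_2=Cone(LR)\setminus\big(Cone(L^2R^2)\cup Cone(LR^2)\big)$, I would compute the next ball level via Lemma \ref{cone mlr = d mlr} and its corollaries, using cases on the orientations of the new edges at $LR$. I expect one infinite child that is again of type $P_L$ after rescaling, coming from the recursive effect noted after the corollaries, which moves down along $LRL\cdots$. All other vector-field configurations should give finite sets, because the atom sits inside the strip between two triangles and corresponds to the single point $\tau^2$.

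The main obstacle is exactly this verification for $P_L$ and $P_R$. The recursive hyperedge behaviour described before Proposition \ref{distance cone statements} makes the shape of the child atom less obvious than for the cone-like types, so I would carry out the four-case analysis explicitly, as in Proposition \ref{prop: level-2 decomposition}. Once every child of every listed type is shown to be of a listed type, the set of types is closed. Since it is finite, induction on the level $n$ shows that every atom in $\AA_n(\MM)$ has one of these seven types, which proves the corollary and yields the finite type graph $\TT$.
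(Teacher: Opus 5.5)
Your route is the paper's: collect the types produced by the explicit decompositions ($L_1,M_1,R_1,M_2,M_3,P_L,P_R$) and check that the list is closed under taking children. The paper stops after decomposing $g_3$ and simply asserts that no new types appear. Two parts of your write-up need repair.

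\textbf{The locality step is false as stated.} The sentence you take from the paper already fails: $RL$ and $RL^2$ orient $\{R,RL\}$ and $\{R,R^2\}$ the same way, but they orient $\{1,L\}$ oppositely, since $RL^2=LR^2\in Cone(L)$.

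More seriously, with $m$ the minimal root, the children of an atom are \emph{not} computed inside $Cone(m)$. The level-$2$ atom $d_2=Cone(LR^2)$ contains no edge of $B_3$, because its only vertex of length at most $3$ is $LR^2$. Yet it splits at level $3$. For instance, $f_3$ is cut off by the edge $\{L^2,L^2R\}$, which lies outside $Cone(LR^2)$. The vertices closer to $L^2R$ than to $L^2$ form $Cone(L^2R)\cup Cone(LRL)$, and this set meets $Cone(LR^2)$ exactly in $Cone(LRLR^2)=Cone(LR^2L^2)=f_3$.

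For the same reason, your criterion (shape plus vector field on the minimal cone) cannot tell $b_1$ from $d_2$. They are the same set with the same shape, and at either level there are no ball edges inside $Cone(LR^2)$. Yet they are the distinct types $M_1$ and $M_2$: one has a single child, the other has three.

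To fix this, compare types inside a larger cone, as the paper does for $b_1,f_3,h_3$ inside $Cone(1),Cone(LR),Cone(RL)$. Measure the ball radius relative to that cone's root. You still have to check that edges outside the cone only produce cuts already recorded in the type.

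\textbf{The step you leave open is short, but its outcome differs from your guess.} The paper omits it entirely. Put $S_L=M\setminus\bigl(Cone(R)\cup Cone(L^2)\bigr)$ and $S_R=M\setminus\bigl(Cone(L)\cup Cone(R^2)\bigr)$.
\begin{itemize}
\item By left cancellativity, $LRw\in Cone(LR^2)\iff w\in Cone(R)$ and $LRw\in Cone(LRL^2)\iff w\in Cone(L^2)$. Hence $c_2=LR\cdot S_L$.
\item The same reasoning gives $S_L=\{1\}\cup L\,S_R$ and $S_R=\{1\}\cup R\,S_L$. So $c_2=\{v_2,v_3,\dots\}$, where $v_k$ is the length-$k$ prefix of $LRLR\cdots$, and each $v_k$ has a unique word.
\item By Proposition \ref{distance cone statements}, for any edge of $B_n$ the vertices closer to its far endpoint form a union of cones whose roots have length at most $n$.
\item Moreover, $v_k\in Cone(q)$ if and only if $q=v_{|q|}$ and $k\ge |q|$. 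Hence all $v_k$ with $k\ge n$ orient $B_n$ identically, while the edge $\{v_{n-1},v_n\}$ separates off $v_{n-1}$.
\end{itemize}
Each level therefore removes exactly one vertex, and the only infinite child of $\{v_n,v_{n+1},\dots\}$ is $\{v_{n+1},\dots\}$.

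For $c_2$ this child is $LRL\cdot S_R$. It has the mirror shape $S_R$ and matches $j_4=LR^2L\cdot S_R$ at the same relative depth, so it is of type $P_R$, not $P_L$. Symmetrically, $P_R$ has a single infinite child, of type $P_L$. With this the list closes, and your induction on levels finishes the argument.
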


\begin{figure}
    \centering
    \includegraphics[width=0.59\linewidth]{"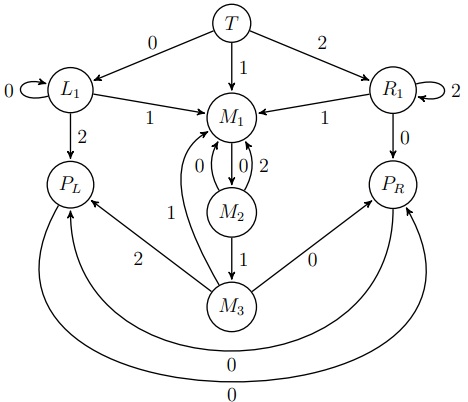"}
    \caption{The type graph $\TT$ of the infinite atoms in $\MM$ with labeled edges}
    \label{fig: type graph}
\end{figure}

\begin{figure}
    \centering
    \includegraphics[width=0.45\linewidth]{"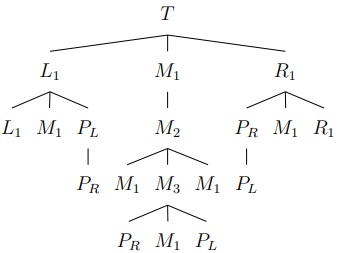"}
    \caption{The tree of atoms $\AA (\MM)$}
    \label{fig: tree of atoms}
\end{figure}
\noindent The path space $P(\TT)$ of the graph $\TT$ (Figure \ref{fig: type graph}) is the set of all infinite sequences of labels of infinite directional paths starting from the vertex $T$. The topology on $P(\TT)$ is generated by cylinder sets of the form $C_\alpha = \{w \in P(\TT) : w \text{ starts with } \alpha\}$ for finite paths~$\alpha$.

\begin{prop}\label{tree homeomorphism}
    The path space of the graph $\TT$ depicted by Figure \ref{fig: type graph} is naturally homeomorphic to $\DD_\tau$.  
\end{prop}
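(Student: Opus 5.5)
The plan is to build an explicit map $\Phi\colon P(\TT)\to\DD_\tau$ by nested intervals, using the modified interval equations~\eqref{equation atoms}. Then I will show that $\Phi$ is a bijection and that it is order preserving with respect to a lexicographic order on $P(\TT)$. Since $P(\TT)$ is compact and $\DD_\tau$ is Hausdorff, continuity of $\Phi$ will then be enough.

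\textbf{Step 1: intervals for finite paths.} To each vertex type of $\TT$ I attach a normalized closed subset of the blown-up interval, following the computations already made for the first levels:
\begin{itemize}
\item $L_1\mapsto[0,\tau^2]$, $M_1\mapsto[{\tau^2}^+,\tau^-]$ and $R_1\mapsto[\tau,1]$, normalized to the atom's cone;
\item $P_L\mapsto\{\tau^2\}$, with $P_R$ symmetric; these are single isolated points;
\item $M_2$ and $M_3$ get the pieces of $[{\tau^2}^+,\tau^-]$ read off from the decompositions of $d_2$ and $g_3$.
\end{itemize}
Each labelled edge of $\TT$ corresponds to one of the inclusions $a_2,b_2,c_2\subseteq a_1$, $f_3,g_3,h_3\subseteq d_2$, and so on. I realize each such edge as an affine contraction of ratio a power of $\tau$ on $[0,1]$, namely a composite of $f_L(t)=\tau t$ and $f_R$ from the proof of Proposition~\ref{prop q commutes for x_0}. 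I also record which side of each endpoint (the ${}^-$, plain, or ${}^+$ copy) the child keeps. A finite path $\alpha$ from the root $T$ then determines a set $J_\alpha\subseteq\DD_\tau$, the composite image. Using Lemma~\ref{lem:I_M_homeomorphic_M} I check two facts by finite inspection of $\TT$:
\begin{itemize}
\item the children of every vertex tile its set $J$ in order, up to the finitely many isolated points;
\item the isolated points are exactly the $P_L$ and $P_R$ leaves, i.e.\ vertices of $\TT$ whose only continuation is a single loop.
\end{itemize}

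\textbf{Step 2: the map and bijectivity.} For an infinite path $w$, set $\Phi(w)=\bigcap_n J_{w_1\cdots w_n}$. Every cycle in $\TT$ outside the $P$-loops contracts by at least a factor $\tau$, so the real lengths shrink to $0$ and the intersection is a single point of $\DD_\tau$. For $x\notin I_\tau$ it is a real point. When the nested sets converge to some $y\in I_\tau$, the path falls into one of three cases:
\begin{itemize}
\item it eventually stays on the left side of $y$, giving $y^-$;
\item it eventually stays on the right side, giving $y^+$;
\item it is eventually trapped in a $P$-loop, giving the isolated point $y$.
\end{itemize}

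\emph{Surjectivity.} At each level the tiling covers $J_\alpha$, so every point of $\DD_\tau$ lies in a nested chain.

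\emph{Injectivity.} Distinct children have disjoint sets in $\DD_\tau$; they share no point because of the blowup convention in~\eqref{equation atoms}. Hence two paths that first differ at step $n$ have images in disjoint sets.

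\emph{Order.} Ordering the children of each vertex left to right makes $\Phi$ order preserving for the lexicographic order.

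\textbf{Step 3: topology.} A cylinder $C_\alpha$ maps onto $J_\alpha$. Each $J_\alpha$ is either an order interval $[a^+,b^-]$ or similar, or a singleton $\{y\}$ with $y\in I_\tau$ isolated in $\DD_\tau$; in both cases it is clopen in the order topology. The sets $J_\alpha$ form a basis of $\DD_\tau$, because their diameters go to $0$ and they include the isolated points. Therefore $\Phi$ and $\Phi^{-1}$ are continuous.

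\textbf{Main obstacle.} The hardest part is Step 1: verifying the tiling claim for every vertex of $\TT$, and especially the endpoint bookkeeping. One must show that the ${}^-$, plain and ${}^+$ copies of each $y\in I_\tau$ are reached by exactly one path each, and that every $y\in I_\tau$ actually arises as an endpoint at some finite level. For the latter I would argue by induction: endpoints of $J_\alpha$ are obtained from $0,1$ by iterating the maps $f_L,f_R$, so they lie in $\mathbb{Z}[\tau]$. Conversely, every element of $I_\tau$ has a finite base-$\tau$ expansion in the normal form of Section~\ref{sec: Irrational slope}, whose prefix dictates a finite path reaching it as a breakpoint.
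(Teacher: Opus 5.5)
Your proposal is correct and follows essentially the same route as the paper. Both build a nested-interval map from the recursive interval rules attached to the edges of $\TT$, get bijectivity from the ordered tiling of each vertex's set by its children, and get continuity from cylinders going to clopen order intervals; the paper closes with the compact-to-Hausdorff argument where you use a basis-to-basis argument. Your explicit check that every $y\in I_\tau$ eventually becomes a breakpoint, so that $y^-$, $y$, $y^+$ are each reached by exactly one path, is something the paper leaves implicit. Note that a base-$\tau$ prefix only places $y$ at an endpoint of some $[0,1]_w$, so to turn this into a path in $\TT$ you also need the breakpoint set to be closed under $f_L$ and $f_R$, which follows from $f_L f_R^2 = f_R f_L^2$.
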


\begin{proof}
We will construct a homeomorphism $f: P(\TT) \rightarrow \DD_\tau$ recursively as follows:
\[
f(w) = [0,1]^T_w,
\]
where $[x,y]^A_w$ is defined for atoms $A \in \{T, L_1, M_1, R_1, M_2, M_3, P_L, P_R\}$ by:
\begin{equation}\label{eq: f function}
    \begin{aligned}
    [0,1]^T_{0\omega} &= [0,\tau^2]^{L_1}_{\omega};\\
    [0,1]^T_{1\omega} &= [{\tau^2}^+, \tau^-]^{M_1}_{\omega};\\
    [0,1]^T_{2\omega} &= [\tau,1]^{R_1}_{\omega};\\
    [x,y]^{L_1}_{0\omega} &= [0,\tau y]^{L_1}_{\omega};\\
    [x,y]^{L_1}_{1\omega} &= [(\tau y)^+,y^-]^{M_1}_{\omega};\\
    [x,y]^{L_1}_{2\omega} &= \{y\};\\
    [x^+,y^-]^{M_1}_{0\omega} &= [x^+,y^-]^{M_2}_{\omega};\\
    [x,y]^{R_1}_{0\omega} &= \{x\};\\  
    \end{aligned}
    \quad
    \begin{aligned}
    [x,y]^{R_1}_{1\omega} &= [x^+, \big(y-\tau(y-x)\big)^-]^{M_1}_{\omega};\\
    [x,y]^{R_1}_{2\omega} &=  [y-\tau(y-x),y]^{R_1}_{\omega};\\
    [x^+,y^-]^{M_2}_{0\omega} &=  [x^+, (x+\tau^2(y-x))^-]^{M_1}_{\omega};\\
    [x^+,y^-]^{M_2}_{1\omega} &=  [x+\tau^2(y-x),y-\tau^2(y-x)]^{M_3}_{\omega};\\
    [x^+,y^-]^{M_2}_{2\omega} &=  [\big(y-\tau^2(y-x)\big)^+,y^-]^{M_1}_{\omega};\\
    [x,y]^{M_3}_{0\omega} &= \{x\};\\
    [x,y]^{M_3}_{1\omega} &=  [x^+,y^-]^{M_1}_{\omega};\\
    [x,y]^{M_3}_{2\omega} &= \{y\}.\\
    \end{aligned}
\end{equation}
\noindent Although this function is written in a complicated form, it is not difficult to understand, as it only represents a recursive decomposition of atoms. This definition of $f$ means that it takes an infinite path~$ w = w_1w_2\cdots \in P(\TT)$  and produces a sequence of nested intervals in $[0,1]$, each corresponding to a subdivision of an atom according to the choices of $ w_1, w_2,\cdots$. In each step, the function $f$ applies a transformation to the interval $[0,1]$ based on the current value of $w_i$, as specified by the recursive rules. Formally,~$\bigcap^\infty_{n=1} [0,1]^T_{w_1w_2\cdots w_n} = \{f(w_1w_2\cdots)\}$. Note that the width of $[0,1]^T_{w_1\cdots w_n}$ is less than $\tau^{\frac{n}{2}}$, we add a factor of $\frac{1}{2}$ to compensate for the action $[x^+,y^-]^{M_1}_{0\omega}$ that does not alter the interval. 

By construction, for any $w \in P(\TT)$, $f(w) \in \DD_\tau$. Therefore, $f$ is well-defined.

The inverse map $g: \DD_\tau \to P(\TT)$ is defined as follows. Given $x \in \DD_\tau$, we express $x$ as a sequence of nested intervals, and we recover the sequence $w = (w_1, w_2, w_3, \dots)$ in $ P(\TT)$ such that:
\[
     w_n = \begin{cases}   
     0 &\text{ when } x \in [0,1]^T_{w_1w_2\cdots w_{n-1}0};\\
     1 &\text{ when } x \in [0,1]^T_{w_1w_2\cdots w_{n-1}1};\\
     2 &\text{ when } x \in [0,1]^T_{w_1w_2\cdots w_{n-1}2}.\\
    \end{cases}
\]
This defines a sequence $w = (w_1, w_2, w_3, \dots)$ that corresponds to $x$. Since $g$ is the inverse of $f$, we have~$g\big(f(w)\big) = w$ for all $w \in P(\TT)$. Hence,~$f$ is a bijection.

We will now show that open sets $U \in P(\TT)$ are mapped to open sets $f(U) \in \DD_\tau$. By construction, every open set in $P(\TT)$ is a cylinder set. Recall that a cylinder set is defined by $C_\alpha = \{w \in P(\TT) : w \text{ starts with } \alpha\}$ for finite paths $\alpha$. Hence, we only need to show that every finite path in $P(\TT)$ is mapped by $f$ to an open set in $D_\TT$. Due to the structure of $D_\TT$, every point and closed interval contained in $D_\TT$ can be expressed as an open interval. We will change the representation of equations~$\ref{eq: f function}$ to show that every image of a finite path $\alpha \in P(\TT)$ under $f$ is an open set:
\begin{equation}\label{eq: f' function}
    \begin{aligned}
    [0,1]^T_{0\omega} &= \big({-}\infty,({\tau^2})^+\big)^{L_1}_{\omega};\\
    [0,1]^T_{1\omega} &= (\tau^2, \tau)^{M_1}_{\omega};\\
    [0,1]^T_{2\omega} &= (\tau^-,+\infty)^{R_1}_{\omega};\\
    [x,y]^{L_1}_{0\omega} &= \big({-}\infty,(\tau y)^+\big)^{L_1}_{\omega};\\
    [x,y]^{L_1}_{1\omega} &= (\tau y,y)^{M_1}_{\omega};\\
    [x,y]^{L_1}_{2\omega} &= (y^-,y^+);\\
    [x^+,y^-]^{M_1}_{0\omega} &= (x,y)^{M_2}_{\omega};\\
    [x,y]^{R_1}_{0\omega} &= (x^-,x^+);\\  
    \end{aligned}
    \quad
    \begin{aligned}
    [x,y]^{R_1}_{1\omega} &= \big(x, y-\tau(y-x)\big)^{M_1}_{\omega};\\
    [x,y]^{R_1}_{2\omega} &=  \Big(\big(y-\tau(y-x)\big)^-,\infty\Big)^{R_1}_{\omega};\\
    [x^+,y^-]^{M_2}_{0\omega} &=  \big(x, x+\tau^2(y-x)\big)^{M_1}_{\omega};\\
    [x^+,y^-]^{M_2}_{1\omega} &=  \Big(\big(x+\tau^2(y-x)\big)^-,\big(y-\tau^2(y-x)\big)^+\Big)^{M_3}_{\omega};\\
    [x^+,y^-]^{M_2}_{2\omega} &=  \big(y-\tau^2(y-x),y\big)^{M_1}_{\omega};\\
    [x,y]^{M_3}_{0\omega} &= (x^-,x^+);\\
    [x,y]^{M_3}_{1\omega} &= (x,y)^{M_1}_{\omega};\\
    [x,y]^{M_3}_{2\omega} &= (y^-,y^+).\\
    \end{aligned}
\end{equation}
Formally, if $\alpha$ is a finite path in $P(\TT)$ and $f(\alpha)$ transforms the interval $[0,1]$ $n$ times, then we use \ref{eq: f function} for the first $n-1$ transformations and \ref{eq: f' function} for the last $n^{th}$ transformation. This approach guaranties that the function remains well-defined and that the resulting interval is open. Hence, the inverse $g$ is a continuous function.

By \cite[Theorem 2.1]{Webster}, which states that the path space of any directed graph is a locally compact Hausdorff space, we  conclude that $P(\TT)$ is a Hausdorff space.

Since the Cantor-like space $D_\TT$ is a union of a Cantor set and isolated points, it is closed and bounded, and therefore compact.

Finally, we use the fact that a bijective continuous function from a compact space to a Hausdorff space is a homeomorphism. The map $f: P(\TT) \rightarrow D_\TT$ is such a function; therefore,~$f$ is indeed a homeomorphism.
\end{proof}

\noindent We now have enough tools to prove the main result of this section.
\begin{proof} [Proof of Theorem \ref{horofunction boundary of $M$}]
    By Proposition \ref{tree homeomorphism} the space of the path space~$P(\TT)$ of the type graph~$\TT$ (see Figure \ref{fig: type graph}) is homeomorphic to $\DD_\tau$. According to \cite[Proposition 2.21]{Belk Rational embeddings} the boundary of the tree of atoms~$\AA(M)$ (see Figure \ref{fig: tree of atoms}) is isomorphic to the path space of the type graph~$\TT$. Finally, by Theorem~\ref{equivalence boundary = horofunction boundary} the boundary of the tree of atoms $\AA(M)$ is homeomorphic to the horofunction boundary~$\partial_h \MM$ of~$\MM$.

    Hence, horofunction boundary $\partial_h \MM$ of $\MM$ is homeomoprhic to $\DD_\tau$.
\end{proof}

\begin{remark}
    It was an open question whether any non-elementary hyperbolic groups have isolated points in their horofunction boundaries. In the next section, we show that $M$ is hyperbolic, so this gives an example of a non-elemenraty hyperbolic monoid whose horofunction boundary has isolated points.
\end{remark}

\section{Gromov hyperbolicity}\label{sec: hyperbolicity} Hyperbolicity in graphs is a fundamental concept in geometric group theory. It indicates tree-like properties on a large scale and often leads to efficient solutions to word problems \cite{Gromov}. In the context of monoids, a hyperbolic Cayley graph suggests negatively curved geometry. This has profound implications for the structure's behavior, including efficient word problems and connections to finitely generated groups. This section proves the following.
\begin{thm} \label{M hyperbolic}
    The Cayley graph $\MM = (V,E,r)$ of the monoid $M=\langle L, R : LR^2 = RL^2\rangle$ is hyperbolic.
\end{thm}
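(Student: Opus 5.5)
The natural route is to work directly with the explicit distance formula of Theorem~\ref{M distance} and the geodesic codes of Proposition~\ref{M geo}, rather than through the Kong--Lau--Wang criterion. I will verify the Gromov four-point (equivalently, thin-triangle) condition with an explicit constant, expecting something like $\delta = 2$ or $3$. The guiding picture is that $\MM$ is a binary tree whose cones $Cone(mL)$ and $Cone(mR)$ have been glued along the common subcone $Cone(mLR^2)=Cone(mRL^2)$. Up to an additive error of~$2$ per branching vertex, distances are tree distances through the ``branch vertex'' of the smallest common cone.

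\textbf{Step 1: a coarse confluent.} For $x,y\in M$, let $m(x,y)$ be the root of the smallest cone containing both $x$ and $y$; it exists by Proposition~\ref{monoid convex} together with the lemma $Cone(L)\cap Cone(R)=Cone(LR^2)$. Reducing by the common prefix and applying Theorem~\ref{M distance} gives
\[
\bigl|\,d(x,y)-\bigl(|x|+|y|-2|m(x,y)|\bigr)\bigr|\le 2 .
\]
Thus the Gromov product $(x\cdot y)_1$ equals $|m(x,y)|$ up to an error of~$1$. The same holds with any base point $w$ in place of $1$, after first passing to the cone containing the relevant points, since left multiplication is an isometry.

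\textbf{Step 2: an ultrametric-type inequality.} With base point $1$, I will show that for all $x,y,z$,
\[
|m(x,z)|\ge \min\{|m(x,y)|,|m(y,z)|\}-C .
\]
Let $a=m(x,y)$ and $b=m(y,z)$. Both are ``prefixes'' of $y$, so by Lemma~\ref{lem:I_M_homeomorphic_M} the intervals $[0,1]_y\subseteq[0,1]_a\cap[0,1]_b$, and the cones $Cone(a)$ and $Cone(b)$ are nested or intersect. The key point is that two cones with nonempty intersection either are nested or have intersection a cone $Cone(c)$ with $|c|\le \max(|a|,|b|)+2$. This follows from the interval representation: the only overlaps are of the type $[0,1]_{L}\cap[0,1]_R=[0,1]_{LR^2}$ occurring at some common ancestor. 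Hence $x$ and $z$ both lie in a cone whose root has length at least $\min(|a|,|b|)-2$, which gives the inequality with $C=2$. Combined with Step~1, this yields $(x\cdot z)_1\ge\min\{(x\cdot y)_1,(y\cdot z)_1\}-\delta$ for an explicit $\delta$.

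\textbf{Step 3: arbitrary base points.} Hyperbolicity requires the four-point inequality at every base point. The standard fact that $\delta$-hyperbolicity at one base point implies $2\delta$-hyperbolicity at all base points holds in any metric space, so Step~2 suffices.

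\textbf{Main obstacle.} The delicate step is the cone-intersection claim in Step~2. Cones that overlap without being nested can arise at many scales, for example $Cone(mLR)$ and $Cone(mRL)$, which share $Cone(mLR^2)$. I must make sure that the error does not accumulate along chains of such overlaps. My first attempt will be to prove, by induction on $|a|+|b|$ using Proposition~\ref{distance cone statements} and the interval picture, that if $Cone(a)\cap Cone(b)\neq\emptyset$ and neither cone contains the other, then $a=pLu$ and $b=pRv$ with $Cone(a)\cap Cone(b)\subseteq Cone(pLR^2)$ and $|u|,|v|\le 2$. This bounded-depth statement is exactly what forces a uniform constant.

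If this fails, the fallback is the paper's announced strategy: pass to the graph $\MM'$ with edges added between adjacent same-level vertices, which is quasi-isometric to $\MM$ since added edges join vertices at distance at most~$2$. There one checks the Kong--Lau--Wang criterion, namely that geodesics can be taken to go up to a confluent and then down, with horizontal part of bounded length. That bound again comes from Theorem~\ref{M distance}.
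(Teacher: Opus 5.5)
\textbf{Gap in Step 2.} Your route differs from the paper's. The paper adds horizontal edges to form $\MM'$, shows the inclusion is a quasi-isometry, checks that $\MM'$ is expansive and $(3,2)$-departing, and invokes Kong--Lau--Wang. Working instead with Gromov products at the base point $1$ via Theorem~\ref{M distance} is more direct, and Steps~1 and~3 are fine. (For $m(x,y)$, any maximal common prefix works, so you do not need uniqueness of a smallest cone.) The gap is in Step~2: the bounded-depth lemma you plan to prove is false.

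Since $LR^2=RL^2$, induction gives $(LR)^kR=(LR)^{k-1}RL^2=RL^{2k}$, so $Cone((LR)^k)\cap Cone(R)\neq\emptyset$. The word $(LR)^k$ contains no subword $LR^2$ or $RL^2$, so it is its own unique representative and $(LR)^k\notin Cone(R)$; also $R\notin Cone((LR)^k)$. The only common prefix is therefore $p=1$, which gives $u=(RL)^{k-1}R$ with $|u|=2k-1$, unbounded.

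Separately, the inference you actually write in Step~2 is a non sequitur. A bound $|c|\le\max(|a|,|b|)+2$ on the root of $Cone(a)\cap Cone(b)$ says nothing about a common cone containing $x$ and $z$, since $x\in Cone(a)$ and $z\in Cone(b)$ need not lie in $Cone(c)$. What you need is a deep common prefix $p$ of $a$ and $b$. Then $x,z\in Cone(p)$, and $(x\cdot z)_1\ge|p|$ because $d(x,z)\le|x|+|z|-2|p|$.

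\textbf{How to repair it.} Only $\min(|u|,|v|)$ needs to be bounded, and that is true.
\begin{enumerate}
\item In the non-nested case, strip a maximal common prefix $p$ of $a$ and $b$. By left cancellation, $a=pa'$ and $b=pb'$ with, say, $a'\in Cone(L)\setminus Cone(R)$, $b'\in Cone(R)\setminus Cone(L)$ and $Cone(a')\cap Cone(b')\neq\emptyset$.
\item By Lemma~\ref{lem:I_M_homeomorphic_M}, $[0,1]_{a'}\subseteq[0,\tau]$ contains a point below $\tau^2$, and $[0,1]_{b'}\subseteq[\tau^2,1]$ contains a point above $\tau$.
\item The two intervals overlap, since both contain $[0,1]_w$ for any $w$ in the intersection. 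Hence their union contains $[\tau^2,\tau]$, so $\tau^{|a'|}+\tau^{|b'|}\ge\tau^3$.
\item Since $\tau^2<1/2$, this forces $\min(|a'|,|b'|)\le 4$, and so $|p|\ge\min(|a|,|b|)-4$.
\item Theorem~\ref{M distance} gives $(x\cdot y)_1\le|a|+1$ and $(y\cdot z)_1\le|b|+1$. Combined with the previous step, this yields the four-point condition at $1$ with $\delta=5$, and your Step~3 finishes the proof.
\end{enumerate}
Your remark in Step~1 that the same estimate holds at any base point ``after passing to the cone containing the relevant points'' is not correct as stated, but Step~3 makes it unnecessary.
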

\subsection{Preliminaries on horizontal graphs} Following \cite[Sections 1-2]{Kong}, we introduce the reader with the notion of horizontal edges, which play a key role in the last section of this paper.

Let $\Gamma = (V,E,r)$ be a rooted graph. The \textit{vertical edge set} $E_v \subseteq E$ is defined as:
\[E_v = \big\{\{x,y\} \in E : |x| - |y| = \pm 1\big\}.\] 
The \textit{horizontal edge set} $E_h \subset E$ is defined as: 
\[E_h = \big\{\{x,y\} \in E : |x| = |y|\big\}.\] 
Therefore, the edge set $E$ can be partitioned as $E = E_v \cup E_h$. Note that in a connected graph with~$E \neq \emptyset$, $E_h$ may be empty, but $E_v$ is always non-empty.

For $\Gamma = (V, E, r)$, we define the \textit{horizontal distance} $d_h(\cdot, \cdot)$ as the graph distance on the subgraph induced by $(V, E_h)$. For any pair of vertices $x, y \in V$, $d_h(x,y) = \infty$ whenever~$|x| \neq |y|$. Moreover, the inequality~$d(x,y) \leq d_h(x,y)$ holds for all $x, y \in V$. When equality occurs, i.e., $d(x,y) = d_h(x,y)$, there exists a geodesic $\pi(x,y)$ that is entirely contained in $(V, E_h)$. We refer to it as the \textit{horizontal geodesic} of $\Gamma$ denoted by $\pi_h(x,y)$.

Let's briefly recall from Section \ref{graph back} the notation for descendant and predecessor sets. Let $m \geq 0$ and~$x \in V$, then
\[\begin{aligned}
    J_m(x) &:= \{y \in V : x \preceq y, |y| = |x| + m\};\\
    J_{-m}(x) &:= \{z \in V : x \in J_m(z)\};\\
\end{aligned}\]
are the $m$-th descendant and $m$-th predecessor sets of $x$, respectively. 

\begin{definition}
   \cite[Definition 2.1]{Kong} A rooted graph $\Gamma = (V, E,r)$ is called \textit{expansive} if it satisfies the following condition:
\[\forall x, y \in V, \ \forall u \in J_1(x), \ \forall v \in J_1(y), \ d_h(x, y) > 1 \implies d_h(u, v) > 1.\]
\end{definition}
\begin{definition}
    \cite[Definition 2.5]{Kong} A rooted graph $\Gamma=(V, E,r)$ is called \textit{$(m,k)$-departing} if there exists $m, k \in \mathbb{N}$ such that:
    \[
   \forall x, y \in V, \ d_h(x, y) > k \implies \forall u \in J_m(x), v \in J_m(y), \ d_h(u, v) > 2k.
    \]
\end{definition}
\noindent Let's consider a short example of an expansive graph.
\begin{example}
Let $\Gamma_n = (V, E, r)$ be an infinite $n$-ary tree (where each vertex has exactly $n$ descendants) with additional horizontal edges between every pair of vertices in lexicographic order on the same level. 

This family of graphs $\Gamma_n$ is expansive. To prove this, consider two vertices $x, y \in V$ such that~$d_h(x,y) > 1$, and let $u \in J_1(x)$ and $v \in J_1(y)$. If $|x| = |y|$, then $|u| = |v| = |x| + 1$, and $d_h(u,v) > 1$ because the horizontal distance between parents/children of non-adjacent vertices at the same level is always greater than 1. If $|x| \neq |y|$, then $d_h(u,v) = \infty > 1$ by the definition of horizontal distance. Therefore, $\Gamma_n$ satisfies the expansive condition for all $n \geq 2$.
\end{example}

Let $\Gamma = (V,E)$ be a locally finite connected graph. Let $x, y$ and $z$ be vertices in $V$, then the three geodesics that join them are called \textit{sides} and form a \textit{geodesic triangle}. If each of the sides of the geodesic triangle is contained in the $\delta$-neighborhood of the union of the other two sides, for some non-negative $\delta$, then such a triangle is called \textit{$\delta$-thin}. A locally finite connected graph is called \textit{$\delta$-hyperbolic} if each geodesic triangle in $\Gamma$ is $\delta$-thin. We call the smallest such $\delta \geq 0$ the \textit{hyperbolicity constant} of $\Gamma$. A graph is called \textit{hyperbolic in the Gromov sense} or simply \textit{hyperbolic} if there exists a $\delta \geq 0$ such that each subgraph of~$\Gamma$ is $\delta$-hyperbolic. \cite{Gromov}.

\begin{thm}\label{hyperbolic theorem}
    \cite[Theorem 1.1]{Kong} Let $\Gamma = (V, E,r)$ be an expansive rooted graph. The following statements are equivalent:
    \begin{enumerate}
        \item $(V, E,r)$ is hyperbolic;
        \item There exists a constant $P < \infty$ such that the lengths of all horizontal geodesics are bounded by $P$;
        \item $(V, E,r)$ is $(m,k)$-departing for $m,k \in \NN$.
    \end{enumerate}
\end{thm}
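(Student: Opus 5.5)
The statement is cited from \cite[Theorem 1.1]{Kong}, so in the paper we would simply invoke it. If we wanted a self-contained argument, I would prove the cycle of implications $(1)\Rightarrow(2)\Rightarrow(3)\Rightarrow(1)$, using expansiveness only in the last two steps.

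\emph{From (1) to (2).} Let $\pi_h(x,y)$ be a horizontal geodesic of length $n$ at level $k=|x|=|y|$. Form the geodesic triangle with sides $\pi_h(x,y)$, $\pi(r,x)$ and $\pi(r,y)$. The two vertical sides are monotone in level. Let $z$ be the midpoint of $\pi_h(x,y)$. By $\delta$-thinness, $z$ is within $\delta$ of a point $w$ on, say, $\pi(r,x)$, with $|w|=j\le k$. Then
\[
\tfrac n2=d(z,x)\le d(z,w)+d(w,x)\le \delta+(k-j),
\]
while $k-j\le d(z,w)\le\delta$, since $|z|=k$. Hence $n\le 4\delta$, and we may take $P=4\delta$.

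\emph{From (2) to (3).} The key tool is the contrapositive of expansiveness: if $d_h(u,u')\le1$, then any parents of $u,u'$ are also at horizontal distance $\le1$. Consequently a horizontal path of length $\ell$ at level $t$ projects to a horizontal path of length $\le\ell$ at every higher level, so horizontal distance never increases when moving up. To obtain genuine contraction, I would use the bound $P$. If $d_h(u,v)=\ell>P$, then no horizontal path between $u$ and $v$ is geodesic, so $d(u,v)<\ell$. A geodesic realizing $d(u,v)$ must therefore rise some levels, traverse, and descend. Projecting its pieces upward with the contrapositive above, I would show that after going up a fixed number $m_0$ of levels (depending only on $P$), the horizontal distance is at most roughly $\ell/2+C$. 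Iterating this $m$ times and choosing $k\approx P+2C$ then gives: if $d_h(u,v)\le 2k$ for some $u\in J_m(x)$ and $v\in J_m(y)$, then $d_h(x,y)\le k$. This is exactly $(m,k)$-departing.

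\emph{From (3) to (1).} This is the step I expect to be the main obstacle. My plan is to show that for any $x,y$, every geodesic is uniformly close to a \emph{canonical path}. This path goes up vertically from $x$ and $y$ to the largest level $t$ at which their ancestors are within horizontal distance $k$, crosses horizontally (a segment of length $\le k$), and then goes down. The departing condition forces horizontal distances to at least double every $m$ levels going down. Hence any path that stays at low levels over a long horizontal stretch costs exponentially more than going up, and this pins geodesics within a bounded neighbourhood of the canonical path. Canonical paths to a common point from the root fit together like geodesics in a tree up to bounded error. Triangles built from canonical paths are therefore uniformly thin, and transferring thinness to actual geodesics yields $\delta$-hyperbolicity. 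The delicate point is the quantitative estimate that a geodesic cannot dip $h$ levels below the canonical crossing level except for a horizontal excursion of length $O(2^{h/m})$; this is where I would concentrate the argument.
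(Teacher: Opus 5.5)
Your first sentence matches the paper exactly: it quotes this result as Theorem 1.1 of Kong--Lau--Wang and gives no proof, so nothing more is needed there. Your $(1)\Rightarrow(2)$ argument is also correct; it gives $P=4\delta+1$ after rounding and uses no expansiveness.

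The self-contained cycle, however, has a genuine gap at $(3)\Rightarrow(1)$. What you give there is a strategy, not a proof. Its central claim is that every geodesic stays uniformly close to your canonical path, so that thinness transfers from ``canonical triangles'' to geodesic triangles. That is a Morse-type stability statement, essentially as strong as hyperbolicity itself, and the estimate you flag as delicate is left unproved. The growth you rely on also does not follow from the definition. $(m,k)$-departing only upgrades $d_h>k$ to $d_h>2k$, and reapplying it yields $>2k$ again, not $>4k$.

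The halving claim in $(2)\Rightarrow(3)$ is likewise unsupported. Geodesics need not rise, traverse and descend; such a geodesic has to be produced. Even one of that shape rises a number of levels depending on $u,v$ (about $(d(u,v)-P)/2$), so by itself it says nothing after a fixed number $m_0$ of levels.

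The missing tool is the existence of canonical geodesics in an expansive graph.
\begin{itemize}
\item Write a geodesic as a word in up-steps $U$, horizontal steps $H$ and down-steps $D$. The contrapositive of expansiveness says that if $d_h(a,b)\le 1$, then any parents of $a$ and $b$ are at horizontal distance $\le 1$.
\item This lets you rewrite $DH\to HD$ and $HU\to UH$ without increasing length. It also shows $DU$ never occurs in a geodesic, since it could be replaced by one horizontal edge or by nothing.
\item Hence some geodesic has the form $U^aH^bD^c$, and its $H$-part is a horizontal geodesic. Moreover, all ancestors of one vertex at a given level are pairwise within horizontal distance $1$.
\end{itemize}
With this the three remaining steps are short.
\begin{itemize}
\item $(2)\Rightarrow(3)$ holds with $k=m=P+2$. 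If $d_h(u,v)\le k$, use that $d_h$ does not increase going up. If $k<d_h(u,v)\le 2k$, then $d_h(u,v)>P$ forces $d(u,v)<d_h(u,v)$. So a canonical geodesic has $a=c=h$ with $1\le h<k$ and $b\le P$, and any $m$-ancestors of $u,v$ lie within $b+2\le k$.
\item $(3)\Rightarrow(2)$ needs no expansiveness. Cut a horizontal geodesic of length $n$ at level $\ge m$ into pieces of length $\le 2k$ (at level $<m$ one has $n\le 2|u|<2m$ anyway). The contrapositive of departing puts $m$-ancestors of consecutive cut points within $k$. Hence $n=d(u,v)\le 2m+k\lceil n/(2k)\rceil\le 2m+n/2+k$, i.e.\ $n\le 4m+2k$.
\item $(2)\Rightarrow(1)$: if a canonical geodesic from $x$ to $y$ crosses at level $t(x,y)$, then $(x|y)_r=t(x,y)-b/2$. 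Comparing ancestors at level $\min\{t(x,z),t(z,y)\}$ gives $(x|y)_r\ge\min\{(x|z)_r,(z|y)_r\}-P-1$. This is the Gromov-product form of hyperbolicity at the base point $r$, which suffices.
\end{itemize}
I would replace your stability plan for $(3)\Rightarrow(1)$ by this route through $(2)$.
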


\begin{definition}\cite[Definition 5.1.6]{Loh}
Let $(X, d_X)$ and $(Y, d_Y)$ be metric spaces, and let $f \colon X \to Y$ be a map. We say that $f$ is a \textit{quasi-isometry} if there exist constants~$A \geq 1$,~$B \geq 0$ and~$C\geq0$ such that the following conditions hold:
\begin{enumerate}
    \item $\forall x, y \in X :\frac{1}{A} d_X(x, y) - B \leq d_Y\big(f(x), f(y)\big) \leq A d_X(x, y) + B;$
    \item  $\forall z\in Y:  \exists x \in X : d_Y\big(z,f(x)\big)\leq C.$
\end{enumerate}
\noindent The constants $A$ and $B$ are called the \textit{quasi-isometric constants} of the embedding $f$. When only the first condition is satisfied $f$ is called a \textit{quasi-isometric embedding}.

Note that if $f$ is a surjective quasi-isometric embedding from $X$ to $Y$ then $f$ is a quasi-isometry.
\end{definition}

\begin{thm}\label{quasi invariant hyperbolic theorem}
\cite[Corollary 7.2.13]{Loh} Let $X$ and $Y$ be geodesic metric spaces. Let $f : X \to Y$ be a quasi-isometry. Then $X$ is hyperbolic if and only if $Y$ is hyperbolic.
\end{thm}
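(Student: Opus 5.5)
The plan is the standard argument via stability of quasi-geodesics (the Morse lemma). First I reduce to one direction. If $f\colon X\to Y$ is a quasi-isometry with constants $A,B,C$, then there is a quasi-inverse $g\colon Y\to X$: send $z\in Y$ to any $x$ with $d_Y(z,f(x))\le C$. A short computation from the two conditions in the definition shows that $g$ is again a quasi-isometry, with constants depending only on $A,B,C$. So it suffices to prove: if $Y$ is $\delta$-hyperbolic and $f\colon X\to Y$ is a quasi-isometric embedding, then $X$ is $\delta'$-hyperbolic for some $\delta'=\delta'(\delta,A,B)$.

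Let $\Delta=[p,q]\cup[q,r]\cup[r,p]$ be a geodesic triangle in $X$, parametrised by arc length.
\begin{enumerate}
\item The image $f\circ[p,q]$ of each side is an $(A,B)$-quasi-geodesic in $Y$, but it need not be continuous. To tame it, I sample the side at integer times and join consecutive image points by geodesic segments in $Y$. Each such segment has length at most $A+B$, so the result is a continuous $(A',B')$-quasi-geodesic $\sigma_{pq}$ at Hausdorff distance at most $A+B$ from $f([p,q])$.
\item I apply the Morse lemma in the $\delta$-hyperbolic space $Y$. It gives a constant $H=H(\delta,A',B')$ such that $\sigma_{pq}$ lies within Hausdorff distance $H$ of any geodesic $[f(p),f(q)]$ in $Y$.
\item The geodesic triangle with vertices $f(p),f(q),f(r)$ is $\delta$-thin. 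Hence every point of $f([p,q])$ lies within $D:=\delta+2H+2(A+B)$ of $f([q,r])\cup f([r,p])$.
\item I pull back. For $u\in[p,q]$ there is $v$ on another side with $d_Y(f(u),f(v))\le D$. The lower quasi-isometric bound then gives $d_X(u,v)\le A(D+B)$, so $\Delta$ is $\delta'$-thin with $\delta'=A(D+B)$.
\end{enumerate}

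The main obstacle is step 2, the Morse lemma itself, since everything else is bookkeeping with the quasi-isometry constants. To prove it, I would first show that in a $\delta$-hyperbolic space a geodesic $[a,b]$ lies within $O(\delta\log \ell)$ of any continuous path of length $\ell$ from $a$ to $b$; this is done by repeatedly bisecting the path and using thin triangles. Next, take the point $z$ of the geodesic farthest from the quasi-geodesic, at distance $D_0$. Replace the portion of the quasi-geodesic near $z$ by a detour that stays at distance at least $D_0$ from $z$. Comparing the detour's length, which is linear in $D_0$ by the quasi-geodesic inequalities, with the logarithmic estimate forces $D_0$ to be bounded. The reverse containment, that the quasi-geodesic lies near the geodesic, then follows by a connectedness argument on the continuous path $\sigma_{pq}$, which is exactly why continuity was arranged in step 1. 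Finally, the hypothesis that $X$ and $Y$ are geodesic is used in step 1 to build $\sigma_{pq}$ and to ensure that the geodesic triangles considered exist.
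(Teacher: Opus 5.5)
Your proposal is correct and follows the standard route. The paper gives no proof of its own but cites L\"oh's Corollary 7.2.13, which is obtained exactly as you do: apply quasi-geodesic stability (the Morse lemma) to the images of the sides of a geodesic triangle, use thinness in $Y$, pull back with the lower quasi-isometry bound, and handle the converse via a quasi-inverse. One detail worth stating explicitly is that the linear length bound for the detour in your Morse-lemma sketch comes from the construction of $\sigma_{pq}$ (each unit of parameter contributes length at most $A+B$), not from the quasi-geodesic inequalities alone, since a general continuous quasi-geodesic need not be rectifiable.
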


\subsection{Hyperbolicity of the \textit{Cay(M)}} We will now prove theorem \ref{M hyperbolic} by a number of lemmas. The first step is to construct a modified graph $\MM'$ that includes additional horizontal edges between every pair of adjacent vertices that are not present in~$Cay(M)$. The next step will be to establish that the two graphs are quasi-isometric. Relying on Theorem \ref{hyperbolic theorem} from the results of Kong, Lau, and Wang \cite{Kong}, we prove that the newly introduced graph is hyperbolic, which is sufficient since quasi-isometries preserve hyperbolicity

We will now construct the modified graph $\MM'$. Since~$Cay(M)= (M,E,1)$ has no horizontal edges in, we allow~$Cay(M) = (M, E_v, 1)$. Now, define $\MM' = (M, E_v\cup E_h, 1)$, where $E_h$ is the set of horizontal edges defined by
\[E_h=\{(x,y) :\exists m \in V, \text{ } x = mL, y=mR\} \cup \{ (x,y) : \exists m\in V, \text{ } x=mLR, y=mRL\}.\]
See Figure \ref{fig:M M' graphs} for an illustration.

\begin{figure}[!ht]
    \centering
    \includegraphics[width=1\linewidth]{"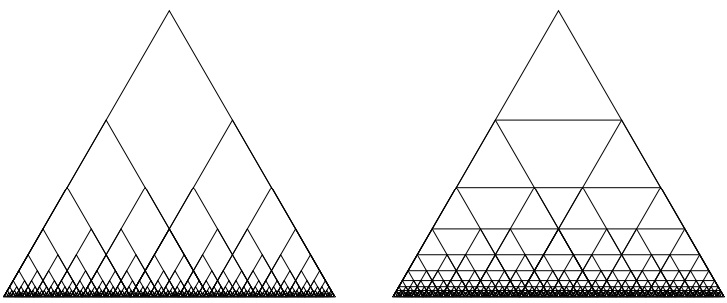"}
\caption{The graphs $Cay(M)$ and $\MM'$}
    \label{fig:M M' graphs}
\end{figure}

Observe that the graph $Cay(M)$ is contained in $\MM'$. We now have enough tools to prove that the large-scale geometry of the graphs $Cay(M)$ and $\MM'$ is essentially the same.
\begin{prop} \label{prop:quasi}
Let $f: Cay(M) \rightarrow \MM'$ be the natural inclusion map, then $f$ is a quasi-isometry.
\end{prop}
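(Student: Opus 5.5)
The natural inclusion $f\colon Cay(M)\to\MM'$ is the identity on the common vertex set $M$. So it is surjective on vertices, and the coarse-surjectivity condition holds with $C=0$ (every point on an edge lies within $1/2$ of a vertex if one works with geometric realisations). The plan is therefore to prove, for all $x,y\in M$,
\[
\tfrac{1}{3}\, d(x,y) \;\le\; d'(x,y) \;\le\; d(x,y),
\]
where $d$ is the path metric of $Cay(M)$ and $d'$ that of $\MM'$. This gives a quasi-isometry with $A=3$ and $B=0$.

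\emph{Upper bound.} Every edge of $Cay(M)$ is also an edge of $\MM'$, because $\MM'=(M,E_v\cup E_h,1)$ and the edge set of $Cay(M)$ equals $E_v$. Hence every path in $Cay(M)$ is a path in $\MM'$ of the same length, and $d'(x,y)\le d(x,y)$.

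\emph{Lower bound.} I would show that each added horizontal edge has endpoints at $d$-distance at most $3$ in $Cay(M)$. There are two kinds of added edge.
\begin{itemize}
\item For an edge $\{mL,mR\}$, the path $mL\to m\to mR$ has length $2$, so $d(mL,mR)\le 2$.
\item For an edge $\{mLR,mRL\}$, Theorem~\ref{M distance} gives
\[
d(mLR,mRL)=d(LR,RL)=|LR|+|RL|-2=2 .
\]
This case applies because $LR\in Cone(LR)$, $RL\in Cone(RL)$, and neither lies in $Cone(LR^2)$. Concretely, the path $LR\to LR^2=RL^2\to RL$ realises this distance, and it can be translated by $m$ since left multiplication is an isometry.
\end{itemize}
So each horizontal edge costs at most $2$ in $Cay(M)$, and the constant $3$ is merely safe.

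Now take a geodesic in $\MM'$ from $x$ to $y$ of length $d'(x,y)$. Replace each horizontal edge on it by the corresponding path of length at most $2$ in $Cay(M)$, and keep the vertical edges. The result is a path in $Cay(M)$ of length at most $2d'(x,y)$, so $d(x,y)\le 2d'(x,y)$. Combined with the upper bound, this shows that $f$ is a quasi-isometric embedding with $A=2$ and $B=0$. Since $f$ is surjective on vertices, the remark after the definition of quasi-isometry then shows that $f$ is a quasi-isometry.

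\emph{Main obstacle.} The only non-formal step is the bound for edges of the form $\{mLR,mRL\}$. It needs the fact that these vertices are close in $Cay(M)$ even though they lie in different cones. Theorem~\ref{M distance}, or directly the relation $LR^2=RL^2$, supplies this. A subtlety worth checking is that $\MM'$ may contain an edge that is already an edge of $Cay(M)$, or a horizontal edge whose endpoints have unequal lengths $|x|\neq|y|$. Neither affects the argument, which uses only that every edge of $\MM'$ joins vertices at $d$-distance at most $2$.
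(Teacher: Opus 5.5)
Your proof is correct and follows the same route as the paper. The inclusion is $1$-Lipschitz because $\MM'$ contains every edge of $Cay(M)$, and the reverse bound $d \le 2d'$ holds because each added horizontal edge joins vertices at distance $2$ in $Cay(M)$; together with surjectivity this gives a quasi-isometry with $A=2$, $B=0$. Your version is more explicit than the paper's remark that horizontal edges ``decrease the distance by a factor of $2$'': you exhibit the detours $mL\to m\to mR$ and $mLR\to mLR^2=mRL^2\to mRL$. The constant $3$ you announce at the start is simply superseded by $2$.
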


\begin{proof}
Let $f \colon Cay(M) \to \MM'$ be the natural inclusion map. Since $\MM'$ contains every path of~$Cay(M)$, we conclude that for any pair of vertices $x, y \in Cay(M)$, the following holds:
\[
    d_{\MM'}\big(f(x), f(y)\big) \leq d_{Cay(M)}(x, y).
\]
On the other hand, every added horizontal edge in $\MM'$ decreases the minimal distance between the points it connects by a factor of $2$. Hence, for every pair of vertices $x, y \in Cay(M)$, we have
\[
    \frac{1}{2} d_{Cay(M)}(x, y) \leq d_{\MM'}\big(f(x), f(y)\big).
\]
Combining these two inequalities, we obtain
\[
    \frac{1}{2} d_{Cay(M)}(x, y) \leq d_{\MM'}\big(f(x), f(y)\big) \leq 2 d_{Cay(M)}(x, y),
\]
which shows that the natural inclusion map $f \colon Cay(M) \to \MM'$ is a quasi-isometric embedding with quasi-isometric constants $A=2$ and $B = 0$. Therefore, $Cay(M)$ embeds quasi-isometrically in $\MM'$. Since $f$ is surjective, we conclude that it is a quasi-isometry.
\end{proof}

\begin{prop}\label{expansive}
    The graph $\MM' = (V,E_v\cup E_h, r)$ is expansive.
\end{prop}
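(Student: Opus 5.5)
We need to show: for all $x,y$ with $d_h(x,y)>1$ and $u\in J_1(x)$, $v\in J_1(y)$, we have $d_h(u,v)>1$; equivalently, by contraposition, whenever two children $u,v$ at the same level are joined by a horizontal edge (or are equal), any pair of parents $x\in J_{-1}(u)$, $y\in J_{-1}(v)$ satisfies $d_h(x,y)\le 1$. The plan is to classify horizontal edges by their form, using left-multiplication invariance of $\MM'$. Left multiplication by any $w\in M$ preserves $E_v$ and $E_h$ by construction, and it is an isometry of $Cay(M)$ by Theorem~\ref{M distance} together with the left cancellativity corollary. Since $|wz|=|w|+|z|$ (the relation is length preserving), it also shifts levels uniformly. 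So it suffices to understand pairs $u,v$ up to a common prefix.

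\emph{Step 1: parents.} First determine $J_{-1}(z)$ for each vertex $z$. Since the relation $LR^2=RL^2$ is length preserving, every word for $z$ has length $|z|$, and the parents of $z$ are the elements $z'$ with $z=z'L$ or $z=z'R$. Using Lemma~\ref{lem:I_M_homeomorphic_M} (equivalently, the cone structure), $z$ has two parents exactly when $z\in Cone(LR^2)$ ends in a way allowing both last letters. Concretely, $z=m'LR^2=m'RL^2$ has parents $m'LR$ and $m'RL$, which are joined by a horizontal edge of the second type. Otherwise the parent is unique.

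\emph{Step 2: case analysis on $\{u,v\}$.} There are three cases.
\begin{itemize}
\item[(a)] $u=v$. Then $x,y$ are parents of the same vertex, so either $x=y$ or $\{x,y\}=\{m'LR,m'RL\}$. In both cases $d_h(x,y)\le1$.
\item[(b)] $\{u,v\}=\{mL,mR\}$. Their parents include $m$ (distance $0$), but other parents may arise when $mL$ or $mR$ lies in a side of some $Cone(nLR^2)$. For instance, if $m=nLR$ then $mR=nLR^2=nRL^2$ has the extra parent $nRL$, and $d_h(nLR,nRL)=1$. The other combinations follow by the $L\leftrightarrow R$ symmetry.
\item[(c)] $\{u,v\}=\{mLR,mRL\}$. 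Here $u$ has parent $mL$ and $v$ has parent $mR$, which are joined by a first-type horizontal edge. If $m$ itself ends suitably, further parents appear (e.g.\ $mLR$ with $m=\dots$) and must be checked.
\end{itemize}
In each case, list all parent pairs and verify that each pair is equal or is one of the two edge types $\{nL,nR\}$ or $\{nLR,nRL\}$.

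\emph{Step 3: finish.} Conclude that adjacency or equality of children forces adjacency or equality of parents; this is exactly the contrapositive of expansiveness.

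The main obstacle is Step 2. Vertices can have two parents, so uniqueness fails, and one must check that the extra parents from the relation never land at horizontal distance $\ge2$. The subtle subcase is (b) when both $mL$ and $mR$ have extra parents, for example $mL$ lying in a $Cone(nRL^2)$-type position while $mR$ does as well. I would handle this by writing $m$ in its last two letters and enumerating, using Theorem~\ref{M distance} to confirm which pairs at the same level are genuinely adjacent. If some pair turns out to be at horizontal distance exactly $2$, I would fall back to proving the weaker $(m,k)$-departing condition from Theorem~\ref{hyperbolic theorem} instead.
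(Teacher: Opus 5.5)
Your overall strategy matches the paper's: contrapositive, split by $d_h(u,v)\in\{0,1\}$, then list all parents. The gap is that the proposal stops at the one step that carries content.

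In case (b) you need to know whether $mL$ and $mR$ can \emph{both} have a second parent, i.e.\ whether some $m$ equals both $wRL$ and $w'LR$. If that happened, the parent pair $(wLR,w'RL)$ would a priori be at horizontal distance $2$ (via $m$), and expansiveness would be in danger. In case (c) you likewise leave open whether $mLR$ or $mRL$ has a second parent. ``Writing $m$ in its last two letters and enumerating'' cannot decide this, because final letters are not invariants of an element: $LR^2=RL^2$ ends both in $RR$ and in $LL$. The fallback to $(m,k)$-departing does not prove the proposition. It would not rescue the later argument either, since Theorem~\ref{hyperbolic theorem} assumes expansiveness. (The paper's ``common ancestor within two levels'' sentence asserts this same step without proof.)

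The missing lemma is: \emph{no element of the form $sRL$ ends in $R$}, and symmetrically no $sLR$ ends in $L$.
\begin{enumerate}
\item An element ending in both letters equals $wLR^2=wRL^2$ for some $w$. Along a chain of relator moves, the last letter can only change through a move on the final three letters.
\item Suppose $sRL$ ends in $R$. Then $sRL=wRL^2$, so $sR=wRL$ after cancelling $L$ on the right.
\item Thus $sR$ ends in both letters, so $sR=uLR^2=uRL^2$. Hence $s=uLR$, and $uRL=wR$ ends in $R$, with $|u|=|s|-2$.
\item Induction on $|s|$ gives a contradiction.
\end{enumerate}

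This argument uses right cancellativity. The paper never states it; it only records left cancellativity. You also need it for ``each vertex has at most one $L$-parent and one $R$-parent''. It holds because the length-preserving rule $LR^2\to RL^2$ is terminating and has no self-overlaps, so it gives unique normal forms. Appending $L$ to a word avoiding $LR^2$ keeps it reduced; for $R$, use the rule $RL^2\to LR^2$ instead.

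With the lemma, the cases close as follows.
\begin{itemize}
\item In (b), at most one of $mL$, $mR$ has an extra parent. The only new pair is the adjacent $(wLR,wRL)$, when $m=wRL$ or in the mirror situation.
\item In (c), the parents are exactly $mL$ and $mR$, which are adjacent.
\end{itemize}

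Two smaller corrections:
\begin{itemize}
\item In Step 1, ``$z\in Cone(LR^2)$'' should read ``$z=m'LR^2$''; cones concern prefixes, not suffixes.
\item The reduction ``up to a common prefix'' is not valid for parents. For example, $L\cdot R^2=RL^2$ has the parent $RL\notin Cone(L)$.
\end{itemize}
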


\begin{proof}
    We will prove this using the contrapositive statement
    \[\forall u, v \in V, \text{ and }\forall x \in J_{-1}(u), \text{ } \forall y \in J_{-1}(v) \text{ s.t. } d_h(u,v) \leq 1 \Rightarrow d_h(x,y) \leq 1.\]

    We will consider two possible cases corresponding to the possible horizontal distances between~$u$ and~$v$. 

    Let $d_h(u,v)=0$, i.e. $u$ and $v$ be the same vertex. In this case $u$ and $v$ have the same predecessors due to the monoid property $LR^2 = RL^2$. If for some predecessor pair~$x$ and~$y$,~$d_h(x,y) > 1$, the restriction~$LR^2=RL^2$ would force~$d_h(u,v)>1$, contradicting our assumption that $d_h(u,v) = 0$.

    Now, let $d_h(u,v)=1$. Due to the monoid property $LR^2=RL^2$, within the next two levels above~$u$ and~$v$, there always exists a common ancestor $w \in J_{-2}(u)$, $w \in J_{-2}(v)$. This common ancestor $w$ ensures that all predecessors $x$ and $y$ are at most 1 apart, as any greater distance would contradict the existence of the common ancestor within two levels.

    Both cases show that $d_h(u,v) \leq 1 \implies d_h(x,y) \leq 1$. Hence, this proves the contrapositive statement, which is equivalent to the definition of expansiveness. Hence, the graph~$\MM'$ is expansive.
\end{proof}
Our next goal is to prove that the graph $\MM'$ is $(m,k)$-departing. But first we need some definitions. In the graph $\MM'$, we define two types of horizontal edges of length 1:
\begin{itemize}
    \item let $U$ be a horizontal edge where the right predecessor of the left vertex and the left predecessor of the right vertex coincide;
    \item let $D$ be a horizontal edge where the right descendant of the left vertex and the left descendant of the right vertex coincide.
\end{itemize}
\[
\begin{tikzpicture}[scale=2]
\draw (4.32624*2-1.6524745577599187*2+1.41139-5.19852,0) node[anchor=north]{}
-- (8.65248-2.28366-5.19852, 1.76893-1.09326) node[anchor=south]{}
-- (5.97871*4-8.65248*2+2.28366*2-4.32624*2+3.45396-5.19852, 0) node[anchor=north]{}
--cycle;

\draw(6.3688004422-5.19852,0) node[anchor=south]{$U$};
\end{tikzpicture}
\qquad
\begin{tikzpicture}[scale=2]
\draw (8.65248-2.28366-5.19852, 1.76893-1.09326) node[anchor=south]{}
-- (5.97871*4-8.65248*2+2.28366*2-4.32624*2+3.45396-5.19852, 0) node[anchor=north]{}
--(5.97871*4-8.65248*2+2.28366*2-4.32624*2+3.45396-5.19852,0) node[anchor=north]{}
--(5.97871*2-8.65248+2.28366-5.19852, 1.76893-1.09326) node[anchor=south]{}
--cycle;

\draw(5.97871*4-8.65248*2+2.28366*2-4.32624*2+3.45396-5.19852, 1.76893-1.09326) node[anchor=south]{$D$};

\end{tikzpicture}
\]

In the graph $M'$, the following properties hold:
\begin{enumerate}
    \item an edge cannot be both $U$ and $D$ simultaneously;
    \item the pattern of the edges $U$ and $D$ follows a specific structure:
    \begin{itemize}
        \item Level 1 (from the root): $U$
        \item Level 2: $UDU$
        \item Level 3: $UDUUDU$
        \item Level 4: $UDUUDUDUUDU$
    \end{itemize}
\end{enumerate}
The patterns above are obtained from the substitution structure. By analyzing the structure, we obtain the following substitution rules:
\begin{enumerate}
    \item a single edge $U$, that is not adjacent to any other $U$'s at level $n$ is transformed into a sequence of~$UDU$ edges at level $n+1$;
    \item a sequence of $UU$ edges at level $n$ is transformed into a sequence of $UDUDU$ edges at level $n+1$;
    \item an  edge $D$ is ignored in the transformation to the next level. We can view $D$'s as having the purpose of separating~$U$'s into singles and doubles.
\end{enumerate}
\begin{figure}[H]
    \centering
    \includegraphics[width=0.8\linewidth]{"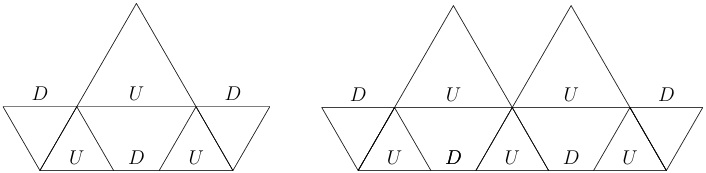"}
\end{figure}
We invite the reader to expand these pictures with various combinations of types of horizontal edges.

Note that  we can define $U$ and $D$ for the graph $\MM$ by considering the same pairs of vertices.

\begin{prop}\label{M' (m,k)}
    The graph $\MM'$ is $(m,k)$-departing.
\end{prop}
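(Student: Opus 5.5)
We will show that $\MM'$ is $(m,k)$-departing with $k=1$ and $m=2$; if the counting below is too tight, $m=3$ or $m=4$ works the same way. The plan is to describe each level of $\MM'$ as a finite horizontal path whose edges form a word in $U$ and $D$. Horizontal distance between two vertices on the same level is then the number of edges between them along that word. Two ingredients are needed. The first is that each level of $\MM'$ is a single horizontal path, ordered as the intervals $[0,1]_w$ of Lemma~\ref{lem:I_M_homeomorphic_M} are ordered. The second is that horizontal edges join only neighbours in this order. Granting both, $d_h(x,y)$ equals the difference of the positions of $x$ and $y$ in the level's linear order, and the problem becomes counting letters in the substitution words.

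First we record precisely how a single horizontal edge at level $n$ propagates to level $n+1$. The edge is either $U$ or $D$, and its endpoints are $x$ and $x'$. The descendants $J_1(x)$ and $J_1(x')$ occupy consecutive blocks at level $n+1$. If the edge is $D$, the two blocks share the vertex $xR=x'L$. If the edge is $U$, the blocks are disjoint and are joined by the new edge $xR\sim x'L$, which is one of the added edges $mLR\sim mRL$ or $mL\sim mR$. From the substitution rules $U\mapsto UDU$, $UU\mapsto UDUDU$ and ``$D$ ignored'', we can read off the spacing. Take $u\in J_1(x)$ and $v\in J_1(y)$ with $y$ to the right of $x$. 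Then $d_h(u,v)$ is at least the number of level-$(n+1)$ edges lying strictly between the block of $x$ and the block of $y$. That number is at least the number of $U$'s among the level-$n$ edges between $x$ and $y$, since each such $U$ contributes at least one separating edge. It also picks up contributions from the interiors of the descendant blocks of the intermediate vertices.

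The key quantitative step uses two facts. Since $D$'s only separate $U$'s into singles and doubles, there are never two consecutive $D$'s. Hence any path of length $\ell$ in a level word contains at least $\lfloor \ell/2\rfloor$ letters $U$. Combining this with the expansion $U\mapsto UDU$, a gap of $\ell\ge 2$ horizontal edges between $x$ and $y$ becomes a gap of at least roughly $2\lfloor\ell/2\rfloor+1$ between any of their children. After a second step it becomes at least roughly $4\lfloor \ell/2\rfloor$. This already exceeds $2k=2$ when $\ell\ge 2$, which is exactly the case $d_h(x,y)>1$. This gives $(2,1)$-departure. The case $|x|\neq|y|$ is vacuous, since then $d_h=\infty$ at every later level. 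Proposition~\ref{expansive} lets us treat $x$ and $y$ independently: their descendant sets only approach each other through the bounded local configurations already listed.

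The main obstacle is the first ingredient: that each level of $\MM'$ really is the single path encoded by the $U/D$ word, with no extra horizontal adjacencies. Two failures are possible. An edge of type $mLR\sim mRL$ might join vertices that are not neighbours in the interval order, or two distant descendants might coincide through the relation $LR^2=RL^2$. Either would create shortcuts that invalidate the counting. To rule these out, I would first use Lemma~\ref{lem:I_M_homeomorphic_M} and Theorem~\ref{M distance} to show that distinct vertices on level $n$ correspond to intervals $[0,1]_w$ whose left endpoints are strictly ordered. Next I would show that the edges in $E_h$ join only vertices whose intervals overlap in exactly one subinterval of the form $[0,1]_{mLR^2}$ (or are consecutive). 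Only after this verification do the substitution counts above translate into genuine lower bounds on $d_h$.
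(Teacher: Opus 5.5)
Your proposal has a genuine gap: with $k=1$ no $m$ works, so the statement you set out to prove is false. Take $x=LL$ and $y=RL$ at level $2$. Then $d_h(x,y)=2>1$, via $LL$--$LR$--$RL$, a $U$ edge followed by a $D$ edge. Yet for every $m\ge 1$ the descendants $u=LLR^m\in J_m(x)$ and $v=RL^{m+1}\in J_m(y)$ satisfy $d_h(u,v)=2$.

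You can see this in your own interval picture. The left endpoint of $[0,1]_w$ is $\sum_{w_i=R}\tau^{i+1}$. Consecutive vertices at level $N$ differ by $\tau^{N+1}$ (a $U$ edge) or by $\tau^{N+2}$ (a $D$ edge). The left endpoints of $u$ and $v$ differ by $\tau^{m+2}=\tau^{m+3}+\tau^{m+4}$, which is exactly one $U$-gap plus one $D$-gap at level $m+2$. Geometrically, $[0,1]_{LL}=[0,\tau^2]$ and $[0,1]_{RL}=[\tau^2,2\tau^2]$ touch at $\tau^2$, so their innermost descendants never separate. For $m=1$ the path is simply $LLR$--$LRL$--$LRR=RLL$.

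The error is in your amplification count. You correctly note that a $D$ edge between $w$ and $w'$ forces $wR=w'L$. Consequently the number of level-$(n+1)$ edges between $xR$ and $yL$ is the number of $U$'s among the $\ell$ edges plus $\ell-1$, not roughly $2\lfloor\ell/2\rfloor+1$. For the pattern $UD$ this gives $2$, and the new pattern is $DU$, which regenerates $UD$ at the next level. The distance therefore stays at $2$ forever, and your two-step bound $4\lfloor\ell/2\rfloor$ is wrong.

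The fix inside your framework is to take $k=2$, as the paper does with $(m,k)=(3,2)$.
\begin{itemize}
\item Because $DD$ never occurs, any three consecutive gaps at level $n$ sum to at least $\tau^{n+1}+2\tau^{n+2}=\tau^n+\tau^{n+2}$; the extreme case is $DUD$.
\item So $d_h(x,y)\ge 3$ forces $[0,1]_x$ and $[0,1]_y$ to be separated by at least $\tau^{n+2}$.
\item The left endpoints of $xR^3$ and $yL^3$ then differ by at least $\tau^{n+2}+\tau^{n+3}=\tau^{n+1}>4\tau^{n+4}$.
\item This forces $d_h(u,v)\ge 5>4$ for all $u\in J_3(x)$ and $v\in J_3(y)$.
\end{itemize}
In substitution terms the extreme case runs $DUD\to UDU\to DUUD\to UDUDU$. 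This also shows that $m=2$ is not enough even with $k=2$, since $DUUD$ has only $4$ edges.

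Your planned verification that each level is a single path, with edges exactly between consecutive left endpoints, does go through by induction. Each vertex $w$ contributes the $U$ edge $wL$--$wR$, each $U$ edge produces a $D$ edge, and each $D$ edge produces a shared vertex. With the parameters corrected, your argument becomes a more systematic version of the paper's pictorial case analysis.
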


\begin{proof}
    We will show that $M'$ is at most $(3,2)$-departing, i.e. 
    \[
   \forall x,y \in X \text{ and } \forall u \in J_3(x), v \in J_3(y) \text{ s.t. } d_h(x,y) > 2 \Rightarrow d_h(u,v) >4, 
    \]
    We will prove by proving the contrapositive statement:
    \[
   \forall u,v \in X \text{ and } \forall x \in J_{-3}(u), y \in J_{-3}(v) \text{ s.t. } d_h(u,v) \leq 4 \Rightarrow d_h(x,y) \leq2, 
    \]

Without loss of generality, let $u$ be located to the left of $v$ on the graph, such that $d_h(u,v) \leq 4$. Due to lexicographic order, we only need to consider the extreme case where~$d_h(u,v) = 4$. There are 3 possible combinations of horizontal edges of length 4:~$UDUD$, $UUDU$, and~$DUUD$. We ignore cases like~$UDUU$ due to symmetry.

We start by analyzing the $UDUD$ case. The key here is to observe that in recovering the predecessors a horizontal edge of type $U$ contributes to the appearance of 2 vertical edges, whereas type $D$ lacks contribution. Note that the adjacent edges of the sequence $UDUD$ can also contribute to the formation of horizontal edges at the above level; therefore, we have to look at all possible combinations in which $UDUD$ can appear. By analyzing the substitution structure, we observe that there are a total of 3 such subcases: $UUDUDU$, $UDUDU$, and $DUDUDU$.

The following picture shows two levels of edges above the sequence of edges $UDUD$ inside $UUDUDU$:
\begin{figure}[H]
    \centering
    \includegraphics[width=0.7\linewidth]{"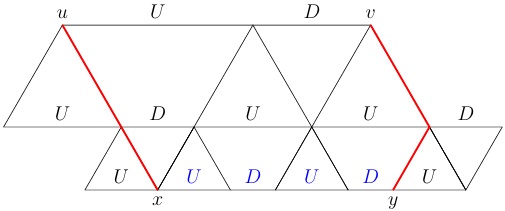"}
\end{figure}
\noindent Let $x$ and $y$ be the extreme vertices in the sequence of edges $UDUD$. Then the distance between the extreme predecessors $J_{-2}(x) = u$ and $J_{-2}(y) = v$ is 2. By the monoid property, the distance of $J_{-1}(u)$ and $J_{-1}(v)$ cannot be greater than 2. Note that the sequence~$UDUD$ is always followed by a~$U$, however, it may or may not have a prefix of $U$ or $D$. The absence or presence of a $D$ in front of the sequence will not contribute to the edges in the previous level. The $UDUDU$ and $DUDUDU$ subcases are proven in a similar way.

The next case is $UUDU$. Once again we only look at $UUDU$ inside: $DUUDUU$, $UUDUU$, $UUUDUD$, $UUDUD$, $DUUDU$ and $UUDU$. The following picture shows two levels of edges above the sequence of edges $UUDU$ inside $DUUDUU$: 
\begin{figure}[H]
    \centering
    \includegraphics[width=0.6\linewidth]{"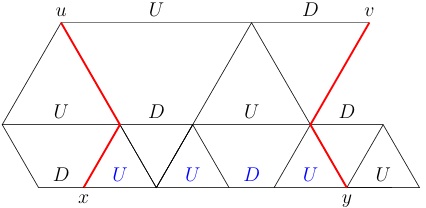"}
\end{figure}
\noindent Observe that we obtain the same situation for the extreme predecessors $J_{-2}(x) = u$ and~$J_{-2}(y) = v$ as before. The subcases $UUDUU$, $UUUDUD$, $UUDUD$, $DUUDU$, and $UUDU$ are proven in a similar way.

We are left with the $DUUD$ case. Since $D$'s are not allowed to be adjacent to each other or at the boundary, we look at $DUUD$ inside $UDUUDU$. 
\begin{figure}[H]
    \centering
    \includegraphics[width=0.65\linewidth]{"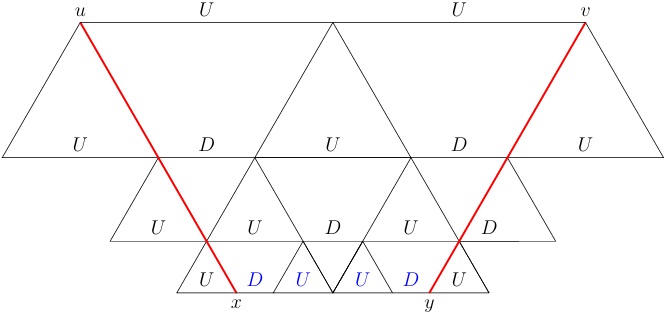"}
\end{figure}
\noindent For the extreme case $J_{-3}(x) = u$ and~$J_{-3}(y) = v$ we find that the distance is at most 2.

Note that we proved only the extreme cases. All other cases are proven in a similar way; moreover, most of them are contained in the pictures provided above.
Hence, this proves the contrapositive statement, which implies that $\MM'$ is at most $(3,2)$-departing.
\end{proof}

\begin{cor}\label{M' hyperbolic}
    The graph $\MM'$ is hyperbolic.
\end{cor}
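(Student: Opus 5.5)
The corollary should follow by feeding the two preceding propositions into the Kong--Lau--Wang criterion, Theorem \ref{hyperbolic theorem}. That theorem applies only to expansive rooted graphs, so the first step is to record that $\MM' = (M, E_v\cup E_h, 1)$ is a rooted, locally finite, connected graph. It is rooted at $1$ and connected because it contains $Cay(M)$. It is locally finite because each vertex has at most two children, finitely many parents (via the relation $LR^2=RL^2$), and only boundedly many added horizontal edges.

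We also need the root and level structure of $\MM'$ to match the one used in the propositions. The added edges join $mL$ with $mR$ and $mLR$ with $mRL$, and these pairs have equal word length. So the edges in $E_h$ do not shorten distances to the root, and $|x|_{\MM'} = |x|_{Cay(M)}$. This confirms that the edges of $E_h$ really are horizontal and that the vertical edges are exactly those of $Cay(M)$. It is the point needed to use the descendant sets $J_m$ consistently in Propositions \ref{expansive} and \ref{M' (m,k)}.

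With this in place, Proposition \ref{expansive} supplies expansiveness, which is the hypothesis of Theorem \ref{hyperbolic theorem}. Proposition \ref{M' (m,k)} supplies that $\MM'$ is $(3,2)$-departing, which is condition (3) of that theorem. The equivalence (3)$\Leftrightarrow$(1) then gives that $\MM'$ is hyperbolic. As an optional consistency check one may note condition (2): the substitution rules $U\mapsto UDU$ and $UU\mapsto UDUDU$ show that horizontal geodesics cannot be long, since a long horizontal path is beaten by going up and coming back down. This is not needed for the argument.

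The main obstacle is not in this corollary itself but in making sure the hypotheses match precisely. In particular, Kong--Lau--Wang's notion of $d_h$ and of $J_m$ must be the one computed for $\MM'$ in the two propositions, which is why the check that root distances are unchanged is the step I would write out carefully. Afterwards, Theorem \ref{M hyperbolic} will follow by combining this corollary with Proposition \ref{prop:quasi} and Theorem \ref{quasi invariant hyperbolic theorem}, since both graphs are geodesic metric spaces.
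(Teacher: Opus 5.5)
Your proposal is correct and follows the paper's proof: you combine expansiveness (Proposition \ref{expansive}) with the $(3,2)$-departing property (Proposition \ref{M' (m,k)}) and apply the equivalence (1)$\Leftrightarrow$(3) of Theorem \ref{hyperbolic theorem}. Your additional checks are also correct and make explicit hypotheses the paper leaves implicit: that $\MM'$ is locally finite, and that the edges of $E_h$ join vertices of equal word length, so that $|x|$, $J_m$ and $d_h$ in $\MM'$ are the ones used in those propositions.
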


\begin{proof}
    Proposition \ref{expansive} shows that $\MM'$ is expansive. Hence, we can apply Theorem \ref{hyperbolic theorem} to Proposition \ref{M' (m,k)}, which concludes that $\MM'$ being $(m,k)$-departing is equivalent to $\MM'$ being hyperbolic.
\end{proof}

\begin{proof}[Proof of Theorem \ref{M hyperbolic}]
    Proposition \ref{prop:quasi} states that $f:Cay(M) \rightarrow \MM'$ is a quasi-isometry. Corollary \ref{M' hyperbolic} shows that $\MM'$ is hyperbolic. Then by Theorem \ref{quasi invariant hyperbolic theorem} $Cay(M)$ must also be hyperbolic.
\end{proof}

\end{document}